\definecolor{darkgreen}{rgb}{0.0, 0.6, 0.0}
\numberwithin{equation}{section}
\numberwithin{figure}{section}
\newlist{enua}{enumerate}{1}
\setlist*[enua]{label={\upshape (\arabic*)}, nosep}
\newlist{enur}{enumerate}{1}
\setlist*[enur]{label={\upshape (\roman*)}, nosep}
\crefname{thm}{Theorem}{Theorems}
\crefname{thma}{Theorem}{Theorems}
\crefname{prp}{Proposition}{Propositions}
\crefname{lem}{Lemma}{Lemmas}
\crefname{cor}{Corollary}{Corollaries}
\crefname{dfn}{Definition}{Definitions}
\crefname{rmk}{Remark}{Remarks}
\crefname{fct}{Fact}{Facts}
\crefname{ex}{Example}{Examples}
\crefname{warn}{Warning}{Warnings}
\theoremstyle{plain}
\newtheorem{thm}{Theorem}[section]
\newtheorem{prp}[thm]{Proposition}
\newtheorem{lem}[thm]{Lemma}
\newtheorem{cor}[thm]{Corollary}
\newtheorem{fct}[thm]{Fact}
\newtheorem*{thm*}{Theorem}
\newtheorem*{prp*}{Propostion}
\newtheorem*{lem*}{Lemma}
\newtheorem*{fct*}{Fact}
\newtheorem{thma}{Theorem}
\theoremstyle{definition}
\newtheorem{dfn}[thm]{Definition}
\newtheorem{nt}[thm]{Notation}
\newtheorem{rmk}[thm]{Remark}
\newtheorem{ex}[thm]{Example}
\newtheorem*{dfn*}{Definition}
\newtheorem*{nt*}{Notattion}
\newtheorem*{rmk*}{Remark}
\newtheorem*{ex*}{Example}
\newcommand{\ass}{\mathrm{ass}}
\newcommand{\sub}{\mathrm{sub}}
\newcommand{\quot}{\mathrm{quot}}
\newcommand{\ext}{\mathrm{ext}}
\newcommand{\T}{\mathsf{T}}
\newcommand{\F}{\mathsf{F}}
\newcommand{\kk}{\kappa}
\DeclareMathOperator{\NZD}{\mathsf{NZD}}
\DeclareMathOperator{\Assh}{Assh}
\DeclareMathOperator{\Min}{Min}
\DeclareMathOperator{\tf}{\mathsf{tf}}
\DeclareMathOperator{\pure}{\mathsf{pure}}
\DeclareMathOperator{\cm}{\mathsf{cm}}
\newcommand{\ol}{\overline}
\newcommand{\wti}{\widetilde}
\newcommand{\la}{\langle}
\newcommand{\ra}{\rangle}
\newcommand{\iso}{\cong}
\newcommand{\imply}{\Rightarrow}
\newcommand{\equi}{\Leftrightarrow}
\newcommand{\xr}[1]{\xrightarrow{\, #1 \, }}
\newcommand{\inj}{\hookrightarrow}
\newcommand{\surj}{\twoheadrightarrow}
\newcommand{\isoto}{\xr{\iso}}
\newcommand{\bbP}{\mathbb{P}}
\newcommand{\bbZ}{\mathbb{Z}}
\newcommand{\bbk}{\Bbbk}
\newcommand{\calB}{\mathcal{B}}
\newcommand{\calL}{\mathcal{L}}
\newcommand{\LL}{\Lambda}
\newcommand{\GG}{\Gamma}
\DeclareMathOperator{\rk}{rk}
\DeclareMathOperator{\Supp}{Supp}
\newcommand{\gen}[1]{\la #1 \ra}
\newcommand{\catA}{\mathcal{A}}
\newcommand{\catC}{\mathcal{C}}
\newcommand{\catF}{\mathcal{F}}
\newcommand{\catS}{\mathcal{S}}
\newcommand{\catT}{\mathcal{T}}
\newcommand{\catX}{\mathcal{X}}
\newcommand{\catY}{\mathcal{Y}}
\newcommand{\id}{\mathsf{id}}
\DeclareMathOperator{\Hom}{Hom}
\DeclareMathOperator{\Ext}{Ext}
\DeclareMathOperator{\Ima}{Im}
\DeclareMathOperator{\Ker}{Ker}
\DeclareMathOperator{\Cok}{Cok}
\DeclareMathOperator{\Mod}{\mathsf{Mod}}
\DeclareMathOperator{\catmod}{\mathsf{mod}}
\DeclareMathOperator{\fl}{\mathsf{fl}}
\DeclareMathOperator{\add}{\mathsf{add}}
\DeclareMathOperator{\Spec}{Spec}
\DeclareMathOperator{\Ass}{Ass}
\DeclareMathOperator{\depth}{depth}
\newcommand{\mm}{\mathfrak{m}}
\newcommand{\pp}{\mathfrak{p}}
\newcommand{\qq}{\mathfrak{q}}
\DeclareMathOperator{\Qcoh}{\mathsf{Qcoh}}
\DeclareMathOperator{\coh}{\mathsf{coh}}
\DeclareMathOperator{\tor}{\mathsf{tor}}
\DeclareMathOperator{\vect}{\mathsf{vect}}
\newcommand{\shE}{\mathscr{E}}
\newcommand{\shF}{\mathscr{F}}
\newcommand{\shG}{\mathscr{G}}
\newcommand{\shH}{\mathscr{H}}
\newcommand{\shI}{\mathscr{I}}
\newcommand{\shK}{\mathscr{K}}
\newcommand{\shL}{\mathscr{L}}
\newcommand{\shO}{\mathscr{O}}
\newcommand{\shV}{\mathscr{V}}
\DeclareMathOperator{\shHom}{\mathscr{H}\textit{\kern -3pt om}\hspace{0.5pt}}
\newcommand{\arr}[1]{\arrow[{#1}]}
\newenvironment{bsmatrix}{\left[\begin{smallmatrix}}{\end{smallmatrix}\right]}
\tikzset{
  symbol/.style={
    draw=none,
    every to/.append style={
      edge node={node [sloped, allow upside down, auto=false]{$#1$}}}
  },
  whitev/.style={circle, fill=white, draw=black, inner sep=1.5pt, outer sep=0pt}
}
\newcommand*{\da@rightarrow}{\mathchar"0\hexnumber@\symAMSa 4B }
\newcommand*{\da@leftarrow}{\mathchar"0\hexnumber@\symAMSa 4C }
\newcommand*{\xdashrightarrow}[2][]{%
  \mathrel{%
    \mathpalette{\da@xarrow{#1}{#2}{}\da@rightarrow{\,}{}}{}%
  }%
}
\newcommand{\xdashleftarrow}[2][]{%
  \mathrel{%
    \mathpalette{\da@xarrow{#1}{#2}\da@leftarrow{}{}{\,}}{}%
  }%
}
\newcommand*{\da@xarrow}[7]{%
  \sbox0{$\ifx#7\scriptstyle\scriptscriptstyle\else\scriptstyle\fi#5#1#6\m@th$}%
  \sbox2{$\ifx#7\scriptstyle\scriptscriptstyle\else\scriptstyle\fi#5#2#6\m@th$}%
  \sbox4{$#7\dabar@\m@th$}%
  \dimen@=\wd0 %
  \ifdim\wd2 >\dimen@
    \dimen@=\wd2 %
  \fi
  \count@=2 %
  \def\da@bars{\dabar@\dabar@}%
  \@whiledim\count@\wd4<\dimen@\do{%
    \advance\count@\@ne
    \expandafter\def\expandafter\da@bars\expandafter{%
      \da@bars
      \dabar@ 
    }%
  }%
  \mathrel{#3}%
  \mathrel{%
    \mathop{\da@bars}\limits
    \ifx\\#1\\%
    \else
      _{\copy0}%
    \fi
    \ifx\\#2\\%
    \else
      ^{\copy2}%
    \fi
  }%
  \mathrel{#4}%
}
\newcommand{\colim@}[2]{%
  \vtop{\m@th\ialign{##\cr
    \hfil$#1\operator@font colim$\hfil\cr
    \noalign{\nointerlineskip\kern1.5\ex@}#2\cr
    \noalign{\nointerlineskip\kern-\ex@}\cr}}%
}
\newcommand{\colim}{%
  \mathop{\mathpalette\colim@{\rightarrowfill@\scriptscriptstyle}}\nmlimits@
}
\newcommand{\plim}{%
  \mathop{\mathpalette\varlim@{\leftarrowfill@\scriptscriptstyle}}\nmlimits@
}
\begin{document}

\title{Classifying torsionfree classes of the category of coherent sheaves and their Serre subcategories}

\author{Shunya Saito}
\address{Graduate School of Mathematics, Nagoya University, Chikusa-ku, Nagoya. 464-8602, Japan}
\email{m19018i@math.nagoya-u.ac.jp}

\subjclass[2020]{13C60, 16G50, 14H60}
\keywords{Serre subcategories; torsionfree classes; exact categories; Cohen-Macaulay modules; vector bundles.}

\begin{abstract}
In this paper, we classify several subcategories of the category of coherent sheaves 
on a divisorial noetherian scheme 
(e.g.\ a quasi-projective scheme over a commutative noetherian ring).
More precisely, we classify the torsionfree (resp.\ torsion) classes
\emph{closed under tensoring with line bundles} 
by the subsets (resp.\ specialization-closed subsets) of the scheme,
which generalizes the classification of torsionfree (resp.\ torsion) classes
of the category of finitely generated modules over a commutative noetherian ring
by Takahashi (resp.\ Stanley--Wang).

Furthermore, we classify the Serre subcategories of a torsionfree class
(in the sense of Quillen's exact categories) by using the above classifications,
which gives a certain generalization of Gabriel's classification of Serre subcategories.
As explicit applications, 
we classify the Serre subcategories of
the category of maximal pure sheaves,
which are a natural generalization of vector bundles for reducible schemes,
on a reduced projective curve over a field,
and the category of maximal Cohen-Macaulay modules over a one-dimensional Cohen-Macaulay ring.
\end{abstract}

\maketitle
\tableofcontents
\section{Introduction}\label{s:Intro}
Classifying nice subcategories of an abelian category or a triangulated category is
quite an active subject that has been studied in representation theory of algebras
and algebraic geometry.
See \cref{dfn:several subcat} for the definitions of several subcategories
appearing in the following.
One of the most classical results is 
Gabriel's classification of Serre subcategories in \cite{Gabriel}.
He classified the Serre subcategories of 
the category $\coh X$ of coherent sheaves on a noetherian scheme $X$
by the specialization-closed subsets of $X$.
For the category $\catmod R$ of finitely generated modules over a commutative noetherian ring $R$,
more kinds of subcategories were classified so far.
Takahashi \cite{Takahashi} showed that wide subcategories of $\catmod R$ are Serre subcategories
and classified the torsionfree classes of $\catmod R$ 
by the subsets of the prime spectrum $\Spec R$ of $R$.
Stanley and Wang \cite{SW} proved that
torsion classes and narrow subcategories of $\catmod R$ are Serre subcategories,
which extends the first half of Takahashi's result.
The most general result in the direction of \cite{SW} is 
the result due to Iima, Matsui, Shimada and Takahashi \cite{IMST},
which asserts that tensor-ideal subcategories of $\catmod R$ 
closed under direct summands and extensions are Serre subcategories.
Recently, Enomoto and Sakai \cite{IE} introduced
a new class of subcategories called \emph{IE-closed} subcategories,
which is a large class including torsion classes, torsionfree classes and wide subcategories
and intensively studied it for the category of finitely generated modules 
over a (non-commutative) finite dimensional algebra.
For a commutative noetherian ring $R$,
Enomoto \cite{IE=torf} proved that IE-closed subcategories of $\catmod R$ are torsionfree classes,
which immediately yields that torsion classes and wide subcategories are Serre subcategories.

The first aim of this paper is to extend the classification results of 
the module category $\catmod R$ of a commutative noetherian ring described above
to the category $\coh X$ of coherent sheaves on a noetherian scheme.
A naive extension does not hold.
For example, there exists a torsion class of 
the category $\coh \bbP^1$ of coherent sheaves on the projective line 
which is not a Serre subcategory (see \cite[Example 5.5]{CS}).
To remedy this phenomenon, 
we focus on the subcategories \emph{closed under tensoring with line bundles}
(cf.\ \cref{dfn:L-closed}).
Extending various techniques for $\catmod R$
to $\coh X$ \emph{up to tensoring with line bundles},
we obtain the following results.
\begin{thma}[{$=$ \cref{thm:Takahashi for scheme}}]\label{thma:torf}
Let $X$ be a divisorial noetherian scheme
(e.g.\ a quasi-projective scheme over a commutative noetherian ring).
Then the assignments
\[
\catX \mapsto \Ass\catX :=\bigcup_{\shF\in\catX} \Ass \shF
\quad \text{and} \quad
\Phi \mapsto \coh^{\ass}_{\Phi} X:=\{\shF\in \coh X \mid \Ass \shF\subseteq \Phi\}
\]
give rise to mutually inverse bijections between the following sets:
\begin{itemize}
\item 
The set of torsionfree classes of $\coh X$ closed under tensoring with line bundles.
\item
The power set of $X$.
\end{itemize}
\end{thma}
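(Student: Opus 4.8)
The plan is to adapt Takahashi's argument for $\catmod R$ to $\coh X$, using the divisorial hypothesis and the closure under tensoring with line bundles only at the two points where the passage from local (stalkwise) module theory to global coherent sheaves is required. First I would verify that both assignments are well defined. For a line bundle $\shL$ one has $\Ass(\shF\otimes\shL)=\Ass\shF$ since $\shL$ is locally free of rank one, and $\Ass$ is subadditive on short exact sequences ($\Ass\shF'\subseteq\Ass\shF$ for subsheaves and $\Ass\shF\subseteq\Ass\shF'\cup\Ass\shF''$ for extensions); hence $\coh^{\ass}_{\Phi}X$ is a torsionfree class closed under tensoring with line bundles, and $\Ass\catX$ is a subset of $X$. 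I would also record that a torsionfree class is automatically closed under finite direct sums, since $\shF\oplus\shG$ is an extension of $\shG$ by $\shF$.

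The two composites must then be shown to be identities. That $\Ass(\coh^{\ass}_{\Phi}X)=\Phi$ is the easy half: the inclusion $\subseteq$ is immediate from the definition, and for $\supseteq$ I would attach to each $x\in\Phi$ the reduced integral closed subscheme $Z=\ol{\{x\}}$ with its structure sheaf $\shO_Z\in\coh X$; since $Z$ is integral, $\Ass\shO_Z=\{x\}\subseteq\Phi$, so $\shO_Z\in\coh^{\ass}_{\Phi}X$ and $x\in\Ass(\coh^{\ass}_{\Phi}X)$. All the content lies in the identity $\coh^{\ass}_{\Ass\catX}X=\catX$. The inclusion $\catX\subseteq\coh^{\ass}_{\Ass\catX}X$ holds by definition, so everything reduces to the implication: if $\Ass\shF\subseteq\Ass\catX$ then $\shF\in\catX$.

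For this I would proceed in three steps. (i) For each $x\in\Ass\catX$ I show $\shO_Z\in\catX$ with $Z=\ol{\{x\}}$: by hypothesis $x\in\Ass\shG$ for some $\shG\in\catX$, which provides a nonzero element of the stalk $\shHom(\shO_Z,\shG)_x$; the divisorial hypothesis lets me choose a suitable line bundle $\shL$ so that $\shHom(\shO_Z,\shG)\otimes\shL^{-1}$ is globally generated, whence a global homomorphism $\shO_Z\otimes\shL\to\shG$ nonzero at $x$, necessarily injective since its source is a rank-one torsionfree sheaf on the integral $Z$. Thus $\shO_Z\otimes\shL\in\catX$ by closure under subsheaves, and $\shO_Z=(\shO_Z\otimes\shL)\otimes\shL^{-1}\in\catX$ by closure under tensoring with line bundles; this is precisely the step that forces the $\otimes\shL$-hypothesis. (ii) Using the primary decomposition $0=\bigcap_i\shF_i$ of the zero subsheaf, $\shF$ embeds into $\bigoplus_i\shF/\shF_i$ with each $\shF/\shF_i$ coprimary with single associated point in $\Ass\shF\subseteq\Ass\catX$, so by closure under direct sums and subsheaves it suffices to treat a coprimary $\shF$ with $\Ass\shF=\{x\}$ and $x\in\Ass\catX$. (iii) For such $\shF$ the canonical map to the germ $\shF_x$ at the generic point of $Z$ is injective, and $\shF_x$ has finite length over $\shO_{X,x}$; pulling back a composition series $0=V_0\subseteq\cdots\subseteq V_\ell=\shF_x$ along this injection yields a saturation filtration $\shF^{(j)}:=\shF\times_{\shF_x}V_j$ whose successive quotients inject at $x$ into the simple factors $V_j/V_{j-1}\iso\kappa(x)$, hence are torsionfree coherent sheaves of generic rank one on the integral scheme $Z$. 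Each such quotient embeds, after clearing denominators globally by a twist furnished by the divisorial hypothesis, into $\shO_Z\otimes\shL_j$ for some line bundle $\shL_j$, and therefore lies in $\catX$ (a subsheaf of a twist of $\shO_Z\in\catX$); assembling the filtration by closure under extensions gives $\shF\in\catX$.

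The main obstacle is the globalization in steps (i) and (iii): the module-theoretic facts, namely the existence of the embedding $\shO_Z\inj\shG$ and the realization of a generically rank-one torsionfree sheaf on $Z$ as a subsheaf of $\shO_Z$, hold only on stalks or up to clearing denominators, and upgrading them to genuine global coherent subsheaves is exactly what requires both the ample family of line bundles provided by divisoriality and the closure of $\catX$ under tensoring with line bundles. Everything else, namely the subadditivity of $\Ass$, the existence of primary decompositions of coherent sheaves on a noetherian scheme, and the saturation filtration of a coprimary sheaf, is either standard or a direct transcription of the affine argument.
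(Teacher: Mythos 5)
Your proposal is correct, and its overall architecture (well-definedness, the easy composite $\Ass(\coh^{\ass}_{\Phi}X)=\Phi$ via $\shO_{Z_x}$, and the reduction of everything to the implication $\Ass\shF\subseteq\Ass\catX\Rightarrow\shF\in\catX$) matches the paper. Your step (i) is essentially the paper's \cref{prp:ass pt inj} and \cref{lem:char x in Ass}. Where you genuinely diverge is in the two reduction lemmas. For the reduction to a coprimary sheaf you invoke the primary decomposition $\shF\inj\bigoplus_i\shF_i$ with $\Ass\shF_i=\{x_i\}$ (the paper's \cref{fct:primary}) together with closure under subobjects and finite direct sums; the paper instead proves $\catX=\gen{\shF\in\catX\mid\#\Ass\shF=1}_{\ext}$ (\cref{lem:torf Ass=1}) by peeling off $\shHom(\shO_{Z_x},\shF)$ for $x$ minimal in $\Ass\shF$ and applying noetherian induction. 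For the coprimary case your ascending filtration $\shF^{(j)}=\shF\times_{\shF_x}V_j$ pulled back from a composition series of the finite-length stalk $\shF_x$ reduces each graded piece to a sheaf with one-dimensional stalk at $x$, which then embeds into a twist of $\shO_{Z_x}$ by the same local-to-global lifting used in step (i); the paper's \cref{lem:Ass=1 Z_x} instead builds a descending chain $\shF=\shF_0\supseteq\shF_1\supseteq\cdots$ by lifting generators of $\Hom_{\shO_{X,x}}(\shF_{i,x},\kk(x))$ and argues via stabilization of the stalk chain. Both routes are valid; yours is arguably closer to the classical affine argument and isolates the single technical engine more cleanly, while the paper's descending-chain argument avoids discussing composition series and fiber products of sheaves. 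You also correctly identify that the only places divisoriality and closure under line-bundle twists enter are the globalization steps.

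One imprecision worth fixing: on a general divisorial scheme (an ample \emph{family}, not a single ample line bundle) it is not clear that a single twist makes $\shHom(\shO_{Z_x},\shG)$ globally generated; what the ample family actually furnishes, and what your argument needs, is weaker, namely that a given element of the stalk $\shHom(\shO_{Z_x},\shG)_x$ can be realized, up to a unit of $\shO_{X,x}$, as the germ of a global section of $\shHom(\shO_{Z_x},\shG)\otimes\shO(\alpha;n)$ for some $\alpha$ and $n>0$ (extend the section over an affine $X_f$ with $f\in\GG(X,\shO(\alpha;m))$ by multiplying by a power of $f$). This is exactly the paper's \cref{lem:lift local hom to global hom}, and with it both occurrences of your ``clearing denominators'' step go through; the injectivity of the resulting global maps follows, as you say, from injectivity on the stalk at $x$ together with $\Ass$ of the source being $\{x\}$.
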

Applying \cref{thma:torf} to an affine scheme $X=\Spec R$,
we immediately obtain Takahashi's classification of torsionfree classes of $\catmod R$
(see \cref{fct:Takahashi torf}).

\begin{thma}[{$=$ \cref{thm:Stanley-Wang for scheme}}]\label{thma:ICE}
Let $X$ be a divisorial noetherian scheme.
Then the following are equivalent for a subcategory $\catX$ of $\coh X$:
\begin{enur}
\item
$\catX$ is a Serre subcategory.
\item
$\catX$ is a torsion class closed under tensoring with line bundles.
\item
$\catX$ is a wide subcategory closed under tensoring with line bundles.
\item
$\catX$ is an ICE-closed subcategory closed under tensoring with line bundles.
\item
$\catX$ is a narrow subcategory closed under tensoring with line bundles.
\item
$\catX$ is a tensor-ideal subcategory closed under direct summands and extensions.
\end{enur}
\end{thma}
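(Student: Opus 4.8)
The plan is to establish the six-way equivalence by routing (ii)--(v) through the narrow condition, which is the weakest of them, and treating the tensor-ideal condition (vi) separately. First I would record that (i) implies all of (ii)--(vi). By Gabriel's classification \cite{Gabriel} a Serre subcategory $\catX$ has the form $\{\shF\in\coh X\mid \Supp\shF\subseteq Z\}$ for a specialization-closed $Z\subseteq X$; since $\Supp(\shF\otimes\shG)\subseteq\Supp\shF$ for every $\shG\in\coh X$, such a $\catX$ is tensor-ideal, hence closed under tensoring with line bundles, and it is visibly closed under direct summands. Being Serre, it is at the same time a torsion class, a wide subcategory, an ICE-closed subcategory, and a narrow subcategory, giving (i)$\Rightarrow$(ii)--(vi). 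Conversely, among (ii)--(v) the narrow condition is the weakest: a torsion class, a wide subcategory, and an ICE-closed subcategory are each closed under cokernels and extensions (and all carry the tensoring condition by hypothesis), so (ii),(iii),(iv) each imply (v). It therefore suffices to prove (v)$\Rightarrow$(i) and, separately, that $\text{(vi)}\Leftrightarrow\text{(i)}$.

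For (v)$\Rightarrow$(i) I would isolate the following lemma, which is the heart of the matter: \emph{a subcategory of $\coh X$ closed under extensions, cokernels, and tensoring with line bundles is closed under subobjects}, i.e.\ is a torsionfree class. Granting it, a narrow tensoring-closed $\catX$ is closed both under subobjects and under cokernels; given a quotient $\shF\twoheadrightarrow\shG$ with kernel $\shK\hookrightarrow\shF$, subobject-closure gives $\shK\in\catX$ and then cokernel-closure gives $\shG=\Cok(\shK\to\shF)\in\catX$, so $\catX$ is closed under subobjects, quotients, and extensions, hence Serre. To prove the lemma I would generalize Stanley--Wang's argument \cite{SW} (narrow $=$ Serre for $\catmod R$) to the sheaf setting: given $\shG\hookrightarrow\shF$ with $\shF\in\catX$, one reduces to building $\shG$ from objects of $\catX$ by d\'evissage along associated points. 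Here the divisorial hypothesis is essential — every coherent sheaf receives a surjection from a finite direct sum of line bundles, so $\shG$ admits a filtration whose subquotients are twists supported on integral closed subschemes, and tensoring objects of $\catX$ with suitable line bundles realizes these subquotients as cokernels of maps inside $\catX$. The organization by associated points is governed by \cref{thma:torf}, which identifies the torsionfree tensoring-closed classes with subsets of $X$ via $\Ass$.

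Finally, for $\text{(vi)}\Leftrightarrow\text{(i)}$ I would follow Iima--Matsui--Shimada--Takahashi \cite{IMST}. The support $Z:=\Supp\catX=\bigcup_{\shF\in\catX}\Supp\shF$ is a union of closed sets, hence specialization-closed, and the inclusion $\catX\subseteq\{\shF\mid\Supp\shF\subseteq Z\}$ is immediate. For the reverse inclusion, the tensor-ideal property produces, from any $\shF_0\in\catX$ whose support contains a given point, sheaves supported on the corresponding integral closed subscheme (by tensoring $\shF_0$ with structure sheaves and line bundles), and closure under direct summands and extensions then assembles an arbitrary $\shF$ with $\Supp\shF\subseteq Z$ by d\'evissage; thus $\catX=\{\shF\mid\Supp\shF\subseteq Z\}$ is Serre. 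The hard part will be the subobject-closure lemma of the second paragraph: passing from a purely cokernel-side hypothesis to genuine subobject-closure admits no formal proof and must be powered by the geometry — precisely by the divisorial hypothesis that line bundles generate $\coh X$ — which is exactly what lets one imitate the module-theoretic arguments of \cite{SW,IE=torf} up to twisting.
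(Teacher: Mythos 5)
Your skeleton of implications is sound and you have correctly located where the real content lies, but the proposal both organizes the reduction less efficiently than necessary and leaves that content unproved. On the organization: condition (vi) is in fact the weakest of all six, because a narrow subcategory closed under tensoring with line bundles is automatically tensor-ideal and closed under direct summands. Indeed, by \cref{rmk:fin ample family} every $\shG\in\coh X$ admits a presentation by finite direct sums of line bundles, and tensoring it with $\shF\in\catX$ exhibits $\shF\otimes\shG$ as a cokernel of a map between objects of $\catX$ (this is \cref{lem:char tors closed under tensoring}); and cokernel-closure gives summand-closure, since $M\iso\Cok\bigl((m,n)\mapsto(0,n)\colon M\oplus N\to M\oplus N\bigr)$. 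Hence (ii)--(v) all imply (vi) formally, and the whole theorem reduces to the single implication (vi)$\imply$(i), which is how the paper proceeds (via \cref{lem:Supp catX inclusion} and \cref{fct:Gabriel}). Your plan instead proves two hard statements --- the subobject-closure lemma for (v)$\imply$(i) and the support classification for (vi)$\imply$(i) --- where one would do.

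The genuine gap is that neither of these two hard statements is actually established. Both of your d\'evissage arguments ultimately hinge on the following claim: if $\shF\in\catX$ and $x\in\Supp\shF$, then $\shO_{Z_x}\in\catX$. (The subquotients of the filtrations you invoke have the form $\shO_{Z_x}\otimes\shL$ by \cref{prp:ass pt inj}, so these must already lie in $\catX$ before extension-closure can assemble anything.) Your proposal only observes that the tensor-ideal property yields $\shF\otimes\shO_{Z_x}\in\catX$, a sheaf supported on $\ol{\{x\}}$ --- but this sheaf is not $\shO_{Z_x}$, and ``tensoring with suitable line bundles'' does not extract it. The paper's \cref{lem:char x in Supp catX} does this by induction on $\dim Z_x$: in the base case $\shF\otimes\shO_{Z_x}\iso\shO_{Z_x}^{\oplus n}$ and one invokes closure under direct summands (the only place that hypothesis is used, and the reason (vi) carries it); in the inductive step one applies the primary decomposition (\cref{fct:primary}) to isolate the component with $\Ass=\{x\}$, notes that its stalk at $x$ is $\kk(x)^{\oplus n}$, and uses \cref{lem:lift local hom to global hom} to produce a map to $\shO_{Z_x}^{\oplus n}\otimes\shL$ that is injective with cokernel supported on a proper closed subset of $\ol{\{x\}}$, which the induction hypothesis absorbs. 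Without this argument (or an equivalent), the step ``realizes these subquotients as cokernels of maps inside $\catX$'' is an assertion, not a proof.
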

Applying \cref{thma:ICE} to an affine scheme $X=\Spec R$,
we immediately obtain the results of Stanley--Wang and Iima--Matsui--Shimada--Takahashi for $\catmod R$
(see \cref{fct:tor=Serre}).

\begin{thma}[{$=$ \cref{thm:IE for scheme}}]\label{thma:IE}
Let $X$ be a divisorial noetherian scheme.
The following are equivalent for a subcategory $\catX$ of $\coh X$ 
closed under tensoring with line bundles:
\begin{itemize}
\item
$\catX$ is a torsionfree class.
\item
$\catX$ is an IKE-closed subcategory.
\item
$\catX$ is an IE-closed subcategory.
\end{itemize}
\end{thma}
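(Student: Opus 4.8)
The plan is to prove the cycle of implications
\[
\text{(torsionfree)} \Imply \text{(IKE-closed)} \Imply \text{(IE-closed)} \Imply \text{(torsionfree)},
\]
of which only the last carries content. The first two are formal: if $\catX$ is a torsionfree class it is closed under subobjects and extensions, and closure under subobjects forces closure under kernels (the kernel of $\shF\to\shG$ is a subobject of $\shF\in\catX$) and under images (the image of $\shF\to\shG$ with $\shG\in\catX$ is a subobject of $\shG\in\catX$), so $\catX$ is IKE-closed; and IKE-closed trivially implies IE-closed. Since an IE-closed subcategory is by definition already closed under extensions, the whole theorem reduces to the single assertion that \emph{an IE-closed subcategory $\catX$ closed under tensoring with line bundles is closed under subsheaves}.

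The elementary observation I would exploit is that image-closure produces certain quotients: if $\shE\in\catX$ admits a morphism $\shE\to\shF'$ into some $\shF'\in\catX$, then $\Ima(\shE\to\shF')$, being simultaneously a quotient of $\shE$ and a subobject of $\shF'$, lies in $\catX$. Thus, to place a subsheaf $\shG\subseteq\shF$ with $\shF\in\catX$ into $\catX$, it suffices to exhibit $\shG$ as a quotient of some object of $\catX$, since $\shG\hookrightarrow\shF$ already supplies the target. A naive attempt --- surjecting a finite sum of line-bundle twists $\bigoplus_i \shF\otimes\shL_i$ onto $\shG$ --- fails in general (for instance when $\shF$ is a sum of structure sheaves of distinct components, its twists cannot surject onto a ``diagonal'' subsheaf glued from them), so images alone do not suffice and must be interleaved with extensions.

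Accordingly, I would argue by Noetherian induction on $\Supp\shG$ (equivalently, on the length of a prime filtration of $\shG$), controlling associated points throughout by means of the $\Ass$-bookkeeping that underlies \cref{thma:torf}. At each step I would split off a single component: choosing an associated point and using the divisorial hypothesis --- the existence of an ample family of line bundles --- I would produce, after twisting objects of $\catX$ by suitable line bundles, a morphism whose image realizes a sub- or quotient sheaf $\shG''$ of $\shG$ supported along that component, so that $\shG''\in\catX$ by image-closure, while the complementary piece $\shG'$ has strictly smaller support and lies in $\catX$ by the inductive hypothesis. The short exact sequence $0\to\shG'\to\shG\to\shG''\to0$ (or its dual form) then yields $\shG\in\catX$ by extension-closure.

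\textbf{Main obstacle.} The delicate point --- the technical heart of the argument --- is the geometric realization step: producing, on a general divisorial noetherian scheme, enough morphisms out of line-bundle twists of objects of $\catX$ whose images carve out the desired sub- and quotient sheaves with exactly the prescribed associated points. This is precisely where both hypotheses are indispensable: divisoriality supplies, after twisting, the global sections needed to build the morphisms (via global generation of an appropriate twist of a sheaf of homomorphisms), while closure under tensoring with line bundles keeps all these twists inside $\catX$. This step is the sheaf-theoretic analogue, ``up to tensoring with line bundles,'' of the fact that IE-closed subcategories of $\catmod R$ are torsionfree, and I expect it to demand the same $\Ass$-theoretic control that drives the classification in \cref{thma:torf}.
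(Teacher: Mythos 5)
Your reduction of the theorem to the single implication ``IE-closed and closed under tensoring with line bundles $\Rightarrow$ torsionfree'' is correct, and the formal implications (torsionfree $\Rightarrow$ IKE-closed $\Rightarrow$ IE-closed) are handled properly. But the proposal stops exactly where the proof has to begin: the ``geometric realization step'' that you flag as the main obstacle is the entire content of the implication, and you give no argument for it, only a description of what such an argument would need to accomplish. A plan of the form ``split off one associated component at a time, realize it as an image of a twisted morphism, and induct on support'' is not a proof until you exhibit the morphisms whose images are the prescribed subsheaves, and it is far from clear that an arbitrary subsheaf $\shG \subseteq \shF$ with $\shF \in \catX$ can be filtered so that each graded piece is the image of a map between line-bundle twists of objects of $\catX$ --- you yourself observe that the naive attempt fails. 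As written, this is a genuine gap.

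The paper closes this gap by a completely different and much shorter route, which you may want to compare against your sketch. It invokes the identity $\catX = \T(\catX) \cap \F(\catX)$ valid for any IE-closed subcategory (this is \cite[Theorem 2.7]{IE=torf}, a purely categorical fact), observes that $\T(\catX)$ and $\F(\catX)$ inherit closure under tensoring with line bundles from $\catX$ (\cref{lem:gen cat L-closed}, immediate from exactness of $-\otimes\shL$), and then applies \cref{thm:Stanley-Wang for scheme} to conclude that $\T(\catX)$ is a Serre subcategory, hence in particular a torsionfree class; since $\F(\catX)$ is torsionfree by construction, $\catX$ is an intersection of two torsionfree classes and therefore torsionfree. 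In effect, the closure under subobjects that you are trying to establish directly falls out for free: given $\shG \subseteq \shF$ with $\shF \in \catX$, one has $\shG \in \T(\catX)$ because $\T(\catX)$ is Serre and $\shG \in \F(\catX)$ because $\F(\catX)$ is torsionfree, so $\shG \in \T(\catX)\cap\F(\catX) = \catX$. All of the $\Ass$-theoretic and ample-family work you anticipate is indeed present in the paper, but it is spent on \cref{thm:Takahashi for scheme} and \cref{thm:Stanley-Wang for scheme}, not re-done here. To repair your proposal you should either import the decomposition $\catX = \T(\catX)\cap\F(\catX)$ and argue as above, or actually construct the morphisms your induction requires --- the former is strongly recommended.
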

Applying \cref{thma:IE} to an affine scheme $X=\Spec R$,
we immediately obtain the result of Enomoto for $\catmod R$
(see \cref{prp:Enomoto's observation}).

It is natural to ask 
when a torsionfree class of $\coh X$ is closed under tensoring with line bundles.
We obtain a complete answer for a connected smooth projective curve.
\begin{thma}[{cf.\ \cref{prp:list torf in coh}}]\label{thma:L-closed in coh C}
Let $C$ be a connected smooth projective curve.
A torsionfree class $\catX$ of $\coh C$ is \emph{not} closed under tensoring with line bundles
if and only if $0 \subsetneq \catX \subsetneq \vect C$,
where $\vect C$ is the category of vector bundles on $C$.
\end{thma}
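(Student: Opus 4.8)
The plan is to first record the standard structural fact that, since $C$ is smooth of dimension one, every coherent sheaf $\shF$ on $C$ splits canonically as $\shF \cong T \oplus V$ with $T$ its torsion subsheaf and $V = \shF/T$ a vector bundle: the defining sequence $0 \to T \to \shF \to V \to 0$ splits because $V$ is locally free and $\Ext^1(V,T) \cong H^1(C, V^\vee \otimes T) = 0$, the sheaf $V^\vee \otimes T$ being torsion, hence supported in dimension zero and so without higher cohomology. As a torsionfree class is closed under subobjects it is closed under direct summands, so $\shF \in \catX$ iff $T \in \catX$ and $V \in \catX$; and since torsion sheaves satisfy $T \otimes L \cong T$ for every line bundle $L$ (a line bundle is locally trivial near the finite support of $T$), the torsion part never obstructs closedness under tensoring with line bundles. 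Thus everything reduces to the behaviour of the vector bundles in $\catX$, and I would prove the two implications separately.

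For the ``if'' direction I would argue by contraposition using \cref{thma:torf}. Suppose $\catX$ is closed under tensoring with line bundles and $0 \subsetneq \catX \subseteq \vect C$; I want $\catX = \vect C$. Every nonzero vector bundle $V$ on the integral curve $C$ is torsion-free, so $\Ass V = \{\eta\}$ for the generic point $\eta$, whence $\Ass \catX = \{\eta\}$. By \cref{thma:torf} such a $\catX$ is recovered as $\catX = \coh^{\ass}_{\Ass\catX} X$, and $\coh^{\ass}_{\{\eta\}} X$ is exactly the class of torsion-free (equivalently, on a smooth curve, locally free) sheaves, i.e.\ $\vect C$. Hence $\catX = \vect C$, contradicting $\catX \subsetneq \vect C$; so a nonzero torsionfree class strictly inside $\vect C$ cannot be closed under tensoring with line bundles.

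For the ``only if'' direction I would show, again by contraposition, that if $\catX$ is not of the form $0 \subsetneq \catX \subsetneq \vect C$ then it is closed under tensoring with line bundles. The cases $\catX = 0$ and $\catX = \vect C$ are immediate, so the substantive case is $\catX \not\subseteq \vect C$, i.e.\ $\catX$ contains a nonzero torsion sheaf and therefore a skyscraper $k(x_0)$. By the first paragraph it suffices to show that each vector bundle $W \in \catX$ satisfies $W \otimes L \in \catX$ for all $L$. Here I would use two elementary modification sequences. First, for any point $x$ one has $W(-x) \hookrightarrow W$, so $\catX$ is automatically closed under $\otimes\,\shO(-x)$, and hence under $\otimes\,\shO(-D)$ for every effective divisor $D$, since tensoring the inclusion $\shO(-D) \hookrightarrow \shO$ with the locally free $W$ keeps it an inclusion. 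Second, from $0 \to W \to W(x_0) \to W \otimes k(x_0) \to 0$ with $W \otimes k(x_0) \cong k(x_0)^{\oplus \rk W} \in \catX$, closure under extensions gives $W(x_0) \in \catX$, and inductively $W(n x_0) \in \catX$ for all $n \ge 0$. The key step is then to write an arbitrary $L$ as $\shO(n x_0 - D)$ with $n \ge 0$ and $D \ge 0$ effective: the line bundle $\shO(n x_0) \otimes L^{-1}$ has arbitrarily large degree as $n \to \infty$, so for $n \gg 0$ it has a nonzero global section by Riemann--Roch, and the divisor of zeros of that section is such a $D$. Then $W \otimes L \cong W(n x_0) \otimes \shO(-D)$ is a subobject of $W(n x_0) \in \catX$, hence lies in $\catX$.

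The main obstacle --- and the only place genuinely using projectivity --- is this last reduction: one must manufacture positive twists of $W$ inside $\catX$ from the single skyscraper $k(x_0)$ that $\catX$ is known to contain. The device that makes it work is that twisting up can be concentrated at the one point $x_0$ (yielding $W(n x_0) \in \catX$ via extensions), while twisting down by an effective divisor costs nothing (it is passage to a subsheaf), and Riemann--Roch guarantees that every line bundle decomposes accordingly.
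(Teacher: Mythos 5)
Your proof is correct, and the overall skeleton (split $\shF \cong \shF_{\mathrm{tor}} \oplus \shF_{\mathrm{vect}}$, note that the torsion part is inert under $\otimes\shL$, then analyze the vector bundles in $\catX$) matches the paper's \cref{prp:property of torf in coh C,prp:torf in coh C,prp:list torf in coh}. Where you genuinely diverge is in the case $0 \subsetneq \catX \subseteq \vect C$: the paper proves that a tensor-closed torsionfree class containing a nonzero bundle must contain all of $\vect C$ by invoking Atiyah's theorem to produce exact sequences $0 \to \shO_C^{\oplus(r-1)} \to \shE(n) \to \shL \to 0$, first forcing $\shO_C \in \catX$ and then every bundle; you instead observe that $\Ass\catX = \{\eta\}$ and apply \cref{thma:torf} directly to get $\catX = \coh^{\ass}_{\{\eta\}}C = \vect C$ (using that torsionfree $=$ locally free on a smooth curve). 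Your route is shorter and avoids Atiyah entirely, at the cost of leaning on the main classification theorem; the paper's route is self-contained within curve theory. For the mixed case your argument is essentially the paper's (twist up at a point by extensions with skyscrapers, twist down by passing to subsheaves), but your Riemann--Roch step writing $\shL \cong \shO(nx_0 - D)$ with $D$ effective is actually more careful than the paper's assertion that one can choose $n$ with $nD \ge E$ for an arbitrary divisor $E$, which is only true up to linear equivalence and is exactly what your global-section argument supplies.
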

Combining \cref{thma:L-closed in coh C} with \cref{thma:torf},
we can reduce the classification of torsionfree classes of $\coh C$ to 
the case when it is contained in $\vect C$.
Using this observation,
we completely classify the torsionfree classes of $\coh \bbP^1$ in \cref{prp:torf in coh P^1}.

The second aim of this paper is to classify Serre subcategories of (Quillen's) exact categories
such as the category $\vect X$ of vector bundles over a noetherian scheme $X$,
and the category $\cm R$ of maximal Cohen-Macaulay modules over a commutative noetherian ring $R$.
The main result is as follows and is proved by using \cref{thma:torf,thma:IE}.
\begin{thma}[{$=$ \cref{prp:classify Serre in torf}}]\label{thma:Serre in torf}
Let $X$ be a noetherian scheme having an ample line bundle,
and let $\catX$ be a torsionfree class of $\coh X$ closed under tensoring with line bundles.
Then the assignments $\catS \mapsto \Ass\catS$ and $\Phi \mapsto \coh^{\ass}_{\Phi} X$
give rise to mutually inverse bijections between the following:
\begin{itemize}
\item 
The Serre subcategories of $\catX$
(cf.\ \cref{dfn:Serre in ex cat}).
\item
The specialization-closed subsets of $\Ass \catX$
(with respect to the relative topology induced from $X$).
\end{itemize}
\end{thma}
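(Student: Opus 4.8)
The plan is to transport the statement into the ambient abelian category $\coh X$ and reduce it to \cref{thma:torf,thma:IE}. First I would record the exact structure on the torsionfree class $\catX$: a sequence $0\to A\to B\to C\to0$ with all terms in $\catX$ is a conflation exactly when it is short exact in $\coh X$ (equivalently, when $C\in\catX$, since $\catX$ is closed under subobjects and extensions). I would also note that tensoring with a line bundle $\shN$ is an exact autoequivalence of $\catX$ preserving associated points, so $\Ass(\shN\otimes\shF)=\Ass\shF$. For $\Phi\subseteq\Ass\catX$, \cref{thma:torf} presents $\coh^{\ass}_\Phi X$ as the torsionfree class attached to $\Phi$ (hence closed under line bundles), and monotonicity of that bijection gives $\coh^{\ass}_\Phi X\subseteq\coh^{\ass}_{\Ass\catX}X=\catX$; thus both assignments are well defined, and the roundtrip $\Ass(\coh^{\ass}_\Phi X)=\Phi$ is already furnished by \cref{thma:torf}.

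The easy half is that $\coh^{\ass}_\Phi X$ is a Serre subcategory of $\catX$ whenever $\Phi$ is specialization-closed in $\Ass\catX$. For a conflation $0\to A\to B\to C\to0$ in $\catX$ one has
\[
\Ass A\subseteq\Ass B\subseteq\Ass A\cup\Ass C,
\qquad
\Supp C\subseteq\Supp B=\bigcup_{x\in\Ass B}\ol{\{x\}}.
\]
The first chain shows at once that $A,C\in\coh^{\ass}_\Phi X$ forces $B\in\coh^{\ass}_\Phi X$, and that $B\in\coh^{\ass}_\Phi X$ forces $A\in\coh^{\ass}_\Phi X$. For the remaining implication, assume $\Ass B\subseteq\Phi$ and take $y\in\Ass C$; then $y\in\Ass\catX$ (as $C\in\catX$) and $y\in\ol{\{x\}}$ for some $x\in\Ass B\subseteq\Phi$, so specialization-closedness of $\Phi$ inside $\Ass\catX$ gives $y\in\Phi$. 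Hence $C\in\coh^{\ass}_\Phi X$, proving the Serre property.

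For the converse I would first show that a Serre subcategory $\catS$ of the exact category $\catX$ is IKE-closed as a subcategory of $\coh X$. Closure under extensions is immediate. Given $f\colon\shF\to\shF'$ with $\shF,\shF'\in\catS$, the image $\Ima f\subseteq\shF'$ lies in $\catX$, so $0\to\ker f\to\shF\to\Ima f\to0$ is a conflation in $\catX$; closure under admissible quotients and admissible subobjects then yields $\Ima f,\ker f\in\catS$. Granting that $\catS$ is closed under tensoring with line bundles, \cref{thma:IE} upgrades ``IKE-closed'' to ``torsionfree class'', and \cref{thma:torf} gives $\catS=\coh^{\ass}_{\Ass\catS}X$. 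Specialization-closedness of $\Ass\catS$ in $\Ass\catX$ then follows by a direct construction, now legitimate because $\catS$ is a torsionfree class and hence closed under subsheaves: for $x\in\Ass\catS$ with witness $\shF\in\catS$, the coprimary subsheaf $\shK\subseteq\shF$ with $\Ass\shK=\{x\}$ lies in $\catS$, and for any specialization $y\in\Ass\catX$ of $x$ one has $\shO_{\ol{\{y\}}}\in\catX$ (as $\Ass\shO_{\ol{\{y\}}}=\{y\}\subseteq\Ass\catX$); a suitable quotient $\shC$ of $\shK$ with $\Ass\shC=\{y\}$ is then an admissible quotient in $\catX$, so $\shC\in\catS$ and $y\in\Ass\catS$. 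Combined with the first two paragraphs this gives the mutually inverse bijections.

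I expect the main obstacle to be exactly the hypothesis used in the previous paragraph, namely that a Serre subcategory $\catS$ of the exact category $\catX$ is automatically closed under tensoring with line bundles. The difficulty is that twisting does not preserve admissibility of a subsheaf inclusion, so the Serre axioms do not apply verbatim; here the ample line bundle $\shL$ is essential. For a globally generated line bundle $\shM$ (e.g.\ $\shM=\shN\otimes\shL^{N}$ or $\shM=\shL^{N}$ with $N\gg0$) a surjection $\shO_X^{\oplus k}\twoheadrightarrow\shM$ dualizes and, after tensoring with $\shF\in\catS$, produces an embedding $\shF\otimes\shM^{-1}\hookrightarrow\shF^{\oplus k}$; I would analyze this together with the image/kernel closure established above to realize twists of $\shF$ through conflations inside $\catX$, thereby forcing $\shN\otimes\shF\in\catS$ for every line bundle $\shN$. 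Once this is in place, closure under line bundles, the identity $\catS=\coh^{\ass}_{\Ass\catS}X$, and the specialization-closedness of $\Ass\catS$ all follow, completing the proof.
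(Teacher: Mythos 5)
Your overall architecture coincides with the paper's: the easy half (part (1)) is the same $\Ass$/$\Supp$ argument; the observation that a Serre subcategory $\catS$ of $\catX$ is IKE-closed in $\coh X$ is exactly \cref{lem:Serre in torf IKE}; and the reduction via \cref{thm:IE for scheme,thm:Takahashi for scheme} to $\catS=\coh^{\ass}_{\Ass\catS}X$ is how the paper proceeds. The step you yourself flag as the crux, however, is where your sketch has a genuine gap. Dualizing a surjection $\shO_X^{\oplus k}\surj\shM$ and tensoring with $\shF\in\catS$ does give an exact sequence $0\to\shF\otimes\shM^{-1}\to\shF^{\oplus k}\to\shF\otimes\shQ\to0$ (exact because the cokernel $\shQ$ of the locally split injection $\shM^{-1}\inj\shO_X^{\oplus k}$ is locally free; all three terms lie in $\catX$ since twisting by a vector bundle preserves $\Ass$), and the Serre property then puts $\shF\otimes\shM^{-1}$ into $\catS$. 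But this mechanism only ever produces the \emph{inverse} twists $\shF\otimes\shM_1^{-1}\otimes\cdots\otimes\shM_r^{-1}$ with each $\shM_i$ globally generated, and these never reach the \emph{positive} twist $\shF\otimes\shL$ by the ample bundle, which is precisely what \cref{lem:char torf closed under tensoring} (applied to the singleton ample family $\{\shL\}$ and the kernel-closed category $\catS$) demands: that would require $\shL^{-1}$ to be a tensor product of globally generated line bundles, hence itself globally generated, which fails in general. Nor can you run the conflation backwards, since knowing $\shG\otimes\shM^{-1}\in\catS$ does not yield $\shG\in\catS$ without also knowing $\shG\otimes\shQ\in\catS$.

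The fix (and the paper's \cref{lem:Serre in torf ample stable}) is not to dualize. Tensor the surjection $\shO_X^{\oplus n}\surj\shL$ itself with $\shF$; right-exactness gives a surjection $\phi\colon\shF^{\oplus n}\surj\shF\otimes\shL$ whose kernel is a subobject of $\shF^{\oplus n}\in\catX$ and whose target lies in $\catX$, so that $0\to\Ker(\phi)\to\shF^{\oplus n}\to\shF\otimes\shL\to0$ is a conflation of $\catX$ and the Serre property gives $\shF\otimes\shL\in\catS$ directly. Combined with the kernel-closure of $\catS$, \cref{lem:char torf closed under tensoring} then upgrades this single positive twist to closure under all line bundles. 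Once that is in place, the rest of your argument goes through: $\catS$ is a torsionfree class closed under tensoring with line bundles, hence $\catS=\coh^{\ass}_{\Ass\catS}X$, and specialization-closedness of $\Ass\catS$ follows either by your coprimary-quotient construction or, more economically, from the conflation $0\to\shI\to\shO_{Z_x}\to\shO_{Z_y}\to0$ of $\catX$ together with $\shO_{Z_x}\in\catS$ supplied by \cref{lem:char x in Ass}.
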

When $X$ has an ample line bundle and $\catX=\coh X$,
\cref{thma:Serre in torf} recovers Gabriel's classification of Serre subcategories.
\cref{thma:Serre in torf} immediately yields the following classifications of Serre subcategories
of two important exact categories.

\begin{thma}[{$=$ \cref{prp:Serre in cm aff}}]\label{thma:cm}
Let $R$ be a $1$-dimensional noetherian Cohen-Macaulay commutative ring.
Then there is a bijection between the following:
\begin{itemize}
\item 
The Serre subcategories of $\cm R$.
\item
The subsets of the set $\Min R$ of minimal prime ideals.
\qed
\end{itemize}
\end{thma}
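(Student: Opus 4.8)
The plan is to deduce the statement from \cref{thma:Serre in torf} applied to the affine scheme $X=\Spec R$ and the subcategory $\catX=\cm R$ of $\coh X=\catmod R$. The key preliminary is the identification
$\cm R=\coh^{\ass}_{\Min R}(\Spec R)=\{M\in\catmod R\mid \Ass M\subseteq\Min R\}$.
First I would note that, since $\dim R=1$, every prime of $R$ is either minimal (of height $0$) or maximal (of height $1$). A finitely generated module $M$ is maximal Cohen--Macaulay exactly when $\depth_{R_\mm}M_\mm=\dim R_\mm$ for every maximal ideal $\mm$ in its support; at a height-$1$ maximal ideal this reads $\depth_{R_\mm}M_\mm=1$, i.e.\ $\mm\notin\Ass M$, while the condition is automatic at height-$0$ maximal ideals. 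Hence $M\in\cm R$ precisely when no height-$1$ prime is associated to $M$, that is, $\Ass M\subseteq\Min R$. By \cref{thma:torf} this realizes $\cm R$ as a torsionfree class of $\catmod R$ closed under tensoring with line bundles.

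Because $R$ is Cohen--Macaulay it has no embedded primes, so $R\in\cm R$ and $\Ass R=\Min R$; combined with the defining containment $\Ass M\subseteq\Min R$ for members of $\cm R$, this yields $\Ass\cm R=\Min R$. The affine scheme $X=\Spec R$ is divisorial and carries the ample line bundle $\shO_X$, so the hypotheses of \cref{thma:Serre in torf} are satisfied. That theorem then produces a bijection between the Serre subcategories of $\cm R$ and the specialization-closed subsets of $\Ass\cm R=\Min R$, taken in the relative topology induced from $\Spec R$.

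It remains to observe that this relative topology carries no nontrivial specializations: for a minimal prime $\pp$ the closure $\overline{\{\pp\}}=V(\pp)$ of $\{\pp\}$ in $\Spec R$ meets $\Min R$ only in $\pp$, so every subset of $\Min R$ is specialization-closed. Consequently the bijection of \cref{thma:Serre in torf} identifies the Serre subcategories of $\cm R$ with the full power set of $\Min R$, which is precisely the assertion. I expect the main work to lie in the first step---translating the homological definition of maximal Cohen--Macaulay modules into the associated-prime condition $\Ass M\subseteq\Min R$ and thereby computing $\Ass\cm R$---after which the appeal to \cref{thma:Serre in torf} and the triviality of specialization among minimal primes are routine.
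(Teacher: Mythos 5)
Your proof is correct and follows essentially the same route as the paper: identify $\cm R$ with $\catmod^{\ass}_{\Min R}R$, apply \cref{prp:classify Serre in torf} to the affine (hence divisorial, with ample $\shO_X$) scheme $\Spec R$, and note that every subset of $\Min R$ is specialization-closed. The only cosmetic difference is that you establish $\cm R=\{M\mid \Ass M\subseteq\Min R\}$ by a direct depth computation at height-one maximal ideals, whereas the paper routes this through its appendix chain $\cm X=\tf X=\coh^{\ass}_{\Ass X}X$ for one-dimensional Cohen--Macaulay schemes (\cref{prp:torf on 1-dim CM} and \cref{prp:torf sh general}); the content is the same.
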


A coherent sheaf $\shF$ on a noetherian scheme $X$ of finite Krull dimension
is said to be \emph{pure} 
if for any nonzero quasi-coherent submodule $\shG$,
we have that $\dim \shG = \dim \shF$.
Moreover, it is said to be \emph{maximal} if $\shF=0$ or $\dim \shF =\dim X$.
It is a natural generalization of vector bundles for reducible schemes.
Indeed, vector bundles and maximal pure sheaves coincide on a regular curve.
It is a basic object of 
the studies of moduli spaces and the derived categories for singular curves.
See, for example, \cite{Huy} and \cite{Ishii-Uehara,Kawatani}, respectively.
\begin{thma}[{$=$ \cref{prp:Serre in pure of reduced curve}}]\label{thma:pure on curve}
Let $Z=\bigcup_{i=1}^r C_i$ be a reduced projective curve over a field $\bbk$ 
with the irreducible components $C_i$.
Then there is a bijection between the following:
\begin{itemize}
\item 
The Serre subcategories of the category $\pure X$ of maximal pure sheaves on $X$.
\item
The subsets of $\{C_1,\dots,C_n\}$.
\end{itemize}
\end{thma}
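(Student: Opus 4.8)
The plan is to obtain the statement as a direct application of \cref{thma:Serre in torf} to the torsionfree class $\catX = \pure Z$. Concretely, I would verify that $\pure Z$ is a torsionfree class of $\coh Z$ closed under tensoring with line bundles, that $Z$ carries an ample line bundle, and then compute $\Ass(\pure Z)$ together with the topology it inherits from $Z$; the bijection will then be read off \cref{thma:Serre in torf}.

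First I would record the following reformulation of the defining property: since $\dim Z = 1$, a nonzero coherent sheaf $\shF$ on the reduced curve $Z$ is maximal pure exactly when it admits no nonzero zero-dimensional subsheaf, equivalently when $\Ass \shF$ contains no closed point of $Z$. From this, closure of $\pure Z$ under subobjects is immediate. For closure under extensions, given a short exact sequence $0 \to \shF' \to \shF \to \shF'' \to 0$ with $\shF', \shF'' \in \pure Z$ and a nonzero zero-dimensional subsheaf $\shT \subseteq \shF$, the intersection $\shT \cap \shF'$ is a zero-dimensional subsheaf of the pure sheaf $\shF'$, hence vanishes; thus $\shT$ embeds into $\shF''$ and, being zero-dimensional, must be $0$. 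So $\shF \in \pure Z$ and $\pure Z$ is a torsionfree class. Closure under tensoring with a line bundle $\shL$ is clear, as $- \otimes \shL$ is an exact autoequivalence of $\coh Z$ preserving dimension of support and the subsheaf lattice. Finally $Z$ is projective over $\bbk$, so $\shO_Z(1)$ is ample; hence all hypotheses of \cref{thma:Serre in torf} hold.

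Next I would compute $\Ass(\pure Z)$. Writing $\eta_i$ for the generic point of $C_i$, the reformulation above shows $\Ass \shF \subseteq \{\eta_1,\dots,\eta_r\}$ for every $\shF \in \pure Z$. Conversely, for each $i$ the pushforward of $\shO_{C_i}$ along the closed immersion $C_i \inj Z$ is the structure sheaf of an integral curve, hence torsion-free and maximal pure, and its only associated point is $\eta_i$. Therefore $\Ass(\pure Z) = \{\eta_1,\dots,\eta_r\}$, which is in canonical bijection with $\{C_1,\dots,C_r\}$.

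It remains to describe the specialization-closed subsets of $\Ass(\pure Z)$ in the topology induced from $Z$. Since $\ol{\{\eta_i\}} = C_i$ and distinct irreducible components satisfy $\eta_j \notin C_i$ whenever $j \neq i$, the trace of any closed subset of $Z$ on $\{\eta_1,\dots,\eta_r\}$ is an arbitrary subset: the relative topology is discrete and the induced specialization order is trivial. Consequently every subset of $\Ass(\pure Z)$ is specialization-closed, and \cref{thma:Serre in torf} delivers the asserted bijection between the Serre subcategories of $\pure Z$ and the subsets of $\{C_1,\dots,C_r\}$. I expect the only substantive steps to be the verification that $\pure Z$ is a torsionfree class closed under tensoring with line bundles and the associated-point computation; once these are settled, the theorem is a formal consequence of \cref{thma:Serre in torf} and the discreteness of the trace topology on the generic points.
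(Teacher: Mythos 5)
Your proposal is correct and ultimately takes the same route as the paper: both reduce the statement to \cref{prp:classify Serre in torf} by identifying $\pure Z$ with the torsionfree class $\coh^{\ass}_{\Phi}Z$ for $\Phi=\{\eta_1,\dots,\eta_r\}$ the set of generic points of the components, and then observing that every subset of $\Phi$ is specialization-closed. The only cosmetic difference is that you verify the hypotheses (torsionfree class, closure under tensoring, the computation of $\Ass(\pure Z)$) directly, whereas the paper cites its chain of identifications $\pure Z=\tf Z=\cm Z$ for an equidimensional one-dimensional Cohen--Macaulay scheme and invokes the already-established \cref{prp:Serre in cm}.
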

\cref{thma:pure on curve} extends the author's classification of Serre subcategories 
of the category of vector bundles over a connected smooth projective curve in \cite{Saito}
(see \cref{prp:Serre in conn sm proj}).

%
%
%
%
\medskip
\noindent
{\bf Conventions.}
For a category $\catC$,
we denote by $\Hom_{\catC}(M,N)$ the set of morphisms between objects $M$ and $N$ in $\catC$.
In this paper,
we suppose that all subcategories are full subcategories closed under isomorphisms.
Thus, we often identify the subcategories with the subsets of the set of isomorphism classes
of objects in $\catC$.

For a ring $R$ (with identity and associative multiplication),
we denote by $\Mod R$ (resp.\ $\catmod R$) 
the category of (resp.\ finitely generated) (left) $R$-modules.

Let $X$ be a scheme with structure sheaf $\shO_X$.
A point of $X$ is not necessarily assumed to be closed.
For a point $x\in X$,
we denote by $\mm_x$ the maximal ideal of $\shO_{X,x}$ 
and $\kappa(x):=\shO_{X,x}/\mm_x$ the residue field of $x$.
The condition \emph{quasi-compact and quasi-separated}
is abbreviated as \emph{qcqs}.
We denote by $\Qcoh X$ (resp.\ $\coh X$)
the category of quasi-coherent (resp.\ coherent) $\shO_X$-modules.
Let $\shF$ and $\shG$ be quasi-coherent sheaves on $X$.
We denote by $\Hom_{\shO_X}(\shF,\shG)$ the set of $\shO_X$-linear maps from $\shF$ to $\shG$.
The tensor product of $\shF$ and $\shG$ over $\shO_X$ is denoted by $\shF \otimes_{\shO_X} \shG$.
The sheaf of homomorphisms from $\shF$ to $\shG$ is denoted by $\shHom_{\shO_X}(\shF,\shG)$.
If no ambiguity can arise, we will often omit the subscript $\shO_X$.
The support of $\shF$ is the subset of $X$
defined by $\Supp \shF :=\{x\in X \mid \shF_x \ne 0 \}$.
A locally free sheaf on $X$ of finite rank is also called a \emph{vector bundle} over $X$.
In particular, a locally free sheaf of rank $1$ is called a \emph{line bundle}.
Let $\shL$ be a line bundle over $X$.
For a global section $f\in \GG(X,\shL)$ and a point $x\in X$,
we denote by $f(x)$ the image of $f$ in $\shL(x):=\shL_x/\mm_x \shL_x$.
We define the open subset of $X$ by $X_f:=\{x\in X \mid f(x)\ne 0\}$.

Let $X$ be a topological space.
For two points $x,y \in X$,
we say that $x$ is a \emph{specialization} of $y$ or that $y$ is a \emph{generalization} of $x$
if $x$ belongs to the topological closure $\ol{\{y\}}$ of $\{y\}$ in $X$.
We write $x\preceq y$ when $x$ is a specialization of $y$.
The binary relation $\preceq$ defines a preorder on $X$,
which is called the \emph{specialization-order} on $X$.
A subset $A$ of $X$ is \emph{specialization-closed} (resp. \emph{generalization-closed})
if for any $x\in A$ and every its specialization (resp. generalization) $x'\in X$,
we have that $x'\in A$.

\medskip
\noindent
{\bf Acknowledgement.}
First and foremost, the author would like to thank Ryo Takahashi.
The project was initiated in discussions with him, 
where the prototype of \cref{thma:Serre in torf} was obtained.
He is also very grateful to Haruhisa Enomoto for sharing his result,
which is crucial in this paper.
He would like to thank Ryo Kanda and Arashi Sakai for their helpful discussions.
This work is supported by JSPS KAKENHI Grant Number JP21J21767.

\section{Preliminaries}\label{s:preliminaries}

\subsection{Several subcategories of abelian categories}\label{ss:subcat}
In this subsection,
we briefly  recall some properties of subcategories of an abelian category.
\begin{dfn}\label{dfn:several subcat}
Let $\catA$ be an abelian category and $\catX$ its additive subcategory.
\begin{enua}
\item
$\catX$ is said to be \emph{extension-closed} (or \emph{closed under extensions})
if for any exact sequence $0 \to A \to B \to C \to 0$,
we have that $A,C \in \catX$ implies $B\in \catX$.
\item
$\catX$ is said to be \emph{closed under images} (resp.\ \emph{kernels}, resp.\ \emph{cokernels})
if for any morphism $f \colon X \to Y$ in $\catA$ with $X,Y \in \catX$,
we have that $\Ima f \in \catX$ (resp.\ $\Ker f \in \catX$, resp.\ $\Cok f \in \catX$).
\item
$\catX$ is said to be \emph{closed under subobjects} (resp.\ \emph{quotients})
if for any injection $A \inj X$ (resp.\ surjection $X \surj A$) in $\catA$ such that $X\in \catX$,
we have that $A \in \catX$.
\item
$\catX$ is called a \emph{torsionfree class}
if it is closed under subobjects and extensions.
\item
$\catX$ is called a \emph{torsion class}
if it is closed under quotients and extensions.
\item
$\catX$ is called a \emph{Serre subcategory} 
if it is closed under subobjects, quotients and extensions.
\item
$\catX$ is said to be \emph{wide}
if it is closed under kernels, cokernels and extensions.
\item
$\catX$ is said to be \emph{IE-closed}
if it is closed under images and extensions.
\item
$\catX$ is said to be \emph{IKE-closed}
if it is closed under images, kernels and extensions.
\item
$\catX$ is said to be \emph{ICE-closed}
if it is closed under images, cokernels and extensions.
\item
$\catX$ is said to be \emph{narrow}
if it is closed under cokernels and extensions.
\end{enua}
\end{dfn}
It is easy to see that the following implications hold:
\[
\begin{tikzcd}
&\text{Serre subcategories} \arr{ld,Rightarrow} \arr{d,Rightarrow} \arr{rd,Rightarrow}& \\
\text{torsionfree classes} \arr{d,Rightarrow} & \text{wide} \arr{ld,Rightarrow} \arr{rd,Rightarrow} & \text{torsion classes} \arr{d,Rightarrow}\\
\text{IKE-closed} \arr{rd,Rightarrow} && \text{ICE-closed} \arr{ld,Rightarrow} \arr{d,Rightarrow}\\
& \text{IE-closed} & \text{narrow}.
\end{tikzcd}
\]

In \S \ref{s:subcat in coh}, we will see that
these conditions degenerate into two classes for subcategories of the category of coherent sheaves
under certain assumptions (cf.\ \cref{thm:Stanley-Wang for scheme,thm:IE for scheme}).

We often use the following notation about subcategories of an abelian category.
\begin{dfn}\label{dfn:notation subcat}
Let $\catX$ and $\catY$ be subcategories of an abelian category $\catA$.
\begin{enua}
\item
We denote by $\add \catX$ the subcategory of $\catA$
consisting of direct summands of finite direct sums of objects in $\catX$.
\item
We denote by $\gen{\catX}_{\sub}$ the subcategory of $\catA$
consisting of subobjects of objects of $\catX$.
\item
We denote by $\gen{\catX}_{\quot}$ the subcategory of $\catA$
consisting of quotients of objects of $\catX$.
\item
We denote by $\catX * \catY$ the subcategory of $\catA$
consisting of $M \in \catA$ such that there is an exact sequence
$0\to X \to M \to Y \to 0$ of $\catA$ for some $X\in\catX$ and $Y\in \catY$.
\item
We define the subcategory $\gen{\catX}_{\ext}$ of $\catA$ by
\[
\gen{\catX}_{\ext}:=\bigcup_{n\ge 0} \catX^{n},
\]
where $\catX^{0}:=0$ and $\catX^{n+1}:=\catX^{n} * \catX$.
\item
We define the subcategories of $\catA$ by
$\F(\catX):=\gen{\gen{\catX}_{\sub}}_{\ext}$ and $\T(\catX):=\gen{\gen{\catX}_{\quot}}_{\ext}$.
\end{enua}
\end{dfn}

\begin{prp}
The following hold for a subcategory $\catX$ of an abelian category $\catA$.
\begin{enua}
\item
$\F(\catX)$ is the smallest torsionfree class of $\catA$ containing $\catX$.
\item
$\T(\catX)$ is the smallest torsion class of $\catA$ containing $\catX$.
\end{enua}
\end{prp}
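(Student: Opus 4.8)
The plan is to prove (1) in full and then obtain (2) for free by passing to the opposite category $\catA^{\op}$, in which subobjects become quotients, short exact sequences reverse, the two arguments of the operation $*$ get swapped, and torsion and torsionfree classes interchange. Since membership in $\gen{-}_{\ext}$ amounts to having a finite filtration with subquotients in the given subcategory — a manifestly self-dual condition — one checks that $\T(\catX)$ computed in $\catA$ coincides with $\F(\catX)$ computed in $\catA^{\op}$, so that (2) is literally (1) read in $\catA^{\op}$. For (1) there are three obligations: that $\catX \subseteq \F(\catX)$, that $\F(\catX)$ is a torsionfree class, and that it is contained in every torsionfree class $\catY \supseteq \catX$. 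The first is routine, since every object is a subobject of itself, giving $\catX \subseteq \gen{\catX}_{\sub}$, and any subcategory $\catZ$ satisfies $\catZ = \catZ^1 \subseteq \gen{\catZ}_{\ext}$.

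Write $\catS := \gen{\catX}_{\sub}$, so that $\F(\catX) = \gen{\catS}_{\ext} = \bigcup_{n \ge 0} \catS^n$; note that $\catS$ is itself closed under subobjects, as a subobject of a subobject of some $Y \in \catX$ is again a subobject of $Y$. The extension-closedness of $\gen{\catS}_{\ext}$ holds for any input and rests on the associativity of $*$: given $M \in (\catU * \catV) * \catW$ with $0 \to N \to M \to W \to 0$ and $0 \to U \to N \to V \to 0$, I would take the subobject $U \subseteq N \subseteq M$ and use $(M/U)/(N/U) \iso M/N$ (together with $N/U \iso V$) to see that $M/U \in \catV * \catW$, whence $M \in \catU * (\catV * \catW)$, and symmetrically for the reverse inclusion. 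Associativity then yields $\catS^m * \catS^n \subseteq \catS^{m+n}$ by an easy induction on $n$, so that if $A, C \in \gen{\catS}_{\ext}$ with $A \in \catS^m$, $C \in \catS^n$, any extension $0 \to A \to B \to C \to 0$ has $B \in \catS^m * \catS^n \subseteq \catS^{m+n} \subseteq \gen{\catS}_{\ext}$.

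The hard part will be showing that $\F(\catX) = \gen{\catS}_{\ext}$ is closed under subobjects; this is where the subobject-closedness of $\catS$ is genuinely used, and I would prove by induction on $n$ that each layer $\catS^n$ is closed under subobjects. The base cases $\catS^0 = 0$ and $\catS^1 = \catS$ are clear. For the inductive step, take $M \in \catS^{n+1} = \catS^n * \catS$ with a chosen sequence $0 \to A \to M \to B \to 0$ where $A \in \catS^n$ and $B \in \catS$, and let $M' \inj M$ be a subobject. Forming the intersection $A' := A \cap M'$ inside $M$ (the pullback of $A \inj M \hookleftarrow M'$), one has $A' \inj A$, so $A' \in \catS^n$ by the induction hypothesis, while the isomorphism $M'/A' \iso (M' + A)/A \inj M/A = B$ exhibits $M'/A'$ as a subobject of $B \in \catS$, hence $M'/A' \in \catS$. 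The sequence $0 \to A' \to M' \to M'/A' \to 0$ then shows $M' \in \catS^n * \catS = \catS^{n+1}$, completing the induction; taking the union over $n$ makes $\gen{\catS}_{\ext}$ subobject-closed, so $\F(\catX)$ is a torsionfree class.

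Finally, for minimality, let $\catY$ be any torsionfree class with $\catX \subseteq \catY$. Closure under subobjects gives $\gen{\catX}_{\sub} \subseteq \catY$, and closure under extensions is exactly the statement $\catY * \catY \subseteq \catY$, whence $\catY^n \subseteq \catY$ for all $n$ by induction and thus $\gen{\catY}_{\ext} = \catY$. By monotonicity of $\gen{-}_{\ext}$ I conclude $\F(\catX) = \gen{\gen{\catX}_{\sub}}_{\ext} \subseteq \gen{\catY}_{\ext} = \catY$, which is the required minimality. I expect the subobject-closedness step to be the only delicate point; the rest is formal manipulation of the $*$-operation, and the deduction of (2) requires only checking that the defining closure conditions dualize correctly under $\catA \rightsquigarrow \catA^{\op}$.
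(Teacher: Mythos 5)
Your proof is correct and complete. The paper does not actually prove this proposition itself --- it simply cites Lemmas 2.5 and 2.6 of Enomoto's paper \cite{IE=torf} --- and your argument is the standard one that those lemmas carry out: associativity of $*$ for extension-closure, the pullback/second-isomorphism-theorem induction for subobject-closure of each $\catS^n$, and minimality via $\gen{\catY}_{\ext}=\catY$ for a torsionfree class $\catY$, with (2) obtained by passing to $\catA^{\op}$. The only point you leave implicit is that $\F(\catX)$ is an \emph{additive} subcategory as required by \cref{dfn:several subcat}, but this is immediate since $0=\catS^0\subseteq\F(\catX)$ and closure under finite direct sums follows from closure under extensions applied to split exact sequences.
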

\begin{proof}
See \cite[Lemma 2.5 and 2.6]{IE=torf}.
\end{proof}

The following observation is often used in this paper.
\begin{lem}\label{lem:half ex}
Let $\catA$ be an abelian category and $\catX$ its subcategory.
\begin{enua}
\item
Suppose that $\catX$ is a torsionfree class.
Then for any exact sequence $0\to L \to M\to N$ in $\catA$,
we have that $L,N \in\catX$ implies $M\in\catX$.
\item
Suppose that $\catX$ is a torsion class.
Then for any exact sequence $L \to M\to N\to 0$ in $\catA$,
we have that $L,N \in\catX$ implies $M\in\catX$.
\item
Suppose that $\catX$ is a Serre subcategory.
Then for any exact sequence $L \to M\to N$ in $\catA$,
we have that $L,N \in\catX$ implies $M\in\catX$.
\end{enua}
\end{lem}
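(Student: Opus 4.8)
The plan is to reduce each case to a genuine short exact sequence obtained from the image factorization of the map $M \to N$, and then to apply the defining closure properties of the relevant class. In all three parts the key observation is that exactness at $M$ identifies $\Ker(M \to N)$ with $\Ima(L \to M)$, and that corestricting $M \to N$ to its image produces the short exact sequence
\[
0 \to \Ker(M \to N) \to M \to \Ima(M \to N) \to 0.
\]
The argument then amounts to showing that both outer terms lie in $\catX$, so that extension-closure of $\catX$ forces $M \in \catX$.

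For (1), I would use that exactness of $0 \to L \to M \to N$ at $L$ and at $M$ means $L \to M$ is a monomorphism whose image is exactly $\Ker(M \to N)$; thus the displayed sequence reads $0 \to L \to M \to \Ima(M \to N) \to 0$. Since $\Ima(M \to N)$ is a subobject of $N \in \catX$ and $\catX$ is closed under subobjects, we get $\Ima(M \to N) \in \catX$. With $L \in \catX$ by hypothesis, extension-closure yields $M \in \catX$.

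For (2), exactness of $L \to M \to N \to 0$ at $N$ makes $M \to N$ an epimorphism, so $\Ima(M \to N) = N$, while exactness at $M$ gives $\Ker(M \to N) = \Ima(L \to M)$, which is a quotient of $L \in \catX$. As a torsion class $\catX$ is closed under quotients, so $\Ker(M \to N) \in \catX$; combined with $N \in \catX$, the short exact sequence $0 \to \Ker(M \to N) \to M \to N \to 0$ and extension-closure give $M \in \catX$.

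For (3) I treat both outer terms at once: $\Ker(M \to N) = \Ima(L \to M)$ is a quotient of $L \in \catX$, hence lies in $\catX$ by quotient-closure of the Serre subcategory, while $\Ima(M \to N)$ is a subobject of $N \in \catX$, hence lies in $\catX$ by subobject-closure. Extension-closure applied to the displayed sequence then yields $M \in \catX$. There is no genuine obstacle here; the only points requiring care are the correct bookkeeping of which closure property (subobjects, quotients, or both) each of the three classes supplies, and the routine verification that the image factorization produces an honest short exact sequence in an arbitrary abelian category.
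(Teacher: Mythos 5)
Your proof is correct and is precisely the straightforward argument the paper has in mind (the paper omits the proof entirely, remarking only that it is straightforward). The image factorization $0 \to \Ker(M\to N) \to M \to \Ima(M\to N) \to 0$ together with the correct bookkeeping of subobject-, quotient-, and extension-closure in each of the three cases is exactly what is needed, and each step you give is valid in an arbitrary abelian category.
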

\begin{proof}
We omit the proof since it is straightforward.
\end{proof}

Finally, we introduce the notion of Serre subcategories of an extension-closed category,
which is one of the main topics of this paper.
\begin{dfn}\label{dfn:Serre in ex cat}
Let $\catX$ be an extension-closed subcategory of an abelian category $\catA$.
\begin{enua}
\item
A \emph{conflation} of $\catX$ is 
an exact sequence $0 \to A \to B \to C \to 0$ of $\catA$
such that $A$, $B$ and $C$ belong to $\catX$.
\item
An additive subcategory $\catS$ of $\catX$ is called a \emph{Serre subcategory of $\catX$}
if for any conflation $0 \to X \to Y \to Z \to 0$ of $\catX$,
we have that $Y\in \catS$ if and only if both $X$ and $Z$ belong to $\catS$.
\end{enua}
\end{dfn}

\subsection{Ample families}\label{ss:ample family}
In this subsection,
we recall the notion of ample families on a quasi-compact and quasi-separated (qcqs) scheme,
which generalizes the notion of ample line bundles.
We often use the following notation about line bundles.
\begin{nt}
Let $\{\shL_{\alpha}\}_{\alpha\in \LL}$ be 
a nonempty family of line bundles on a scheme $X$,
and let $\shF$ be a quasi-coherent $\shO_X$-module.
We use the following notation:
\[
\shF(\alpha_1,\dots,\alpha_r; n_1,\dots,n_r)
:=\shF\otimes \shL_{\alpha_1}^{\otimes n_1}\otimes\cdots\otimes \shL_{\alpha_r}^{\otimes n_r},\quad
\shO(\alpha_1,\dots,\alpha_r; n_1,\dots,n_r)
:=\shL_{\alpha_1}^{\otimes n_1}\otimes\cdots\otimes \shL_{\alpha_r}^{\otimes n_r},
\]
where 
$r$ is a positive integer, $n_i$ is an integer,
and $\{\alpha_1,\dots,\alpha_r\}$ is a finite subset of $\LL$.

The following formulas are often used in this paper:
\begin{itemize}
\item 
$\shF(\alpha_1,\dots,\alpha_r; n_1,\dots,n_r) \otimes \shO(\alpha_1,\dots,\alpha_r; m_1,\dots,m_r)\iso \shF(\alpha_1,\dots,\alpha_r; n_1+m_1,\dots,n_r+m_r)$.
\item
$\shHom(\shO(\alpha_1,\dots,\alpha_r; -n_1,\dots,-n_r),\shF) \iso \shF(\alpha_1,\dots,\alpha_r; n_1,\dots,n_r)$.
\end{itemize}
\end{nt}

An affine scheme $X$ possesses the following two significant features.
\begin{itemize}
\item 
The set $\{X_f \mid f \in \GG(X,\shO_X)\}$ is an open basis of $X$.
\item
Any coherent $\shO_X$-module $\shF$ is a quotient of a free $\shO_X$-module,
that is, there is a surjection $\shO_X^{\oplus n} \surj \shF$.
\end{itemize}
The notion of divisorial schemes generalizes these properties
by using a family of line bundles.

\begin{dfn}
Let $X$ be a qcqs scheme.
\begin{enua}
\item
A non-empty family $\calL=\{\shL_{\alpha}\}_{\alpha \in \LL}$ of line bundles on $X$
is called an \emph{ample family} if the following set is an open basis of $X$:
\[
\calB_{\calL}:=\{X_f \mid f \in \GG(X,\shO(\alpha;n)), n>0, \alpha \in \LL\}.
\]
\item
$X$ is said to be \emph{divisorial} if it admits an ample family.
\end{enua}
\end{dfn}

\begin{fct}[{cf.\ \cite[Chapter II, Proposition 2.2.3]{SGA6}}]\label{fct:char ample family}
Let $X$ be a qcqs scheme.
The following are equivalent for
a non-empty family $\calL=\{\shL_{\alpha}\}_{\alpha \in \LL}$ of line bundles on $X$. 
\begin{enur}
\item
$\calL$ is an ample family.
\item
The set $\{X_f \in \calB_{\calL} \mid \text{$X_f$ is affine}\}$ is an open basis of $X$.
\item
There is an affine open cover $\{U_i\}_{i\in I}$ of $X$ such that $U_i=X_{f_i}$ 
for some $\alpha_i \in \LL$, $n_i>0$ and $f_i\in \GG(X,\shO(\alpha_i;n_i))$.
\item
For any quasi-coherent $\shO_X$-module $\shF$ of finite type,
there exist integers $n_{\alpha}>0$, $k_{\alpha} \ge 0$ and a surjective $\shO_X$-linear map
$\bigoplus_{\alpha\in\LL} \shO(\alpha;-n_{\alpha})^{\oplus k_{\alpha}} \surj \shF$.
\end{enur}
\end{fct}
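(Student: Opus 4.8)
The plan is to prove the cyclic chain (i) $\Rightarrow$ (ii) $\Rightarrow$ (iii) $\Rightarrow$ (iv) $\Rightarrow$ (i). Throughout I would lean on two standard facts about qcqs schemes. First, an \textbf{affineness fact}: for a section $f$ of a line bundle on an affine scheme $\Spec A$, the non-vanishing locus $(\Spec A)_f$ is affine; equivalently, on a qcqs scheme the open immersion $X_f \inj X$ is affine, so that $X_f \cap W$ is affine for every affine open $W$. Second, an \textbf{extension fact}: for a quasi-coherent $\shO_X$-module $\shF$ of finite type, $f \in \GG(X,\shO(\alpha;n))$, and $s \in \GG(X_f,\shF)$, there is $N \gg 0$ so that $s \otimes f^{\otimes N} \in \GG(X_f,\shF(\alpha;Nn))$ extends to a global section of $\shF(\alpha;Nn)$ on $X$.

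For (i) $\Rightarrow$ (ii), recall that (i) means precisely that $\calB_{\calL}$ is an open basis. Given a point $x$ in an open set $U$, I first shrink to an affine open $W$ with $x \in W \subseteq U$ (possible since $X$ is a scheme), and then apply (i) to produce $X_g \in \calB_{\calL}$, say $g \in \GG(X,\shO(\beta;m))$ with $m>0$, satisfying $x \in X_g \subseteq W$. Since $X_g \subseteq W$ we have $X_g = W_{g|_W}$, which is affine by the affineness fact. Thus the affine members of $\calB_{\calL}$ already form a basis. The step (ii) $\Rightarrow$ (iii) is then immediate: a basis covers $X$, and quasi-compactness extracts a finite affine cover of the required shape $U_i = X_{f_i}$.

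For (iii) $\Rightarrow$ (iv), let $\{U_i = X_{f_i}\}$ be the given cover with $f_i \in \GG(X,\shO(\alpha_i;n_i))$, and let $\shF$ be of finite type. On each affine $U_i = \Spec A_i$ the restriction $\shF|_{U_i}$ is a finite module, hence generated by finitely many sections; the extension fact lets me multiply each by a power of $f_i$ and extend it to a global section of a positive twist $\shF(\alpha_i;Nn_i)$, and since $f_i$ is invertible on $U_i$ these global sections still generate $\shF$ at every point of $U_i$. Interpreting a global section of $\shF(\alpha_i;m)$ as a morphism $\shO(\alpha_i;-m) \to \shF$ through the natural identification $\shHom(\shO(\alpha_i;-m),\shF) \iso \shF(\alpha_i;m)$, and keeping only finitely many $U_i$ by quasi-compactness, I assemble a surjection $\bigoplus_\alpha \shO(\alpha;-n_\alpha)^{\oplus k_\alpha} \surj \shF$ with all $n_\alpha > 0$.

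The implication (iv) $\Rightarrow$ (i) is where the real work lies. Given $x \in U$, shrink to an affine open $W$ with $x \in W \subseteq U$; it suffices to find $f \in \GG(X,\shO(\alpha;n))$ with $n>0$, $f(x) \neq 0$, and $f$ vanishing on $Z := X \setminus W$, for then $x \in X_f \subseteq W \subseteq U$. Here I would invoke the qcqs input that a quasi-compact open such as $W$ is the complement of $V(\shI)$ for some finite-type quasi-coherent ideal $\shI \subseteq \shO_X$. Applying (iv) to $\shF = \shI$ gives a surjection $\bigoplus_\alpha \shO(\alpha;-n_\alpha)^{\oplus k_\alpha} \surj \shI$; composing with $\shI \inj \shO_X$ and evaluating at $x$, the resulting map is nonzero because $\shI_x = \shO_{X,x}$ (as $x \notin Z$), so one component yields a global section $f \in \GG(X,\shO(\alpha;n_\alpha))$ with $n_\alpha>0$, whose image lies in $\shI$ and with $f(x) \neq 0$. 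Since the image of $f$ lies in $\shI$, it vanishes on $Z = V(\shI)$, giving the desired $X_f$. I expect the main obstacle to be precisely these qcqs-specific inputs — the affineness fact, the extension fact, and the existence of a finite-type ideal cutting out $Z$ — each of which genuinely requires quasi-compactness and quasi-separatedness and is the reason the statement is confined to qcqs schemes.
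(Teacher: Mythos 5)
The paper does not prove this statement: it is recorded as a \textbf{Fact} with a citation to SGA6, Chapter II, Proposition 2.2.3, so there is no in-paper argument to compare against. Your proof is correct and is essentially the standard argument from that reference: the cycle (i)$\Rightarrow$(ii)$\Rightarrow$(iii)$\Rightarrow$(iv)$\Rightarrow$(i), using that $X_f\cap W$ is affine for $W$ affine, the extension of sections over $X_f$ after multiplying by a power of $f$, and the realization of $X\setminus W$ as $V(\shI)$ for a finite-type quasi-coherent ideal $\shI$ to which (iv) is applied. The only point worth tidying is in (iii)$\Rightarrow$(iv): to land exactly in the form $\bigoplus_{\alpha}\shO(\alpha;-n_{\alpha})^{\oplus k_{\alpha}}$ with a single twist $n_{\alpha}$ per index $\alpha$, you should note that the twists coming from different $U_i$ with the same $\alpha_i$ can be equalized by multiplying the extended sections by further powers of $f_i$, which does not destroy generation on $U_i=X_{f_i}$.
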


\begin{rmk}\label{rmk:fin ample family}
Let $X$ be a qcqs scheme
and $\{\shL_{\alpha}\}_{\alpha \in \LL}$ an ample family.
\begin{enua}
\item
There is a finite subset $\{\alpha_1,\dots, \alpha_r\} \subseteq \LL$
such that $\{\shL_{\alpha_i}\}_{i=1}^r$ is also an ample family.
It follows from \cref{fct:char ample family} (iii) and the quasi-compactness of $X$.
\item
For any quasi-coherent $\shO_X$-module $\shF$ of finite type,
there exist a finite subset $\{\alpha_1,\dots, \alpha_r\} \subseteq \LL$, 
integers $n_{i}>0$, $k_{i} \ge0$ and a surjective $\shO_X$-linear map
$\bigoplus_{i=1}^r \shO(\alpha_i;-n_i)^{\oplus k_i} \surj \shF$.
It follows from (1) and \cref{fct:char ample family} (iv).
\end{enua}
\end{rmk}

In particular, the following corollary holds.
\begin{cor}\label{cor:divisorial resol}
Let $X$ be a divisorial scheme.
Then for any quasi-coherent $\shO_X$-module $\shF$ of finite type,
there exist finitely many line bundles $\shL_1, \dots, \shL_r$
and a surjective $\shO_X$-linear map $\oplus_{i=1}^r \shL_i \surj \shF$.
\end{cor}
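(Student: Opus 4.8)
The plan is to deduce this immediately from \cref{rmk:fin ample family}(2). Since $X$ is divisorial it admits an ample family $\calL=\{\shL_{\alpha}\}_{\alpha\in\LL}$, so \cref{rmk:fin ample family}(2) applies to the finite-type quasi-coherent sheaf $\shF$ and yields a finite subset $\{\alpha_1,\dots,\alpha_r\}\subseteq\LL$, integers $n_i>0$ and $k_i\ge 0$, and a surjective $\shO_X$-linear map
\[
\bigoplus_{i=1}^r \shO(\alpha_i;-n_i)^{\oplus k_i} \surj \shF.
\]

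First I would observe that each twist $\shO(\alpha_i;-n_i)=\shL_{\alpha_i}^{\otimes(-n_i)}$ is again a line bundle: it is an (inverse) tensor power of the line bundle $\shL_{\alpha_i}$, and tensor products and duals of invertible sheaves remain locally free of rank $1$. Next I would repackage the source of the displayed surjection as a finite direct sum of line bundles, absorbing the multiplicities $k_i$: one has $\bigoplus_{i=1}^r \shO(\alpha_i;-n_i)^{\oplus k_i}\iso \bigoplus_{j=1}^{s} \shM_j$, where $s=\sum_{i=1}^r k_i$ and each $\shM_j$ equals one of the line bundles $\shO(\alpha_i;-n_i)$. Relabelling $\shM_1,\dots,\shM_s$ as $\shL_1,\dots,\shL_s$ (this $s$ being the number called $r$ in the statement) gives the required surjection from finitely many line bundles onto $\shF$.

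There is essentially no obstacle: the entire substance of the statement sits in the ample-family machinery recalled in \cref{fct:char ample family} and \cref{rmk:fin ample family}, and the corollary is pure bookkeeping that reinterprets a direct sum of negatively twisted structure sheaves as a direct sum of line bundles. The only point deserving a word of care is that the indexing in \cref{rmk:fin ample family}(2) permits repeated summands through the exponents $k_i$, so the final number of line bundles is $\sum_i k_i$ rather than $r$; merging these multiplicities into a single index set is the whole of the argument.
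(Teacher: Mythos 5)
Your proposal is correct and is exactly the argument the paper intends: the corollary is stated as an immediate consequence of \cref{rmk:fin ample family}(2), and your observation that each $\shO(\alpha_i;-n_i)$ is a line bundle and that the multiplicities $k_i$ can be absorbed into a single index set is precisely the bookkeeping the paper leaves implicit. Nothing is missing.
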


\begin{ex}
We give examples of ample families and divisorial schemes.
\begin{enua}
\item
For a line bundle $\shL$ on a qcqs scheme,
it is ample if and only if the singleton $\{\shL\}$ is an ample family
(cf.\ \cite[Proposition 13.47]{GW}).
\item
For a qcqs scheme $X$,
it is quasi-affine (that is, there exists an open embedding of $X$ into an affine scheme)
if and only if the structure sheaf $\shO_X$ is ample
(cf.\ \cite[Proposition and Definition 13.78]{GW}).
\item
A quasi-projective scheme $X$ over a commutative ring $R$
(that is, there exists an $R$-immersion $X \inj \bbP^n_R$ 
into the projective space $\bbP^n_R$ over $R$ for some $n\ge 0$)
has an ample line bundle
(cf.\ \cite[Summary 13.71]{GW}).
\item
A noetherian separated regular scheme is divisorial
(cf.\ \cite[Chapter II, Corollaire 2.2.7.1]{SGA6}).
\end{enua}
\end{ex}

\section{Classifying several subcategories of the category of coherent sheaves}\label{s:subcat in coh}
In this section,
we classify several subcategories of the category $\coh X$ of coherent sheaves
on a divisorial noetherian scheme $X$.
We first introduce the notion of subcategories closed under tensoring with line bundles
in \S \ref{ss:L-closed subcat}.
Next, we develop the theory up to tensoring with line bundles
in \S \ref{ss:torf in coh}--\ref{ss:IE in coh}.
This allows us to extend various techniques 
for the module category on a commutative noetherian ring
to the category $\coh X$ of coherent sheaves 
and classify several classes of subcategories closed under tensoring with line bundles of $\coh X$.
See \S \ref{ss:quasi-affine} for the case of commutative noetherian rings.

In the next section, we will use the theory established in this section 
to classify the Serre subcategories of a torsionfree class in $\coh X$.

\subsection{Subcategories closed under tensoring with line bundles}\label{ss:L-closed subcat}
In this subsection, 
we introduce the notion of subcategories closed under tensoring with line bundles
and give criteria for subcategories to be closed under tensoring with line bundles.
This notion plays a central role in this paper.

\begin{dfn}\label{dfn:L-closed}
Let $X$ be a locally noetherian scheme. 
A subcategory $\catX$ of $\coh X$ is said to be \emph{closed under tensoring with line bundles}
if $\shF \otimes \shL \in \catX$ for any $\shF\in\catX$ and any line bundle $\shL$ on $X$.
\end{dfn}

If $X$ has an ample family, 
there is a convenient way to determine 
whether torsion(free) classes are closed under tensoring with line bundles.
\begin{lem}\label{lem:char torf closed under tensoring}
Let $X$ be a noetherian scheme 
having an ample family $\{\shL_{\alpha}\}_{\alpha \in \LL}$.
The following are equivalent for an additive subcategory $\catX$ of $\coh X$
closed under kernels.
\begin{enur}
\item
$\catX$ is closed under tensoring with line bundles.
\item
$\shF\otimes \shL_{\alpha} \in \catX$ holds for any $\shF\in\catX$ and any $\alpha \in \LL$.
\item
$\catX$ is $\shHom(\coh X,-)$-closed, that is,
$\shHom(\shG,\shF) \in \catX$ holds for any $\shF \in \catX$ and $\shG \in\coh X$.
\end{enur}
\end{lem}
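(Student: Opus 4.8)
The plan is to prove the cycle of implications (i) $\Rightarrow$ (ii) $\Rightarrow$ (iii) $\Rightarrow$ (i), with essentially all of the work concentrated in the middle implication. The implication (i) $\Rightarrow$ (ii) is immediate, since each $\shL_{\alpha}$ is in particular a line bundle. For (iii) $\Rightarrow$ (i), I would invoke the natural isomorphism $\shHom(\shL^{-1},\shF) \iso \shL \otimes \shF$, valid for any line bundle $\shL$ (the general form of the formula $\shHom(\shO(\alpha;-n),\shF) \iso \shF(\alpha;n)$ recorded in the Notation above). Given an arbitrary line bundle $\shL$, its inverse $\shL^{-1}$ is a coherent sheaf, so applying (iii) with $\shG = \shL^{-1}$ yields $\shF \otimes \shL \iso \shHom(\shL^{-1},\shF) \in \catX$ for every $\shF \in \catX$; this is exactly (i).

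The heart of the argument is (ii) $\Rightarrow$ (iii). I would first record the easy observation that, iterating (ii) and using additivity, $\catX$ is closed under finite direct sums and under tensoring with $\shO(\alpha;n)$ for any $\alpha \in \LL$ and $n>0$; in particular $\shF(\alpha;n) \in \catX$ whenever $\shF \in \catX$ and $n>0$. Now fix $\shG \in \coh X$ and $\shF \in \catX$. Since $X$ is noetherian and divisorial, \cref{rmk:fin ample family}(2) gives a surjection $P_0 := \bigoplus_{i=1}^{r} \shO(\alpha_i;-n_i)^{\oplus k_i} \surj \shG$ with all $n_i > 0$. Its kernel is again coherent, so a second application of \cref{rmk:fin ample family}(2) produces an exact sequence $P_1 \to P_0 \to \shG \to 0$ with $P_1 := \bigoplus_{j} \shO(\beta_j;-m_j)^{\oplus l_j}$, all $m_j > 0$.

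Applying the contravariant left-exact functor $\shHom(-,\shF)$ to this right-exact sequence, and using additivity of $\shHom$ in the first variable together with $\shHom(\shO(\alpha;-n),\shF) \iso \shF(\alpha;n)$, I obtain a left-exact sequence
\[
0 \to \shHom(\shG,\shF) \to \bigoplus_{i=1}^{r} \shF(\alpha_i;n_i)^{\oplus k_i} \to \bigoplus_{j} \shF(\beta_j;m_j)^{\oplus l_j}.
\]
By the opening observation both of the latter terms lie in $\catX$, and the exactness says precisely that $\shHom(\shG,\shF) \iso \Ker\bigl(\bigoplus_i \shF(\alpha_i;n_i)^{\oplus k_i} \to \bigoplus_j \shF(\beta_j;m_j)^{\oplus l_j}\bigr)$. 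Since $\catX$ is closed under kernels, $\shHom(\shG,\shF) \in \catX$, which establishes (iii).

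The main obstacle is this last implication, and the decisive point is the choice of twists: one must resolve $\shG$ by \emph{negatively} twisted ample line bundles, so that the contravariant functor $\shHom(-,\shF)$ converts them into the \emph{positively} twisted sheaves $\shF(\alpha_i;n_i)$ that are forced into $\catX$ by (ii). It is also worth emphasizing that closure under \emph{kernels} — not the stronger closure under subobjects — is exactly what the argument uses, since $\shHom(\shG,\shF)$ is realized as a genuine kernel of a morphism between two objects of $\catX$; correspondingly, only a two-term presentation $P_1 \to P_0 \to \shG \to 0$ is needed rather than a full resolution.
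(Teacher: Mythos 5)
Your proposal is correct and follows essentially the same route as the paper's proof: the trivial implication (i)$\imply$(ii), the isomorphism $\shF\otimes\shL \iso \shHom(\shL^{\vee},\shF)$ for (iii)$\imply$(i), and for (ii)$\imply$(iii) a two-term presentation of $\shG$ by negatively twisted sums $\bigoplus\shO(\alpha_i;-n_i)^{\oplus k_i}$ from \cref{rmk:fin ample family}, to which $\shHom(-,\shF)$ is applied so that $\shHom(\shG,\shF)$ is realized as a kernel of a map between objects forced into $\catX$ by (ii) and additivity. Your explicit remarks on why the twists must be negative and why closure under kernels (rather than subobjects) suffices are accurate glosses on the same argument.
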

\begin{proof}
It is clear that (i)$\imply$(ii).
The implication (iii)$\imply$(i) follows from 
the isomorphism $\shF \otimes \shL \iso \shHom\left(\shL^{\vee},\shF\right)$,
where $\shF \in \coh X$ and $\shL$ is a line bundle.
We prove (ii)$\imply$(iii).
Let $\shF \in \catX$ and $\shG \in \coh X$.
The following exact sequence exists in $\coh X$ by \cref{rmk:fin ample family}:
\[
\bigoplus_{j=1}^s \shO(\beta_j;-m_j)^{\oplus l_j} \to \bigoplus_{i=1}^t \shO(\alpha_i;-n_i)^{\oplus k_i} \to \shG \to 0,
\]
where $\alpha_i, \beta_j \in \LL$, $n_i,m_j >0$ and $k_i,l_j\ge 0$.
Applying the functor $\shHom(-,\shF)$ to this sequence,
we have the following exact sequence in $\coh X$:
\[
0 \to \shHom(\shG,\shF) \to \bigoplus_{i=1}^t \shF(\alpha_i;n_i)^{\oplus k_i} \to \bigoplus_{j=1}^s \shF(\beta_j;m_j)^{\oplus l_j}.
\]
Then we have $\shHom(\shG,\shF) \in \catX$ 
since $\catX$ is an additive subcategory closed under taking kernels and the condition (ii) holds.
\end{proof}

\begin{lem}\label{lem:char tors closed under tensoring}
Let $X$ be a noetherian scheme 
having an ample family $\{\shL_{\alpha}\}_{\alpha \in \LL}$.
The following are equivalent for an additive subcategory $\catX$ of $\coh X$
closed under cokernels.
\begin{enur}
\item
$\catX$ is closed under tensoring with line bundles.
\item
$\shF\otimes \shL_{\alpha}^{\vee} \in \catX$ holds for any $\shF\in\catX$ and any $\alpha \in \LL$.
\item
$\catX$ is tensor-ideal, that is,
$\shF\otimes \shG \in \catX$ holds for any $\shF \in \catX$ and $\shG \in \coh X$.
\end{enur}
\end{lem}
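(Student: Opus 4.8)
The plan is to mirror the proof of \cref{lem:char torf closed under tensoring}, replacing every occurrence of the kernel/$\shHom$ machinery by its cokernel/$\otimes$ dual. The functor $\shHom(-,\shF)$ used there is left exact and turns a presentation of $\shG$ into an exact sequence whose \emph{kernel} computes $\shHom(\shG,\shF)$; here the functor $\shF\otimes(-)$ is right exact and turns a presentation of $\shG$ into an exact sequence whose \emph{cokernel} computes $\shF\otimes\shG$, which is precisely why the standing hypothesis switches from ``closed under kernels'' to ``closed under cokernels''. The implication (i)$\imply$(ii) is immediate, since each $\shL_{\alpha}^{\vee}$ is a line bundle, and (iii)$\imply$(i) is immediate, since any line bundle $\shL$ is in particular a coherent sheaf, so $\shF\otimes\shL\in\catX$ whenever $\shF\in\catX$. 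Thus the content lies in (ii)$\imply$(iii).

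For (ii)$\imply$(iii), I would first produce, for an arbitrary $\shG\in\coh X$, a two-term presentation by negative twists of the ample family. Taking a surjection onto $\shG$ as in \cref{rmk:fin ample family}, and then a surjection onto its kernel (which is coherent, as $X$ is noetherian) by the same remark, yields an exact sequence
\[
\bigoplus_{j=1}^s \shO(\beta_j;-m_j)^{\oplus l_j} \to \bigoplus_{i=1}^t \shO(\alpha_i;-n_i)^{\oplus k_i} \to \shG \to 0
\]
with $\alpha_i,\beta_j\in\LL$, $n_i,m_j>0$ and $k_i,l_j\ge0$. Applying the right-exact functor $\shF\otimes(-)$ then gives an exact sequence
\[
\bigoplus_{j=1}^s \shF(\beta_j;-m_j)^{\oplus l_j} \to \bigoplus_{i=1}^t \shF(\alpha_i;-n_i)^{\oplus k_i} \to \shF\otimes\shG \to 0,
\]
which realizes $\shF\otimes\shG$ as the cokernel of a map between the left two terms.

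It then remains to check that those two terms lie in $\catX$. Each summand has the form $\shF(\alpha;-n)=\shF\otimes(\shL_{\alpha}^{\vee})^{\otimes n}$ with $n>0$, so a straightforward induction on $n$ using hypothesis (ii) places it in $\catX$, and finite direct sums stay in $\catX$ because $\catX$ is additive. Hence the displayed map is a morphism between two objects of $\catX$, and since $\catX$ is closed under cokernels we conclude $\shF\otimes\shG\in\catX$. I do not anticipate a genuine obstacle: the argument is formally dual to the previous lemma. The only point requiring care is the bookkeeping that (ii) controls a \emph{single} tensor factor $\shL_{\alpha}^{\vee}$ while the presentation involves arbitrary positive powers, so the inductive passage from $\shF\otimes\shL_{\alpha}^{\vee}$ to $\shF\otimes(\shL_{\alpha}^{\vee})^{\otimes n}$—exactly parallel to the iteration already needed in \cref{lem:char torf closed under tensoring}—should be stated explicitly.
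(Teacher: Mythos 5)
Your proposal is correct and follows essentially the same route as the paper: the same two-term presentation by negative twists from \cref{rmk:fin ample family}, tensored with $\shF$ using right-exactness, and the conclusion via additivity and closure under cokernels. The only difference is that you make explicit the induction placing $\shF\otimes(\shL_{\alpha}^{\vee})^{\otimes n}$ in $\catX$, which the paper leaves implicit in the phrase ``the condition (ii) holds.''
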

\begin{proof}
It is clear that (iii)$\imply$(i)$\imply$(ii).
We prove (ii)$\imply$(iii).
Let $\shF \in \catX$ and $\shG\in \coh X$.
The following exact sequence exists in $\coh X$ by \cref{rmk:fin ample family}:
\[
\bigoplus_{j=1}^s \shO(\beta_j;-m_j)^{\oplus l_j} \to \bigoplus_{i=1}^t \shO(\alpha_i;-n_i)^{\oplus k_i} \to \shG \to 0,
\]
where $\alpha_i, \beta_j \in \LL$, $n_i,m_j >0$ and $k_i,l_j\ge 0$.
Tensoring this sequence with $\shF$,
we have the following exact sequence in $\coh X$:
\[
\bigoplus_{j=1}^s \shF(\beta_j;-m_j)^{\oplus l_j} \to \bigoplus_{i=1}^t \shF(\alpha_i;-n_i)^{\oplus k_i} \to \shF \otimes \shG \to 0.
\]
Then we have $\shF \otimes \shG \in \catX$ 
since $\catX$ is an additive subcategory closed under taking cokernels 
and the condition (ii) holds.
\end{proof}

We will give a similar way to determine 
whether IE-closed categories are closed under tensoring with line bundles
in \cref{lem:char IE closed under tensoring}.

The following categories are frequently used in this paper.
\begin{ex}
Let $X$ be a locally noetherian scheme.
Let $\mathbf{P}$ be a property of modules over a commutative local ring.
Then the subcategory $\catX(\mathbf{P})$ of $\coh X$ defined by the following 
is closed under tensoring with line bundles:
\[
\catX(\mathbf{P})
:=\{\shF \in \coh X \mid \text{$\shF_x$ satisfies $\mathbf{P}$ for any $x\in X$} \}.
\]
In particular,
the following subcategories of $\coh X$ are closed under tensoring with line bundles.
\begin{itemize}
\item
The category $\vect X$ of vector bundles on $X$.
\item
The category $\tf X$ of locally torsionfree sheaves on $X$ (cf.\ \S \ref{ss:torf sh}).
\item
The category $\cm X$ of maximal Cohen-Macaulay sheaves on $X$ (cf.\ \S \ref{ss:torf pure CM}).
\end{itemize}
\end{ex}

\begin{ex}
Let $X$ be a locally noetherian scheme.
The following subcategories of $\coh X$ are closed under tensoring with line bundles.
\begin{itemize}
\item
$\coh_Z X:=\{\shF \in \coh X \mid \Supp \shF \subseteq Z\}$ for a subset $Z \subseteq X$.
\item
$\coh_{\Phi}^{\ass} X:=\{\shF \in \coh X \mid \Ass \shF \subseteq \Phi\}$ 
for a subset $\Phi \subseteq X$.
\end{itemize}
\end{ex}

\subsection{Classifying torsionfree classes closed under tensoring with line bundles}\label{ss:torf in coh}
Let $X$ be a divisorial noetherian scheme.
In this subsection,
we classify torsionfree classes of $\coh X$ closed under tensoring with line bundles.
More precisely,
consider the following assignments:
\begin{itemize}
\item
For a subset $\Phi$ of $X$,
define a subcategory $\coh^{\ass}_{\Phi} X$ of $\coh X$ by
\[
\coh^{\ass}_{\Phi} X :=\{\shF \in \coh X \mid \Ass \shF \subseteq \Phi\}.
\]
\item
For a subcategory $\catX$ of $\coh X$,
define a subset $\Ass \catX$ of $X$ by
\[
\Ass \catX := \bigcup_{\shF \in \catX} \Ass \shF.
\]
\end{itemize}
We will prove that these assignments yield mutually inverse bijections
between the power set of $X$ 
and the set of torsionfree classes of $\coh X$ closed under tensoring with line bundles
(cf.\ \cref{thm:Takahashi for scheme}).
This generalizes Takahashi's classification of torsionfree classes of 
the category of finitely generated modules over a commutative noetherian ring
(cf.\ \cref{fct:Takahashi torf}).
We refer to \S \ref{ss:Ass} 
for the notion of associated points and the notation used in this subsection.

First of all, we extend various techniques for the module category on a commutative noetherian ring
to the category $\coh X$ of coherent sheaves, up to tensoring with line bundles.
\begin{lem}\label{lem:germ loc sec affine}
Let $X$ be a scheme and $\shF$ a quasi-coherent $\shO_X$-module.
Let $s\in \shF_x$ be a germ at some point $x\in X$.
Then for any affine neighborhood $U$ of $x$,
there are sections $s'\in \shF(U)$ and $a\in\shO_X(U)$
such that $s'_x=a_xs$ and $a_x \in \shO_{X,x}$ is invertible.
\end{lem}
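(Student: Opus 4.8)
The plan is to reduce everything to the standard dictionary between quasi-coherent sheaves on an affine scheme and modules. Write $U=\Spec A$ with $A:=\GG(U,\shO_X)$. Since $\shF$ is quasi-coherent and $U$ is affine, the restriction $\shF|_U$ is isomorphic to $\wti{M}$, where $M:=\shF(U)$ is the $A$-module of sections over $U$. Under this identification the point $x\in U$ corresponds to a prime ideal $\pp\in\Spec A$, so that $\shO_{X,x}\iso A_{\pp}$ and $\shF_x\iso M_{\pp}$, the latter being the localization of the module $M$ at $\pp$.

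Next I would invoke the description of elements of a localization as fractions: every element of $M_{\pp}$ has the form $m/b$ for some $m\in M$ and some $b\in A\setminus\pp$. Applying this to the given germ $s\in\shF_x\iso M_{\pp}$, I may write $s=m/b$, and I then take $s':=m\in\shF(U)$ and $a:=b\in\shO_X(U)$ as the required sections.

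Finally I would check the two asserted properties. Because $b\notin\pp$, its image $a_x=b/1$ is a unit in $\shO_{X,x}\iso A_{\pp}$, giving the invertibility of $a_x$. And in $M_{\pp}$ one computes $a_x s=(b/1)\cdot(m/b)=m/1=s'_x$, which is precisely the identity $s'_x=a_xs$.

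I expect no genuine obstacle here: the whole content is the equivalence between $\Qcoh U$ and $A$-modules for $U=\Spec A$, together with the fraction description of a localized module. The only point deserving a little care is the canonical identification $\shF_x\iso M_{\pp}$ and the compatibility of the $\shO_{X,x}$-action with the $A_{\pp}$-module structure on $M_{\pp}$, since this is what makes the equation $a_x s=s'_x$ well-posed; once that identification is fixed, the computation is immediate.
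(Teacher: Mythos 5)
Your argument is correct and is exactly the paper's proof, merely written out in full: the paper's one-line justification is the identification $\shF_x=\shF(U)_{\pp_x}$, and your fraction description $s=m/b$ with $b\notin\pp_x$ is precisely how that identity yields $s'$ and $a$. No issues.
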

\begin{proof}
It follows from $\shF_x=\shF(U)_{\pp_x}$,
where $\pp_x$ is the prime ideal of $\shO_X(U)$ corresponding to $x \in U$.
\end{proof}

The following lemma asserts that
a local homomorphism $\shF_x \to \shG_x$ between the stalks of coherent sheaves
can be extended to a global homomorphism $\shF \to \shG$, \emph{up to tensoring with line bundles}.
This lemma plays an important role throughout this section.
\begin{lem}\label{lem:lift local hom to global hom}
Let $X$ be a qcqs scheme having an ample family $\{\shL_{\alpha}\}_{\alpha\in \LL}$,
$\shF$ an $\shO_X$-module of finite presentation,
and $\shG$ a quasi-coherent $\shO_X$-module.
For any point $x\in X$ and any $\shO_{X,x}$-linear map $\phi\colon \shF_x \to \shG_x$,
there exist an $\shO_X$-linear map $\psi\colon \shF\to \shG(\alpha;n)$ 
for some $\alpha \in \LL$ and $n>0$,
and an $\shO_{X,x}$-isomorphism $\mu \colon \shG(\alpha;n)_x \isoto \shG_x$
such that $\phi=\mu\circ\psi_x$.
\end{lem}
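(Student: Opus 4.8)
The plan is to reinterpret the stalk map $\phi$ as a germ of the quasi-coherent sheaf $\shHom_{\shO_X}(\shF,\shG)$, to extend that germ to a section over a basic open of the ample family, and then to globalise it by multiplying by a high power of the cutting-out section; the twist $\shL_\alpha^{\otimes n}$ produced by this multiplication is exactly the twist appearing in the statement. First I would set $\shH:=\shHom_{\shO_X}(\shF,\shG)$. Since $\shF$ is of finite presentation and $\shG$ is quasi-coherent, $\shH$ is quasi-coherent and its formation commutes with localisation, so that $\shH_x\iso\Hom_{\shO_{X,x}}(\shF_x,\shG_x)$; under this identification the given map $\phi$ is an element of the stalk $\shH_x$. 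I would represent $\phi$ by a section over some open neighbourhood $V$ of $x$. Because $\{\shL_\alpha\}_{\alpha\in\LL}$ is an ample family, the opens $X_f$ with $f\in\GG(X,\shL_\alpha^{\otimes n_0})$, $n_0>0$, $\alpha\in\LL$, form an open basis of $X$; hence I may choose one such $f$ with $x\in X_f\subseteq V$ and restrict to obtain a section $s\in\GG(X_f,\shH)$ whose germ satisfies $s_x=\phi$.

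Next I would globalise $s$. Applying the standard extension lemma for quasi-coherent sheaves on a qcqs scheme (as in Hartshorne's \emph{Algebraic Geometry}, Lemma~II.5.14) to the line bundle $\shL_\alpha^{\otimes n_0}$ and its global section $f$, for $N\gg 0$ the section $f^N s\in\GG(X_f,\shH\otimes\shL_\alpha^{\otimes Nn_0})$ extends to a global section $\Psi\in\GG(X,\shH\otimes\shL_\alpha^{\otimes Nn_0})$. By the projection isomorphism $\shH\otimes\shL_\alpha^{\otimes Nn_0}\iso\shHom_{\shO_X}(\shF,\shG(\alpha;Nn_0))$ for the line bundle $\shL_\alpha^{\otimes Nn_0}$, the section $\Psi$ is precisely an $\shO_X$-linear map $\psi\colon\shF\to\shG(\alpha;n)$ with $n:=Nn_0>0$, which is the map required by the statement.

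Finally I would verify compatibility at $x$. Since $x\in X_f$, the germ $f_x$ generates the free rank-one module $(\shL_\alpha^{\otimes n_0})_x$, so $f^N_x$ generates $(\shL_\alpha^{\otimes n})_x$; under the identification $\shH_x\otimes(\shL_\alpha^{\otimes n})_x\iso\Hom_{\shO_{X,x}}(\shF_x,\shG(\alpha;n)_x)$ we get $\psi_x=\Psi_x=(f^N s)_x=\phi\otimes f^N_x$, i.e.\ $\psi_x(v)=\phi(v)\otimes f^N_x$. Taking $\mu\colon\shG(\alpha;n)_x=\shG_x\otimes(\shL_\alpha^{\otimes n})_x\isoto\shG_x$ to be the trivialisation $g\otimes f^N_x\mapsto g$ then yields $\mu\circ\psi_x=\phi$, as desired.

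The hard part is the globalisation step. A single germ of the \emph{merely} quasi-coherent sheaf $\shH$ cannot be globalised through \cref{rmk:fin ample family} or \cref{fct:char ample family}\,(iv) directly, since those require finite type; one genuinely needs the qcqs extension lemma, and this is exactly where ampleness enters, through the basis of opens $X_f$ for sections $f$ of the single twists $\shL_\alpha^{\otimes n}$. The bookkeeping of the twist $\shL_\alpha^{\otimes n}$ and of the trivialisation $\mu$ must be done with care, as the isomorphism $\mu$ is forced to be the one determined by the generator $f^N_x$. Alternatively, one can run the same argument on an affine chart $X_f\iso\Spec A$, using \cref{lem:germ loc sec affine} together with the identity $\Hom_A(M,N)_{\pp}\iso\Hom_{A_{\pp}}(M_{\pp},N_{\pp})$ valid for a finitely presented module $M$, and then extend off the chart by the same multiplication-by-$f^N$ device.
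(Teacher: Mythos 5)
Your proposal is correct and follows essentially the same route as the paper: both reinterpret $\phi$ as a germ of the quasi-coherent sheaf $\shHom_{\shO_X}(\shF,\shG)$ (using finite presentation of $\shF$ for the stalk identification), represent it by a section over a basic open $X_f$ of the ample family, extend $f^N\cdot(\text{local section})$ to a global section of $\shHom(\shF,\shG)\otimes\shL_\alpha^{\otimes Nn_0}$ via the qcqs extension lemma (your Hartshorne II.5.14 is the paper's [GW, Theorem 7.22]), and take $\mu$ to be the trivialisation determined by the generator $f^N_x$. The only cosmetic difference is that the paper produces the local section over an affine $X_f$ via its \cref{lem:germ loc sec affine}, picking up a unit $a_x$ that it absorbs into $\mu$, whereas you represent the germ exactly on a possibly smaller basic open; both are fine.
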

\begin{proof}
Since $\{\shL_{\alpha}\}_{\alpha\in \LL}$ is an ample family,
there are $\alpha \in \LL$, $m>0$ and $f\in \GG\left(X,\shL_{\alpha}^{\otimes m}\right)$ 
such that $X_f$ is an affine open neighborhood of $x$.
Since $\shF$ is of finite presentation and $\shG$ is quasi-coherent,
the $\shO_X$-module $\shHom(\shF,\shG)$ is quasi-coherent and we have an $\shO_{X,x}$-isomorphism
$\shHom_{\shO_X}(\shF,\shG)_x \iso \Hom_{\shO_{X,x}}(\shF_x,\shG_x)$
by \cite[Proposition 7.27 and 7.29]{GW}.
By \cref{lem:germ loc sec affine},
there are sections $a \in \shO_X(X_f)$ and 
$\psi' \in \GG\left(X_f,\shHom_{\shO_X}(\shF,\shG)\right)$ such that
$\psi'_x = a_x \cdot \phi$ and $a_x \in \shO_{X,x}$ is invertible.
Then we have a global section 
$\psi\in \GG\left(X,\shHom_{\shO_X}(\shF,\shG)\otimes\left(\shL_{\alpha}^{\otimes m}\right)^{\otimes n} \right)$
such that $\psi_{|X_f}= \psi' \otimes f^{\otimes n}$ for some $n>0$
by \cite[Theorem 7.22]{GW}.
We also denote by $\psi\colon \shF \to \shG(\alpha;mn)$
the $\shO_X$-linear map corresponding to $\psi$ by the following isomorphism
(cf.\ \cite[Proposition 7.7]{GW}):
\[
\GG\left(X,\shHom_{\shO_X}(\shF,\shG)\otimes\left(\shL_{\alpha}^{\otimes m}\right)^{\otimes n}\right) 
\isoto \GG\left(X, \shHom_{\shO_X}(\shF,\shG(\alpha;mn))\right)
= \Hom_{\shO_X}(\shF,\shG(\alpha;mn)).
\]
There is the following $\shO_{X|X_f}$-isomorphism by \cite[Remark 13.46 (2)]{GW}:
\[
u \colon \shO_{X|X_f} \isoto (\shL_{\alpha}^{\otimes m})_{|X_f},\quad 1 \mapsto f_{|X_f}.
\]
Then we have the following commutative diagram:
\[
\begin{tikzcd}[column sep=30pt]
\shF_x \arr{d,"\phi"'} \arr{rrr,"\psi_x"} \arr{rd,"{\psi'_x}"}& & & \shG(\alpha;mn)_x \arr{d,"\iso"}\\
\shG_x \arr{r,"a_x{\cdot}-"',"\iso"}& \shG_x \arr{r,"\iso"} & \shG_x\otimes \shO_{X,x}^{\otimes n} \arr{r,"\id\otimes u_{x}^{\otimes n}"',"\iso"} & \shG_x \otimes \left(\shL_{\alpha}^{\otimes m}\right)_x^{\otimes n}.
\end{tikzcd}
\]
This finishes the proof.
\end{proof}

Let $X$ be a scheme.
For any point $x\in X$, we denote by $Z_x$
the reduced closed subscheme of $X$ whose underlying topological space is $\ol{\{x\}}$
(cf.\ \cite[Proposition 3.52]{GW}).

In the affine case,
for any associated prime ideal $\pp$ of a module $M$ over a commutative ring $R$,
there is an injective $R$-linear map $R/\pp \inj M$ by definition.
On the other hand,
an associated point $x$ of a quasi-coherent sheaf $\shF$ on a scheme $X$
does not yield an injective $\shO_X$-linear map $\shO_{Z_x} \inj \shF$
(see \cite[Example B.11]{CS}).
However,
we can obtain such an embedding, up to tensoring with line bundles, 
as the following proposition shows.
See \cite[Proposition B.12]{CS} for another kind of generalization of this.
\begin{prp}\label{prp:ass pt inj}
Let $X$ be a noetherian scheme having an ample family $\{\shL_{\alpha}\}_{\alpha\in \LL}$,
and let $\shF$ be a quasi-coherent sheaf on $X$.
For any $x\in \Ass \shF$,
there is an injective $\shO_X$-linear map $\shO_{Z_x} \inj \shF(\alpha; n)$ 
for some $\alpha \in \LL$ and $n>0$.
\end{prp}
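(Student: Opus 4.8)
The plan is to reduce to the stalk at $x$, lift the resulting local inclusion to a global map via \cref{lem:lift local hom to global hom}, and then upgrade stalkwise injectivity to honest injectivity of sheaves by exploiting that $Z_x$ is integral.

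First I would unwind the definition of associated point: $x\in\Ass\shF$ means precisely that $\mm_x\in\Ass_{\shO_{X,x}}(\shF_x)$, i.e.\ there is a germ $s\in\shF_x$ whose annihilator is $\mm_x$. Multiplication by $s$ then yields an $\shO_{X,x}$-linear injection $\kappa(x)=\shO_{X,x}/\mm_x\inj\shF_x$. Since $x$ is the generic point of the integral scheme $Z_x$, we have $(\shO_{Z_x})_x\iso\kappa(x)$ as $\shO_{X,x}$-modules, so this inclusion is an injective $\shO_{X,x}$-linear map $\phi\colon(\shO_{Z_x})_x\inj\shF_x$.

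Next, because $X$ is noetherian the coherent sheaf $\shO_{Z_x}$ is of finite presentation, so I would apply \cref{lem:lift local hom to global hom} with domain $\shO_{Z_x}$ and codomain $\shF$. This produces some $\alpha\in\LL$, $n>0$, an $\shO_X$-linear map $\psi\colon\shO_{Z_x}\to\shF(\alpha;n)$, and an $\shO_{X,x}$-isomorphism $\mu\colon\shF(\alpha;n)_x\isoto\shF_x$ with $\phi=\mu\circ\psi_x$. As $\phi$ is injective and $\mu$ is an isomorphism, the stalk map $\psi_x$ is injective. It then remains to promote this to injectivity of $\psi$, which is the crux. Let $\shK:=\Ker\psi$. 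Since $\shO_{Z_x}$ is the pushforward of the structure sheaf of $Z_x$ along the closed immersion $i\colon Z_x\inj X$, and $i_*$ is exact and fully faithful on quasi-coherent sheaves, $\shK$ corresponds to a quasi-coherent ideal sheaf of the integral scheme $Z_x$. Exactness of stalks gives $\shK_x=\Ker\psi_x=0$ at the generic point $x$ of $Z_x$. But on an integral scheme every nonzero quasi-coherent ideal sheaf has nonzero stalk at the generic point: for any open $U$ the restriction $\shO_{Z_x}(U)\inj K(Z_x)=\kappa(x)$ is injective, so a local section of $\shK$ over $U$ whose germ at the generic point vanishes is itself zero. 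Hence $\shK=0$ and $\psi\colon\shO_{Z_x}\inj\shF(\alpha;n)$ is the desired injection.

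The only genuine obstacle is this last step: \cref{lem:lift local hom to global hom} only controls behavior at the single stalk $x$, so global injectivity is not automatic and must be extracted from the integrality of $Z_x$. This is exactly why it is essential that $Z_x$ carries the reduced induced structure rather than any thickening; on a non-reduced subscheme the kernel could be a nonzero torsion ideal invisible at the generic point.
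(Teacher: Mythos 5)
Your proof is correct and follows essentially the same route as the paper: take the injection $\kappa(x)\inj\shF_x$ coming from $x\in\Ass\shF$, lift it to a global map $\psi\colon\shO_{Z_x}\to\shF(\alpha;n)$ via \cref{lem:lift local hom to global hom}, and deduce global injectivity from injectivity of $\psi_x$. The only cosmetic difference is in the last step: the paper invokes $\Ass_X\shO_{Z_x}=\{x\}$ (so the kernel, having no associated points, vanishes), whereas you unwind the same fact directly from the integrality of $Z_x$; these are equivalent justifications.
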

\begin{proof}
For any associated point $x$ of $\shF$,
there is an injective $\shO_{X,x}$-linear map $\kk(x)\inj \shF_x$.
By \cref{lem:lift local hom to global hom},
we have an $\shO_X$-linear map $\psi \colon \shO_{Z_x} \to \shF(\alpha;n)$
for some $\alpha\in \LL$ and $n>0$
such that $\psi_x$ is injective.
Then $\psi$ is injective since $\Ass_X \shO_{Z_x}=\{x\}$.
\end{proof}


We now come back to classifying torsionfree classes closed under tensoring with line bundles.
We prove the lemmas needed to prove \cref{thm:Takahashi for scheme} below.
\begin{lem}\label{lem:char x in Ass}
Let $X$ be a divisorial noetherian scheme and
a torsionfree class $\catX$ of $\coh X$ closed under tensoring with line bundles.
For any point $x\in X$,
we have that $x\in \Ass \catX$ if and only if $\shO_{Z_x} \in \catX$.
\end{lem}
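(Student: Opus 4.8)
The plan is to prove the two implications separately, with essentially all of the content concentrated in the forward direction, where I will invoke \cref{prp:ass pt inj}. For the easy implication $\shO_{Z_x}\in\catX \imply x\in\Ass\catX$, I observe that $Z_x$ is integral with generic point $x$, so its structure sheaf has a single associated point, namely $\Ass_X\shO_{Z_x}=\{x\}$ (this fact is already recorded in the proof of \cref{prp:ass pt inj}). Hence if $\shO_{Z_x}\in\catX$, then $x\in\Ass\shO_{Z_x}\subseteq\bigcup_{\shF\in\catX}\Ass\shF=\Ass\catX$, as desired. This direction uses nothing beyond the definition of $\Ass\catX$.

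For the converse $x\in\Ass\catX\imply\shO_{Z_x}\in\catX$, I would first unwind the definition of $\Ass\catX$ to choose some $\shF\in\catX$ with $x\in\Ass\shF$. Since $X$ is divisorial noetherian, it carries an ample family $\{\shL_{\alpha}\}_{\alpha\in\LL}$, so \cref{prp:ass pt inj} applies and furnishes an injective $\shO_X$-linear map $\shO_{Z_x}\inj\shF(\alpha;n)$ for suitable $\alpha\in\LL$ and $n>0$. The hypothesis that $\catX$ is closed under tensoring with line bundles then yields $\shF(\alpha;n)=\shF\otimes\shL_{\alpha}^{\otimes n}\in\catX$, and since a torsionfree class is closed under subobjects, the injection $\shO_{Z_x}\inj\shF(\alpha;n)$ forces $\shO_{Z_x}\in\catX$. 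This completes the argument.

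The main (and essentially only) obstacle is the forward direction, and it is entirely absorbed by \cref{prp:ass pt inj}: an associated point $x$ of $\shF$ need not in general produce an embedding $\shO_{Z_x}\inj\shF$ on the nose (cf.\ \cite[Example B.11]{CS}), so one is forced to allow a twist by $\shL_{\alpha}^{\otimes n}$. This is precisely where closure under tensoring with line bundles enters, as it is exactly the hypothesis needed to reabsorb that twist back into $\catX$; without it the statement would fail. No work beyond quoting \cref{prp:ass pt inj} together with the two closure properties of a torsionfree class is required.
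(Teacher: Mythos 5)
Your proposal is correct and follows essentially the same route as the paper: the easy direction from $\Ass_X\shO_{Z_x}=\{x\}$, and the substantive direction by choosing $\shF\in\catX$ with $x\in\Ass\shF$, invoking \cref{prp:ass pt inj} to embed $\shO_{Z_x}$ into a line-bundle twist of $\shF$, and then using closure under tensoring with line bundles and under subobjects. Your closing remark correctly identifies why the twist (and hence the tensoring hypothesis) is unavoidable; the only blemish is a minor terminological slip in which the implication $x\in\Ass\catX\Rightarrow\shO_{Z_x}\in\catX$ is called the ``forward direction'' in one paragraph and the ``converse'' in another.
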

\begin{proof}
If $\shO_{Z_x} \in \catX$,
then we have $x \in \Ass_X \shO_{Z_x} \subseteq \Ass \catX$.
Conversely, suppose that $x\in \Ass \catX$. 
Then there exists $\shF \in \catX$ such that $x\in \Ass \shF$.
By \cref{prp:ass pt inj},
there is an injective $\shO_X$-linear map $\shO_{Z_x} \inj \shF\otimes \shL^{\otimes n}$ 
for some line bundle $\shL$ and $n>0$.
Since $\catX$ is a torsionfree class closed under tensoring with line bundles,
we conclude that $\shO_{Z_x} \in \catX$.
\end{proof}

\begin{lem}\label{lem:torf Ass=1}
Let $X$ be a divisorial noetherian scheme.
For any torsionfree class $\catX$ of $\coh X$ closed under tensoring with line bundles,
the following holds:
\[
\catX=\gen{\shF \in \catX \mid \# \Ass \shF =1}_{\ext}.
\]
\end{lem}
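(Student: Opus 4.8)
The plan is to prove the inclusion $\catX \supseteq \gen{\shF \in \catX \mid \# \Ass \shF =1}_{\ext}$ formally and to establish the reverse inclusion by induction on the finite number $\#\Ass\shF$ of associated points. The easy direction is immediate: every generator lies in $\catX$, and $\catX$, being a torsionfree class, is extension-closed, so its extension-closure contains $\gen{\cdots}_{\ext}$. For the base case, if $\#\Ass\shF\le 1$ then $\shF$ is either $0$ or a generator, hence lies in $\gen{\cdots}_{\ext}$. So the content is the inductive step, where I want, for $\shF\in\catX$ with $\#\Ass\shF=n\ge 2$, a short exact sequence $0\to\shF'\to\shF\to\shF''\to 0$ with $\shF'$ coprimary, $\shF''$ having $n-1$ associated points, and \emph{both} $\shF',\shF''\in\catX$; then the inductive hypothesis applies to both terms and extension-closedness of $\gen{\cdots}_{\ext}$ (equivalently $\catY^m*\catY^n=\catY^{m+n}$, where $\catY$ denotes the set of generators) finishes the argument.

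To produce such a sequence, I would choose a point $x\in\Ass\shF$ minimal with respect to the specialization order $\preceq$ (possible since $\Ass\shF$ is finite and $X$ is $T_0$), and set $\shF':=\Gamma_{Z_x}(\shF)$, the subsheaf of local sections supported on $\ol{\{x\}}$. By the basic properties of associated points recalled in \S\ref{ss:Ass}, one has $\Ass\shF'=\Ass\shF\cap\ol{\{x\}}=\{x\}$, the last equality by minimality of $x$, while $\Ass(\shF/\shF')=\Ass\shF\setminus\ol{\{x\}}=\Ass\shF\setminus\{x\}$ has $n-1$ elements. Since $\catX$ is closed under subobjects, $\shF'\in\catX$, so $\shF'$ is a generator; it thus remains only to show $\shF'':=\shF/\shF'\in\catX$.

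The main obstacle is exactly this last membership: a torsionfree class is \emph{not} closed under quotients, so $\shF''\in\catX$ requires a genuine argument, and this is where tensoring with line bundles is indispensable. I would prove the key claim that $\shF/\Gamma_{Z}(\shF)\in\catX$ for any closed $Z=V(\shI)$ by embedding it into a finite direct sum of twists of $\shF$. Concretely, since $\shF$ is coherent and $X$ noetherian, $\Gamma_Z(\shF)=(0:_{\shF}\shI^N)$ for $N\gg 0$ (a uniform exponent exists by quasi-compactness, as the chain $(0:_{\shF}\shI^k)$ stabilizes); applying the ampleness of $\{\shL_\alpha\}$ to the coherent sheaf $\shI^N$ via \cref{rmk:fin ample family} yields finitely many maps $\shO(\beta_j;-m_j)\to\shO_X$, i.e.\ twisted global sections $f_j\in\GG(X,\shO(\beta_j;m_j))$, whose images generate $\shI^N$. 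Multiplication by the $f_j$ assembles into an $\shO_X$-linear map $\shF\to\bigoplus_j\shF(\beta_j;m_j)$ whose kernel, checked locally on an affine cover trivializing the $\shL_{\beta_j}$, is $(0:_{\shF}\shI^N)=\Gamma_Z(\shF)$. This gives an injection $\shF/\Gamma_Z(\shF)\inj\bigoplus_j\shF(\beta_j;m_j)$; the target lies in $\catX$ because $\catX$ is additive and closed under tensoring with line bundles, so $\shF/\Gamma_Z(\shF)\in\catX$ by subobject-closedness. Taking $Z=\ol{\{x\}}$ gives $\shF''\in\catX$ and completes the induction. I expect the only delicate points to be the uniform choice of $N$ and the local kernel computation, both routine once the line bundles are trivialized; note that this route does not even require \cref{prp:ass pt inj}, though that proposition could be used to give the subobject $\shF'$ a more explicit description.
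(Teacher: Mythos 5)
Your proof is correct and follows essentially the same route as the paper's: both pick a minimal associated point $x$ of $\shF$, split off the subsheaf supported on $\ol{\{x\}}$ (the paper uses $\shHom(\shO_{Z_x},\shF)=(0:_{\shF}\shI_{Z_x})$ where you use $\Gamma_{Z_x}(\shF)=(0:_{\shF}\shI_{Z_x}^N)$), and return the quotient to $\catX$ by embedding it into a finite direct sum of line-bundle twists of $\shF$ supplied by the ample family. The only substantive deviations are that the paper runs a noetherian induction on $\shF$ rather than your induction on $\#\Ass\shF$ --- which spares it the identity $\Ass(\shF/\Gamma_{Z_x}(\shF))=\Ass\shF\setminus\ol{\{x\}}$ that you invoke (true and standard, but not among the facts recalled in \S\ref{ss:Ass}) --- and that it produces the embedding by applying $\shHom(-,\shF)$ to a presentation $\bigoplus_i\shL_i\to\shO_X\to\shO_{Z_x}\to0$ instead of generating $\shI_{Z_x}^N$ by twisted global sections; these differences are cosmetic.
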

\begin{proof}
The proof is inspired by \cite[Proposition 3.8]{Iyama-Kimura}.
Let $\shF\in \catX$.
Since $\shF$ is a noetherian object of $\Qcoh X$,
it is enough to show that, if $\shF\ne 0$, 
then there is a non-zero quasi-coherent $\shO_X$-submodule $\shG$ of $\shF$
such that $\# \Ass \shG =1$ and $\shF/\shG\in \catX$.
Let $x$ be a minimal element of $\Ass \shF$ with respect to the specialization-order.
That is, if $y\in \ol{\{x\}}$ such that $y\in \Ass \shF$, then $x=y$ holds.
Such an element $x$ certainly exists since $\Ass \shF$ is a nonempty finite set.
Since $X$ is divisorial and the ideal sheaf $\shI_{Z_x}$ of $Z_x$ is of finite type,
there is a surjective $\shO_X$-linear map
$\oplus_{i=1}^r \shL_i \surj \shI_{Z_x}$
for some line bundles $\shL_i$ by \cref{cor:divisorial resol}.
Thus, we have the following exact sequence in $\coh X$:
\[
\bigoplus_{i=1}^r \shL_i \to \shO_X \to \shO_{Z_x} \to 0.
\]
Applying the functor $\shHom_{\shO_X}(-,\shF)$ to this sequence,
we obtain the following exact sequence in $\coh X$:
\[
0 \to \shHom_{\shO_X}(\shO_{Z_x},\shF) \to \shF \to \bigoplus_{i=1}^r \shF \otimes \shL_i^{\vee}.
\]
Then $\shG :=\shHom_{\shO_X}(\shO_{Z_x},\shF)$ is a nonzero coherent $\shO_X$-module
since there is an injection $\kk(x) \inj \shF_x$
and $\shHom_{\shO_X}(\shO_{Z_x},\shF)_x \iso \Hom_{\shO_{X,x}}(\kk(x),\shF_x)$.
Because $\catX$ is a torsionfree class closed under tensoring with line bundles,
we have that $\shG, \shF/\shG \in \catX$.
It remains to show that $\# \Ass \shG =1$.
Since $\Supp \shG \subseteq \ol{\{x\}}$,
we have that $\Ass \shG \subseteq \Ass \shF \cap \ol{\{x\}} = \{x\}$.
Here, the last equality follows from the minimality of $x$.
Thus, we obtain that $\# \Ass\shG =1$ as $\shG$ is nonzero.
\end{proof}

\begin{lem}\label{lem:Ass=1 Z_x}
Let $X$ be a divisorial noetherian scheme and
a torsionfree class $\catX$ of $\coh X$ closed under tensoring with line bundles.
Suppose that $\shO_{Z_x} \in \catX$ for some point $x\in X$.
Then any coherent $\shO_X$-module $\shF$ such that $\Ass \shF=\{x\}$ belongs to $\catX$.
\end{lem}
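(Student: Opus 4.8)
The plan is to argue by induction on $\ell:=\length_{\shO_{X,x}}\shF_x$. Since $\Ass\shF=\{x\}$ we have $\Ass_{\shO_{X,x}}\shF_x=\{\mm_x\}$, so $\shF_x$ is a nonzero $\shO_{X,x}$-module of finite length; in particular $\ell\ge 1$. Throughout I will freely use that $\catX$ contains every twist $\shO_{Z_x}(\alpha;n)=\shO_{Z_x}\otimes\shL_{\alpha}^{\otimes n}$ (because $\shO_{Z_x}\in\catX$ and $\catX$ is closed under tensoring with line bundles), hence also every finite direct sum of such twists (extension-closedness gives closure under finite direct sums via $0\to A\to A\oplus B\to B\to 0$), and that $\catX$ is closed under subobjects.

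First I treat the base case $\ell=1$, i.e.\ $\shF_x\iso\kk(x)$. Writing $\shI=\shI_{Z_x}$ for the ideal sheaf, I claim $\shI\shF=0$: indeed $\shI\shF$ is a subsheaf of $\shF$, so $\Ass(\shI\shF)\subseteq\{x\}$, while $(\shI\shF)_x=\mm_x\shF_x=0$ forces $x\notin\Supp(\shI\shF)$; the only possible associated point is thus absent, so $\shI\shF=0$. Hence $\shF=i_*\shF'$ for a coherent sheaf $\shF'$ on the integral scheme $Z_x$ (where $i\colon Z_x\inj X$), and $\Ass_{Z_x}\shF'=\{x\}$ says exactly that $\shF'$ is torsionfree of generic rank $1$. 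Since the restriction of the ample family to $Z_x$ is again an ample family, $Z_x$ is divisorial, so \cref{cor:divisorial resol} in the refined form of \cref{rmk:fin ample family} yields a surjection $\bigoplus_i\shO_{Z_x}(\alpha_i;-n_i)\surj(\shF')^{\vee}$, where $(\shF')^{\vee}=\shHom_{\shO_{Z_x}}(\shF',\shO_{Z_x})$. Dualizing gives an injection $(\shF')^{\vee\vee}\inj\bigoplus_i\shO_{Z_x}(\alpha_i;n_i)$, and torsionfreeness makes $\shF'\to(\shF')^{\vee\vee}$ injective (its kernel is generically zero, hence torsion, hence zero). Applying the exact functor $i_*$ and the projection formula, I obtain an injection
\[
\shF\inj\bigoplus_i\shO_{Z_x}(\alpha_i;n_i)
\]
into an object of $\catX$; closure under subobjects gives $\shF\in\catX$.

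For the inductive step $\ell\ge 2$ I peel off a saturated rank-one subsheaf. Choose a submodule $V\subseteq\shF_x$ of length one (e.g.\ a line in $\soc\shF_x$) and a coherent subsheaf $\shG_0\subseteq\shF$ with $(\shG_0)_x=V$ (such a subsheaf exists since $X$ is noetherian). Let $\shG\subseteq\shF$ be its saturation along $Z_x$, namely $\shG=\ker\bigl(\shF\surj(\shF/\shG_0)/T\bigr)$, where $T$ is the largest subsheaf of $\shF/\shG_0$ supported on a proper closed subset of $Z_x$. By construction $\shF/\shG\iso(\shF/\shG_0)/T$ has no nonzero subsheaf supported on a proper closed subset, so $\Ass(\shF/\shG)\subseteq\{x\}$; as $T_x=0$ its generic stalk has length $\ell-1>0$, whence $\Ass(\shF/\shG)=\{x\}$ and $\length(\shF/\shG)_x=\ell-1$. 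On the other hand $\shG$ is a subsheaf of $\shF$ with $\shG_x=V\ne 0$, so $\Ass\shG=\{x\}$ and $\length\shG_x=1$. The base case gives $\shG\in\catX$, the induction hypothesis gives $\shF/\shG\in\catX$, and the short exact sequence $0\to\shG\to\shF\to\shF/\shG\to 0$ in $\coh X$ together with extension-closedness of $\catX$ yields $\shF\in\catX$, completing the induction.

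The step I expect to be most delicate is guaranteeing that \emph{both} the subsheaf and the quotient retain the single associated point $x$. The naive $\shI_{Z_x}$-adic filtration of $\shF$ does not work: its graded pieces can acquire associated points that are proper specializations of $x$ and thereby leave $\catX$, even though $\Ass\shF=\{x\}$. The remedy is exactly to peel off \emph{saturated} subsheaves, which forces the quotient to stay torsionfree along $Z_x$, combined with the observation that a sheaf with $\Ass=\{x\}$ and generic length one is automatically annihilated by $\shI_{Z_x}$ and hence lives on the integral scheme $Z_x$, where the double-dual embedding into sums of twists of $\shO_{Z_x}$ is available.
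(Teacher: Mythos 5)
Your proof is correct, but it takes a genuinely different route from the paper's. The paper adapts Takahashi's argument directly: it chooses generators $\phi_{i,1},\dots,\phi_{i,k_i}$ of $\Hom_{\shO_{X,x}}(\shF_{i,x},\kk(x))$, lifts them via \cref{lem:lift local hom to global hom} to global maps $\psi_i\colon\shF_i\to\shO_{Z_x}^{\oplus k_i}\otimes\shL$, and forms the descending chain $\shF=\shF_0\supseteq\shF_1\supseteq\cdots$ with $\shF_{i+1}=\Ker\psi_i$; the finite length of $\shF_x$ forces the chain to reach $0$, and $\shF$ is reassembled from the sequences $0\to\shF_{i+1}\to\shF_i\to\shG_i$ by \cref{lem:half ex}. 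You instead induct on $\length_{\shO_{X,x}}\shF_x$: in the base case you observe that $\shI_{Z_x}\shF=0$, so $\shF$ lives on the integral scheme $Z_x$ (which inherits an ample family and is therefore divisorial), and you embed it into a finite direct sum of line-bundle twists of $\shO_{Z_x}$ via the reflexive hull $\shF'\inj(\shF')^{\vee\vee}$; the inductive step peels off a saturated subsheaf whose stalk at $x$ has length one. Both arguments exploit exactly the same closure properties of $\catX$ (subobjects, extensions, twists by line bundles). Your route avoids the lifting lemma \cref{lem:lift local hom to global hom} in the main induction, at the price of the divisoriality of $Z_x$ and the double-dual embedding; the paper's route is more uniform and reuses machinery it needs elsewhere anyway. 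If you write yours up, two steps deserve an explicit word, though both are routine: the deduction of $\Ass(\shF/\shG)\subseteq\{x\}$ from ``no nonzero subsheaf supported on a proper closed subset of $Z_x$'' (e.g.\ via \cref{prp:ass pt inj}, or a direct local computation with annihilators), and the existence of a coherent subsheaf $\shG_0\subseteq\shF$ with prescribed stalk $V$ at $x$, which uses the standard extension lemma for coherent subsheaves on noetherian schemes.
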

\begin{proof}
The proof is inspired by \cite[Lemma 4.2]{Takahashi}.
Let $\shF$ be a coherent $\shO_X$-module such that $\Ass \shF=\{x\}$.
Let $\phi_{0,1},\dots, \phi_{0,k_0}$ be 
a system of generators of $\Hom_{\shO_{X,x}}(\shF_x,\kk(x))$
as an $\shO_{X,x}$-module.
Put $\phi_0:=\begin{bsmatrix}\phi_{0,1}& \cdots & \phi_{0.k_0}\end{bsmatrix}^t \colon \shF_x \to \kk(x)^{\oplus k_0}$.
Then there are line bundles $\shL$
and $\shO_X$-linear maps $\psi_0\colon \shF \to \shO_{Z_x}^{\oplus k_0} \otimes \shL$
such that $\phi_{0} \colon \shF_x \xr{\psi_{0,x}} \left(\shO_{Z_x}^{\oplus k_0}\otimes \shL\right)_x \iso \kk(x)^{\oplus k_0}$ by \cref{lem:lift local hom to global hom}.
Setting $\shG_0:=\shO_{Z_x}^{\oplus k_0} \otimes \shL \in \catX$ and $\shF_1:=\Ker(\psi_0)$,
we have the following exact sequence:
\[
0\to \shF_1 \to \shF \to \shG_0.
\]
Iterating this procedure, we obtain exact sequences
\begin{equation}\label{eq:Ass=1 Z_x 1}
0 \to \shF_{i+1} \xr{\iota_i} \shF_i  \xr{\psi_i} \shG_i,
\end{equation}
which satisfies the following:
\begin{itemize}
\item
$\shF_0=\shF$ and $\shG_i\in\catX$ for any $i$.
\item
There is an $\shO_{X,x}$-isomorphism $\shG_{i,x} \iso \kk(x)^{\oplus k_i}$
such that the following diagram commutes:
\[
\begin{tikzcd}[ampersand replacement=\&]
\shF_{i,x} \arr{r,"\psi_{i,x}"} \arr{rd,"{\begin{bsmatrix}\phi_{i,1} \\ \vdots \\ \phi_{i,k_i} \end{bsmatrix}}"'} \& \shG_{i,x} \arr{d,"\iso"}\\
\& \kk(x)^{\oplus k_i}
\end{tikzcd}
\]
\item
$\Hom_{\shO_{X,x}}(\shF_{i,x},\kk(x))$ is generated by $\phi_{i,1},\dots \phi_{i,k_i}$
as an $\shO_{X,x}$-module.
\end{itemize}
Now, we have the descending chain
$\shF=\shF_0 \supseteq \shF_1 \supseteq \shF_2 \supseteq \cdots$
of coherent $\shO_X$-submodules of $\shF$.
Then $\shF_x$ is an $\shO_{X,x}$-module of finite length since $\Ass_{\shO_{X,x}} \shF_x=\{\mm_x\}$.
Thus, the descending chain
$\shF_x=\shF_{0,x} \supseteq \shF_{1,x} \supseteq \shF_{2,x} \supseteq \cdots$
becomes stationary,
that is, there is some $n \ge 0$ such that $\shF_{n,x}=\shF_{n+1,x}=\shF_{n+2,x}=\cdots$.
The exact sequence
\[
0 \to \shF_{n+1,x} \xr{\iso} \shF_{n,x} \xr{\begin{bsmatrix} \phi_{n,1} \\ \vdots \\ \phi_{n,k_{n}} \end{bsmatrix}} \kk(x)^{\oplus k_{n}}
\]
shows that $\phi_{n,1}=\cdots=\phi_{n,k_{n}}=0$,
and hence we have $\Hom_{\shO_{X,x}}(\shF_{n,x},\kk(x))=0$.
This implies $\shF_{n,x}=0$. 
Thus, we obtain $\shF_{n}=0$ because $\Ass \shF_{n} \subseteq \Ass \shF=\{x\}$.
Then we conclude that $\shF \in \catX$ 
by the exact sequences \eqref{eq:Ass=1 Z_x 1} and \cref{lem:half ex}.
\end{proof}

\begin{lem}\label{lem:Ass catX inclusion}
Let $X$ be a divisorial noetherian scheme.
For any torsionfree classes $\catX, \catY \subseteq \coh X$ 
closed under tensoring with line bundles,
we have that $\catX \subseteq \catY$ if and only if $\Ass\catX \subseteq \Ass\catY$.
In particular, we have that $\catX = \catY$ if and only if $\Ass\catX = \Ass\catY$.
\end{lem}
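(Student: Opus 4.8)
The plan is to dispatch the forward implication immediately and then concentrate on the converse. If $\catX \subseteq \catY$, then since $\Ass \catX = \bigcup_{\shF \in \catX} \Ass \shF$ is a union over a smaller index set than $\Ass \catY = \bigcup_{\shG \in \catY} \Ass \shG$, we obtain $\Ass \catX \subseteq \Ass \catY$ with no work at all.

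For the converse, assume $\Ass \catX \subseteq \Ass \catY$. I would first reduce to objects with a single associated point by invoking \cref{lem:torf Ass=1}, which yields $\catX = \gen{\shF \in \catX \mid \# \Ass \shF = 1}_{\ext}$. Since $\catY$ is a torsionfree class and in particular extension-closed, it suffices to show that each generator $\shF \in \catX$ with $\# \Ass \shF = 1$ already lies in $\catY$; the extension closure will then take care of the rest, giving $\catX = \gen{\dots}_{\ext} \subseteq \catY$.

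So fix such an $\shF$ and write $\Ass \shF = \{x\}$. The crucial point is that $x \in \Ass \catX \subseteq \Ass \catY$ by hypothesis. Applying \cref{lem:char x in Ass} to the torsionfree class $\catY$, this membership is equivalent to $\shO_{Z_x} \in \catY$. Now \cref{lem:Ass=1 Z_x}, again applied to $\catY$, says precisely that once $\shO_{Z_x} \in \catY$ every coherent sheaf whose set of associated points is exactly $\{x\}$ belongs to $\catY$; in particular $\shF \in \catY$. This establishes $\catX \subseteq \catY$, and the ``in particular'' clause follows by applying the equivalence in both directions.

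The heavy lifting has already been carried out in the three preceding lemmas, so the only real task here is to verify that they interlock correctly: \cref{lem:torf Ass=1} performs the d\'evissage to single-associated-point sheaves, \cref{lem:char x in Ass} translates a point of $\Ass \catY$ into the statement that the structure sheaf $\shO_{Z_x}$ lies in $\catY$, and \cref{lem:Ass=1 Z_x} then upgrades that single structure sheaf to all sheaves sharing the associated point $x$. I do not anticipate a genuine obstacle; the one thing to be careful about is that both \cref{lem:char x in Ass} and \cref{lem:Ass=1 Z_x} are invoked for $\catY$ rather than $\catX$, and that their standing hypotheses --- namely that $\catY$ is a torsionfree class closed under tensoring with line bundles --- are exactly what is assumed.
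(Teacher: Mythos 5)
Your proof is correct and follows exactly the paper's own argument: the forward direction is immediate from the definition of $\Ass$ of a subcategory, and the converse combines \cref{lem:torf Ass=1} (d\'evissage of $\catX$ to sheaves with a single associated point), \cref{lem:char x in Ass} applied to $\catY$ (to get $\shO_{Z_x}\in\catY$), and \cref{lem:Ass=1 Z_x} applied to $\catY$ (to conclude $\shF\in\catY$), with extension-closedness of $\catY$ finishing the job. No gaps; the care you take about which category each lemma is applied to is exactly the right point to check.
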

\begin{proof}
The only if part is obvious.
Suppose that $\Ass\catX \subseteq \Ass\catY$.
It is enough to show that any $\shF\in\catX$ such that $\Ass\shF=\{x\}$ belongs to $\catY$
by \cref{lem:torf Ass=1}.
Since $x \in \Ass\shF \subseteq \Ass\catX \subseteq \Ass \catY$,
we have that $\shO_{Z_x} \in \catY$ by \cref{lem:char x in Ass}.
Thus, we obtain $\shF \in \catY$ by \cref{lem:Ass=1 Z_x}.
\end{proof}

We now classify the torsionfree classes closed under tensoring with line bundles.
\begin{thm}\label{thm:Takahashi for scheme}
Let $X$ be a divisorial noetherian scheme.
\begin{enua}
\item
$\coh^{\ass}_{\Phi} X$ is a torsionfree class of $\coh X$ closed under tensoring with line bundles
for any subset $\Phi$ of $X$.
\item
$\Phi=\Ass\left(\coh^{\ass}_{\Phi} X\right)$ holds
for any subset $\Phi$ of $X$.
\item
$\catX=\coh^{\ass}_{\Ass\catX} X$ holds
for any torsionfree class $\catX$ of $\coh X$ closed under tensoring with line bundles.
\item
The assignments $\Phi \mapsto \coh^{\ass}_{\Phi} X$ and $\catX \mapsto \Ass\catX$
give mutually inverse bijections between the following sets:
\begin{itemize}
\item
The power set of $X$.
\item 
The set of torsionfree classes of $\coh X$ closed under tensoring with line bundles.
\end{itemize}
\end{enua}
\end{thm}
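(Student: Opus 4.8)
The statement decomposes into the two behaviour claims (a)--(b), the reconstruction claim (c), and the formal consequence (d), and the plan is to prove them in this order, leaning on the elementary properties of associated points recalled in \S\ref{ss:Ass} together with the comparison \cref{lem:Ass catX inclusion}, which already encapsulates the hard content. For (a) I would verify the three closure properties of $\coh^{\ass}_{\Phi} X$ separately. Closure under subobjects and under extensions is immediate from the inclusions $\Ass\shG \subseteq \Ass\shF$ for a subsheaf $\shG \subseteq \shF$ and $\Ass\shF \subseteq \Ass\shF' \cup \Ass\shF''$ for a short exact sequence $0\to\shF'\to\shF\to\shF''\to 0$: if the relevant terms have associated points inside $\Phi$, then so does the one in question. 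Closure under tensoring with a line bundle $\shL$ follows because $\shL_x$ is free of rank one, so that $\shF_x \iso (\shF\otimes\shL)_x$ as $\shO_{X,x}$-modules and hence $\Ass(\shF\otimes\shL)=\Ass\shF$; thus twisting does not enlarge the associated locus.

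For (b), the inclusion $\Ass\left(\coh^{\ass}_{\Phi} X\right) \subseteq \Phi$ holds by definition, since every object $\shF$ of $\coh^{\ass}_{\Phi} X$ satisfies $\Ass\shF \subseteq \Phi$, whence the union of all such $\Ass\shF$ is contained in $\Phi$. For the reverse inclusion, given $x \in \Phi$ I would exhibit a witness in $\coh^{\ass}_{\Phi} X$ whose associated locus contains $x$, namely the structure sheaf $\shO_{Z_x}$ of the reduced closed subscheme $Z_x$ with generic point $x$: it is coherent because $X$ is noetherian, and $\Ass_X \shO_{Z_x}=\{x\}$ (the fact already used in \cref{prp:ass pt inj}). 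Since $x\in\Phi$, this gives $\shO_{Z_x}\in\coh^{\ass}_{\Phi} X$ and therefore $x\in\Ass\shO_{Z_x}\subseteq\Ass\left(\coh^{\ass}_{\Phi} X\right)$.

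For (c) I would apply \cref{lem:Ass catX inclusion} to $\catX$ and $\catY := \coh^{\ass}_{\Ass\catX} X$. By (a) the latter is a torsionfree class closed under tensoring with line bundles, so the comparison lemma is applicable, and by (b) we have $\Ass\catY = \Ass\left(\coh^{\ass}_{\Ass\catX}X\right)=\Ass\catX$; the lemma then forces $\catX=\catY$, which is exactly the desired identity. Finally (d) is purely formal: (a) shows that $\Phi\mapsto\coh^{\ass}_{\Phi} X$ takes values among torsionfree classes closed under tensoring with line bundles, (b) says that this assignment followed by $\catX\mapsto\Ass\catX$ is the identity on the power set of $X$, and (c) says that the composite taken in the other order is the identity on such subcategories, so the two assignments are mutually inverse bijections.

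The genuine difficulty in this theorem does not lie in the assembly above but in the comparison \cref{lem:Ass catX inclusion} it invokes, and thereby in \cref{lem:torf Ass=1,lem:char x in Ass,lem:Ass=1 Z_x}, whose combined content is that a torsionfree class closed under tensoring with line bundles is reconstructed from its objects with a single associated point, and these in turn from the sheaves $\shO_{Z_x}$. Conditional on those lemmas, the only point requiring care at the level of the theorem is that the properties of $\Ass$ used in (a)--(b) --- stability under passage to subsheaves and extensions, the single associated point of $\shO_{Z_x}$, and invariance under twisting by a line bundle --- all remain valid for coherent sheaves on a noetherian scheme rather than merely for finitely generated modules over a ring; these are precisely the facts collected in \S\ref{ss:Ass}.
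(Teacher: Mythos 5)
Your proposal is correct and follows essentially the same route as the paper: part (1) via the standard stability of $\Ass$ under subobjects, extensions, and twisting; part (2) via the witness $\shO_{Z_x}$ with $\Ass_X\shO_{Z_x}=\{x\}$; part (3) by feeding $\Ass\catX=\Ass\bigl(\coh^{\ass}_{\Ass\catX}X\bigr)$ into \cref{lem:Ass catX inclusion}; and part (4) formally. Your observation that the inclusion $\Ass\bigl(\coh^{\ass}_{\Phi}X\bigr)\subseteq\Phi$ is immediate from the definition is a mild simplification of the paper's detour through \cref{lem:char x in Ass}, but the substance and the reliance on \cref{lem:Ass catX inclusion} are identical.
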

\begin{proof}
(1)
It is obvious.

(2)
For any $x\in \Phi$, we have that $\shO_{Z_x} \in \coh^{\ass}_{\Phi} X$.
Thus $\Phi \subseteq \Ass\left(\coh^{\ass}_{\Phi} X\right)$ holds.
If $x\in \Ass\left(\coh^{\ass}_{\Phi} X\right)$,
then $\shO_{Z_x} \in \coh^{\ass}_{\Phi} X$ by \cref{lem:char x in Ass}.
Then we have $x \in \Ass_X \shO_{Z_x} \subseteq \Phi$, 
and hence $\Ass\left(\coh^{\ass}_{\Phi} X\right) \subseteq \Phi$ holds.
Therefore, we obtain $\Phi = \Ass\left(\coh^{\ass}_{\Phi} X\right)$.

(3)
It follows from
$\Ass \catX = \Ass(\coh^{\ass}_{\Ass\catX} X)$ and \cref{lem:Ass catX inclusion}.

(4)
It follows from (1)--(3).
\end{proof}

\begin{rmk}
In \cite{CS},
torsionfree classes of $\coh X$ closed under tensoring with line bundles were also studied
and some kinds of them were classified via cotilting objects.
It seems that their study is strongly related to the results in this subsection,
but it does not overlap.
\end{rmk}
\subsection{Classifying torsion classes closed under tensoring with line bundles}\label{ss:tor in coh}
Let $X$ be a divisorial noetherian scheme.
In this subsection,
we classify torsion classes of $\coh X$ closed under tensoring with line bundles.
More precisely, we prove that torsion classes closed under tensoring with line bundles
coincide with Serre subcategories (cf.\ \cref{thm:Stanley-Wang for scheme}).
This generalizes Stanley and Wang's classification of torsion classes of 
the category of finitely generated modules over a commutative noetherian ring 
(cf.\ \cref{fct:tor=Serre}).

%

We first recall 
Gabriel's classification of Serre subcategories of the category of coherent sheaves.
Let $X$ be a noetherian scheme.
Consider the following assignments:
\begin{itemize}
\item
For a subset $\Phi$ of $X$,
define a subcategory $\coh_{\Phi} X$ of $\coh X$ by
\[
\coh_{\Phi} X :=\{\shF \in \coh X \mid \Supp \shF \subseteq \Phi\}.
\]
\item
For a subcategory $\catX$ of $\coh X$,
define a subset $\Supp \catX$ of $X$ by
\[
\Supp \catX := \bigcup_{\shF \in \catX} \Supp \shF.
\]
\end{itemize}

\begin{fct}[{\cite[Proposition VI.2.4]{Gabriel}, see also \cite[Theorem 5.5]{atom}}]\label{fct:Gabriel}
Let $X$ be a noetherian scheme.
The assignments $\catX \mapsto \Supp\catX$ and $Z \mapsto \coh_{Z} X$
give mutually inverse bijections between the following sets:
\begin{itemize}
\item
The set of Serre subcategories of $\coh X$.
\item
The set of specialization-closed subsets of $X$.
\end{itemize}
\end{fct}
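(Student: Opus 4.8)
The plan is to verify that the two assignments are well defined and mutually inverse, the real content residing in a single inclusion. Throughout, the workhorse is the additivity of support: for a short exact sequence $0\to\shA\to\shB\to\shC\to0$ in $\coh X$ one has $\Supp\shB=\Supp\shA\cup\Supp\shC$, which I would record first.

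First the routine directions. For any subcategory $\catX$, the set $\Supp\catX=\bigcup_{\shF\in\catX}\Supp\shF$ is a union of closed subsets of $X$, each of which is specialization-closed; hence $\Supp\catX$ is specialization-closed, with no hypothesis on $\catX$. Conversely, for a specialization-closed $Z$, additivity of support shows at once that $\coh_Z X$ is closed under subobjects, quotients and extensions, so it is a Serre subcategory. The identity $\Supp(\coh_Z X)=Z$ is then immediate: the inclusion $\subseteq$ holds by definition, and for $x\in Z$ the structure sheaf $\shO_{Z_x}$ satisfies $\Supp\shO_{Z_x}=\ol{\{x\}}\subseteq Z$ (using that $Z$ is specialization-closed) and $x\in\Supp\shO_{Z_x}$, witnessing $x\in\Supp(\coh_Z X)$. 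Finally, the inclusion $\catX\subseteq\coh_{\Supp\catX}X$ is trivial, since $\Supp\shF\subseteq\Supp\catX$ for every $\shF\in\catX$.

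The crux is the reverse inclusion $\coh_{\Supp\catX}X\subseteq\catX$ for a Serre subcategory $\catX$. I would argue by noetherian induction on the closed set $\Supp\shF$, the case $\shF=0$ being clear. By the standard dévissage of coherent sheaves on a noetherian scheme, $\shF$ admits a finite filtration whose successive quotients have the form $\iota_*\shM$, where $\iota\colon Z=\ol{\{x\}}\inj X$ is the inclusion of an integral closed subscheme with generic point $x\in\Supp\shF\subseteq\Supp\catX$ and $\shM$ is coherent on $Z$; since $\catX$ is extension-closed it suffices to place each such quotient in $\catX$. Filtering $\shM$ further into rank-one torsion-free $\shO_Z$-modules and coherent modules supported on proper closed subsets of $Z$, and disposing of the latter by the inductive hypothesis, the problem reduces to showing $\iota_*\shO_Z=\shO_{Z_x}\in\catX$ and, more generally, that every rank-one torsion-free sheaf on $Z$ pushes into $\catX$, whenever $x\in\Supp\catX$. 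To obtain $\shO_{Z_x}\in\catX$, choose $\shG\in\catX$ with $\shG_x\neq0$; the closed-immersion counit is a surjection $\shG\surj\iota_*(\iota^*\shG)$, so $\iota_*(\iota^*\shG)\in\catX$ as $\catX$ is closed under quotients, and $\iota^*\shG$ is nonzero at $x$. Extracting a rank-one torsion-free subquotient $\shL$ of $\iota^*\shG$ places $\iota_*\shL\in\catX$, and comparing $\shL$ with $\shO_Z$ inside the constant sheaf $K(Z)$ and clearing denominators realises $\shL$ as a coherent ideal sheaf, yielding a short exact sequence $0\to\iota_*\shL\to\shO_{Z_x}\to\iota_*\shQ\to0$ with $\shQ$ supported on a proper closed subset of $Z$. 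Since $\Supp(\iota_*\shQ)\subsetneq\ol{\{x\}}$ lies in $\Supp\catX$, the inductive hypothesis gives $\iota_*\shQ\in\catX$, whence $\shO_{Z_x}\in\catX$ by extension-closedness; the same function-field comparison, now run against $\shO_{Z_x}$, then deposits every rank-one torsion-free sheaf on $Z$ into $\catX$.

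The main obstacle is exactly this last reduction, and it is the point at which the contrast with torsionfree classes becomes visible. One cannot in general produce an embedding $\shO_{Z_x}\inj\shG$ from a mere membership $x\in\Supp\catX$ — this is the phenomenon recorded in the discussion preceding \cref{prp:ass pt inj} — so the argument must instead exploit that a Serre subcategory is closed under quotients as well as subobjects, restricting $\shG$ to $Z$ and descending through a rank-one torsion-free sheaf. The comparison of that sheaf with $\shO_Z$ introduces an error term supported in strictly lower dimension, which is precisely what the noetherian induction is set up to absorb. This two-sided flexibility is what renders the structure-sheaf obstruction harmless for Serre subcategories, whereas for torsionfree classes (closed only under subobjects) it persists and is what forces the twisting by line bundles developed earlier in this section.
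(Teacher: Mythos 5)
The paper states this result as a \emph{Fact} and cites Gabriel and Kanda for its proof, so there is no in-text argument to compare against; I assess your proof on its own terms. The architecture is right: the routine verifications are correct, d\'evissage plus noetherian induction is the standard route, and you correctly locate the crux in showing $\shO_{Z_x}\in\catX$ for $x\in\Supp\catX$, exploiting closure under quotients (restriction to $Z=\ol{\{x\}}$) where a torsionfree class would be stuck.

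However, one step fails as written: the assertion that ``clearing denominators realises $\shL$ as a coherent ideal sheaf,'' i.e.\ that the rank-one torsion-free sheaf $\shL$ on the integral scheme $Z$ embeds into $\shO_Z$. A global embedding $\shL\inj\shO_Z$ amounts to a nonzero global section of $\shHom(\shL,\shO_Z)$, and on a non-affine $Z$ this sheaf, though nonzero and generically of rank one, may have no nonzero global sections: for $Z=\bbP^1$ and $\shL=\shO(1)$ one has $\shHom(\shL,\shO_Z)\iso\shO(-1)$, whose global sections vanish. Worse, the opposite comparison can fail simultaneously: on an elliptic curve with $\shL$ a nontrivial degree-zero line bundle there is no nonzero map between $\shL$ and $\shO_Z$ in either direction. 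This is exactly the global-sections obstruction you correctly flag for the embedding $\shO_{Z_x}\inj\shG$ in your closing paragraph --- it does not disappear when the arrow is reversed. The repair is standard and stays within your framework: inside the constant sheaf $K(Z)$ set $\shM:=\shL\cap\shO_Z$. Since stalks commute with this intersection, $\shM$ is a coherent rank-one subsheaf of both $\shL$ and $\shO_Z$, and both quotients $\shL/\shM$ and $\shO_Z/\shM$ are torsion, i.e.\ supported on proper closed subsets of $\ol{\{x\}}$. Closure under subobjects gives $\iota_*\shM\in\catX$ from $\iota_*\shL\in\catX$, and the exact sequence $0\to\iota_*\shM\to\shO_{Z_x}\to\iota_*(\shO_Z/\shM)\to0$ together with the noetherian induction then yields $\shO_{Z_x}\in\catX$; the same intersection trick (or simply the observation that the d\'evissage quotients are already pushforwards of ideal sheaves of $\shO_Z$, hence subobjects of $\shO_{Z_x}$) replaces your final sentence. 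With this substitution the proof is correct.
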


\begin{rmk}\label{rmk:Serre L-closed}
Let $X$ be a noetherian scheme.
Then any Serre subcategory of $\coh X$ is of the form $\coh_Z X$ by \cref{fct:Gabriel}.
In particular, every Serre subcategory of $\coh X$ is closed under tensoring with line bundles.
If $X$ has an ample line bundle, we can prove this without \cref{fct:Gabriel}
(see \cref{lem:Serre in torf ample stable}).
However, the author does not know a direct proof for the general case
without \cref{fct:Gabriel} at the moment.
\end{rmk}

\begin{rmk}
Let $X$ be a noetherian scheme.
For a specialization-closed subset $Z$ of $X$,
we have that $\coh_Z^{\ass} X = \coh_Z X$.
Indeed,
it is enough to show that 
$\Ass \shF \subseteq Z$ if and only if $\Supp \shF \subseteq Z$ 
for any $\shF \in \coh X$.
It follows from $\Min \shF \subseteq \Ass \shF \subseteq \Supp \shF$.
On the other hand, 
for a Serre subcategory $\catX$ of $\coh X$,
we have that $\Ass \catX= \Supp \catX$.
It is clear that $\Ass \catX \subseteq \Supp \catX$.
Take $x \in \Supp \catX$.
Then $\shO_{Z_x}\in \coh_{\Supp \catX} X =\catX$, and hence $x\in \Ass \catX$.

Since Serre subcategories of $\coh X$ are
torsionfree classes closed under tensoring with line bundles,
we obtain the following commutative diagram:
\[
\begin{tikzcd}
\{\text{torsionfree classes closed under tensoring with line bundles}\} \arrow[r,shift left,"\Ass"] & \{\text{arbitrary subsets}\} \arrow[l,shift left,"{\coh_{(-)}^{\ass}X}"]\\
\{\text{Serre subcategories}\} \arr{u,hook} \arrow[r,shift left,"\Supp"] & \{\text{specialization-closed subsets}\}. \arr{u,hook} \arrow[l,shift left,"{\coh_{(-)}X}"]
\end{tikzcd}
\]
\end{rmk}

We prove the lemmas needed to prove \cref{thm:Stanley-Wang for scheme} below.
\begin{lem}\label{lem:Z_x filtration}
Let $X$ be a divisorial noetherian scheme and $\shF$ a coherent $\shO_X$-module.
Let $\catX$ be an extension-closed subcategory of $\coh X$
closed under tensoring with line bundles.
If $\shO_{Z_x}\in\catX$ for any $x\in \Supp \shF$,
then $\shF \in \catX$.
\end{lem}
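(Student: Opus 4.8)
The plan is to reduce $\shF$ to its associated-point strata by constructing a finite filtration
\[
0=\shF_0\subsetneq \shF_1 \subsetneq \cdots \subsetneq \shF_n=\shF
\]
whose subquotients have the shape $\shF_i/\shF_{i-1}\iso\shO_{Z_{x_i}}\otimes\shL_i$ with $x_i\in\Supp\shF$ and $\shL_i$ a line bundle, and then to collapse it: each subquotient lies in $\catX$ because $\shO_{Z_{x_i}}\in\catX$ by hypothesis and $\catX$ is closed under tensoring with line bundles, so an induction on the length using extension-closedness (together with $0\in\catX$) yields $\shF\in\catX$.

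The essential input for the filtration is \cref{prp:ass pt inj}, which I would first recast as follows. If $\shG$ is a nonzero coherent $\shO_X$-module, pick $x\in\Ass\shG$ (nonempty since $\shG\ne 0$); \cref{prp:ass pt inj} gives an injection $\shO_{Z_x}\inj\shG(\alpha;n)$ for some $\alpha\in\LL$ and $n>0$, and tensoring with the line bundle $\shO(\alpha;-n)$ (which preserves injectivity) produces an embedding $\shO_{Z_x}(\alpha;-n)\inj\shG$. Thus every nonzero coherent sheaf contains a subsheaf isomorphic to a line-bundle twist of some $\shO_{Z_x}$ with $x\in\Ass\shG\subseteq\Supp\shG$.

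To build the filtration I would run a noetherian-induction (maximal-element) argument. Let $\Sigma$ be the set of coherent subsheaves of $\shF$ admitting a finite filtration of the above form with all $x_i\in\Supp\shF$; it contains $0$, and since $\shF$ is a noetherian object of $\Qcoh X$ it possesses a maximal element $\shG_0$. If $\shG_0\ne\shF$, then $\shF/\shG_0$ is a nonzero coherent sheaf, so the recast of \cref{prp:ass pt inj} yields $y\in\Ass(\shF/\shG_0)$ and an embedding $\shO_{Z_y}(\beta;-m)\inj\shF/\shG_0$. Because $\Supp(\shF/\shG_0)\subseteq\Supp\shF$ (it is a quotient of $\shF$), we have $y\in\Supp\shF$; taking the preimage $\shG_1$ of this subsheaf under the surjection $\shF\surj\shF/\shG_0$ gives $\shG_0\subsetneq\shG_1$ with $\shG_1/\shG_0\iso\shO_{Z_y}(\beta;-m)$, whence $\shG_1\in\Sigma$, contradicting maximality. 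Hence $\shG_0=\shF$ and the filtration exists.

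Finally, the collapse is routine: for each $i$, $x_i\in\Supp\shF$ gives $\shO_{Z_{x_i}}\in\catX$ by hypothesis, hence $\shF_i/\shF_{i-1}\iso\shO_{Z_{x_i}}\otimes\shL_i\in\catX$ since $\catX$ is closed under tensoring with line bundles; then extension-closedness propagates membership up the filtration from $\shF_0=0\in\catX$ to $\shF_n=\shF$. The only genuinely scheme-theoretic subtlety---and the reason the hypothesis ``closed under tensoring with line bundles'' is indispensable---is that on a scheme $\shO_{Z_x}$ need not embed into $\shF$ on the nose (cf.\ \cite[Example B.11]{CS}); only a twist does, and the twists are precisely what the tensoring hypothesis lets us absorb.
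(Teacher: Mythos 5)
Your proof is correct and follows essentially the same route as the paper's: both use \cref{prp:ass pt inj} to extract a subsheaf of the form $\shO_{Z_x}\otimes\shL$ with $x\in\Ass\shF\subseteq\Supp\shF$, iterate on the quotient (whose support stays inside $\Supp\shF$) with noetherianness guaranteeing termination, and then collapse the resulting filtration using extension-closedness together with closure under tensoring with line bundles. Your maximal-element formulation is just a more formal packaging of the paper's ``iterate this process for $\Cok(i)$ whenever it is nonzero.''
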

\begin{proof}
Suppose that $\shF \ne 0$ since the case where $\shF=0$ is obvious.
Take $x \in \Ass \shF\subseteq \Supp \shF$.
Then there is an injective $\shO_X$-linear map $i\colon \shO_{Z_{x}}\otimes \shL \inj \shF$
for some line bundle $\shL$ by \cref{prp:ass pt inj}.
Iterating this process for $\Cok(i)$ whenever it is nonzero, we find that
\[
\shF\in \gen{\shO_{Z_x}\otimes \shL \mid \text{$x\in \Supp \shF$, $\shL$ is a line bundle}}_{\ext} \subseteq \catX
\]
since $\shF$ is a noetherian object in $\Qcoh X$.
\end{proof}

\begin{lem}\label{lem:char x in Supp catX}
Let $X$ be a divisorial noetherian scheme
and $\catX$ a tensor-ideal subcategory of $\coh X$ 
closed under direct summands and extensions.
For any point $x\in X$,
we have that $x\in \Supp \catX$ if and only if $\shO_{Z_x} \in \catX$.
\end{lem}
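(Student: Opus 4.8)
The plan is to prove the two implications separately: the forward one is immediate, and the converse will be a Noetherian induction on $\ol{\{x\}}$ whose single hard input is a generic-splitting construction.

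\emph{The forward direction.} If $\shO_{Z_x}\in\catX$, then $x\in\Supp\shO_{Z_x}\subseteq\Supp\catX$, and there is nothing more to do. For the converse, suppose $x\in\Supp\catX$, so there is $\shF\in\catX$ with $\shF_x\ne0$. Since $\catX$ is tensor-ideal, $\shG:=\shF\otimes\shO_{Z_x}\in\catX$, and by Nakayama $\shG_x\cong\shF_x\otimes_{\shO_{X,x}}\kk(x)\ne0$. As $\shG$ is annihilated by the ideal sheaf of $Z_x$, its support is a closed subset of the integral scheme $Z_x$ containing the generic point $x$, hence $\Supp\shG=Z_x$ and $x$ is the unique minimal, thus associated, point of $\shG$. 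Applying \cref{prp:ass pt inj} and tensor-ideality once more, I may pass to a line-bundle twist of $\shG$ and assume there is an injection $\shO_{Z_x}\inj\shH$ with $\shH\in\catX$, $\Supp\shH=Z_x$, and positive generic rank $d\ge1$ over $Z_x$.

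\emph{The induction.} Among all $w\in\Supp\catX$ with $\shO_{Z_w}\notin\catX$ (if any exist) I choose one with $\ol{\{w\}}$ minimal, and derive a contradiction. Running the reduction above at $w$ yields $\shH\in\catX$ with $\Supp\shH=Z_w$. For every $y\in Z_w$ with $y\ne w$ one has $\ol{\{y\}}\subsetneq\ol{\{w\}}$ and $y\in\Supp\shH\subseteq\Supp\catX$, so minimality forces $\shO_{Z_y}\in\catX$; hence, by \cref{lem:Z_x filtration}, \emph{every} coherent sheaf whose support is a proper closed subset of $Z_w$ lies in $\catX$. The problem now reduces to producing a single nonzero coherent subsheaf $\shA\subseteq\shO_{Z_w}$ (a nonzero ideal sheaf of the integral scheme $Z_w$) with $\shA\in\catX$: the quotient $\shO_{Z_w}/\shA$ is then a torsion sheaf whose support is a proper closed subset of $Z_w$, so it lies in $\catX$, and the conflation $0\to\shA\to\shO_{Z_w}\to\shO_{Z_w}/\shA\to0$ together with extension-closedness gives $\shO_{Z_w}\in\catX$, the desired contradiction.

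\emph{The crux, which is the main obstacle.} It remains to construct $\shA\in\catX$. Over $Z_w$ with function field $K=\kk(w)$ the sheaf $\shH$ is generically free of rank $d\ge1$, so there is a $K$-linear surjection $\shH_w\to(\shO_{Z_w})_w=K$; lifting it by \cref{lem:lift local hom to global hom} produces, up to tensoring with a line bundle $\shL$, an $\shO_X$-linear map $\shH\to\shO_{Z_w}\otimes\shL$ whose image is a nonzero rank-one subsheaf of the twist $\shO_{Z_w}\otimes\shL$ and which generically splits off a copy of $\shO_{Z_w}$ from $\shH$. The difficulty — and precisely the reason the whole theory is developed \emph{up to tensoring with line bundles} — is that this splitting is defined only over the dense open complement of the vanishing locus of the section of $\shL$, so summand-closedness cannot be invoked globally as it stands; the line-bundle twist must be absorbed so that the rank-one piece is realized either as a genuine direct summand of some $\shH\otimes\shN\in\catX$ (then summand-closedness applies) or as the ideal-sheaf term of a conflation whose remaining terms are already in $\catX$. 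This is the global, twist-sensitive analogue of the affine argument of Iima--Matsui--Shimada--Takahashi, and I expect it to be the delicate point; once $\shA\in\catX$ is secured the induction closes. (Equivalently, one could try to show directly that $\catX$ is closed under subobjects: it would then be an $L$-closed torsionfree class, and \cref{thm:Takahashi for scheme} with \cref{lem:char x in Ass}, together with the identity $\Supp\catX=\Ass\catX$ valid for tensor-ideal $\catX$, would immediately give $\shO_{Z_x}\in\catX\Leftrightarrow x\in\Supp\catX$; this reformulation isolates the very same obstruction.)
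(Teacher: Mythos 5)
Your forward direction, your Noetherian-induction setup, and your reduction to ``produce one nonzero ideal sheaf $\shA\subseteq\shO_{Z_w}$ lying in $\catX$'' are all sound and match the shape of the paper's argument (which runs an induction on $\dim Z_x$ and likewise observes that every coherent sheaf supported on a proper closed subset of $Z_x$ already lies in $\catX$). But the step you flag as the crux is genuinely missing, and the difficulty is real: the image of your map $\shH\to\shO_{Z_w}\otimes\shL$ need not lie in $\catX$, because $\catX$ is only assumed closed under direct summands and extensions, not under images or subobjects. No amount of absorbing the line-bundle twist fixes this; the obstruction is the direction of the construction, not the twist.

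The paper resolves it by going the opposite way: instead of extracting a subsheaf of $\shO_{Z_x}$ from $\shG:=\shF\otimes\shO_{Z_x}$, it embeds a suitable quotient of $\shG$ \emph{into} a twist of $\shO_{Z_x}^{\oplus n}$ with small cokernel. Concretely, using the primary decomposition (\cref{fct:primary}) one produces a quotient $\shG\surj\shG_0$ with $\Ass\shG_0=\{x\}$; the cokernel of $\shG\to\bigoplus_i\shG_i$ has proper support, so extension-closedness plus summand-closedness put $\shG_0$ in $\catX$. Since $\shG_0\iso i_*i^*\shG_0$, the stalk $\shG_{0,x}$ is a finite-dimensional $\kk(x)$-vector space, and lifting $\shG_{0,x}\iso\kk(x)^{\oplus n}$ via \cref{lem:lift local hom to global hom} yields $\psi\colon\shG_0\to\shO_{Z_x}^{\oplus n}\otimes\shL$ with $\psi_x$ an isomorphism. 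Then $\Ker\psi=0$ automatically (its associated points would lie in $\Ass\shG_0=\{x\}$, but $x\notin\Ass\Ker\psi$) and $\Cok\psi$ has proper support, hence lies in $\catX$. Extension-closedness applied to $0\to\shG_0\to\shO_{Z_x}^{\oplus n}\otimes\shL\to\Cok\psi\to0$ gives $\shO_{Z_x}^{\oplus n}\otimes\shL\in\catX$, and tensor-ideality plus summand-closedness finish. Note also that your sentence ``$x$ is the unique minimal, thus associated, point of $\shG$'' glosses over the possibility of embedded associated points of $\shG$; killing those via the primary decomposition is precisely what makes the injectivity of $\psi$ automatic, and it is the ingredient your proposal lacks.
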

\begin{proof}
The proof is inspired by \cite[Lemma 2.1]{IMST}.
It is clear that $\shO_{Z_x} \in \catX$ implies $x\in \Supp \catX$.
We prove the converse by induction on $\dim Z_x$.
Suppose that $x\in \Supp \catX$.
There exists $\shF \in \catX$ such that $x \in \Supp \shF$.
Then $\shG := \shF \otimes \shO_{Z_x}\in \catX$ since $\catX$ is tensor-ideal.
Let $i\colon Z_x \inj X$ be the canonical closed immersion.
Then $\shG \iso i_*i^*\shG$ and $\Supp \shG = \ol{\{x\}}$ hold.
If $\dim Z_x =0$, then $Z_x = \Spec \kk(x)$.
Thus, we have that $\shG \iso \shO_{Z_x}^{\oplus n}$ for some positive integer $n$,
and hence $\shO_{Z_x} \in \catX$ since $\catX$ is closed under direct summands.
Suppose that $\dim Z_x >0$.
We first note that $\shH \in \catX$ if $\Supp \shH \subsetneq \ol{\{x\}}$
for any $\shH \in \coh X$.
Indeed, for any $y\in \Supp \shH$, we have $\dim Z_y < \dim Z_x$,
and thus $\shO_{Z_y} \in \catX$ by the induction hypothesis.
Hence $\shH \in \catX$ by \cref{lem:Z_x filtration}.
Let $y_1,\dots, y_n$ be all the associated points of $\shG$ except $x$.
We have surjective morphisms $\shG \surj \shG_i$ for $0\le i \le n$
satisfying the following by \cref{fct:primary}:
\begin{itemize}
\item 
$\Ass \shG_0=\{x\}$ and $\Ass \shG_i=\{y_i\}$ hold for any $1\le i \le n$.
\item
The canonical morphism $\phi\colon \shG \to \bigoplus_{i=0}^n \shG_i$ is injective.
\end{itemize}
Then $\phi_x$ is surjective since $\shG_{i,x}=0$ for $1 \le i \le n$.
Thus $\Supp (\Cok \phi)$ is a proper closed subset of $\ol{\{x\}}$,
and $\Cok \phi$ belongs to $\catX$.
We obtain $\shG_0 \in \catX$ by the exact sequence
$0 \to \shG \xr{\phi} \bigoplus_{i=0}^n \shG_i \to \Cok \phi \to 0$.
Then $\shG_0 \iso i_*i^*\shG_0$ also holds since $\shG_0$ is a quotient of $\shG$.
From this, we have the following $\shO_{X,x}$-isomorphisms for some positive integer $n$:
\[
\shG_{0,x} \iso \left(i_*i^*\shG_0 \right)_x \iso \shG_{0,x} \otimes_{\shO_{X,x}} \kk(x) 
\iso \kk(x)^{\oplus n}.
\]
Then we have an $\shO_X$-linear map 
$\psi \colon  \shG_0 \to \shO_{Z_x}^{\oplus n} \otimes \shL$ 
for some line bundle $\shL$ such that $\psi_x$ is an isomorphism 
by \cref{lem:lift local hom to global hom}.
On the one hand,
we have $\Ker \psi =0$ because $x \not \in \Ass \Ker \psi \subseteq \Ass \shG_0 = \{x\}$.
On the other hand, we have $\Cok(\psi) \in \catX$ since
$x \not\in  \Supp(\Cok(\psi)) \subseteq \Supp \left(\shO_{Z_x}^{\oplus n} \otimes \shL \right)
=\ol{\{x\}}$.
Therefore, we obtain $\shO_{Z_x} \in \catX$ by the following exact sequence
since $\catX$ is a tensor-ideal subcategory closed under direct summands and extensions:
\[
0 \to \shG_0 \xr{\psi} \shO_{Z_x}^{\oplus n} \otimes \shL \to \Cok \psi \to 0.
\]
This proves the lemma by induction.
\end{proof}

\begin{lem}\label{lem:Supp catX inclusion}
Let $X$ be a divisorial noetherian scheme.
For any tensor-ideal subcategories $\catX$ and $\catY$ of $\coh X$ 
closed under direct summands and extensions,
we have that $\catX \subseteq \catY$ if and only if $\Supp\catX \subseteq \Supp\catY$.
In particular, we have that $\catX = \catY$ if and only if $\Supp\catX = \Supp\catY$.
\end{lem}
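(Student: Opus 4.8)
The plan is to follow the template of \cref{lem:Ass catX inclusion}, but with associated points replaced by supports and with the two structural lemmas already proved for tensor-ideal subcategories doing the work. The only-if direction is immediate: if $\catX \subseteq \catY$, then every $\shF \in \catX$ lies in $\catY$, so $\Supp \shF \subseteq \Supp \catY$, and taking the union over $\shF \in \catX$ gives $\Supp \catX \subseteq \Supp \catY$.

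For the converse, the first thing I would record is the elementary but essential observation that a \emph{tensor-ideal} subcategory is automatically closed under tensoring with line bundles: taking $\shG$ to be a line bundle in the defining condition $\shF \otimes \shG \in \catX$ gives exactly \cref{dfn:L-closed}. This is what lets both $\catX$ and $\catY$ feed into \cref{lem:char x in Supp catX} and \cref{lem:Z_x filtration}, whose hypotheses require (tensor-ideality and closure under direct summands and extensions for the former, extension-closedness and closure under tensoring with line bundles for the latter).

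Now assume $\Supp \catX \subseteq \Supp \catY$ and fix $\shF \in \catX$. For each $x \in \Supp \shF$ we have $x \in \Supp \catX \subseteq \Supp \catY$, so \cref{lem:char x in Supp catX} applied to $\catY$ yields $\shO_{Z_x} \in \catY$. Thus $\shO_{Z_x} \in \catY$ for every $x \in \Supp \shF$, and since $\catY$ is extension-closed and closed under tensoring with line bundles, \cref{lem:Z_x filtration} gives $\shF \in \catY$. Hence $\catX \subseteq \catY$, and the \textbf{in particular} clause follows by applying this equivalence to the two inclusions $\catX \subseteq \catY$ and $\catY \subseteq \catX$ separately. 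I do not expect a genuine obstacle here: the mathematical content has been isolated in \cref{lem:char x in Supp catX} (the dictionary between points of the support and membership of the structure sheaves $\shO_{Z_x}$) and in \cref{lem:Z_x filtration} (the dévissage of an arbitrary $\shF$ by the $\shO_{Z_x}$ with $x \in \Supp \shF$), so the present lemma is a clean assembly of the two. The one point most worth double-checking is that tensor-ideality implies closure under tensoring with line bundles, since this is precisely what licenses invoking \cref{lem:Z_x filtration} for $\catY$.
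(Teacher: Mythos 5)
Your proof is correct and follows essentially the same route as the paper: the only-if direction is trivial, and the converse combines \cref{lem:char x in Supp catX} with \cref{lem:Z_x filtration} exactly as the paper does. Your extra remark that tensor-ideality immediately gives closure under tensoring with line bundles (so that \cref{lem:Z_x filtration} applies to $\catY$) is a point the paper leaves implicit, and it is worth having made explicit.
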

\begin{proof}
The only if part is obvious.
Suppose that $\Supp\catX \subseteq \Supp\catY$.
Let $\shF \in \catX$.
Then $\Supp \shF \subseteq \Supp \catX \subseteq \Supp \catY$ holds.
We obtain that $\shO_{Z_x} \in \catY$ for any $x\in \Supp \shF$
by \cref{lem:char x in Supp catX}.
This implies $\shF \in \catY$ by \cref{lem:Z_x filtration},
and hence $\catX \subseteq \catY$ holds.
\end{proof}

We now classify the torsion classes closed under tensoring with line bundles.
\begin{thm}\label{thm:Stanley-Wang for scheme}
Let $X$ be a divisorial noetherian scheme.
Then the following are equivalent for a subcategory $\catX$ of $\coh X$:
\begin{enur}
\item
$\catX$ is a Serre subcategory.
\item
$\catX$ is a torsion class closed under tensoring with line bundles.
\item
$\catX$ is a wide subcategory closed under tensoring with line bundles.
\item
$\catX$ is an ICE-closed subcategory closed under tensoring with line bundles.
\item
$\catX$ is a narrow subcategory closed under tensoring with line bundles.
\item
$\catX$ is a tensor-ideal subcategory closed under direct summands and extensions.
\end{enur}
\end{thm}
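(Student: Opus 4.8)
The plan is to prove the cyclic chain of implications
\[
\text{(i)} \Rightarrow \text{(iii)} \Rightarrow \text{(iv)} \Rightarrow \text{(v)} \Rightarrow \text{(vi)} \Rightarrow \text{(i)},
\]
together with $\text{(i)} \Rightarrow \text{(ii)} \Rightarrow \text{(iv)}$, so that all six conditions become equivalent. Most of the arrows are formal consequences of the general implications recorded after \cref{dfn:several subcat} between the various types of subcategories, combined with the trivial observation that the property of being closed under tensoring with line bundles is unchanged when one merely weakens the closure axioms imposed on a fixed subcategory $\catX$. The genuine content lives only at the two ends of the chain, and it has already been isolated in the lemmas of this subsection.

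For the formal steps: a Serre subcategory is both a torsion class and a wide subcategory, and it is closed under tensoring with line bundles by \cref{rmk:Serre L-closed}; this gives $\text{(i)} \Rightarrow \text{(ii)}$ and $\text{(i)} \Rightarrow \text{(iii)}$. A wide subcategory and a torsion class are each ICE-closed, and an ICE-closed subcategory is narrow, so $\text{(iii)} \Rightarrow \text{(iv)}$, $\text{(ii)} \Rightarrow \text{(iv)}$ and $\text{(iv)} \Rightarrow \text{(v)}$ all hold, the hypothesis of closure under tensoring with line bundles being carried along unchanged in each case.

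The first step requiring an argument is $\text{(v)} \Rightarrow \text{(vi)}$. A narrow subcategory is closed under cokernels, so given $\shF \oplus \shG \in \catX$ the idempotent endomorphism $e$ of $\shF \oplus \shG$ projecting onto the second summand has image $\shG$ and cokernel $\shF$, whence $\shF \in \catX$; thus $\catX$ is closed under direct summands. Since $\catX$ is additive, closed under cokernels, and closed under tensoring with line bundles, \cref{lem:char tors closed under tensoring} shows that it is tensor-ideal. As $\catX$ is also extension-closed, condition (vi) follows.

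The decisive step is $\text{(vi)} \Rightarrow \text{(i)}$, and here essentially all the work has been done in \cref{lem:char x in Supp catX}. Assuming (vi), that lemma identifies $\Supp\catX$ with $\{x \in X \mid \shO_{Z_x} \in \catX\}$; since $\ol{\{x\}} = \Supp \shO_{Z_x}$, this set is specialization-closed. For the equality $\catX = \coh_{\Supp\catX} X$, the inclusion $\subseteq$ is immediate, while for $\supseteq$ any $\shF$ with $\Supp\shF \subseteq \Supp\catX$ satisfies $\shO_{Z_x} \in \catX$ for every $x \in \Supp\shF$, so $\shF \in \catX$ by \cref{lem:Z_x filtration} (alternatively, one may invoke \cref{lem:Supp catX inclusion} directly). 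Thus $\catX = \coh_Z X$ for the specialization-closed set $Z = \Supp\catX$, and \cref{fct:Gabriel} shows that $\catX$ is a Serre subcategory, closing the cycle. I expect the only real obstacle to be the content already encapsulated in \cref{lem:char x in Supp catX}; within the proof proper the sole subtlety is the closure of narrow subcategories under direct summands used in $\text{(v)} \Rightarrow \text{(vi)}$.
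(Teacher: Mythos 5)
Your proof is correct and follows essentially the same route as the paper: the formal implications among the closure conditions together with \cref{rmk:Serre L-closed} and \cref{lem:char tors closed under tensoring} for the easy arrows, and \cref{lem:char x in Supp catX} combined with \cref{fct:Gabriel} (via \cref{lem:Z_x filtration} or \cref{lem:Supp catX inclusion}) for (vi)$\imply$(i). The only addition is that you spell out why narrow subcategories are closed under direct summands (cokernel of the idempotent projection), a step the paper leaves implicit.
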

\begin{proof}
The implications (i)$\imply$(ii)$\imply$(iv)$\imply$(v)$\imply$(vi) 
and (i)$\imply$(iii)$\imply$(iv) easily follow from 
\cref{lem:char tors closed under tensoring,rmk:Serre L-closed}.
We now prove (vi)$\imply$(i).
Let $\catX$ be a tensor-ideal subcategory closed under direct summands and extensions.
Then we have $\Supp \catX = \Supp(\coh_{\Supp \catX} X)$ by \cref{fct:Gabriel}.
This implies $\catX = \coh_{\Supp \catX} X$ 
by the implication (i)$\imply$(vi) and \cref{lem:Supp catX inclusion}.
Thus $\catX$ is a Serre subcategory.
\end{proof}

\subsection{Classifying IE-closed subcategories closed under tensoring with line bundles}\label{ss:IE in coh}
Let $X$ be a divisorial noetherian scheme.
In this subsection,
we classify IE-closed subcategories of $\coh X$ closed under tensoring with line bundles.
More precisely,
we prove that for a subcategory of $\coh X$ closed under tensoring with line bundles,
it is a torsionfree class if and only if it is IE-closed (\cref{thm:IE for scheme}).
This generalizes Enomoto's classification of IE-closed subcategories of 
the category of finitely generated modules over a commutative noetherian ring 
(cf.\ \cref{prp:Enomoto's observation}).
This result plays a crucial role in the next section.

Let us begin with an easy observation.
\begin{lem}\label{lem:gen cat L-closed}
Let $X$ be a locally noetherian scheme.
If $\catX$ is a subcategory of $\coh X$ closed under tensoring with line bundles,
then so are $\gen{\catX}_{\sub}$, $\gen{\catX}_{\quot}$ and $\gen{\catX}_{\ext}$.
In particular, $\T(\catX)$ and $\F(\catX)$ are also closed under tensoring with line bundles.
\end{lem}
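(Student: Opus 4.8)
The plan is to exploit the single structural fact that, for any line bundle $\shL$ on $X$, the functor $-\otimes\shL\colon \coh X \to \coh X$ is an exact autoequivalence, with quasi-inverse $-\otimes\shL^{\vee}$. Exactness holds because $\shL$ is locally free of finite rank, hence flat, and the isomorphism $\shL\otimes\shL^{\vee}\iso\shO_X$ provides the quasi-inverse. Consequently $-\otimes\shL$ carries injections to injections, surjections to surjections, and short exact sequences to short exact sequences. Every construction in the statement is assembled from these three operations, so the claim follows by transporting the relevant diagrams along $-\otimes\shL$.

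First I would treat $\gen{\catX}_{\sub}$. Given $\shG\in\gen{\catX}_{\sub}$, choose an injection $\shG\inj\shF$ with $\shF\in\catX$. Applying $-\otimes\shL$ yields an injection $\shG\otimes\shL\inj\shF\otimes\shL$, and $\shF\otimes\shL\in\catX$ by hypothesis; hence $\shG\otimes\shL\in\gen{\catX}_{\sub}$. The argument for $\gen{\catX}_{\quot}$ is identical with surjections in place of injections.

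Next I would handle $\gen{\catX}_{\ext}=\bigcup_{n\ge 0}\catX^{n}$ by induction on $n$, showing that each $\catX^{n}$ is closed under tensoring with line bundles. The base case $\catX^{0}=0$ is trivial. For the inductive step, let $M\in\catX^{n+1}=\catX^{n}*\catX$, so there is a short exact sequence $0\to A\to M\to B\to 0$ with $A\in\catX^{n}$ and $B\in\catX$. Since $-\otimes\shL$ preserves exactness, we obtain a short exact sequence $0\to A\otimes\shL\to M\otimes\shL\to B\otimes\shL\to 0$; here $A\otimes\shL\in\catX^{n}$ by the inductive hypothesis and $B\otimes\shL\in\catX$ by hypothesis, whence $M\otimes\shL\in\catX^{n+1}$. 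Taking the union over $n$ shows that $\gen{\catX}_{\ext}$ is closed under tensoring with line bundles.

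Finally, the \emph{in particular} is immediate: $\T(\catX)=\gen{\gen{\catX}_{\quot}}_{\ext}$ and $\F(\catX)=\gen{\gen{\catX}_{\sub}}_{\ext}$ are each obtained by applying the $\gen{-}_{\quot}$ (resp.\ $\gen{-}_{\sub}$) construction followed by the $\gen{-}_{\ext}$ construction, both of which preserve closedness under tensoring with line bundles by the previous steps. There is no genuine obstacle here; the only point requiring (routine) care is the exactness of $-\otimes\shL$, which is what allows the extension filtration to be transported, and this is guaranteed by local freeness of line bundles.
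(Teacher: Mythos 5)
Your proof is correct and is exactly the argument the paper intends: its entire proof reads ``It follows from the constructions and the exactness of the functor $-\otimes \shL$,'' and you have simply spelled out the routine details (transporting injections, surjections, and the extension filtration along the exact autoequivalence $-\otimes\shL$).
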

\begin{proof}
It follows from the constructions and the exactness of the functor $-\otimes \shL$.
\end{proof}

\begin{cor}
Let $X$ be a divisorial noetherian scheme and
$\catX$ a subcategory of $\coh X$
closed under tensoring with line bundles.
Then $\T(\catX)=\coh_{\Supp \catX} X$ and $\F(\catX)=\coh^{\ass}_{\Ass\catX} X$ hold.
\end{cor}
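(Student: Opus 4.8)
The plan is to combine the two classification theorems already established (\cref{thm:Takahashi for scheme} for torsionfree classes and \cref{thm:Stanley-Wang for scheme} for torsion classes $=$ Serre subcategories $=$ supports) with the $L$-closedness of the generated classes from \cref{lem:gen cat L-closed}. Recall from \cref{dfn:notation subcat} that $\T(\catX)$ is the smallest torsion class containing $\catX$ and $\F(\catX)$ is the smallest torsionfree class containing $\catX$. Since $\catX$ is closed under tensoring with line bundles, \cref{lem:gen cat L-closed} tells us that both $\T(\catX)$ and $\F(\catX)$ inherit this property. Thus $\T(\catX)$ is a torsion class closed under tensoring with line bundles, hence a Serre subcategory by \cref{thm:Stanley-Wang for scheme}, and $\F(\catX)$ is a torsionfree class closed under tensoring with line bundles, placing both sides in the range where the earlier bijections apply.

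For the identity $\T(\catX)=\coh_{\Supp \catX} X$, I would argue via \cref{lem:Supp catX inclusion}: since both $\T(\catX)$ and $\coh_{\Supp \catX} X$ are Serre subcategories (the latter by \cref{fct:Gabriel}, the former as just noted), it suffices to check that their supports agree. On one hand, $\Supp(\coh_{\Supp \catX} X)=\Supp\catX$ is exactly the content of \cref{fct:Gabriel}. On the other hand, $\catX \subseteq \T(\catX)$ gives $\Supp\catX \subseteq \Supp\T(\catX)$, while $\T(\catX)$ is built from $\catX$ by taking quotients and extensions, operations that cannot enlarge the support; hence $\Supp\T(\catX)=\Supp\catX$. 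With equal supports, \cref{lem:Supp catX inclusion} forces $\T(\catX)=\coh_{\Supp\catX}X$.

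The dual identity $\F(\catX)=\coh^{\ass}_{\Ass\catX} X$ proceeds the same way through \cref{lem:Ass catX inclusion}. Both $\F(\catX)$ and $\coh^{\ass}_{\Ass\catX} X$ are torsionfree classes closed under tensoring with line bundles (the latter by \cref{thm:Takahashi for scheme}(1), the former by the remark above), so it is enough to compare their sets of associated points. By \cref{thm:Takahashi for scheme}(2) we have $\Ass(\coh^{\ass}_{\Ass\catX} X)=\Ass\catX$. For $\F(\catX)$, the inclusion $\catX\subseteq\F(\catX)$ gives $\Ass\catX\subseteq\Ass\F(\catX)$, and the construction of $\F(\catX)$ via subobjects and extensions does not create new associated points (subobjects have a subset of the associated points, and extensions have associated points contained in the union of those of the sub and quotient). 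Hence $\Ass\F(\catX)=\Ass\catX$, and \cref{lem:Ass catX inclusion} yields the claim.

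\textbf{The main subtlety} I expect is justifying precisely that $\T(\catX)$ and $\F(\catX)$ do not change the support and the set of associated points, respectively. The support assertion is robust—quotients and extensions of sheaves with support in a fixed $Z$ again have support in $Z$—so that direction is routine. The associated-points assertion is the more delicate one, since one must track $\Ass$ through extensions: for a short exact sequence $0\to A\to B\to C\to 0$ one has $\Ass B\subseteq \Ass A\cup\Ass C$ and $\Ass A\subseteq\Ass B$, which together keep the union of associated points within $\Ass\catX$ at each stage of the filtration defining $\gen{\cdot}_{\ext}$. I would verify these standard containments hold on a noetherian scheme (reducing to the stalkwise statement over $\shO_{X,x}$), after which both identities follow cleanly from the two inclusion lemmas.
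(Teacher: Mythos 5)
Your proposal is correct and follows essentially the same route as the paper: establish that $\T(\catX)$ and $\F(\catX)$ are closed under tensoring with line bundles via \cref{lem:gen cat L-closed}, then reduce the identities to equalities of supports and associated points via \cref{lem:Supp catX inclusion} and \cref{lem:Ass catX inclusion} together with \cref{fct:Gabriel} and \cref{thm:Takahashi for scheme}. The only (immaterial) difference is that where you verify directly that quotients, subobjects and extensions do not enlarge $\Supp$ resp.\ $\Ass$, the paper obtains the same inclusions $\T(\catX)\subseteq\coh_{\Supp\catX}X$ and $\F(\catX)\subseteq\coh^{\ass}_{\Ass\catX}X$ from the minimality of $\T(\catX)$ and $\F(\catX)$.
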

\begin{proof}
$\T(\catX)$ and $\F(\catX)$ are closed under tensoring with line bundles
by \cref{lem:gen cat L-closed}.
We have $\T(\catX)\subseteq \coh_{\Supp \catX} X$ by the minimality of $\T(\catX)$.
Then we obtain the following by \cref{fct:Gabriel}:
\[
\Supp(\coh_{\Supp \catX}X)= \Supp\catX 
\subseteq \Supp \T(\catX) \subseteq \Supp(\coh_{\Supp \catX} X)
\]
This implies $\T(\catX)=\coh_{\Supp \catX} X$ by \cref{lem:Supp catX inclusion}.
Similarly, we find that $\F(\catX)=\coh^{\ass}_{\Ass\catX} X$
by \cref{lem:Ass catX inclusion,thm:Takahashi for scheme}.
\end{proof}

We now classify the IE-closed subcategories closed under tensoring with line bundles.
\begin{thm}\label{thm:IE for scheme}
Let $X$ be a divisorial noetherian scheme.
The following are equivalent for a subcategory $\catX$ of $\coh X$
closed under tensoring with line bundles.
\begin{enur}
\item
$\catX$ is a torsionfree class.
\item
$\catX$ is an IKE-closed subcategory.
\item
$\catX$ is an IE-closed subcategory.
\end{enur}
\end{thm}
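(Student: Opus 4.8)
The plan is to prove the cycle of implications $(i)\imply(ii)\imply(iii)\imply(i)$. The first two are immediate from the general implications recorded right after \cref{dfn:several subcat}: every torsionfree class is IKE-closed (being closed under subobjects and extensions forces closure under kernels and images, since a kernel is a subobject and the image of $f\colon\shF\to\shG$ is a subobject of $\shG$), and every IKE-closed subcategory is trivially IE-closed. So the entire content lies in the implication $(iii)\imply(i)$: an IE-closed subcategory $\catX$ of $\coh X$ that is closed under tensoring with line bundles must be a torsionfree class, i.e.\ closed under subobjects. Since $\catX$ is already extension-closed, the task reduces to showing that $\catX$ is closed under subobjects.

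The key idea is to exploit the machinery built in \S\ref{ss:torf in coh} to reduce subobject-closure to a statement about associated points, which IE-closure together with $\otimes$-stability can control. First I would show that the smallest torsionfree class $\F(\catX)$ containing $\catX$ has the same associated points, $\Ass\F(\catX)=\Ass\catX$; this follows because $\F(\catX)=\gen{\gen{\catX}_{\sub}}_{\ext}$ is built from subobjects and extensions, neither of which creates new associated points ($\Ass$ of a subobject lies in $\Ass$ of the ambient object, and $\Ass$ of an extension lies in the union of the outer $\Ass$'s). By \cref{lem:gen cat L-closed}, $\F(\catX)$ is again closed under tensoring with line bundles, so by \cref{thm:Takahashi for scheme} it equals $\coh^{\ass}_{\Ass\catX}X$. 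Thus it suffices to prove the reverse containment $\coh^{\ass}_{\Ass\catX}X\subseteq\catX$, which by \cref{lem:torf Ass=1}-style reasoning reduces to: every $\shF$ with $\#\Ass\shF=1$, say $\Ass\shF=\{x\}$ with $x\in\Ass\catX$, already lies in $\catX$.

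To handle the single-associated-point case, I would mimic the proof of \cref{lem:Ass=1 Z_x}, but now only using closure under \emph{images} and extensions rather than subobjects. Since $x\in\Ass\catX$, there is some $\shE\in\catX$ with an injection $\kk(x)\inj\shE_x$, and after tensoring with a suitable line bundle (using \cref{prp:ass pt inj}) one gets $\shO_{Z_x}\in\catX$ exactly as in \cref{lem:char x in Ass}—note that the argument there only needs $\shO_{Z_x}\inj\shF(\alpha;n)$ together with image-closure, since $\shO_{Z_x}$ is the image of the composite $\shO_{Z_x}\inj\shF(\alpha;n)$ viewed as a morphism with source and target... here is the subtlety. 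The genuinely hard part is that image-closure only lets us conclude $\Ima f\in\catX$ when both the source \emph{and} target of $f$ already lie in $\catX$; we cannot freely take a subobject of an object of $\catX$ unless we realize it as an image of a map between two objects of $\catX$. So the main obstacle is engineering, for a given $\shF$ with $\Ass\shF=\{x\}$, a morphism in $\catX$ whose image is $\shF$. The plan to overcome this is to run the dual of the filtration in \cref{lem:Ass=1 Z_x}: using \cref{lem:lift local hom to global hom} build maps $\shO_{Z_x}^{\oplus k}\otimes\shL\to\shF$ that are surjective on the stalk at $x$, take images, and iterate so that the images exhaust $\shF_x$; finite length of $\shF_x$ (as $\Ass_{\shO_{X,x}}\shF_x=\{\mm_x\}$) guarantees termination, and the resulting surjection from a sum of copies of $\shO_{Z_x}\otimes\shL\in\catX$ realizes $\shF$ (up to an object supported away from $x$, which has strictly fewer associated points and is handled by descending induction) as an image of a morphism with domain in $\catX$. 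Assembling these images through extensions then places $\shF$ in $\catX$, completing $(iii)\imply(i)$.
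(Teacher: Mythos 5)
Your reduction steps are fine: (i)$\imply$(ii)$\imply$(iii) is formal, the identification $\F(\catX)=\coh^{\ass}_{\Ass\catX}X$ follows from \cref{lem:gen cat L-closed} and \cref{thm:Takahashi for scheme} (indeed the paper records it as a corollary), and \cref{lem:torf Ass=1} legitimately reduces everything to showing that each $\shF$ with $\Ass\shF=\{x\}\subseteq\Ass\catX$ lies in $\catX$. But that final step has a genuine gap, and you put your finger on it yourself before sliding past it. Closure under images only applies to a morphism whose source \emph{and} target both already lie in $\catX$. Your construction produces morphisms $\shO_{Z_x}^{\oplus k}\otimes\shL\to\shF$ whose codomain is the very object you are trying to place in $\catX$, so ``an image of a morphism with domain in $\catX$'' licenses nothing. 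The same circularity already defeats the preliminary claim $\shO_{Z_x}\in\catX$: realizing $\shO_{Z_x}$ as the image of the inclusion $\shO_{Z_x}\inj\shE(\alpha;n)$ presupposes that the source $\shO_{Z_x}$ is in $\catX$. To invoke image-closure you would need to exhibit $\shF$ simultaneously as a quotient of an object of $\catX$ and as a subobject of an object of $\catX$ (so that $\shF$ is the image of the composite), and neither half is supplied: membership in $\F(\catX)=\gen{\gen{\catX}_{\sub}}_{\ext}$ only gives a filtration of $\shF$ whose subquotients embed into objects of $\catX$, not an embedding of $\shF$ itself, and your iterated-image device never produces a genuine codomain inside $\catX$.

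The paper avoids this entirely by quoting \cite[Theorem 2.7]{IE=torf}: for any IE-closed $\catX$ one has $\catX=\T(\catX)\cap\F(\catX)$. By \cref{lem:gen cat L-closed} the torsion class $\T(\catX)$ is closed under tensoring with line bundles, hence is a Serre subcategory by \cref{thm:Stanley-Wang for scheme}, and in particular a torsionfree class; since $\F(\catX)$ is a torsionfree class as well, $\catX$ is an intersection of two torsionfree classes and is therefore one itself. The combinatorial content missing from your argument is exactly the proof of the identity $\catX=\T(\catX)\cap\F(\catX)$, which is where the bookkeeping of ``quotient of something in $\catX$ meets subobject of something in $\catX$'' is actually carried out; if you want a self-contained proof, that is the statement to establish.
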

\begin{proof}
The proof is parallel to \cite[Theorem 3.1]{IE=torf}.
It is clear that (i)$\imply$(ii)$\imply$(iii).
Suppose that $\catX$ is an IE-closed subcategory closed under tensoring with line bundles.
Then $\catX=\T(\catX) \cap \F(\catX)$ by \cite[Theorem 2.7]{IE=torf}.
Since $\T(\catX)$ is a Serre subcategory of $\coh X$
by \cref{thm:Stanley-Wang for scheme,lem:gen cat L-closed},
$\catX=\T(\catX) \cap \F(\catX)$ is the intersection of the two torsionfree classes.
Thus $\catX$ is also a torsionfree class.
\end{proof}


\section{Classifying Serre subcategories in a torsionfree class}\label{s:Serre in torf}
Let $X$ be a noetherian scheme having an ample line bundle.
In this section,
we classify the Serre subcategories (cf.\ \cref{dfn:Serre in ex cat}) 
in a torsionfree class of $\coh X$ closed under tensoring with line bundles
(\cref{prp:classify Serre in torf}).
This gives a certain generalization of
Gabriel's classification of Serre subcategories (cf.\ \cref{fct:Gabriel}).

Let us begin with an observation for Serre subcategories in a torsionfree class.
\begin{lem}\label{lem:Serre in torf IKE}
Let $\catA$ be an abelian category
and $\catX$ its torsionfree class.
Then any Serre subcategory $\catS$ of $\catX$
is an IKE-closed subcategory of $\catA$.
\end{lem}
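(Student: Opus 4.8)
The plan is to verify directly that $\catS$, regarded as a subcategory of $\catA$, is closed under images, kernels and extensions (cf.\ \cref{dfn:several subcat}), reducing each of the three closure properties to the defining property of a Serre subcategory of $\catX$ (\cref{dfn:Serre in ex cat}). The single point to keep in mind throughout is that the Serre condition may only be invoked on a \emph{conflation of} $\catX$, that is, a short exact sequence in $\catA$ all of whose terms lie in $\catX$. Thus in each case the actual work is to check that the exact sequence at hand is such a conflation, which I will extract from the two hypotheses defining a torsionfree class: $\catX$ is closed under subobjects and under extensions.

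For \textbf{extensions}, I would take a short exact sequence $0 \to A \to B \to C \to 0$ in $\catA$ with $A, C \in \catS \subseteq \catX$. Since $\catX$ is extension-closed, $B \in \catX$, so the sequence is a conflation of $\catX$; the defining property of $\catS$ (applied with both end terms in $\catS$) then gives $B \in \catS$. For \textbf{kernels} and \textbf{images} I would handle both at once: given a morphism $f \colon M \to N$ with $M, N \in \catS$, consider the canonical sequence $0 \to \Ker f \to M \to \Ima f \to 0$. Here $\Ker f$ is a subobject of $M \in \catX$ and $\Ima f$ is a subobject of $N \in \catX$, so both lie in $\catX$ because $\catX$ is closed under subobjects; hence the sequence is a conflation of $\catX$ whose middle term $M$ lies in $\catS$. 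Applying the Serre property yields $\Ker f \in \catS$ and $\Ima f \in \catS$ simultaneously, establishing closure under both kernels and images.

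I expect no serious technical obstacle here; the argument is essentially an unwinding of definitions. The only subtlety to be disciplined about is the bookkeeping described above, namely never applying the Serre condition to a sequence before confirming it is a conflation of $\catX$. In particular, the closure under images requires no separate argument, as it falls out of the very same conflation used to treat kernels.
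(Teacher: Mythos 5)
Your proposal is correct and follows essentially the same route as the paper's proof: extension-closure is immediate from the Serre condition applied to a conflation whose outer terms lie in $\catS$, and kernels and images are obtained simultaneously from the conflation $0 \to \Ker f \to M \to \Ima f \to 0$, whose terms lie in $\catX$ by subobject-closure. Your write-up merely makes explicit the bookkeeping that the paper leaves to the reader.
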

\begin{proof}
It is clear that $\catS$ is an extension-closed subcategory of $\catA$.
Let $f\colon M \to N$ be a morphism in $\catS$.
Since $\catX$ is a torsionfree class of $\catA$,
the canonical exact sequence $0\to \Ker(f) \to M \to \Ima(f) \to 0$ is a conflation of $\catX$.
From this and the fact that $\catS$ is a Serre subcategory of $\catX$,
we have  $\Ker(f), \Ima(f)\in\catS$.
This proves that $\catS$ is an IKE-closed subcategory of $\catA$.
\end{proof}

The following lemma states that 
if a torsionfree class possesses the property of being closed under tensoring with line bundles,
this property will be inherited by its Serre subcategories.
\begin{lem}\label{lem:Serre in torf ample stable}
Let $X$ be a noetherian scheme having an ample line bundle $\shL$.
Let $\catX$ be a torsionfree class of $\coh X$ closed under tensoring with line bundles,
and let $\catS$ be a Serre subcategory of $\catX$.
Then $\catS$ is also a torsionfree class of $\coh X$ closed under tensoring with line bundles.
\end{lem}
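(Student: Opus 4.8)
The plan is to derive both conclusions—closure of $\catS$ under tensoring with line bundles and the fact that $\catS$ is a torsionfree class—from the single assertion that $\shF\otimes\shL\in\catS$ for every $\shF\in\catS$. First I would collect the facts available for free. Since $X$ carries an ample line bundle $\shL$, the singleton $\{\shL\}$ is an ample family, so $X$ is divisorial noetherian; and by \cref{lem:Serre in torf IKE} the Serre subcategory $\catS$ is IKE-closed in $\coh X$, hence in particular additive and closed under kernels. Granting the membership $\shF\otimes\shL\in\catS$, \cref{lem:char torf closed under tensoring}, applied to the ample family $\{\shL\}$ and the kernel-closed subcategory $\catS$, upgrades it to closure of $\catS$ under tensoring with \emph{every} line bundle; then \cref{thm:IE for scheme}, applied to the IKE-closed and line-bundle-closed subcategory $\catS$, shows that $\catS$ is a torsionfree class. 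So everything reduces to proving $\shF\otimes\shL\in\catS$.

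To prove this I would first treat high twists and then descend. Fix $m_0$ such that $\shL^{\otimes m}$ is globally generated for $m\ge m_0$, which exists as $\shL$ is ample on a noetherian scheme. For $m\ge m_0$, tensoring a surjection $\shO_X^{\oplus k}\surj\shL^{\otimes m}$ with $\shF$ gives a surjection $\shF^{\oplus k}\surj\shF\otimes\shL^{\otimes m}$; its kernel $\shK$ is a subobject of $\shF^{\oplus k}\in\catS\subseteq\catX$, hence $\shK\in\catX$ since $\catX$ is torsionfree, and all three terms of $0\to\shK\to\shF^{\oplus k}\to\shF\otimes\shL^{\otimes m}\to0$ lie in $\catX$ (using that $\catX$ is closed under tensoring with line bundles). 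Thus this is a conflation of $\catX$ with middle term in $\catS$, and \cref{dfn:Serre in ex cat} forces $\shF\otimes\shL^{\otimes m}\in\catS$ for all $m\ge m_0$.

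For the descent I would dualize global generation. For $N\ge m_0$, the surjection $\shO_X^{\oplus k}\surj\shL^{\otimes N}$ has locally free kernel (its quotient $\shL^{\otimes N}$ being a line bundle), so dualizing yields a locally split exact sequence $0\to\shL^{\otimes(-N)}\to\shO_X^{\oplus k}\to\shC\to0$ of vector bundles. Tensoring it with $\shF\otimes\shL^{\otimes(N+1)}$—which stays exact by local splitness—produces $0\to\shF\otimes\shL\to(\shF\otimes\shL^{\otimes(N+1)})^{\oplus k}\to\shF\otimes\shL^{\otimes(N+1)}\otimes\shC\to0$, whose middle term lies in $\catS$ by the previous step. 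If all three terms lie in $\catX$ this is a conflation of $\catX$ with middle term in $\catS$, whence $\shF\otimes\shL\in\catS$, as desired. The main obstacle—and the one point that needs a separate verification—is that the cokernel $\shF\otimes\shL^{\otimes(N+1)}\otimes\shC$ lies in $\catX$, since $\shC$ is merely a vector bundle and not a sum of line bundles. I would dispatch this with an auxiliary observation: a torsionfree class closed under tensoring with line bundles is automatically closed under tensoring with arbitrary vector bundles. Indeed, any vector bundle $\shE$ embeds, with locally free cokernel, into a finite sum $(\shL^{\otimes n})^{\oplus j}$ (dualize a global-generation surjection for $\shE^{\vee}\otimes\shL^{\otimes n}$); tensoring with $\shF$ gives $\shF\otimes\shE\inj(\shF\otimes\shL^{\otimes n})^{\oplus j}$, and closure under subobjects together with closure under tensoring with line bundles yields $\shF\otimes\shE\in\catX$.
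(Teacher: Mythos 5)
Your proof is correct, and its core coincides with the paper's: reduce everything to showing $\shF\otimes\shL\in\catS$ via \cref{lem:Serre in torf IKE}, \cref{lem:char torf closed under tensoring} and \cref{thm:IE for scheme}, and obtain that membership from a global-generation surjection $\shO_X^{\oplus k}\surj\shL^{\otimes m}$, which after tensoring with $\shF$ yields a conflation of $\catX$ with middle term $\shF^{\oplus k}\in\catS$. The one genuine divergence is your second half: the paper simply says ``we may assume $\shL$ is globally generated,'' i.e.\ it replaces $\shL$ by a globally generated power $\shL^{\otimes m_0}$ (still an ample line bundle, hence $\{\shL^{\otimes m_0}\}$ is still an ample family) and applies \cref{lem:char torf closed under tensoring} to \emph{that} family, so that proving $\shF\otimes\shL^{\otimes m_0}\in\catS$ already finishes the job. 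You instead insist on descending to the original $\shL$, which forces you to dualize, tensor with a locally split sequence, and verify the auxiliary claim that $\catX$ is closed under tensoring with arbitrary vector bundles. All of that is sound (the local splitness keeps the sequences exact, and the embedding $\shF\otimes\shE\inj(\shF\otimes\shL^{\otimes n})^{\oplus j}$ does put $\shF\otimes\shE$ in $\catX$ by closure under subobjects), but it is machinery the paper's one-line reduction renders unnecessary; if you reapply the characterization lemma to the ample family $\{\shL^{\otimes m_0}\}$ rather than $\{\shL\}$, your first paragraph and your ``high twists'' step already constitute a complete proof.
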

\begin{proof}
We first prove that $\catS$ is closed under tensoring with line bundles.
We may assume that $\shL$ is globally generated.
Since $\catS$ is an IKE-closed subcategory of $\coh X$ by \cref{lem:Serre in torf IKE},
it is enough to show that
$\shF\otimes \shL \in \catS$ for any $\shF\in\catS$
by \cref{lem:char torf closed under tensoring}.
Since $\shL$ is globally generated,
we have an exact sequence
\[
0 \to \shK \to \shO_X^{\oplus n} \to \shL \to 0.
\]
Applying the functor $\shF \otimes -$ to this sequence,
we get the exact sequence $\shK \otimes \shF \to \shF^{\oplus n} \xr{\phi} \shF\otimes \shL \to 0$.
Then the exact sequence 
$0 \to \Ker (\phi) \to \shF^{\oplus n} \to \shF\otimes \shL \to 0$
is a conflation in $\catX$ 
since it is a torsionfree class closed under tensoring with line bundles.
Then we have that $\shF\otimes \shL\in\catS$ 
because $\catS$ is a Serre subcategory in $\catX$ and $\shF^{\oplus n}\in\catS$.
Therefore
$\catS$ is an IKE-closed subcategory of $\coh X$ closed under tensoring with line bundles,
and hence it is a torsionfree class by \cref{thm:IE for scheme}.
This finishes the proof.
\end{proof}

We now classify the Serre subcategories 
in a torsionfree class closed under tensoring with line bundles.
\begin{thm}\label{prp:classify Serre in torf}
Let $X$ be a noetherian scheme having an ample line bundle.
Let $\catX$ be a torsionfree class of $\coh X$ closed under tensoring with line bundles.
\begin{enua}
\item
If $\Phi$ is a specialization-closed subset of $\Ass \catX$,
then $\coh_{\Phi}^{\ass} X$ is a Serre subcategory of $\catX$.
\item
If $\catS$ is a Serre subcategory of $\catX$,
then $\Ass\catS$ is a specialization-closed subset of $\Ass \catX$.
\item
The assignments $\catS \mapsto \Ass\catS$ and $\Phi \mapsto \coh^{\ass}_{\Phi} X$
give rise to mutually inverse bijections between the following sets:
\begin{itemize}
\item 
The set of Serre subcategories of $\catX$.
\item
The set of specialization-closed subsets of $\Ass \catX$
(with respect to the relative topology induced from $X$).
\end{itemize}
\end{enua}
\end{thm}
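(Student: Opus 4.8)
The plan is to obtain both assignments as the restriction of the global bijection of \cref{thm:Takahashi for scheme} to suitable subcollections. The only genuinely new inputs are \cref{lem:Serre in torf ample stable}, which guarantees that a Serre subcategory $\catS$ of $\catX$ is again a torsionfree class of $\coh X$ closed under tensoring with line bundles, and \cref{lem:char x in Ass}, which translates membership of a point in $\Ass(-)$ into membership of the structure sheaf $\shO_{Z_x}$. Granting these, parts (1) and (2) amount to pinning down the images of the two maps, and part (3) is then formal.

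For part (1) I would first observe that $\coh^{\ass}_{\Phi} X$ is a torsionfree class closed under tensoring with line bundles by \cref{thm:Takahashi for scheme}(1), and that it is contained in $\catX$: indeed $\Ass(\coh^{\ass}_{\Phi} X)=\Phi\subseteq\Ass\catX$ by \cref{thm:Takahashi for scheme}(2), so \cref{lem:Ass catX inclusion} gives $\coh^{\ass}_{\Phi} X\subseteq\catX$. It then remains to verify the two-out-of-three condition of \cref{dfn:Serre in ex cat} for an arbitrary conflation $0\to\shF\to\shG\to\shH\to 0$ of $\catX$. The implication $\shF,\shH\in\coh^{\ass}_{\Phi} X\Rightarrow\shG\in\coh^{\ass}_{\Phi} X$ follows from $\Ass\shG\subseteq\Ass\shF\cup\Ass\shH$. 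For the converse, $\Ass\shF\subseteq\Ass\shG\subseteq\Phi$ because $\shF$ is a subobject of $\shG$; and for $\shH$ one uses $\Ass\shH\subseteq\Supp\shH\subseteq\Supp\shG=\bigcup_{x\in\Ass\shG}\ol{\{x\}}$, so every $w\in\Ass\shH$ is a specialization of some $x\in\Ass\shG\subseteq\Phi$. The subtle point is that $\Phi$ is only specialization-closed in the relative topology of $\Ass\catX$; but since $\shH\in\catX$ we have $w\in\Ass\shH\subseteq\Ass\catX$, so $w\in\ol{\{x\}}\cap\Ass\catX$ with $x\in\Phi$ does force $w\in\Phi$. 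Hence $\shH\in\coh^{\ass}_{\Phi} X$.

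Part (2) is the heart of the matter. By \cref{lem:Serre in torf ample stable} the Serre subcategory $\catS$ is a torsionfree class closed under tensoring with line bundles, so $\catS\subseteq\catX$ yields $\Ass\catS\subseteq\Ass\catX$ via \cref{lem:Ass catX inclusion}. To see that $\Ass\catS$ is specialization-closed in $\Ass\catX$, take $y\in\Ass\catS$ and $z\in\Ass\catX$ with $z\in\ol{\{y\}}$; the goal is $z\in\Ass\catS$. By \cref{lem:char x in Ass} we have $\shO_{Z_y}\in\catS$ and $\shO_{Z_z}\in\catX$. Since $z\in\ol{\{y\}}$, the reduced subscheme $Z_z$ is a closed subscheme of $Z_y$, giving an exact sequence $0\to\shI\to\shO_{Z_y}\to\shO_{Z_z}\to 0$ in $\coh X$ with $\shI$ the corresponding ideal sheaf. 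The main obstacle is exactly how to \emph{descend} from the generic point $y$ to its specialization $z$: one cannot find a subsheaf of $\shO_{Z_y}$ with associated point $z$, as $\Ass\shO_{Z_y}=\{y\}$, so the naive subobject argument fails and one is forced to use the quotient direction of the Serre condition. This is legitimate here because $\catX$ is closed under subobjects, whence $\shI\in\catX$; thus the displayed sequence is a conflation of $\catX$ whose middle term $\shO_{Z_y}$ lies in $\catS$, and the defining property of a Serre subcategory of $\catX$ forces both $\shI$ and $\shO_{Z_z}$ into $\catS$. In particular $\shO_{Z_z}\in\catS$, i.e.\ $z\in\Ass\catS$ by \cref{lem:char x in Ass}.

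Finally, for part (3) I would note that the two round-trip identities hold: $\Ass(\coh^{\ass}_{\Phi} X)=\Phi$ for every subset $\Phi$ by \cref{thm:Takahashi for scheme}(2), and $\catS=\coh^{\ass}_{\Ass\catS} X$ by \cref{thm:Takahashi for scheme}(3) applied to the torsionfree class $\catS$ (via \cref{lem:Serre in torf ample stable}). Together with the well-definedness statements established in (1) and (2)---that $\coh^{\ass}_{(-)} X$ sends relatively specialization-closed subsets of $\Ass\catX$ to Serre subcategories of $\catX$, and that $\Ass(-)$ sends Serre subcategories of $\catX$ to relatively specialization-closed subsets of $\Ass\catX$---these identities show the two assignments are mutually inverse bijections.
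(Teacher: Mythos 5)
Your proposal is correct and follows essentially the same route as the paper: part (1) reduces to checking the quotient direction via $\Supp\shH\subseteq\Supp\shG$ and the observation that $\Ass\shH\subseteq\Ass\catX$ rescues the relative specialization-closedness, part (2) uses the conflation $0\to\shI\to\shO_{Z_y}\to\shO_{Z_z}\to 0$ together with \cref{lem:char x in Ass} and \cref{lem:Serre in torf ample stable}, and part (3) is the formal round-trip via \cref{thm:Takahashi for scheme}. Your explicit verification that $\coh^{\ass}_{\Phi}X\subseteq\catX$ via \cref{lem:Ass catX inclusion} is a small point the paper leaves implicit, but otherwise the arguments coincide.
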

\begin{proof}
(1)
Let $0\to \shF \to \shG \to \shH \to 0$ be a conflation in $\catX$
such that $\Ass \shG \subseteq \Phi$.
Since $\coh_{\Phi}^{\ass} X$ is a torsionfree class,
it is enough to show that $\Ass \shH \subseteq \Phi$.
Take $x\in \Ass \shH$.
Since $\Ass \shH \subseteq \Supp \shH \subseteq \Supp \shG$,
there is $y\in \Min \shG$ such that
$y$ is a generalization of $x$.
Because $\Min\shG \subseteq \Ass \shG \subseteq \Phi$ 
and $\Phi$ is specialization-closed in $\Ass \catX$,
we have $x\in \Phi$.
Hence, we obtain $\Ass \shH \subseteq \Phi$.
This proves that $\coh_{\Phi}^{\ass} X$ is a Serre subcategory of $\catX$. 

(2)
Let $x\in \Ass \catS$ and $y\in \Ass\catX$ such that $y$ is a specialization of $x$.
Then $Z_y$ is a closed subscheme of $Z_x$.
Thus, we have the following exact sequence in $\coh X$:
\[
0 \to \shI \to \shO_{Z_x} \to \shO_{Z_y} \to 0.
\]
We can easily see that this sequence is a conflation of $\catX = \coh^{\ass}_{\Ass\catX} X$.
We have that $\shO_{Z_x}\in \catS$ by \cref{lem:char x in Ass,lem:Serre in torf ample stable}.
Then we obtain that $\shO_{Z_y} \in \catS$ since it is a Serre subcategory of $\catX$,
and hence we have that $y\in \Ass \catS$.
This proves that $\Ass\catS$ is a specialization-closed subset of $\Ass \catX$.

(3)
It follows from (1), (2), \cref{thm:Takahashi for scheme}
and \cref{lem:Serre in torf ample stable}.
\end{proof}

We will describe the applications of this theorem in the next section.

\section{Examples}\label{s:ex}

\subsection{The cases of quasi-affine schemes and commutative rings}\label{ss:quasi-affine}
In this subsection,
we describe the immediate consequences of the results 
in \S \ref{s:subcat in coh} and \ref{s:Serre in torf} 
for quasi-affine noetherian schemes and commutative noetherian rings.

Let us give a convenient way to determine
whether IE-closed subcategories are closed under tensoring with line bundles.
\begin{lem}\label{lem:char IE closed under tensoring}
Let $X$ be a noetherian scheme 
having an ample family $\{\shL_{\alpha}\}_{\alpha \in \LL}$.
The following are equivalent for an IE-closed subcategory $\catX$ of $\coh X$.
\begin{enur}
\item
$\catX$ is closed under tensoring with line bundles.
\item
Both $\shF\otimes \shL_{\alpha}$ and $\shF\otimes \shL_{\alpha}^{\vee}$ belong to $\catX$
for any $\shF\in\catX$ and any $\alpha \in \LL$.
\end{enur}
\end{lem}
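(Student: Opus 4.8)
The implication (i)$\imply$(ii) is immediate, so the work is in (ii)$\imply$(i). The difficulty is that an IE-closed subcategory need not be closed under kernels or cokernels, so one cannot build $\shF \otimes \shL$ directly inside $\catX$ by the presentation arguments used for the torsion(free) cases in \cref{lem:char torf closed under tensoring,lem:char tors closed under tensoring}. Instead, the plan is to pass to the torsionfree and torsion closures $\F(\catX)$ and $\T(\catX)$, where those two criteria \emph{do} apply, and then recombine them via the identity $\catX = \T(\catX)\cap\F(\catX)$, which holds for any IE-closed subcategory by \cite[Theorem 2.7]{IE=torf}.

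First I would record the formal input: for a fixed line bundle, the functor $-\otimes\shL$ is an exact autoequivalence of $\coh X$, with quasi-inverse $-\otimes\shL^{\vee}$. Hence it carries torsionfree (resp.\ torsion) classes to torsionfree (resp.\ torsion) classes and commutes with the closure operators $\F$ and $\T$; concretely $\F(\catX)\otimes\shL = \F(\catX\otimes\shL)$ and $\T(\catX)\otimes\shL = \T(\catX\otimes\shL)$. This is exactly the exactness observation already underlying \cref{lem:gen cat L-closed}.

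Now assume (ii). The hypothesis gives $\catX\otimes\shL_\alpha\subseteq\catX\subseteq\F(\catX)$, so by monotonicity and idempotency of $\F$ we get $\F(\catX)\otimes\shL_\alpha=\F(\catX\otimes\shL_\alpha)\subseteq\F(\catX)$ for every $\alpha\in\LL$. Since $\F(\catX)$ is a torsionfree class, hence an additive subcategory closed under kernels, \cref{lem:char torf closed under tensoring} then shows that $\F(\catX)$ is closed under tensoring with \emph{every} line bundle. Dually, the other half of (ii), namely $\catX\otimes\shL_\alpha^{\vee}\subseteq\catX\subseteq\T(\catX)$, yields $\T(\catX)\otimes\shL_\alpha^{\vee}\subseteq\T(\catX)$ for every $\alpha$; as $\T(\catX)$ is a torsion class, hence closed under cokernels, \cref{lem:char tors closed under tensoring} shows that $\T(\catX)$ is closed under tensoring with every line bundle.

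Finally, for any $\shF\in\catX$ and any line bundle $\shL$, the previous step gives both $\shF\otimes\shL\in\F(\catX)$ and $\shF\otimes\shL\in\T(\catX)$, whence $\shF\otimes\shL\in\T(\catX)\cap\F(\catX)=\catX$ by \cite[Theorem 2.7]{IE=torf}. This establishes (i). The only genuine subtlety is the bookkeeping that the two halves of (ii)---closure under the $\shL_\alpha$ and under their duals $\shL_\alpha^{\vee}$---feed respectively into the torsionfree criterion (which needs only the positive generators) and the torsion criterion (which needs the duals); everything else is formal once the identity $\catX=\T(\catX)\cap\F(\catX)$ is in hand.
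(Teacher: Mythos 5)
Your proposal is correct and follows essentially the same route as the paper: both pass to $\T(\catX)$ and $\F(\catX)$ via $\catX=\T(\catX)\cap\F(\catX)$ from \cite[Theorem 2.7]{IE=torf}, verify that these closures inherit the relevant half of condition (ii), and then invoke \cref{lem:char torf closed under tensoring,lem:char tors closed under tensoring}. Your write-up merely makes explicit (via the observation that $-\otimes\shL$ commutes with the closure operators $\F$ and $\T$) what the paper dismisses as ``easy to see by the constructions.''
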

\begin{proof}
We only prove (ii)$\imply$(i) since the converse is obvious.
We have $\catX=\T(\catX) \cap \F(\catX)$ by \cite[Theorem 2.7]{IE=torf}.
It is easy to see that
both $\T(\catX)$ and $\F(\catX)$ satisfy the condition (ii) by the constructions.
Hence, they are closed under tensoring with line bundles by 
\cref{lem:char torf closed under tensoring,lem:char tors closed under tensoring}.
In particular, so is $\catX=\T(\catX) \cap \F(\catX)$.
\end{proof}

\begin{cor}\label{cor:IEL-closed quasi-affine}
Let $X$ be a quasi-affine noetherian scheme.
Then every IE-closed subcategory is closed under tensoring with line bundles.
\end{cor}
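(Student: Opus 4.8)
The plan is to reduce the statement to \cref{lem:char IE closed under tensoring} by choosing the simplest possible ample family. As recorded in the examples of \S\ref{ss:ample family}, a qcqs scheme $X$ is quasi-affine precisely when its structure sheaf $\shO_X$ is ample, and a line bundle is ample exactly when it forms a singleton ample family. Hence, for our quasi-affine noetherian $X$, the singleton $\{\shO_X\}$ is an ample family. First I would take this as the ample family $\{\shL_{\alpha}\}_{\alpha\in\LL}$ appearing in \cref{lem:char IE closed under tensoring}, so that $\LL$ is a one-point set with $\shL_{\alpha}=\shO_X$.

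Next I would verify condition (ii) of that lemma for an arbitrary IE-closed subcategory $\catX$. The key observation is that $-\otimes\shO_X$ is, up to natural isomorphism, the identity functor. Concretely, for any $\shF\in\catX$ we have $\shF\otimes\shO_X\iso\shF\in\catX$; moreover $\shO_X^{\vee}\iso\shHom(\shO_X,\shO_X)\iso\shO_X$, so that $\shF\otimes\shO_X^{\vee}\iso\shF\otimes\shO_X\iso\shF\in\catX$ as well. Thus both required memberships hold automatically, with no hypothesis on $\catX$ beyond being a subcategory closed under isomorphisms.

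Applying \cref{lem:char IE closed under tensoring} then gives that $\catX$ is closed under tensoring with line bundles, which is the assertion. I expect essentially no obstacle here: the entire content is the equivalence between quasi-affineness and the ampleness of $\shO_X$, which collapses the abstract criterion of \cref{lem:char IE closed under tensoring} into a tautology once one notes that tensoring with $\shO_X$ (and with $\shO_X^{\vee}\iso\shO_X$) changes nothing up to isomorphism.
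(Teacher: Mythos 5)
Your proposal is correct and is exactly the paper's argument: the paper's proof also just invokes \cref{lem:char IE closed under tensoring} together with the fact that $\shO_X$ is ample on a quasi-affine scheme, and your spelling out that condition (ii) becomes vacuous because $\shO_X^{\vee}\iso\shO_X$ and $-\otimes\shO_X\iso\id$ is precisely the implicit content of that one-line proof.
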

\begin{proof}
It follows from \cref{lem:char IE closed under tensoring} and 
the fact that the structure sheaf $\shO_X$ is ample when $X$ is quasi-affine.
\end{proof}

As a result of this corollary, when $X$ is quasi-affine,
torsion(free) classes are automatically closed under tensoring with line bundles.
Therefore, we immediately obtain the following corollaries.

\begin{cor}\label{prp:torf in coh quasi-affine}
Let $X$ be a noetherian quasi-affine scheme.
Then the assignments $\Phi \mapsto \coh^{\ass}_{\Phi} X$ and $\catX \mapsto \Ass\catX$
give mutually inverse bijections between the following sets:
\begin{itemize}
\item
The power set of $X$.
\item 
The set of torsionfree classes of $\coh X$.
\end{itemize}
\end{cor}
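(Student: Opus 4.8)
The plan is to derive this directly from \cref{thm:Takahashi for scheme}, reducing the statement to the assertion that the hypothesis ``closed under tensoring with line bundles'' is automatically satisfied once $X$ is quasi-affine. First I would recall that a quasi-affine noetherian scheme has ample structure sheaf $\shO_X$, so that the singleton $\{\shO_X\}$ is an ample family and in particular $X$ is divisorial. Thus \cref{thm:Takahashi for scheme} applies and already yields mutually inverse bijections $\Phi \mapsto \coh^{\ass}_{\Phi} X$ and $\catX \mapsto \Ass \catX$ between the power set of $X$ and the set of torsionfree classes of $\coh X$ that are \emph{closed under tensoring with line bundles}.

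The key step is then to identify this latter set with the set of \emph{all} torsionfree classes of $\coh X$. By the implications recorded after \cref{dfn:several subcat}, every torsionfree class is in particular an IE-closed subcategory. Since $X$ is quasi-affine, \cref{cor:IEL-closed quasi-affine} guarantees that every IE-closed subcategory of $\coh X$ is closed under tensoring with line bundles. Combining these two observations, every torsionfree class of $\coh X$ is automatically closed under tensoring with line bundles, so the two collections of subcategories coincide.

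Consequently, the bijections furnished by \cref{thm:Takahashi for scheme} are precisely the asserted bijections between the power set of $X$ and the set of all torsionfree classes of $\coh X$, with no extra bookkeeping required. I do not expect any genuine obstacle here: the entire content of the corollary is the redundancy of the tensoring condition in the quasi-affine setting, which is exactly \cref{cor:IEL-closed quasi-affine}, and the remainder is a verbatim invocation of the general classification.
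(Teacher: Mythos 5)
Your proposal is correct and follows exactly the paper's route: the paper deduces this corollary from \cref{thm:Takahashi for scheme} together with \cref{cor:IEL-closed quasi-affine}, using precisely the observation that torsionfree classes are IE-closed and hence automatically closed under tensoring with line bundles in the quasi-affine case. Your write-up merely spells out the same argument in more detail.
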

\begin{proof}
It follows from \cref{thm:Takahashi for scheme,cor:IEL-closed quasi-affine}
\end{proof}

\begin{cor}\label{prp:tors in coh quasi-affine}
Let $X$ be a noetherian quasi-affine scheme.
The following are equivalent for a subcategory $\catX$ of $\coh X$.
\begin{enur}
\item
$\catX$ is a Serre subcategory.
\item
$\catX$ is a torsion class.
\item
$\catX$ is a wide subcategory.
\item
$\catX$ is an ICE-closed subcategory.
\item
$\catX$ is a narrow subcategory.
\item
$\catX$ is a tensor-ideal subcategory closed under direct summands and extensions.
\end{enur}
\end{cor}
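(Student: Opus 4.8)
The plan is to deduce this corollary directly from \cref{thm:Stanley-Wang for scheme}, which already establishes the equivalence of all six conditions once the hypothesis ``closed under tensoring with line bundles'' is appended to each of (ii)--(v). Thus it suffices to show that, when $X$ is quasi-affine, this extra hypothesis is \emph{automatic} for subcategories of the types occurring in (ii)--(v); the plain conditions of the corollary then coincide with the corresponding conditions of \cref{thm:Stanley-Wang for scheme} (while (i) and (vi) are unchanged), and the stated equivalences follow at once.

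First I would record the only input that uses the hypothesis on $X$: a quasi-affine noetherian scheme has ample structure sheaf $\shO_X$, so the singleton $\{\shO_X\}$ is an ample family on $X$. This is exactly the feature that trivialises the tensoring condition below.

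Next I would observe that each of the classes in (ii)--(v) is an additive subcategory of $\coh X$ that is closed under cokernels: a torsion class is closed under quotients, hence under cokernels, while wide, ICE-closed and narrow subcategories are closed under cokernels by definition. I may therefore apply \cref{lem:char tors closed under tensoring} to such a $\catX$ \emph{with respect to the ample family $\{\shO_X\}$}. Criterion (ii) of that lemma then reads $\shF \otimes \shO_X^{\vee} \in \catX$ for every $\shF \in \catX$; since $\shO_X^{\vee} \iso \shO_X$ and $\shF \otimes \shO_X \iso \shF$, this holds trivially. Hence every subcategory of the types (ii)--(v) is automatically closed under tensoring with line bundles, and the corollary drops straight out of \cref{thm:Stanley-Wang for scheme}.

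The point that requires care, and which I would flag as the main (if modest) obstacle, is the narrow case (v). A narrow subcategory need not be IE-closed, so one cannot route through \cref{cor:IEL-closed quasi-affine} (which disposes of the IE-closed cases (ii)--(iv), all of which are IE-closed by the implication diagram). It is therefore essential to argue via the cokernel-based criterion \cref{lem:char tors closed under tensoring}, which treats all four classes uniformly and only needs closure under cokernels together with the self-duality $\shO_X^{\vee}\iso\shO_X$. Once this uniform route is adopted, no genuine difficulty remains.
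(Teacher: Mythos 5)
Your proposal is correct and follows essentially the same route as the paper, whose proof cites exactly \cref{lem:char tors closed under tensoring} together with \cref{thm:Stanley-Wang for scheme}; your argument simply spells out the details (ampleness of $\shO_X$ on a quasi-affine scheme, closure under cokernels for each of (ii)--(v), and the triviality of the criterion $\shF\otimes\shO_X^{\vee}\iso\shF$). Your observation that the narrow case forces the cokernel-based criterion rather than \cref{cor:IEL-closed quasi-affine} correctly identifies why the paper routes this corollary through \cref{lem:char tors closed under tensoring}.
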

\begin{proof}
It follows from \cref{lem:char tors closed under tensoring,thm:Stanley-Wang for scheme}.
\end{proof}

\begin{cor}\label{prp:IE in coh quasi-affine}
Let $X$ be a noetherian quasi-affine scheme.
The following are equivalent for a subcategory $\catX$ of $\coh X$.
\begin{enur}
\item
$\catX$ is a torsionfree class.
\item
$\catX$ is an IKE-closed subcategory.
\item
$\catX$ is an IE-closed subcategory.
\end{enur}
\end{cor}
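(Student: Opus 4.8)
The plan is to deduce everything from \cref{thm:IE for scheme}, which already establishes the equivalence of these three conditions for any divisorial noetherian scheme, \emph{provided} the subcategory in question is closed under tensoring with line bundles. The only additional input needed in the quasi-affine setting is that this last hypothesis comes for free.

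First I would record that a quasi-affine noetherian scheme $X$ is divisorial: since $X$ is quasi-affine, the structure sheaf $\shO_X$ is ample, so the singleton $\{\shO_X\}$ is an ample family. The implications (i)$\imply$(ii)$\imply$(iii) are purely formal and hold in any abelian category. Indeed, if $\catX$ is a torsionfree class and $f\colon M\to N$ is a morphism with $M,N\in\catX$, then $\Ker f$ and $\Ima f$ are subobjects of $M$ and $N$ respectively, hence lie in $\catX$; together with extension-closedness this shows that $\catX$ is IKE-closed. That IKE-closed implies IE-closed is immediate from the definitions.

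The substance is the converse (iii)$\imply$(i), and here I would simply combine two already-proven facts. By \cref{cor:IEL-closed quasi-affine}, every IE-closed subcategory of $\coh X$ is automatically closed under tensoring with line bundles when $X$ is quasi-affine. Hence an IE-closed $\catX$ is closed under tensoring with line bundles, and since $X$ is divisorial, \cref{thm:IE for scheme} applies directly and yields that $\catX$ is a torsionfree class, closing the cycle.

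There is no genuine obstacle remaining at this level: all the work has been front-loaded into \cref{cor:IEL-closed quasi-affine} (which itself rests on the decomposition $\catX=\T(\catX)\cap\F(\catX)$ obtained via \cref{lem:char IE closed under tensoring}) and into \cref{thm:IE for scheme}. The proof is therefore a one-line citation of these two results, exactly parallel to the treatment of \cref{prp:torf in coh quasi-affine,prp:tors in coh quasi-affine}.
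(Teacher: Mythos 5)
Your proposal is correct and follows exactly the paper's own route: the proof given there is a one-line citation of \cref{thm:IE for scheme} and \cref{cor:IEL-closed quasi-affine}, which is precisely the combination you describe. The additional remarks (quasi-affine implies divisorial via ampleness of $\shO_X$, and the formal implications (i)$\imply$(ii)$\imply$(iii)) are accurate and consistent with how the paper sets things up.
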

\begin{proof}
It follows from \cref{thm:IE for scheme,cor:IEL-closed quasi-affine}.
\end{proof}

\begin{cor}\label{prp:Serre in torf quasi-affine}
Let $X$ be a noetherian quasi-affine scheme and $\catX$ a torsionfree class of $\coh X$.
Then the assignments $\catS \mapsto \Ass\catS$ and $\Phi \mapsto \coh^{\ass}_{\Phi} X$
give rise to mutually inverse bijections between the following sets:
\begin{itemize}
\item 
The set of Serre subcategories of $\catX$.
\item
The set of specialization-closed subsets of $\Ass \catX$.
\end{itemize}
\end{cor}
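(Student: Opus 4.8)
The plan is to reduce this statement to the general classification \cref{prp:classify Serre in torf} by verifying that both of its hypotheses hold automatically in the quasi-affine setting. Recall that \cref{prp:classify Serre in torf} requires (a) that $X$ possess an ample line bundle and (b) that the torsionfree class $\catX$ be closed under tensoring with line bundles. Once both are checked, the conclusion is immediate.

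For (a), I would invoke the characterization recalled in \S\ref{ss:ample family}: a qcqs scheme is quasi-affine precisely when its structure sheaf $\shO_X$ is ample. Hence a noetherian quasi-affine scheme automatically has an ample line bundle, namely $\shO_X$ itself, so the first hypothesis is satisfied.

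For (b), the key observation is that a torsionfree class is in particular IE-closed: being closed under subobjects and extensions, it is closed under images (the image of a morphism between two of its objects is a subobject of the target, hence again an object of $\catX$) and under extensions. By \cref{cor:IEL-closed quasi-affine}, every IE-closed subcategory of $\coh X$ is closed under tensoring with line bundles when $X$ is quasi-affine, so $\catX$ satisfies the second hypothesis as well.

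With both hypotheses in place, \cref{prp:classify Serre in torf} yields that $\catS \mapsto \Ass\catS$ and $\Phi \mapsto \coh^{\ass}_{\Phi} X$ are mutually inverse bijections between the Serre subcategories of $\catX$ and the specialization-closed subsets of $\Ass\catX$ (understood in the subspace topology, as in that theorem), which is exactly the assertion. I do not anticipate a genuine obstacle here, since the statement is a specialization of an already-proved theorem; the only substantive input is the automatic line-bundle stability of (b), and that is precisely what \cref{cor:IEL-closed quasi-affine} supplies, itself resting on the ampleness of $\shO_X$ together with the decomposition $\catX = \T(\catX) \cap \F(\catX)$.
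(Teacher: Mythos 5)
Your proposal is correct and follows exactly the paper's route: the paper proves this corollary by citing \cref{prp:classify Serre in torf} together with \cref{cor:IEL-closed quasi-affine}, which is precisely the reduction you carry out (ampleness of $\shO_X$ for quasi-affine $X$, plus the observation that a torsionfree class is IE-closed and hence automatically closed under tensoring with line bundles).
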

\begin{proof}
It follows from \cref{prp:classify Serre in torf,cor:IEL-closed quasi-affine}.
\end{proof}

Therefore, we obtain the following implications for subcategories of $\coh X$,
where $X$ is a quasi-affine noetherian scheme:
\[
\begin{tikzcd}
&\text{Serre subcategories} \arr{d,equal} \arr{rd,equal}& \\
\text{torsionfree classes} \arr{d,equal} & \text{wide}  \arr{rd,equal} & \text{torsion classes} \arr{d,equal}\\
\text{IKE-closed} \arr{rd,equal} && \text{ICE-closed} \arr{ld,Rightarrow} \arr{d,equal}\\
& \text{IE-closed} & \text{narrow}.
\end{tikzcd}
\]

Applying these results to a noetherian affine scheme $X=\Spec R$,
we can recover the known results for $\catmod R$.

\begin{cor}[{\cite[Theorem 4.1]{Takahashi}}]\label{fct:Takahashi torf}
Let $R$ be a commutative noetherian ring.
Then the assignments
\[
\catX \mapsto \Ass\catX :=\bigcup_{M\in\catX} \Ass M
\quad \text{and} \quad
\Phi \mapsto \catmod^{\ass}_{\Phi} R:=\{M\in \catmod R \mid \Ass M\subseteq \Phi\}
\]
give rise to mutually inverse bijections between the following sets:
\begin{itemize}
\item 
The set of torsionfree classes of $\catmod R$.
\item
The power set of $\Spec R$.
\end{itemize}
\end{cor}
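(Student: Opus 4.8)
The plan is to read off the statement from \cref{prp:torf in coh quasi-affine} applied to the affine scheme $X = \Spec R$. Since $R$ is commutative noetherian, $X$ is a noetherian scheme, and being affine it is in particular quasi-affine: the identity morphism is an open immersion of $X$ into the affine scheme $X$. Thus \cref{prp:torf in coh quasi-affine} applies and produces mutually inverse bijections, given by $\Phi \mapsto \coh^{\ass}_{\Phi} X$ and $\catX \mapsto \Ass \catX$, between the power set of the underlying space of $X$ and the set of torsionfree classes of $\coh X$. The underlying space of $X = \Spec R$ is the prime spectrum $\Spec R$, so its power set is already the one appearing in the statement.

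Next I would transport this bijection along the standard equivalence of abelian categories $\coh(\Spec R) \simeq \catmod R$, given by $\shF \mapsto \GG(X, \shF)$ with quasi-inverse $M \mapsto \widetilde{M}$. This equivalence is exact, so it carries torsionfree classes of $\coh X$ to torsionfree classes of $\catmod R$ and back (exactness preserves closure under subobjects and under extensions in both directions). It remains to check that the equivalence intertwines the two pairs of assignments; for this I would verify that $\Ass \widetilde{M} = \Ass M$ under the homeomorphism $X \cong \Spec R$. This is immediate from the definition of associated points via stalks together with $(\widetilde{M})_{\pp} = M_{\pp}$ and the fact that $\pp \in \Ass_R M$ if and only if $\pp R_{\pp} \in \Ass_{R_{\pp}} M_{\pp}$. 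From this identification it follows at once that $\coh^{\ass}_{\Phi} X$ corresponds to $\catmod^{\ass}_{\Phi} R$ and that $\Ass \catX$ corresponds to $\bigcup_{M \in \catX} \Ass M$.

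Substituting these identifications into the bijections furnished by \cref{prp:torf in coh quasi-affine} yields precisely the two mutually inverse bijections asserted in the statement. I do not anticipate any real obstacle, since all the substantive content is already contained in \cref{thm:Takahashi for scheme} (packaged for the quasi-affine case in \cref{prp:torf in coh quasi-affine}); the remaining work is the routine translation across the affine equivalence, the most delicate --- but still entirely standard --- step being the compatibility $\Ass \widetilde{M} = \Ass M$ of associated points of sheaves with associated primes of modules.
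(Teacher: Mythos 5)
Your proposal is correct and follows exactly the paper's own route: the paper deduces \cref{fct:Takahashi torf} in one line from \cref{prp:torf in coh quasi-affine} applied to $X=\Spec R$, leaving the translation across the equivalence $\coh(\Spec R)\simeq \catmod R$ (including the compatibility $\Ass\widetilde{M}=\Ass M$) implicit, which you have merely spelled out.
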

\begin{proof}
It follows from \cref{prp:torf in coh quasi-affine}.
\end{proof}


\begin{cor}[{\cite[Corollary 7.1]{SW}, \cite[Theorem 2.5]{IMST}}]\label{fct:tor=Serre}
Let $R$ be a commutative noetherian ring.
The following are equivalent for a subcategory $\catX$ of $\catmod R$.
\begin{enur}
\item
$\catX$ is a Serre subcategory.
\item
$\catX$ is a torsion class.
\item
$\catX$ is a wide subcategory.
\item
$\catX$ is an ICE-closed subcategory.
\item
$\catX$ is a narrow subcategory.
\item
$\catX$ is a tensor-ideal subcategory closed under direct summands and extensions.
\end{enur}
\end{cor}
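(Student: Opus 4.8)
The plan is to reduce the statement to the scheme-theoretic result \cref{prp:tors in coh quasi-affine} by passing through the affine case. Since $R$ is a commutative noetherian ring, the spectrum $X := \Spec R$ is a noetherian scheme, and being affine it is in particular quasi-affine. Thus \cref{prp:tors in coh quasi-affine} applies to $X$, yielding the equivalence of the six conditions (i)--(vi) for subcategories of $\coh X$.

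Next I would transport this equivalence along the standard exact equivalence of abelian categories $\catmod R \simeq \coh X$ given by $M \mapsto \widetilde{M}$, with quasi-inverse $\shF \mapsto \GG(X,\shF)$. Under any equivalence of abelian categories, the notions appearing in conditions (i)--(v)---closure under subobjects, quotients, kernels, cokernels, images, and extensions---are preserved, since they are formulated purely in terms of exact sequences and (co)kernels. Hence a subcategory $\catX \subseteq \catmod R$ satisfies one of (i)--(v) if and only if its image in $\coh X$ does.

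The only point requiring care is condition (vi), which involves the tensor structure. Here I would use that the equivalence is monoidal: for finitely generated $R$-modules one has a natural isomorphism $\widetilde{M} \otimes_{\shO_X} \widetilde{N} \iso \widetilde{M \otimes_R N}$. Consequently a subcategory of $\catmod R$ is tensor-ideal (closed under $-\otimes_R N$ for all $N \in \catmod R$) precisely when its image in $\coh X$ is tensor-ideal in the sense of \cref{lem:char tors closed under tensoring}, and closure under direct summands transports trivially. Combining these translations with \cref{prp:tors in coh quasi-affine} gives the asserted equivalence of (i)--(vi) for $\catmod R$. I do not expect any genuine obstacle here: the entire content has already been established at the level of $\coh X$, and what remains is the routine verification that the affine-sheaf dictionary matches the six conditions---with the monoidal compatibility for (vi) being the only step worth spelling out.
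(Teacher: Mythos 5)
Your proposal is correct and matches the paper's own proof, which simply invokes \cref{prp:tors in coh quasi-affine} for the (quasi-)affine scheme $X=\Spec R$; the translation along $\catmod R\simeq\coh X$, including the monoidal compatibility needed for condition (vi), is exactly the routine dictionary the paper leaves implicit.
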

\begin{proof}
It follows from \cref{prp:tors in coh quasi-affine}.
\end{proof}

\begin{cor}[{\cite[Theorem 3.1.]{IE=torf}}]\label{prp:Enomoto's observation}
Let $R$ be a commutative noetherian ring.
The following are equivalent for a subcategory $\catX$ of $\catmod R$.
\begin{enur}
\item
$\catX$ is a torsionfree class.
\item
$\catX$ is an IKE-closed subcategory.
\item
$\catX$ is an IE-closed subcategory.
\end{enur}
\end{cor}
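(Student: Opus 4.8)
The plan is to obtain this as the affine special case of the scheme-theoretic equivalence \cref{thm:IE for scheme}. First I would pass to the affine scheme $X := \Spec R$ and use the identification $\catmod R \simeq \coh X$, under which the three conditions (i)--(iii) correspond verbatim to the torsionfree, IKE-closed and IE-closed conditions on the associated subcategory of $\coh X$. Since $X$ is affine it is in particular quasi-affine, its structure sheaf $\shO_X$ is ample, and hence $X$ is divisorial and noetherian; so \cref{thm:IE for scheme} is available for any subcategory of $\coh X$ that is closed under tensoring with line bundles.

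The only discrepancy between \cref{thm:IE for scheme} and the present statement is the hypothesis \emph{closed under tensoring with line bundles}, which is absent here. The key step is to check that this hypothesis is automatic over a quasi-affine scheme. Each of (i)--(iii) forces $\catX$ to be IE-closed (by the implications recorded in the diagram just after \cref{dfn:several subcat}), so it suffices to invoke \cref{cor:IEL-closed quasi-affine}, which states precisely that an IE-closed subcategory of $\coh X$ is closed under tensoring with line bundles when $X$ is quasi-affine. Concretely this rests on \cref{lem:char IE closed under tensoring} applied to the trivial ample family $\{\shO_X\}$: tensoring with the single generator $\shO_X$ or its dual changes nothing, so the numerical criterion there holds vacuously, even when $\Pic R \ne 0$.

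Once the line-bundle-closure hypothesis has been supplied for free, the equivalence (i)$\Leftrightarrow$(ii)$\Leftrightarrow$(iii) is exactly \cref{thm:IE for scheme}, and the corollary follows. I do not expect a genuine obstacle here, since the substance has all been carried out upstream; the only point needing care is the bookkeeping that each of the three conditions entails IE-closedness, so that \cref{cor:IEL-closed quasi-affine} can legitimately be applied. As an alternative that avoids the geometric reduction altogether, one could run Enomoto's argument directly at the module level: an IE-closed $\catX$ satisfies $\catX = \T(\catX) \cap \F(\catX)$, and by the theorem of Stanley--Wang torsion classes over a commutative noetherian ring are Serre subcategories, hence torsionfree; thus $\catX$ is an intersection of two torsionfree classes and so is itself a torsionfree class.
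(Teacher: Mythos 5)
Your proposal is correct and follows exactly the paper's route: the paper deduces this corollary from \cref{prp:IE in coh quasi-affine}, which in turn combines \cref{thm:IE for scheme} with \cref{cor:IEL-closed quasi-affine} (automatic closure under tensoring with line bundles over a quasi-affine scheme, via the ample family $\{\shO_X\}$), precisely as you describe. Your closing remark about running Enomoto's argument directly at the module level is a valid alternative but is not what the paper does here.
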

\begin{proof}
It follows from \cref{prp:IE in coh quasi-affine}.
\end{proof}

\cref{prp:classify Serre in torf} 
remains new and interesting even for commutative noetherian rings.

\begin{cor}\label{prp:Serre in torf mod R}
Let $R$ be a commutative noetherian ring and $\catX$ a torsionfree class of $\catmod R$.
Then the assignments $\catS \mapsto \Ass\catS$ and $\Phi \mapsto \catmod^{\ass}_{\Phi} R$
give rise to mutually inverse bijections between the following sets:
\begin{itemize}
\item 
The set of Serre subcategories of $\catX$.
\item
The set of specialization-closed subsets of $\Ass \catX$.
\end{itemize}
\end{cor}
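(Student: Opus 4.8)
The plan is to recognize this statement as nothing more than the affine specialization of the quasi-affine classification already in hand, namely \cref{prp:Serre in torf quasi-affine}. Setting $X=\Spec R$, the scheme $X$ is affine, hence in particular a noetherian quasi-affine scheme, and there is an exact equivalence $\coh X \simeq \catmod R$ under which $\Supp$, $\Ass$, and the subcategories $\coh^{\ass}_{\Phi}X$ correspond to the module-theoretic notions $\Ass M$ and $\catmod^{\ass}_{\Phi}R$, and the topological space $X$ is identified with $\Spec R$. Under this identification the two assignments $\catS\mapsto\Ass\catS$ and $\Phi\mapsto\catmod^{\ass}_{\Phi}R$ of the corollary are literally the assignments of \cref{prp:classify Serre in torf}, and ``specialization-closed subset of $\Ass\catX$'' means the same thing on either side. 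So the entire content is to invoke the already-proved quasi-affine result.

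The one point that requires a word of justification is that \cref{prp:classify Serre in torf} is stated for a torsionfree class $\catX$ \emph{closed under tensoring with line bundles} over a scheme with an \emph{ample line bundle}, whereas here $\catX$ is merely assumed to be a torsionfree class of $\catmod R$. Both extra hypotheses come for free in the affine setting. First, since $X=\Spec R$ is quasi-affine the structure sheaf $\shO_X$ is ample, supplying the ample line bundle. Second, a torsionfree class is closed under subobjects and extensions, hence in particular IE-closed (images being subobjects of targets); so by \cref{cor:IEL-closed quasi-affine} it is automatically closed under tensoring with line bundles. Thus both hypotheses of \cref{prp:classify Serre in torf} are met, and indeed this is exactly the reasoning packaged into \cref{prp:Serre in torf quasi-affine}.

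Given this, I would simply write that the corollary follows from \cref{prp:Serre in torf quasi-affine} applied to $X=\Spec R$ (equivalently, from \cref{prp:classify Serre in torf} together with \cref{cor:IEL-closed quasi-affine}). There is no genuine obstacle here: the only ``work'' is the routine bookkeeping of the equivalence $\coh \Spec R \simeq \catmod R$ and the verification that the two free hypotheses hold, both of which are immediate from the cited results. The substantive mathematics all sits upstream in \cref{thm:Takahashi for scheme}, \cref{thm:IE for scheme}, and \cref{prp:classify Serre in torf}.
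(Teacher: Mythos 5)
Your proposal is correct and matches the paper exactly: the paper's proof of this corollary is simply the citation of \cref{prp:Serre in torf quasi-affine} applied to $X=\Spec R$, and your justification that the two extra hypotheses (ample line bundle, closure under tensoring with line bundles) are automatic in the affine case is precisely the content already packaged into that quasi-affine corollary via \cref{cor:IEL-closed quasi-affine}.
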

\begin{proof}
It follows from \cref{prp:Serre in torf quasi-affine}.
\end{proof}

\subsection{Classifying Serre subcategories of the category of maximal pure sheaves}\label{ss:Serre in pure}
Let $X$ be a noetherian scheme.
Consider the following subcategories of $\coh X$:
\begin{itemize}
\item
The category $\tf X$ of locally torsionfree coherent $\shO_X$-modules.
\item
The category $\pure X$ of maximal pure coherent $\shO_X$-modules
(when $\dim X < \infty$).
\item
The category $\cm X$ of maximal Cohen-Macaulay coherent $\shO_X$-modules.
\item
The category $\vect X$ of locally free $\shO_X$-modules of finite rank.
\end{itemize}
For a noetherian commutative ring $R$,
we write $\tf R := \tf(\Spec R)$, $\pure R:=\pure(\Spec R)$, and $\cm R:=\cm(\Spec R)$.
In this subsection,
we will classify Serre subcategories of these categories
by using \cref{prp:classify Serre in torf}.
We refer the reader to Appendix \ref{s:basic on torf sh} 
for the notion of locally torsionfree sheaves, pure sheaves and Cohen-Macaulay sheaves.
We first describe the relationship between these categories.
\begin{itemize}
\item
Suppose that $\dim X < \infty$.
In general, we have $\pure X \subseteq \tf X$ (cf.\ \cref{prp:maximal pure Ass}).
The equality $\pure X = \tf X$ holds when $X$ is equidimensional and has no embedding points
(cf.\ \cref{cor:pure and torf}).
\item
In general, we have $\cm X \subseteq \tf X$ (cf.\ \cref{prp:MCM and torf}).
The equality $\cm X = \tf X$ holds when $X$ is Cohen-Macaulay scheme and $\dim X \le 1$
(cf.\ \cref{prp:torf on 1-dim CM}).
\item
If $X$ is Cohen-Macaulay, then we have $\vect X \subseteq \cm X$.
The equality $\vect X = \cm X$ holds when $X$ is regular
(cf.\ \cref{prp:torf on regular}).
\item
In particular,
if $X$ is equidimensional of $\dim X \le 1$ and Cohen-Macaulay,
then we have $\pure X = \tf X = \cm X$.
\item
In particular,
if $X$ is regular and $\dim X \le 1$,
then we have $\tf X = \cm X = \vect X$.
\end{itemize}

The following classifications of Serre subcategories are immediately obtained 
from the bijections in \cref{prp:classify Serre in torf}.
\begin{cor}\label{prp:Serre in tf and pure}
Let $X$ be a noetherian scheme having an ample line bundle.
\begin{enua}
\item
There is a bijection between the following two sets:
\begin{itemize}
\item 
The set of Serre subcategories of $\tf X$.
\item
The set of specialization-closed subsets of $\Ass X$.
\end{itemize}
\item
If $\dim X < \infty$,
then there is a bijection between the following two sets:
\begin{itemize}
\item 
The set of Serre subcategories of $\pure X$.
\item
The power set of the set of irreducible components $Z$ of $X$ such that $\dim Z = \dim X$.
\end{itemize}
\end{enua}
\end{cor}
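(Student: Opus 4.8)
The plan is to realize both $\tf X$ and $\pure X$ as torsionfree classes of $\coh X$ closed under tensoring with line bundles of the special form $\coh^{\ass}_{\Phi} X$, and then to invoke \cref{prp:classify Serre in torf}. With that reduction in place, the entire content comes down to identifying the subset $\Phi = \Ass \catX$ attached to each category and understanding its relative specialization topology.

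For (1), I would first recall from Appendix \ref{s:basic on torf sh} that a coherent sheaf $\shF$ is locally torsionfree if and only if $\Ass \shF \subseteq \Ass X$, where $\Ass X := \Ass \shO_X$; equivalently $\tf X = \coh^{\ass}_{\Ass X} X$. In particular $\tf X$ is a torsionfree class closed under tensoring with line bundles, being of the form $\catX(\mathbf{P})$. By \cref{thm:Takahashi for scheme}(2) we then have $\Ass(\tf X) = \Ass X$, so \cref{prp:classify Serre in torf} applied to $\catX = \tf X$ yields the claimed bijection with the specialization-closed subsets of $\Ass X$.

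For (2), the same mechanism applies once $\Ass(\pure X)$ is computed. By \cref{prp:maximal pure Ass}, a coherent sheaf is maximal pure exactly when $\Ass \shF \subseteq \Phi_{\max}$, where $\Phi_{\max} := \{x \in X \mid \dim \ol{\{x\}} = \dim X\}$; hence $\pure X = \coh^{\ass}_{\Phi_{\max}} X$ is a torsionfree class closed under tensoring with line bundles, and $\Ass(\pure X) = \Phi_{\max}$ by \cref{thm:Takahashi for scheme}(2). A point $x$ lies in $\Phi_{\max}$ precisely when it is the generic point $\eta_Z$ of an irreducible component $Z$ with $\dim Z = \dim X$, so $Z \mapsto \eta_Z$ is a bijection from the set of top-dimensional irreducible components onto $\Phi_{\max}$. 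The key remaining step, and the only genuinely new point in this corollary, is that the relative topology on $\Phi_{\max}$ is discrete: if $\eta_{Z_1}$ is a specialization of $\eta_{Z_2}$ then $Z_1 = \ol{\{\eta_{Z_1}\}} \subseteq \ol{\{\eta_{Z_2}\}} = Z_2$, and since both are irreducible components this forces $Z_1 = Z_2$. Thus distinct points of $\Phi_{\max}$ are incomparable under $\preceq$, every subset of $\Phi_{\max}$ is specialization-closed, and \cref{prp:classify Serre in torf} identifies the Serre subcategories of $\pure X$ with the full power set of $\Phi_{\max}$, i.e.\ with the power set of the set of top-dimensional irreducible components.

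I expect no serious obstacle within this corollary once the appendix identifications $\tf X = \coh^{\ass}_{\Ass X} X$ and $\pure X = \coh^{\ass}_{\Phi_{\max}} X$ are available: everything is then a direct application of \cref{prp:classify Serre in torf} together with the elementary discreteness argument above. The only point requiring care is verifying that $\Phi_{\max}$ consists solely of generic points of components, since that is exactly what collapses the lattice of specialization-closed subsets into a plain power set.
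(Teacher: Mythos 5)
Your proposal is correct and follows essentially the same route as the paper: both identify $\tf X = \coh^{\ass}_{\Ass X}X$ and $\pure X = \coh^{\ass}_{\Assh X}X$ via the appendix results and then apply \cref{prp:classify Serre in torf}, with the observation that distinct points of $\Assh X$ (the generic points of top-dimensional components) are incomparable under specialization, so every subset is specialization-closed. The paper states this last point without proof; your explicit discreteness argument is a correct justification of it.
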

\begin{proof}
We have $\tf X = \coh^{\ass}_{\Ass X} X$ and $\pure X = \coh^{\ass}_{\Assh X} X$
by \cref{prp:torf sh general,prp:maximal pure Ass}, respectively.
Thus (1) follows from \cref{prp:classify Serre in torf}.
Since any subsets of $\Assh X$ are specialization-closed,
the Serre subcategories of $\pure X$ bijectively correspond to
the subsets of $\Assh X$ by \cref{prp:classify Serre in torf}.
Because the elements of $\Assh X$ bijectively correspond to 
the irreducible components $Z$ of $X$ such that $\dim Z = \dim X$,
we obtain (2).
\end{proof}

\begin{cor}
Let $X$ be a noetherian scheme having an ample line bundle.
\begin{enua}
\item
$\tf X$ has only finitely many Serre subcategories.
\item
If $\dim X < \infty$,
then $\pure X$ has only finitely many Serre subcategories.
\end{enua}
\end{cor}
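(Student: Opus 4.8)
The plan is to deduce both finiteness statements directly from the bijections established in \cref{prp:Serre in tf and pure}, reducing each to a purely combinatorial finiteness fact about the noetherian scheme $X$.

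For part (1), I would invoke \cref{prp:Serre in tf and pure}(1), which gives a bijection between the Serre subcategories of $\tf X$ and the specialization-closed subsets of $\Ass X$. It then suffices to observe that $\Ass X$ is a finite set: since $X$ is noetherian, the structure sheaf $\shO_X$ is coherent, and a coherent sheaf on a noetherian scheme has only finitely many associated points. A finite set has only finitely many subsets in total, hence in particular only finitely many specialization-closed ones, and the bijection finishes the argument.

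For part (2), I would invoke \cref{prp:Serre in tf and pure}(2), which, under the hypothesis $\dim X < \infty$, identifies the Serre subcategories of $\pure X$ with the power set of the set of irreducible components $Z$ of $X$ satisfying $\dim Z = \dim X$. Since $X$ is noetherian, it has only finitely many irreducible components, so this power set is finite, completing the proof.

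There is essentially no obstacle here: both statements are immediate corollaries of the classification in \cref{prp:Serre in tf and pure} once one records that the indexing sets, namely $\Ass X$ in (1) and the set of top-dimensional irreducible components in (2), are finite for a noetherian scheme. The only point requiring any care is the finiteness of $\Ass X$, which is the standard fact that a coherent sheaf on a noetherian scheme admits only finitely many associated points; everything else is formal.
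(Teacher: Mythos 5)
Your argument is correct and is essentially the paper's own proof: the paper also deduces both parts from the bijections in \cref{prp:Serre in tf and pure} together with the finiteness of $\Ass X$ for a noetherian scheme (and, implicitly for part (2), the finiteness of the set of irreducible components). Your write-up merely spells out the combinatorial reductions that the paper leaves to the reader.
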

\begin{proof}
It follows from \cref{prp:Serre in tf and pure} and the fact that $\Ass X$ is a finite set.
\end{proof}

\begin{ex}
Consider the ring $R:=\bbk[x,y]/(x^2,xy)$,
where $\bbk[x,y]$ is the polynomial ring in two variables over a field $\bbk$.
Then we have $\Ass R=\{(x), (x,y)\}$ and $\Assh R = \{(x)\}$.
Thus $\tf R$ has exactly three Serre subcategories, as follows:
\[
\catmod^{\ass}_{\emptyset} R=0,\quad
\catmod^{\ass}_{\{(x,y)\}} R=\fl R:=\{\text{$R$-modules of finite length}\} ,\quad
\catmod^{\ass}_{\Ass R} R=\tf R.
\]
On the other hand, $\pure R$ has no nontrivial Serre subcategories.
\end{ex}


\cref{prp:Serre in tf and pure} (1) can be simplified if $X$ has no embedding points.
\begin{cor}\label{prp:Serre in tf without emb pt}
Let $X$ be a noetherian scheme without embedding points.
Suppose that $X$ has an ample line bundle.
Then there is a bijection between the following two sets:
\begin{itemize}
\item 
The set of Serre subcategories of $\tf X$.
\item
The power set of the set of irreducible components of $X$.
\end{itemize}
\end{cor}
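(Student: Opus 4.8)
The plan is to deduce this directly from \cref{prp:Serre in tf and pure}(1), which already supplies a bijection between the Serre subcategories of $\tf X$ and the specialization-closed subsets of $\Ass X$ taken with its relative topology. Consequently it suffices to establish two things: that under the no-embedding-points hypothesis \emph{every} subset of $\Ass X$ is specialization-closed, and that $\Ass X$ is canonically in bijection with the set of irreducible components of $X$.

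First I would unwind the hypothesis. Saying that $X$ has no embedding points means precisely $\Ass X = \Min X$, where $\Min X$ denotes the set of generic points of the irreducible components of $X$; sending a generic point $x$ to its closure $\ol{\{x\}}$ then gives the standard bijection between $\Min X$ and the irreducible components. This reduces the whole problem to analysing the specialization order on $\Min X$.

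The crux is the observation that distinct minimal points are pairwise incomparable in the specialization order: if $x,y \in \Min X$ with $y$ a specialization of $x$, then the irreducible closed set $\ol{\{y\}}$ is contained in $\ol{\{x\}}$, forcing $\ol{\{x\}} = \ol{\{y\}}$ and hence $x=y$ by maximality of irreducible components. Thus $\Ass X = \Min X$ is an antichain, so for every $x \in \Ass X$ the relative closure $\ol{\{x\}} \cap \Ass X$ is just $\{x\}$. It follows that every subset of $\Ass X$ is specialization-closed in the relative topology, i.e.\ the specialization-closed subsets of $\Ass X$ form exactly the full power set of $\Ass X$.

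Putting these together, \cref{prp:Serre in tf and pure}(1) yields bijections
\[
\{\text{Serre subcategories of } \tf X\} \longleftrightarrow \{\text{subsets of } \Ass X\} \longleftrightarrow \{\text{subsets of the irreducible components}\},
\]
which is the claim. I do not expect any genuine obstacle: the only non-formal input is the antichain property above, and the remaining steps are pure bookkeeping already packaged inside the cited proposition. The single point demanding care is the relative-topology subtlety, namely confirming that no associated point specializes to another once embedded points are excluded.
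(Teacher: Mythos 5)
Your proposal is correct and follows essentially the same route as the paper: apply \cref{prp:Serre in tf and pure}(1), note that $\Ass X=\Min X$ is an antichain under the specialization order so every subset is specialization-closed, and identify $\Min X$ with the set of irreducible components. The only difference is that you spell out the antichain verification, which the paper treats as immediate.
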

\begin{proof}
Since any subsets of $\Ass X=\Min X$ are specialization-closed,
the Serre subcategories of $\tf X$ bijectively correspond to the subsets of $\Min X$
by \cref{prp:Serre in tf and pure}.
Because the elements of $\Min X$ bijectively correspond to 
the irreducible components of $X$, we obtain the corollary.
\end{proof}

As special cases of \cref{prp:Serre in tf without emb pt},
we obtain the classifications of the Serre subcategories of $\cm X$ and $\vect X$.
\begin{cor}\label{prp:Serre in cm}
Let $X$ be a noetherian Cohen-Macaulay scheme such that $\dim X \le 1$.
Suppose that $X$ has an ample line bundle.
Then there is a bijection between the following two sets:
\begin{itemize}
\item 
The set of Serre subcategories of $\cm X$.
\item
The power set of the set of irreducible components of $X$.
\end{itemize}
\end{cor}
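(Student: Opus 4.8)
The plan is to deduce this directly from \cref{prp:Serre in tf without emb pt} by identifying $\cm X$ with $\tf X$. First I would check that $X$ meets the hypotheses of that corollary. It has an ample line bundle by assumption. It also has no embedding points: since $X$ is Cohen-Macaulay, every local ring $\shO_{X,x}$ is Cohen-Macaulay, hence unmixed with no embedded associated primes, so $\Ass X = \Min X$ and there are no embedding points.

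Next I would invoke the identification $\cm X = \tf X$, which holds precisely because $X$ is Cohen-Macaulay of dimension at most $1$; this is exactly \cref{prp:torf on 1-dim CM}, quoted in the list of relationships at the start of this subsection. Moreover $\tf X = \coh^{\ass}_{\Ass X} X$ is a torsionfree class of $\coh X$ closed under tensoring with line bundles, so the same is true of $\cm X$, and consequently the Serre subcategories in the sense of \cref{dfn:Serre in ex cat} coincide for the two (identical) categories.

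Finally, applying \cref{prp:Serre in tf without emb pt} to $\tf X$ produces a bijection between the Serre subcategories of $\tf X = \cm X$ and the power set of the set of irreducible components of $X$, which is precisely the asserted correspondence.

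I expect no genuine obstacle here, since the statement is a direct specialization of \cref{prp:Serre in tf without emb pt}. The only points requiring care are the two inputs—the unmixedness of a Cohen-Macaulay scheme, which guarantees the absence of embedding points, and the equality $\cm X = \tf X$ in dimension at most one—and both are either standard or already established in the appendix, so the argument amounts to verifying that these hypotheses apply and then citing the earlier bijection.
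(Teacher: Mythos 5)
Your proposal is correct and matches the paper's own proof, which likewise deduces the statement from the equality $\cm X = \tf X$ (valid for Cohen--Macaulay schemes of dimension at most one) together with \cref{prp:Serre in tf without emb pt}, using that a Cohen--Macaulay scheme has no embedding points. You have simply spelled out the two inputs the paper cites implicitly.
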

\begin{proof}
It follows from $\cm X = \tf X$ and \cref{prp:Serre in tf without emb pt}.
\end{proof}

In particular, the following holds in the affine case.
\begin{cor}\label{prp:Serre in cm aff}
Let $R$ be a $1$-dimensional noetherian Cohen-Macaulay commutative ring.
Then there is a bijection between the following two sets:
\begin{itemize}
\item 
The set of Serre subcategories of $\cm R$.
\item
The power set of $\Min R$.
\qed
\end{itemize}
\end{cor}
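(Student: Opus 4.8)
The plan is to deduce this statement as the affine specialization of \cref{prp:Serre in cm}, so the main task is simply to verify that the scheme $X := \Spec R$ satisfies the hypotheses of that corollary and then to translate its conclusion into ring-theoretic language. First I would observe that $X$ is affine, hence quasi-affine, so its structure sheaf $\shO_X$ is ample; in particular $X$ has an ample line bundle. Since $R$ is a $1$-dimensional noetherian Cohen-Macaulay commutative ring, $X$ is a noetherian Cohen-Macaulay scheme with $\dim X \le 1$. Thus all hypotheses of \cref{prp:Serre in cm} are met.

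Applying \cref{prp:Serre in cm} to $X = \Spec R$, the Serre subcategories of $\cm X$ are in bijection with the power set of the set of irreducible components of $X$. By the notational convention $\cm R = \cm(\Spec R)$, the left-hand side is exactly the set of Serre subcategories of $\cm R$. For the right-hand side, I would recall that the irreducible components of $\Spec R$ correspond bijectively to the minimal prime ideals of $R$, via $\pp \mapsto V(\pp)$; hence the power set of the set of irreducible components of $X$ is naturally identified with the power set of $\Min R$. Combining these two identifications yields the asserted bijection.

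There is no genuine obstacle here: the substantive content is entirely carried by \cref{prp:Serre in cm}, which in turn rests on \cref{prp:classify Serre in torf} together with the identification $\cm X = \tf X$ for a one-dimensional Cohen-Macaulay scheme. The only point requiring a moment's care is the (standard) dictionary between the irreducible components and the minimal primes of an affine scheme, which is routine.
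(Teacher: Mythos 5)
Your proposal is correct and matches the paper's (implicit) argument exactly: the corollary is stated with a \qed precisely because it is the specialization of \cref{prp:Serre in cm} to $X=\Spec R$, using that an affine scheme has an ample structure sheaf and that irreducible components of $\Spec R$ correspond to minimal primes of $R$. Nothing further is needed.
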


\begin{cor}\label{prp:Serre in vect}
Let $X$ be a noetherian regular scheme such that $\dim X \le 1$.
Suppose that $X$ has an ample line bundle.
Then there is a bijection between the following two sets:
\begin{itemize}
\item 
The set of Serre subcategories of $\vect X$.
\item
The power set of the set of connected components of $X$.
\end{itemize}
\end{cor}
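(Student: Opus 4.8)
The plan is to deduce this from \cref{prp:Serre in tf without emb pt} by translating between $\vect X$, $\tf X$, and the two notions of ``component.'' First I would record that a regular scheme is reduced: each stalk $\shO_{X,x}$ is a regular local ring, hence an integral domain, so $X$ has no embedding points (every associated point is minimal). Since $X$ is assumed to carry an ample line bundle, the hypotheses of \cref{prp:Serre in tf without emb pt} are met, and that result furnishes a bijection between the Serre subcategories of $\tf X$ and the power set of the set of irreducible components of $X$.

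Next I would invoke the identification $\vect X = \tf X$, which is valid here because $X$ is regular of dimension at most $1$: regularity gives $\vect X = \cm X$ via \cref{prp:torf on regular}, and regular schemes are Cohen--Macaulay, so $\cm X = \tf X$ holds once $\dim X \le 1$. Consequently the Serre subcategories of $\vect X$ are precisely those of $\tf X$, and hence are in bijection with the power set of the irreducible components of $X$.

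Finally I would identify the irreducible components with the connected components. Since every stalk $\shO_{X,x}$ is a domain, each point of $X$ lies on a unique irreducible component, so the (finitely many, closed) irreducible components are pairwise disjoint, hence clopen; being irreducible they are in particular connected, and therefore they are precisely the connected components of $X$. Substituting this identification into the bijection from the previous step yields the asserted correspondence.

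The only step that is not pure bookkeeping is this last identification, but it rests on the elementary fact that local integrality forces the irreducible components of $X$ to be pairwise disjoint, so no serious obstacle arises; the substantive content is already carried by \cref{prp:Serre in tf without emb pt} together with the equalities recorded in \cref{prp:torf on regular}.
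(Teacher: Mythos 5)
Your proposal is correct and follows essentially the same route as the paper: reduce to \cref{prp:Serre in tf without emb pt} via the identification $\vect X = \tf X$ from \cref{prp:torf on regular}, and then observe that the irreducible components of a regular scheme are exactly its connected components (which you justify, correctly, by noting the stalks are domains so the finitely many closed irreducible components are pairwise disjoint, hence clopen). The paper merely cites these three facts without elaboration, so your write-up is just a more detailed version of the same argument.
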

\begin{proof}
It follows from $\vect X = \tf X$, \cref{prp:Serre in tf without emb pt}
and the fact that every irreducible component of a regular scheme 
is a connected component.
\end{proof}

The following corollary is the author's original motivation for this paper.
\begin{cor}\label{prp:Serre in pure of reduced curve}
Let $Z=\bigcup_{i=1}^r C_i$ be a reduced projective curve over a field $\bbk$ 
with irreducible components $C_i$.
Then there is a bijection between the following two sets:
\begin{itemize}
\item 
The set of Serre subcategories of $\pure X = \tf X = \cm X$.
\item
The power set of $\{C_1,\dots,C_n\}$.
\end{itemize}
\end{cor}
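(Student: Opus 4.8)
The plan is to check that the reduced projective curve $Z$ meets the hypotheses of the classification results already proved, and then simply read off the bijection. First I would record the relevant geometric properties of $Z$. Being projective over $\bbk$, it carries an ample line bundle, so \cref{prp:classify Serre in torf} and its consequences apply. Being reduced, $Z$ has no embedding points and $\Ass Z = \Min Z$. As a curve it is equidimensional of dimension one, and it is Cohen--Macaulay: at every non-generic point the local ring is reduced of dimension one, so its maximal ideal is not contained in the union of the (finitely many) minimal primes (otherwise prime avoidance would force $\mm$ to coincide with a minimal prime, forcing $\dim = 0$), whence $\mm$ contains a nonzerodivisor and $\depth = 1 = \dim$.

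With these properties in hand, the next step is to identify the three categories appearing in the statement. The equidimensionality of $Z$ together with the absence of embedding points gives $\pure Z = \tf Z$ by \cref{cor:pure and torf}, while the Cohen--Macaulay property in dimension at most one gives $\cm Z = \tf Z$ by \cref{prp:torf on 1-dim CM}. Thus $\pure Z = \tf Z = \cm Z$, and it remains only to classify the Serre subcategories of $\tf Z$.

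Finally I would invoke \cref{prp:Serre in tf without emb pt}: for a noetherian scheme without embedding points admitting an ample line bundle, the Serre subcategories of $\tf Z$ correspond bijectively to the power set of the set of irreducible components, the point being that every subset of $\Ass Z = \Min Z$ is specialization-closed in the relative topology. Since the irreducible components of $Z$ are exactly $C_1,\dots,C_r$, this is the asserted bijection. There is no genuine obstacle in the argument beyond assembling the cited results; the only step demanding care is the verification that a reduced curve is Cohen--Macaulay and equidimensional with no embedding points, as it is precisely the Cohen--Macaulay property that allows $\cm Z$ to join the coincidence $\pure Z = \tf Z$ and that licenses the application of the curve-specific classification.
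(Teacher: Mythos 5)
Your proof is correct and follows essentially the same route as the paper: verify that a reduced projective curve is equidimensional, Cohen--Macaulay of dimension one, and carries an ample line bundle, deduce $\pure Z = \tf Z = \cm Z$, and read off the bijection from the classification of Serre subcategories of $\tf Z$ for schemes without embedded points. The only difference is cosmetic: you spell out the depth computation showing a reduced curve is Cohen--Macaulay and cite \cref{prp:Serre in tf without emb pt} directly, whereas the paper asserts the Cohen--Macaulay property and routes through \cref{prp:Serre in cm}, which is itself an immediate consequence of the same result.
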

\begin{proof}
A curve is an algebraic $\bbk$-scheme that is equidimensional of dimension $1$ by definition.
Thus, $Z$ is an equidimensional Cohen-Macaulay scheme of dimension $1$.
Then $\pure X = \tf X = \cm X$ hold, and the assertion follows from \cref{prp:Serre in cm}.
\end{proof}

\cref{prp:Serre in pure of reduced curve} immediately yields the following.
\begin{cor}[{\cite[Corollary 4.10]{Saito}}]\label{prp:Serre in conn sm proj}
Let $C$ be a connected smooth projective curve over a field $\bbk$.
Then $\vect C$ has no nontrivial Serre subcategories.
\qed
\end{cor}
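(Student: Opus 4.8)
The plan is to derive this statement as the special case of a single irreducible component in \cref{prp:Serre in pure of reduced curve}. First I would observe that a connected smooth projective curve $C$ over $\bbk$ is in particular a reduced projective curve, so that the previous corollary applies. The crucial input is a count of its irreducible components. Since $C$ is smooth over $\bbk$ it is regular, and a regular noetherian scheme is a finite disjoint union of integral schemes (namely its connected components); as $C$ is connected it is therefore integral, hence irreducible. Thus $C$ has exactly one irreducible component, which is $C$ itself.

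Next I would record that the four categories appearing in \cref{prp:Serre in pure of reduced curve} all coincide for $C$. Indeed, among the relationships collected just before \cref{prp:Serre in tf and pure}, regularity together with $\dim C \le 1$ gives $\tf C = \cm C = \vect C$, while equidimensionality of dimension $1$ (a curve, with no embedding points since $C$ is integral) gives $\pure C = \tf C$. Combining these, $\vect C = \pure C = \tf C = \cm C$, so the classification of Serre subcategories of $\pure C$ supplied by \cref{prp:Serre in pure of reduced curve} is literally a classification of the Serre subcategories of $\vect C$.

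Finally, applying that corollary to $C$, whose only irreducible component is $C$, yields a bijection between the Serre subcategories of $\vect C$ and the power set of the singleton $\{C\}$. This power set has exactly two elements, $\emptyset$ and $\{C\}$, and under the assignment $\Phi \mapsto \coh^{\ass}_{\Phi} C$ these correspond respectively to the two trivial Serre subcategories $0$ and $\vect C$. Hence $\vect C$ has no Serre subcategory other than $0$ and $\vect C$ itself, i.e.\ no nontrivial ones. There is essentially no obstacle beyond correctly reducing to the one-component case: the only facts to check are the standard statement that a connected regular scheme is irreducible and the bookkeeping that the regularity and dimension hypotheses collapse $\vect C$, $\tf C$, $\cm C$ and $\pure C$ into a single category.
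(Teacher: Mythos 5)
Your argument is correct and is essentially the route the paper takes: the paper states that \cref{prp:Serre in pure of reduced curve} ``immediately yields'' the corollary, and you have simply filled in the implicit details (a connected regular curve is irreducible, the categories $\vect C=\pure C=\tf C=\cm C$ coincide by regularity and $\dim C\le 1$, and the power set of a singleton has two elements corresponding to $0$ and $\vect C$). No issues.
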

\subsection{Classifying torisionfree classes of the projective line}
In this subsection,
we classify the \emph{arbitrary} torsionfree classes of 
the category $\coh \bbP^1$ of coherent sheaves on the projective line.
This classification explains the necessity of 
the assumption of being closed under tensoring with line bundles in \S \ref{s:subcat in coh}.
Indeed, we will see that there exist distinct torsionfree classes $\catX$ and $\catY$ 
of $\coh \bbP^1$ such that $\Ass \catX = \Ass \catY$
(cf.\ \cref{prp:torf in coh P^1}).
For future work, we discuss torsionfree classes of the category $\coh C$
of coherent sheaves on a connected smooth projective curve.
We refer to \cite{Atiyah,Hartshorne,LeP} for the basics of coherent sheaves 
on a connected smooth projective curve.
See also \cite[Section 4]{Saito}.

We recall some basic notions for coherent sheaves on a curve to fix the notation.
Let $C$ be a connected smooth projective curve over a field $\bbk$
with generic point $\eta$.
We denote by $C_0$ the set of closed points of $C$.
Define $\tor C := \coh_{C_0} C$ (see \S \ref{ss:tor in coh} for the notation)
and call its objects \emph{torsion sheaves}.
Then $(\tor C, \vect C)$ is a torsion pair of $\coh C$
(see, for example, \cite[Lemma 4.11]{Saito}), that is, the following hold:
\begin{enur}
\item
$\Hom(\shF,\shE)=0$ for any $\shF\in \tor C$ and $\shE \in \vect C$.
\item
$\coh C = \tor C * \vect C$ holds (see \cref{dfn:notation subcat} for the notation).
\end{enur}
Thus, for any $\shF \in \coh C$, we have an exact sequence
\[
0 \to \shF_{\rm{tor}} \to \shF \to \shF_{\rm{vect}} \to 0,
\]
where $\shF_{\rm{tor}}\in \tor C$ and $\shF_{\rm{vect}}\in \vect C$.
Moreover, this exact sequence splits by the Serre duality:
\[
\Ext^1(\shF_{\rm{vect}},\shF_{\rm{tor}})
\iso \Hom(\shF_{\rm{tor}},\shF_{\rm{vect}}\otimes \omega_C)=0.
\]
Here $\omega_C$ is the canonical line bundle on $C$.

For a divisor $D$ on $C$, we denote by $\shO_C(D)$ the line bundle associated with $D$.
Any line bundles are of the form $\shO_C(D)$ for some divisor $D$.
We write $\shF(D):= \shF \otimes \shO_C(D)$ for $\shF \in \coh C$.
For any closed point $x\in C$,
we set $\shO_{n{\cdot}x}:=i_*\left(\shO_{C,x}/\mm_x^n\right)$,
where $i$ is the natural morphism $\Spec \shO_{C,x} \to C$.
When a divisor $D=\sum_{i=1}^r n_i{\cdot}x_i$ is effective,
we define the torsion sheaf $\shO_D$ associated with $D$ by
\[
\shO_D := \bigoplus_{i=1}^r \shO_{n_i{\cdot}x_i} \in \tor C.
\]
Note that any torsion sheaf is of the form $\shO_D$ for some effective divisor $D$
(see, for example, \cite[Lemma 4.3]{Saito}).

We first determine
when a torsionfree class is closed under tensoring with line bundles.
For this, let us begin with observations for a torsionfree class of $\coh C$.
\begin{lem}\label{prp:property of torf in coh C}
Let $C$ be a connected smooth projective curve over a field $\bbk$
and $\catX$ a torsionfree class of $\coh C$.
\begin{enua}
\item
If $\shF \in \catX$, then both $\shF_{\rm{vect}}$ and $\shF_{\rm{tor}}$ belong to $\catX$.
\item
If $\catX$ contains a vector bundle $\shE$,
then $\shE(-D) \in \catX$ for any effective divisor $D$.
\item
If $\catX$ contains a nonzero vector bundle $\shE$ and is closed under tensoring with line bundles,
then $\vect C \subseteq \catX$ holds.
\item
If $\catX$ contains a nonzero vector bundle and a nonzero torsion sheaf,
then $\vect C \subseteq \catX$ holds.
\end{enua}
\end{lem}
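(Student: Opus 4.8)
The plan is to handle the four parts in increasing order of difficulty, using throughout that $\catX$, being a torsionfree class, is closed under subobjects and extensions, and that the torsion pair $(\tor C, \vect C)$ splits. For (1) I would recall that the canonical sequence $0 \to \shF_{\rm{tor}} \to \shF \to \shF_{\rm{vect}} \to 0$ splits, so $\shF_{\rm{tor}}$ and $\shF_{\rm{vect}}$ are both (isomorphic to) subobjects of $\shF$; since $\shF \in \catX$ and $\catX$ is closed under subobjects, both lie in $\catX$. For (2), for an effective divisor $D$ the inclusion $\shO_C(-D) \inj \shO_C$ tensored with the locally free sheaf $\shE$ yields an injection $\shE(-D) \inj \shE$, and closure under subobjects gives $\shE(-D) \in \catX$.

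For (3) the first step is to put a single line bundle into $\catX$: a nonzero vector bundle $\shE$ on the smooth curve $C$ contains a rank-one subsheaf, which is automatically a line bundle because a torsionfree sheaf on a smooth curve is locally free; closure under subobjects then yields a line bundle $\shL \in \catX$. Since $\catX$ is closed under tensoring with line bundles, $\shL \otimes \shM \in \catX$ for every line bundle $\shM$, and writing an arbitrary line bundle $\shN$ as $\shL \otimes (\shL^{\vee} \otimes \shN)$ shows that \emph{every} line bundle lies in $\catX$. Finally, every vector bundle on $C$ is a successive extension of line bundles (choose a line subbundle and induct on the rank), so extension-closedness gives $\vect C \subseteq \catX$.

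For (4) the hypothesis of closure under tensoring is replaced by the presence of a nonzero torsion sheaf, and the main task is again to force every line bundle into $\catX$. As in (3), a line subbundle of the given nonzero vector bundle produces a line bundle $\shL \in \catX$. A nonzero torsion sheaf $\shT \in \catX$ has the skyscraper $\shO_{1{\cdot}x}$ at some point $x$ of its support as a subobject (the socle of $\shT_x$), so $\shO_{1{\cdot}x} \in \catX$. The standard sequences $0 \to \shL((n-1)x) \to \shL(nx) \to \shO_{1{\cdot}x} \to 0$ then let me climb in degree: by extension-closedness $\shL(nx) \in \catX$ for all $n \ge 0$, so $\catX$ contains line bundles of arbitrarily large degree. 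Now for an arbitrary line bundle $\shN$ the degree of $\shN^{\vee} \otimes \shL(nx)$ tends to $+\infty$, so by Riemann--Roch $\Hom(\shN, \shL(nx)) = H^0(\shN^{\vee} \otimes \shL(nx)) \ne 0$ for $n \gg 0$; a nonzero map of line bundles on the integral curve $C$ is injective, giving $\shN \inj \shL(nx) \in \catX$ and hence $\shN \in \catX$. The line-bundle filtration of a vector bundle together with extension-closedness then yields $\vect C \subseteq \catX$.

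The routine parts are (1) and (2); the real content lies in the two generation arguments, and the main obstacle is (4). Without closure under tensoring one cannot move a line bundle to an arbitrary isomorphism class directly, and the observation that resolves this is that the torsion sheaf supplies enough positivity—the bundles $\shL(nx)$ of unbounded degree—that every line bundle embeds into one of them as a subobject. I would take care to verify the two small points this relies on: that a nonzero homomorphism between line bundles on $C$ is automatically injective (it is an isomorphism at the generic point and the source is torsionfree), and that a sufficiently positive line bundle has nonzero $H^0$.
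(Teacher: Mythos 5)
Your proposal is correct. Parts (1) and (2) match the paper's argument (the paper uses the splitting $\shF \iso \shF_{\rm{tor}}\oplus\shF_{\rm{vect}}$ and the injection $\shE(-D)\inj\shE$, exactly as you do), and part (3) differs only cosmetically: the paper invokes Atiyah's theorem to produce an exact sequence $0 \to \shO_C^{\oplus(r-1)} \to \shE(n) \to \shL \to 0$ and extracts $\shO_C$ as a subobject, whereas you extract a line subbundle directly and then use the standard filtration of a vector bundle by line bundles; both routes are standard and of equal weight. Part (4) is where you genuinely diverge. The paper reduces (4) to (3): writing the given torsion sheaf as $\shO_D$, it twists up via the extensions $0 \to \shE \to \shE(nD) \to \shO_{nD}^{\oplus\rk\shE} \to 0$ (using that $\shO_{nD}$ is an iterated extension of copies of $\shO_D$) and twists down via (2), concluding that $\vect C \cap \catX$ is closed under tensoring with line bundles and then citing (3). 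You instead generate all line bundles directly: the skyscraper $\shO_{1{\cdot}x}$ sits inside the torsion sheaf, the extensions $0 \to \shL((n-1)x) \to \shL(nx) \to \shO_{1{\cdot}x} \to 0$ give line bundles of unbounded degree in $\catX$, and Riemann--Roch embeds an arbitrary line bundle into one of them. Your argument buys independence from the "closed under tensoring" formulation of (3) and is arguably more self-contained (only positivity of sufficiently high twists is needed); the paper's is shorter once (3) is in hand and avoids any cohomological input. The two small verifications you flag at the end (injectivity of nonzero maps of line bundles on an integral curve, and nonvanishing of $H^0$ in large degree) are exactly the right points to check and both hold, including over a non-algebraically-closed field, since the degree of a closed point is positive.
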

\begin{proof}
(1)
It follows from $\shF \iso \shF_{\rm{tor}}\oplus \shF_{\rm{vect}}$ 
and the fact that $\catX$ is closed under direct summands.

(2)
It follows from the exact sequence $0 \to \shE(-D) \to \shE \to \shO_D^{\oplus \rk \shE} \to 0$.

(3)
We first prove that any line bundles belong to $\catX$.
It is clear when $\rk \shE=1$ since $\catX$ is closed under tensoring with line bundles.
Suppose that $r:=\rk \shE >1$.
Let $\shO_C(1)$ be a very ample sheaf on $C$.
Then there exists the following exact sequence for a sufficiently large $n\gg0$
by \cite[Theorem 2]{Atiyah}:
\[
0 \to \shO_C^{\oplus(r-1)} \to \shE(n) \to \shL \to 0,
\]
where $\shL$ is a line bundle.
Then $\shO_C \in \catX$ since $\catX$ is closed under subobjects and tensoring with line bundles.
Thus any line bundles belong to $\catX$.
Next, we prove that any vector bundles belong to $\catX$.
Let $\shV \in \vect C$.
There exists the following exact sequence for a sufficient large $n\gg0$
again by \cite[Theorem 2]{Atiyah}:
\[
0 \to \shO_C(-n)^{\oplus(r-1)} \to \shV \to \shL \to 0,
\]
where $\shL$ is a line bundle.
Then $\shV \in\catX$ since both $\shO_C(-n)$ and $\shL$ belong to $\catX$.

(4)
Let $\shE \in \catX$ be a nonzero vector bundle
and $D$ the effective divisor corresponding to a nonzero torsion sheaf $\shF \in \catX$.
We first claim that $\shE(n D) \in \catX$ for any $n\ge 0$.
It follows from an exact sequence $0 \to \shE \to \shE(n D) \to \shO_{nD}^{\oplus \rk \shE} \to 0$
and $\shO_{nD}\iso \shF^{\oplus n}$.
Next, we claim that $\shE(E) \in \catX$ for any divisor $E$.
Indeed, there is some positive integer $n>0$ such that $nD \ge E$.
Thus, we have that $\shE(E)= \shE(nD -(nD-E)) \in \catX$ by (2).
This proves that $\vect C \cap \catX$ is closed under tensoring with line bundles.
Therefore, we have $\vect C = \vect C \cap \catX \subseteq \catX$ by (3).
\end{proof}

We now determine when a torsionfree class of $\coh C$ is closed under tensoring with line bundles.
\begin{lem}\label{prp:torf in coh C}
Let $C$ be a connected smooth projective curve over a field $\bbk$
and $\eta$ its generic point.
Let $\catX$ be a torsionfree class of $\coh C$ and set $\Phi := \Ass \catX$.
\begin{enua}
\item
If $\Phi \subseteq C_0$, 
then $\catX \subseteq \tor C$ and $\catX$ is closed under tensoring with line bundles.
In this case, 
we have $\catX = \add(\shO_{m{\cdot}x} \mid x\in \Phi, m>0)$.
\item
If $\Phi \supsetneq \{\eta\}$,
then $\vect C \subseteq \catX$ and $\catX$ is closed under tensoring with line bundles.
In this case, 
we have $\catX = \add(\shE, \shO_{m{\cdot}x} \mid \shE \in \vect C, x\in \Phi_0, m>0)$,
where $\Phi_0:= \Phi \cap C_0$.
\item
If $\Phi = \{ \eta \}$,
then $\catX \subseteq \vect C$.
In this case, $\catX$ is closed under tensoring with line bundles
if and only if either $\catX = \vect C$ or $\catX= 0$.
\end{enua}
\end{lem}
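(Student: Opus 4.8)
The plan is to read off the coarse shape of $\catX$ from the torsion-pair decomposition $\shF \cong \shF_{\mathrm{tor}} \oplus \shF_{\mathrm{vect}}$ together with the behaviour of associated points, and then refine each case to the explicit $\add$-description. The dictionary I would set up first is that for $\shF \in \coh C$ one has $\eta \in \Ass \shF$ exactly when $\shF_{\mathrm{vect}} \neq 0$ (a vector bundle is torsionfree, with sole associated point $\eta$), while the closed points of $\Ass \shF$ are precisely the points of $\Supp \shF_{\mathrm{tor}}$. Since $\Ass\catX = \Phi$ and \cref{prp:property of torf in coh C}(1) places both $\shF_{\mathrm{tor}}$ and $\shF_{\mathrm{vect}}$ in $\catX$ whenever $\shF \in \catX$, this immediately separates the cases: when $\eta \notin \Phi$ no nonzero vector bundle can lie in $\catX$, so $\catX \subseteq \tor C$ (case (1)); when $\Phi = \{\eta\}$ no nonzero torsion sheaf can lie in $\catX$, so $\catX \subseteq \vect C$ (case (3)); the remaining possibility $\Phi \supsetneq \{\eta\}$ is case (2).

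The technical heart is the claim that $x \in \Phi$ forces $\shO_{m\cdot x} \in \catX$ for every $m > 0$. I would argue that some $\shG \in \catX$ has $x \in \Ass\shG$, so $\shG_{\mathrm{tor}} \in \catX$ by \cref{prp:property of torf in coh C}(1) and $x \in \Supp\shG_{\mathrm{tor}}$; writing $\shG_{\mathrm{tor}}$ as a direct sum of indecomposables $\shO_{m_j \cdot x_j}$ and using closure under direct summands produces a summand $\shO_{m_0\cdot x} \in \catX$. Its socle embeds $\shO_{1\cdot x} \hookrightarrow \shO_{m_0\cdot x}$, so $\shO_{1\cdot x} \in \catX$ by closure under subobjects, and climbing the length filtration via the exact sequences $0 \to \shO_{(m-1)\cdot x} \to \shO_{m\cdot x} \to \shO_{1\cdot x} \to 0$ and closure under extensions yields every $\shO_{m\cdot x} \in \catX$. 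Crucially this never uses closure under tensoring, so it is available before that property is established.

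Granting the claim, the three cases close quickly. In case (1), $\catX \subseteq \tor C$ plus the claim gives $\catX = \add(\shO_{m\cdot x} \mid x \in \Phi, m > 0)$, the reverse inclusion coming from the summand/associated-point argument; closure under tensoring is then immediate from $\shT \otimes \shL \cong \shT$ for torsion $\shT$ and line bundle $\shL$ (locally $\shL$ is free of rank one). In case (2), $\Phi \supsetneq \{\eta\}$ supplies both a nonzero vector bundle and a nonzero torsion sheaf in $\catX$, so \cref{prp:property of torf in coh C}(4) gives $\vect C \subseteq \catX$; combined with the claim this yields the stated description, and closure under tensoring follows by decomposing $\shF \otimes \shL = (\shF_{\mathrm{vect}} \otimes \shL) \oplus (\shF_{\mathrm{tor}} \otimes \shL)$ into a vector bundle lying in $\vect C \subseteq \catX$ and a torsion sheaf isomorphic to $\shF_{\mathrm{tor}} \in \catX$. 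In case (3), the backward implication is clear since $0$ and $\vect C$ are closed under tensoring; for the forward one, a nonzero $\catX$ closed under tensoring contains a nonzero vector bundle, so \cref{prp:property of torf in coh C}(3) forces $\vect C \subseteq \catX$, hence $\catX = \vect C$ by the inclusion established in the first paragraph.

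I expect the main obstacle to be exactly the claim of the second paragraph: manufacturing all the $\shO_{m\cdot x}$ from the single datum $x \in \Ass\catX$ while avoiding a circular appeal to closure under tensoring. The socle embedding (feeding closure under subobjects) paired with the length filtration (feeding closure under extensions) is the device that makes this work; everything else is bookkeeping around the torsion pair $(\tor C, \vect C)$ and \cref{prp:property of torf in coh C}.
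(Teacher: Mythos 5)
Your proof is correct, and its overall skeleton coincides with the paper's: the same case separation via the split torsion pair $(\tor C,\vect C)$ and the dictionary between $\Ass\shF$ and the pieces $\shF_{\mathrm{tor}}$, $\shF_{\mathrm{vect}}$; the same tensoring arguments ($\shT\otimes\shL\iso\shT$ for torsion $\shT$ in case (1), the decomposition $\shF\otimes\shL\iso\shF_{\mathrm{tor}}\oplus(\shF_{\mathrm{vect}}\otimes\shL)$ in case (2)); and the same use of \cref{prp:property of torf in coh C} (1), (3), (4). The one genuine divergence is how the explicit $\add$-descriptions are obtained. The paper first establishes closure under tensoring and then invokes the classification theorem \cref{thm:Takahashi for scheme} to conclude $\catX=\coh^{\ass}_{\Phi}C$, from which the $\add$-description is read off. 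You instead prove directly that $x\in\Phi\cap C_0$ forces $\shO_{m\cdot x}\in\catX$ for all $m>0$, by extracting an indecomposable summand $\shO_{m_0\cdot x}$ of some $\shG_{\mathrm{tor}}\in\catX$, passing to its socle $\shO_{1\cdot x}\inj\shO_{m_0\cdot x}$ (closure under subobjects), and climbing back up via the extensions $0\to\shO_{(m-1)\cdot x}\to\shO_{m\cdot x}\to\shO_{1\cdot x}\to 0$. This is more elementary and self-contained -- it never uses the heavy machinery of Section 3 for the $\add$-descriptions -- at the cost of a little extra bookkeeping with the structure of torsion sheaves on a smooth curve. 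Both routes are valid; yours has the minor advantage of making the reverse inclusions in (1) and (2) explicit rather than hiding them behind ``we can easily see.''
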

\begin{proof}
(1)
It is clear that $\catX \subseteq \coh_{C_0}^{\ass} C =\tor C$.
It follows from $\shF \otimes \shL \iso \shF$ for any torsion sheaf $\shF$ and line bundle $\shL$
that $\catX$ is closed under tensoring with line bundles.
Then we can easily see that 
$\catX = \coh^{\ass}_{\Phi} C = \add(\shO_{m{\cdot}x} \mid x\in \Phi, m>0)$.

(2)
We can easily see that $\catX$ contains $\vect C$ 
by \cref{prp:property of torf in coh C} (1) and (4).
Let $\shF \in \catX$ and $\shL$ a line bundle.
We obtain an isomorphism
\[
\shF \otimes \shL 
\iso \left(\shF_{\rm{tor}} \oplus \shF_{\rm{vect}}\right)\otimes \shL
\iso \shF_{\rm{tor}} \oplus \left(\shF_{\rm{vect}}\otimes \shL\right).
\]
Then $\shF \otimes \shL$ belongs to $\catX$
since $\shF_{\rm{tor}} \in \catX$ and $\shF_{\rm{vect}}\otimes \shL \in \vect C \subseteq \catX$.
Thus $\catX$ is closed under tensoring with line bundles.
Then we can easily see that 
$\catX = \coh^{\ass}_{\Phi} C
= \add(\shE, \shO_{m{\cdot}x} \mid \shE \in \vect C, x\in \Phi_0, m>0)$.

(3)
It is clear that $\catX \subseteq \coh_{\{\eta\}}^{\ass} C =\vect C$.
Suppose that $\catX$ is closed under tensoring with line bundles and $\catX\ne 0$.
Then we obtain $\vect C \subseteq \catX$ by \cref{prp:property of torf in coh C} (3),
and hence $\catX= \vect C$.
The converse is obvious.
\end{proof}

Conversely,
for any subset $\Phi_0 \subseteq C_0$,
it is clear that
$\coh_{\Phi_0}^{\ass} C = \add(\shO_{m{\cdot}x} \mid x\in \Phi_0, m>0)$ and 
$\coh_{\Phi_0\cup\{\eta\}}^{\ass} C = \add(\shE, \shO_{m{\cdot}x} \mid \shE\in\vect C, x\in \Phi_0, m>0)$ are torsionfree classes of $\coh C$.
Thus, we obtain the following proposition.
\begin{prp}\label{prp:list torf in coh}
Let $C$ be a connected smooth projective curve over a field $\bbk$.
The complete list of torsionfree classes $\catX$ of $\coh C$ is the following:
\begin{enur}
\item
$\catX = \add(\shO_{m{\cdot}x} \mid x\in \Phi_0, m>0)$ 
for a subset $\Phi_0 \subseteq C_0$.
In this case, $\catX$ is closed under tensoring with line bundles
and $\Ass \catX = \Phi_0$.
\item
$\catX = \add(\shE, \shO_{m{\cdot}x} \mid \shE\in\vect C, x\in \Phi_0, m>0)$ 
for a subset $\Phi_0 \subseteq C_0$.
In this case, $\catX$ is closed under tensoring with line bundles
and $\Ass \catX = \{\eta\}\cup \Phi_0$.
\item
$0 \subsetneq \catX \subsetneq \vect C$.
In this case, $\catX$ is \emph{not} closed under tensoring with line bundles
and $\Ass \catX = \{\eta\}$.
\qed
\end{enur}
\end{prp}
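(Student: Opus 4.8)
The plan is to read the classification off directly from \cref{prp:torf in coh C} together with the converse observation recorded just above the statement, organizing everything around the trichotomy for $\Phi := \Ass\catX$. Since $C$ is a curve, its underlying space is $C = \{\eta\} \sqcup C_0$, so for any torsionfree class $\catX$ exactly one of the following holds: $\eta \notin \Phi$ (equivalently $\Phi \subseteq C_0$), or $\Phi = \{\eta\}$, or $\eta \in \Phi$ with $\Phi \neq \{\eta\}$ (equivalently $\Phi \supsetneq \{\eta\}$). These are precisely the three hypotheses of \cref{prp:torf in coh C}, so that lemma's three parts partition all torsionfree classes.

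First I would dispatch the two cases in which the lemma pins down $\catX$ directly. If $\Phi \subseteq C_0$, then \cref{prp:torf in coh C}(1) gives $\catX = \add(\shO_{m{\cdot}x} \mid x \in \Phi,\, m > 0)$ and that $\catX$ is closed under tensoring with line bundles; setting $\Phi_0 := \Phi$ places $\catX$ in case (i) with $\Ass\catX = \Phi_0$. If instead $\Phi \supsetneq \{\eta\}$, then $\Phi_0 := \Phi \cap C_0$ is nonempty and \cref{prp:torf in coh C}(2) identifies $\catX$ with case (ii), again closed under tensoring with line bundles and with $\Ass\catX = \{\eta\} \cup \Phi_0$.

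The remaining case $\Phi = \{\eta\}$ is the only one requiring care, as it is where the split between cases (ii) and (iii) occurs. Here \cref{prp:torf in coh C}(3) yields $\catX \subseteq \vect C$, and since $\Phi \neq \emptyset$ we have $\catX \neq 0$. If $\catX = \vect C$, then $\catX = \add(\shE \mid \shE \in \vect C)$ falls into case (ii) with $\Phi_0 = \emptyset$, and it is closed under tensoring with line bundles. Otherwise $0 \subsetneq \catX \subsetneq \vect C$, which is exactly case (iii); since neither $\catX = \vect C$ nor $\catX = 0$ holds, \cref{prp:torf in coh C}(3) shows that $\catX$ is \emph{not} closed under tensoring with line bundles, while $\Ass\catX = \{\eta\}$.

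Finally I would record that the list is a genuine classification. The converse observation preceding the statement shows the subcategories in (i) and (ii) are torsionfree classes (they coincide with $\coh^{\ass}_{\Phi_0} C$ and $\coh^{\ass}_{\Phi_0 \cup \{\eta\}} C$), while the classes in (iii) are torsionfree by hypothesis; note that case (ii) with $\Phi_0 = \emptyset$ produces $\vect C$ itself, which is supplied by the $\catX = \vect C$ subcase above. The three forms are moreover mutually disjoint — a class in (i) lies in $\tor C$, a class in (ii) contains $\vect C$, and a class in (iii) is a nonzero proper subcategory of $\vect C$ — so every torsionfree class of $\coh C$ occurs in exactly one of them. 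I do not anticipate a genuine obstacle, since all the substantive work is carried by \cref{prp:torf in coh C}; the only subtlety is observing that the hypothesis $\Phi = \{\eta\}$ of that lemma bifurcates into case (ii) (when $\catX = \vect C$) and case (iii) (when the inclusion $\catX \subseteq \vect C$ is proper), which is precisely the phenomenon that exhibits the necessity of the closure hypothesis studied throughout \S\ref{s:subcat in coh}.
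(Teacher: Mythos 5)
Your proposal is correct and follows essentially the same route as the paper: the paper derives \cref{prp:list torf in coh} immediately from the trichotomy on $\Phi=\Ass\catX$ in \cref{prp:torf in coh C} together with the converse observation recorded just before the statement, which is exactly the structure of your argument. Your only addition is to spell out explicitly how the case $\Phi=\{\eta\}$ bifurcates between (ii) (when $\catX=\vect C$) and (iii), and to verify disjointness of the three forms, both of which the paper leaves implicit.
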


Therefore, to classify torsionfree classes of $\coh C$,
it is sufficient to classify torsionfree classes of $\coh C$ contained in $\vect C$.

We now come back to the case of the projective line $\bbP^1$.
Let us classify the torsionfree classes of $\coh \bbP^1$.
Recall that any indecomposable vector bundle on $\bbP^1$ is a line bundle,
and it is of the form $\shO(n)$ for $n\in \bbZ$.
\begin{cor}\label{prp:torf in coh P^1}
The complete list of torsionfree classes $\catX$ of $\coh \bbP^1$ is the following:
\begin{enur}
\item
$\catX = \add(\shO_{m{\cdot}x} \mid x\in \Phi_0, m>0)$ 
for a subset $\Phi_0 \subseteq (\bbP^1)_0$.
In this case, $\catX$ is closed under tensoring with line bundles
and $\Ass \catX = \Phi_0$.
\item
$\catX = \add(\shO(i), \shO_{m{\cdot}x} \mid i\in\bbZ, x\in \Phi_0, m>0)$ 
for a subset $\Phi_0 \subseteq (\bbP^1)_0$.
In this case, $\catX$ is closed under tensoring with line bundles
and $\Ass \catX = \{\eta\}\cup\Phi_0$.
\item
$\catX = \add(\shO(i) \mid i \le n)$ for an integer $n\in\bbZ$.
In this case, $\catX$ is \emph{not} closed under tensoring with line bundles
and $\Ass \catX = \{\eta\}$.
\end{enur}
\end{cor}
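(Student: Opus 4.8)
The plan is to deduce this from \cref{prp:list torf in coh} applied to the curve $C=\bbP^1$, whose three cases (i)--(iii) already match the three cases of the present statement. Cases (i) and (ii) of \cref{prp:list torf in coh} translate verbatim into cases (1) and (2) here, using that $\bbP^1$ has generic point $\eta$ and closed points $(\bbP^1)_0$, so only case (iii) requires work: I must identify explicitly every torsionfree class $\catX$ with $0\subsetneq \catX \subsetneq \vect \bbP^1$ and check that the list $\add(\shO(i)\mid i\le n)$ exhausts them. The two remaining assertions of case (3) --- that such $\catX$ is \emph{not} closed under tensoring with line bundles and that $\Ass\catX=\{\eta\}$ --- are already furnished by \cref{prp:list torf in coh}(iii), so they need no separate argument.

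First I would record the two facts about $\bbP^1$ that drive everything: as recalled just above the statement, every vector bundle on $\bbP^1$ is a finite direct sum of line bundles $\shO(n)$, and $\Hom(\shO(a),\shO(b))\iso \GG(\bbP^1,\shO(b-a))$ is nonzero exactly when $a\le b$, any nonzero such map being injective. Now let $\catX\subseteq \vect\bbP^1$ be a torsionfree class and put $S:=\{n\in\bbZ\mid \shO(n)\in\catX\}$. Since a torsionfree class is closed under direct summands (being closed under subobjects) and under finite direct sums (being additive and extension-closed), and since every object of $\catX\subseteq\vect\bbP^1$ splits into line bundles, we get $\catX=\add(\shO(n)\mid n\in S)$. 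Moreover $S$ is downward closed: if $b\in S$ and $a\le b$, then the injection $\shO(a)\inj\shO(b)$ exhibits $\shO(a)$ as a subobject of an object of $\catX$, so $a\in S$.

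The downward-closed subsets of $\bbZ$ are $\emptyset$, $\bbZ$, and the intervals $\{i\le n\}$; the first yields $\catX=0$ and the second $\catX=\vect\bbP^1$, both excluded in case (iii), so $S=\{i\le n\}$ and $\catX=\add(\shO(i)\mid i\le n)$. For the converse --- that each such family really is a torsionfree class --- I would verify closure under subobjects and extensions directly. A subsheaf, or the middle term of an extension, of vector bundles on the smooth curve $\bbP^1$ is again torsion-free, hence a vector bundle $\bigoplus_k\shO(c_k)$; writing $c_{k_0}:=\max_k c_k$ and mapping the summand $\shO(c_{k_0})$ into the ambient object, the $\Hom$-vanishing recalled above forces $c_{k_0}\le n$ (for an extension $0\to A\to B\to C\to 0$ one distinguishes whether $\shO(c_{k_0})\to B\surj C$ is nonzero, bounding $c_{k_0}$ by a degree occurring in $C$, or vanishes, in which case it factors through $A$ and is bounded by a degree in $A$). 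Hence every $c_k\le n$ and the family is closed as required. I expect this degree-bound argument via the top summand to be the only real content; everything else is bookkeeping layered on top of \cref{prp:list torf in coh}.
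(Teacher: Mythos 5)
Your proposal is correct and follows essentially the same route as the paper: reduce to the case $0\subsetneq\catX\subsetneq\vect\bbP^1$ via \cref{prp:list torf in coh}, show the set of degrees $\{i\mid\shO(i)\in\catX\}$ is a proper, nonempty, downward-closed subset of $\bbZ$ (the paper gets downward closure from \cref{prp:property of torf in coh C}(2), you from the injections $\shO(a)\inj\shO(b)$ --- the same fact), and conclude $\catX=\add(\shO(i)\mid i\le n)$. The only difference is that you spell out the converse verification (that each $\add(\shO(i)\mid i\le n)$ is indeed a torsionfree class) via the top-summand degree bound, which the paper dismisses as clear; your argument there is valid.
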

\begin{proof}
It is enough to consider the case $0 \subsetneq \catX \subsetneq \vect \bbP^1$
by \cref{prp:list torf in coh}.
Set $n:= \max\{i \in \bbZ \mid \shO(i) \in \catX\}$.
Then $n<\infty$.
Indeed, if $n=\infty$, 
then we have $\catX = \vect \bbP^1$ by \cref{prp:property of torf in coh C} (2).
Thus, we have $\catX = \add(\shO(i) \mid i \le n)$ 
again by \cref{prp:property of torf in coh C} (2).
Conversely, 
it is clear that $\add(\shO(i) \mid i \le n)$ is a torsionfree class for any $n\in\bbZ$.
\end{proof}

\begin{rmk}
Let us recall the relationship between torsion pairs and torsionfree classes.
Let $\catA$ be an abelian category.
If $(\catT,\catF)$ is a torsion pair of $\catA$,
then $\catF$ is a torsionfree class.
However, even if $\catX$ is a torsionfree class,
$({}^{\perp}\catX,\catX)$ is not necessarily a torsion pair.
Here ${}^{\perp}\catX$ is the subcategory consisting of $A \in \catA$
such that $\Hom_{\catA}(A,X)=0$ for any $X\in\catX$.
In general, for a torsionfree class $\catX$ of $\catA$,
the following three conditions are equivalent:
\begin{enua}
\item
$({}^{\perp}\catX,\catX)$ is a torsion pair.
\item
$\catX$ is reflexive,
that is, the inclusion functor $\catX \inj \catA$ has a left adjoint.
\item
$\catX$ is covariantly finite,
that is,
for any $A \in\catA$, there is a morphism $f\colon A \to X_A$ to an object of $\catX$
such that $\Hom_{\catA}(f,X) \colon \Hom_{\catA}(X_A,X) \to \Hom_{\catA}(A,X)$ is surjective 
for any $X\in\catX$.
\end{enua}
It is immediate that (1) $\equi$ (2).
We refer to \cite[Theorem 3.2]{CCS} for (2) $\equi$ (3).

The torsion pairs $(\catT,\catF)$ of $\coh \bbP^1$ are classified 
by Chen, Lin and Ruan in \cite[Section 6]{CLR}, as follows:
\begin{enur}
\item
The trivial torsion pairs $(\coh \bbP^1, 0)$ and $(0,\coh \bbP^1)$.
\item
$\catT=\add(\shO_{m{\cdot}x} \mid x \not\in \Phi_0, m> 0)$ and 
$\catF=\add(\shO(i), \shO_{m{\cdot}x} \mid i\in\bbZ, x\in \Phi_0, m>0)$
for a proper subset $\Phi_0 \subseteq \left(\bbP^1\right)_0$.
\item
$\catT=\add(\shO(i), \shO_{m{\cdot}x} \mid i>n, x\in (\bbP^1)_0, m>0)$ and 
$\catF=\add(\shO(i) \mid i\le n)$
for an integer $n\in \bbZ$.
\end{enur}
Comparing this classification with \cref{prp:torf in coh P^1},
we can see that the difference between them is 
the torsionfree classes in (i) of \cref{prp:torf in coh P^1}.
More precisely,
we find that a nonzero torsionfree class $\catX$ of $\coh \bbP^1$
is not associated with a torsion pair
if and only if it is contained in $\tor C$.
\end{rmk}
\appendix
\def\thesection{\Alph{section}}
\section{Basics of torsionfree sheaves}\label{s:basic on torf sh}
In this appendix,
we give a systematic treatment for torsionfree sheaves and related concepts
for the reader's convenience.

\subsection{Associated points}\label{ss:Ass}
In this subsection,
we gather the facts about associated points used in this paper.
The main references are \cite{Matsumura,EGA4-2}.

We first recall the notion of associated prime ideals.
Let $R$ be a commutative ring and $M$ an $R$-module.
A prime ideal $\pp$ of $R$ is called \emph{associated prime} of $M$
if there is an injective $R$-linear map $R/\pp \inj M$.
We denote by $\Ass M := \Ass_R M$ the set of associated prime ideals of $M$.
We denote by $\Min M := \Min_R M$ the set of minimal elements of $\Ass M$
with respect to the inclusion-order.
An element of $\Min M$ is called an \emph{isolated prime ideal}.
An associated prime ideal which is not isolated 
is called an \emph{embedding prime ideal}.
Then $M$ has no embedding prime ideals if and only if $\Ass M = \Min M$.

Next, we recall the notion of associated points,
which generalize that of associated prime ideals to schemes.
Let $X$ be a scheme and $\shF\in \Qcoh X$.
A point $x\in X$ is called an \emph{associated point} of $\shF$
if $\mm_x \in \Ass_{\shO_{X,x}}(\shF_x)$,
that is, there is an injective $\shO_{X,x}$-linear map $\kk(x) \inj \shF_x$.
We denote by $\Ass \shF := \Ass_X \shF$ the set of associated points of $\shF$.
An \emph{isolated point} of $\shF$ is a maximal element of $\Ass \shF$ 
with respect to the specialization-order.
An \emph{embedding point} of $\shF$ is an associated point of $\shF$
which is not isolated.
We denote by $\Min \shF := \Min_X \shF$ the set of isolated points.
Then $\shF$ has no embedding point if and only if $\Ass \shF = \Min \shF$.
We call an associated point of $\shO_X$ an associated point of $X$.
We write $\Ass X := \Ass \shO_X$ and $\Min X:= \Min \shO_X$.

Let us describe the properties of associated points that hold 
for general schemes and quasi-coherent sheaves.
\begin{fct}[{cf.\ \cite[The text following (3.1.1) and (3.1.7)]{EGA4-2}}]\label{fct:Ass general scheme}
Let $X$ be a scheme and $\shF\in \Qcoh X$.
\begin{enua}
\item
$\Ass(\shF) \subseteq \Supp(\shF)$ holds.
\item
$\Ass_X(\shF)\cap U = \Ass_{U}(\shF_{|U})$
for any open subset $U$ of $X$.
\item
For any exact sequence $0 \to \shF_1 \to \shF_2 \to \shF_3 \to 0$ in $\Qcoh X$,
we have that $\Ass(\shF_1) \subseteq \Ass(\shF_2) \subseteq \Ass(\shF_1) \cup \Ass(\shF_3)$.
\end{enua}
\end{fct}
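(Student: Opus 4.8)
The plan is to reduce every assertion to a statement about the stalk $\shF_x$, regarded as a module over the local ring $\shO_{X,x}$, exploiting that membership in $\Ass$ is by definition a purely stalk-local condition, namely $\mm_x \in \Ass_{\shO_{X,x}}(\shF_x)$. First I would dispose of (1): if $x \in \Ass \shF$, there is an injection $\kappa(x) \inj \shF_x$, and since $\kappa(x)$ is a nonzero field this forces $\shF_x \ne 0$, i.e.\ $x \in \Supp \shF$.

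For (2), the key observation is that restriction to an open $U$ leaves the local data at a point $x \in U$ unchanged: one has $(\shF_{|U})_x = \shF_x$ and $\shO_{U,x} = \shO_{X,x}$, with the same maximal ideal and residue field. Hence the condition defining membership of $x$ in $\Ass_X \shF$ is literally identical to the one defining membership in $\Ass_U(\shF_{|U})$. Since $\Ass_U(\shF_{|U})$ is already a subset of $U$, intersecting the left-hand side with $U$ produces the asserted equality; I would simply spell out this chain of equivalences.

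The main content, and the step I expect to require the most care, is (3). Here I would use that taking stalks is an exact functor, so the given short exact sequence yields an exact sequence $0 \to \shF_{1,x} \to \shF_{2,x} \to \shF_{3,x} \to 0$ of $\shO_{X,x}$-modules at every point $x$. The inclusion $\Ass \shF_1 \subseteq \Ass \shF_2$ is then immediate: composing an injection $\kappa(x) \inj \shF_{1,x}$ with the injection $\shF_{1,x} \inj \shF_{2,x}$ shows $\mm_x \in \Ass_{\shO_{X,x}}(\shF_{2,x})$. For $\Ass \shF_2 \subseteq \Ass \shF_1 \cup \Ass \shF_3$, suppose $\mm_x \in \Ass(\shF_{2,x})$ and let $N \subseteq \shF_{2,x}$ be the image of an injection $\kappa(x) \inj \shF_{2,x}$, so $N \cong \kappa(x)$. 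Because $\mm_x$ is maximal, $\kappa(x)$ is a \emph{simple} $\shO_{X,x}$-module, so the submodule $N \cap \shF_{1,x}$ of $N$ is either $0$ or all of $N$. In the former case the quotient map $\shF_{2,x} \to \shF_{3,x}$ (with kernel $\shF_{1,x}$) restricts to an injection on $N$, giving $\mm_x \in \Ass(\shF_{3,x})$; in the latter $\kappa(x) \cong N \subseteq \shF_{1,x}$, giving $\mm_x \in \Ass(\shF_{1,x})$. Either way $x \in \Ass \shF_1 \cup \Ass \shF_3$.

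The only place demanding attention is this last case analysis, and I would emphasize that because $\kappa(x)$ is simple one avoids the annihilator computation usually needed for general associated primes; the argument is therefore valid over an arbitrary, possibly non-noetherian, local ring $\shO_{X,x}$, matching the generality of the statement. I would also remark that quasi-coherence plays no essential role beyond fixing the setting, since only exactness of the stalk functor and the local description of $\kappa(x)$ are used; the underlying input is the standard commutative-algebra fact on associated primes (cf.\ \cite{Matsumura}) applied fibrewise.
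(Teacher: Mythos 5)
Your proof is correct. Note that the paper itself offers no argument for this Fact: it is stated with a citation to EGA IV (the text following (3.1.1) and (3.1.7)), so there is no in-paper proof to compare against line by line. Your stalk-local argument is a valid self-contained substitute, and it correctly matches the definition the paper actually uses (in \S A.1, $x\in\Ass\shF$ means there is an injection $\kk(x)\inj\shF_x$ of $\shO_{X,x}$-modules). Part (1) is immediate as you say; part (2) reduces to the identifications $(\shF_{|U})_x=\shF_x$ and $\shO_{U,x}=\shO_{X,x}$; and in part (3) the exactness of the stalk functor plus the simplicity of $\kk(x)$ as an $\shO_{X,x}$-module makes the dichotomy $N\cap\shF_{1,x}\in\{0,N\}$ airtight, yielding an injection of $\kk(x)$ into $\shF_{1,x}$ or into $\shF_{3,x}$ respectively. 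Your closing remark is also accurate and worth keeping: because $\mm_x$ is maximal, one never needs the annihilator computations required for general associated primes, so no noetherian hypothesis enters, which is consistent with the generality of the statement (an arbitrary scheme and an arbitrary quasi-coherent sheaf) and with EGA's treatment.
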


The following proposition asserts
that the property of having no embedding points is local.
\begin{prp}
Let $X$ be a scheme and $\shF\in \Qcoh X$.
\begin{enua}
\item
$\Min_X(\shF)\cap U = \Min_{U}(\shF_{|U})$
for any open subset $U$ of $X$.
\item
The following are equivalent:
\begin{itemize}
\item
$\shF$ has no embedding points.
\item
$\shF_{|U}$ has no embedding points for any open subset $U$ of $X$.
\item
There is an open covering $\{U_i\}_{i\in I}$ of $X$
such that $\shF_{|U_i}$ has no embedding points for any $i\in I$.
\end{itemize}
\end{enua}
\end{prp}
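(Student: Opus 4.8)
The plan is to deduce both parts from \cref{fct:Ass general scheme}\,(2), which already supplies $\Ass_X(\shF)\cap U=\Ass_U(\shF_{|U})$, together with a single topological observation: \emph{every open subset $U$ of $X$ is generalization-closed}. Indeed, if $x\in U$ and $y$ is a generalization of $x$ (so $x\in\ol{\{y\}}$), then $y\in U$; otherwise $y$ would lie in the closed set $X\setminus U$, forcing $\ol{\{y\}}\subseteq X\setminus U$ and contradicting $x\in\ol{\{y\}}\cap U$. I would also record that the specialization-order on $U$ (in the relative topology) is just the restriction of that on $X$, since $\ol{\{y\}}^{U}=\ol{\{y\}}^{X}\cap U$ for $y\in U$. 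This is the one genuinely non-formal input; once it is in place, maximality becomes a local condition and the rest is bookkeeping.

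For part (1), I would start from $\Ass_U(\shF_{|U})=\Ass_X(\shF)\cap U$ and then show that, for a point $x\in U\cap\Ass_X(\shF)$, being maximal in $\Ass_X(\shF)$ with respect to the specialization-order is equivalent to being maximal in $\Ass_X(\shF)\cap U$. The forward implication is immediate from $\Ass_X(\shF)\cap U\subseteq\Ass_X(\shF)$. For the converse, suppose $x$ is maximal in $\Ass_X(\shF)\cap U$ and let $y\in\Ass_X(\shF)$ be a generalization of $x$; by generalization-closedness of $U$ we get $y\in U$, hence $y\in\Ass_X(\shF)\cap U$, and maximality forces $y=x$. Translating maximality into the definition of $\Min$ (and using that the relative order is the restriction) yields $\Min_X(\shF)\cap U=\Min_U(\shF_{|U})$.

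For part (2), the implication from the first condition to the second is formal: if $\Ass_X(\shF)=\Min_X(\shF)$, then for open $U$ I would intersect with $U$ and apply \cref{fct:Ass general scheme}\,(2) and part (1) to get $\Ass_U(\shF_{|U})=\Ass_X(\shF)\cap U=\Min_X(\shF)\cap U=\Min_U(\shF_{|U})$, so $\shF_{|U}$ has no embedding points. The second condition trivially implies the third, since any open cover qualifies. For the third to the first, I would take $x\in\Ass_X(\shF)$ and pick $i$ with $x\in U_i$; then $x\in\Ass_{U_i}(\shF_{|U_i})=\Min_{U_i}(\shF_{|U_i})=\Min_X(\shF)\cap U_i$, using the no-embedding-points hypothesis on $U_i$ and part (1), whence $x\in\Min_X(\shF)$. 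This gives $\Ass_X(\shF)\subseteq\Min_X(\shF)$, and the reverse inclusion holds by definition, so $\Ass_X(\shF)=\Min_X(\shF)$.

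The hard part is really only part (1), and within it the observation that open sets are stable under generalization together with the identification of the relative specialization-order; this is exactly what lets maximality (hence the notions of isolated point and of having no embedding points) be tested locally. Everything in part (2) is then a routine combination of part (1) with \cref{fct:Ass general scheme}\,(2), so I do not anticipate any further obstacle there.
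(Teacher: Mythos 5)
Your proposal is correct and follows essentially the same route as the paper, which proves (1) from \cref{fct:Ass general scheme}\,(2) together with the observation that open subsets are generalization-closed, and then deduces (2) from (1); you have simply written out in full the details the paper leaves implicit (the identification of the relative specialization-order and the equivalence of maximality in $\Ass_X(\shF)$ and in $\Ass_X(\shF)\cap U$).
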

\begin{proof}
(1) follows from
\cref{fct:Ass general scheme} (2) and the fact that open subsets are generalization-closed.
(2) follows from (1) and \cref{fct:Ass general scheme} (2).
\end{proof}

The notion of associated points is of particular interest 
in the case of locally noetherian schemes. 
\begin{fct}[{cf.\ \cite[(3.1.2), (3.1.4), (3.1.5), (3.1.10)]{EGA4-2} and \cite[Theorem 6.2]{Matsumura}}]\label{fct:Ass loc noetherian}
Let $X$ be a locally noetherian scheme and $\shF\in \Qcoh X$.
\begin{enua}
\item
$\Ass_X(\shF)\cap U = \Ass_{\shO_X(U)} \shF(U)$
for any affine open subset $U$ of $X$.
\item
$\Ass_X (\shF) \cap \Spec \shO_{X,x} = \Ass_{\shO_{X,x}}(\shF_x)$
for any point $x\in X$.
\item
$\Min \shF$ coincides with 
the set of maximal elements of $\Supp \shF$ with respect to the specialization-order.
\item
$\shF=0$ if and only if $\Ass\shF=\emptyset$.
\item
For any closed immersion $j\colon Z \inj X$ and $\shG \in \Qcoh Z$,
we have that $\Ass_X \left(j_*\shG\right) = j(\Ass_Z \shG)$.
\end{enua}
\end{fct}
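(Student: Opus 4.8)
The plan is to reduce every assertion to the corresponding statement of commutative algebra over a noetherian ring, invoking the standard theory of associated primes (e.g.\ \cite[\S6]{Matsumura}). Throughout I would fix an affine open $U=\Spec R$ of $X$ with $R$ noetherian; for a point $x\in U$ let $\pp_x\subseteq R$ be the corresponding prime, so that $\shO_{X,x}=R_{\pp_x}$, $\mm_x=\pp_x R_{\pp_x}$, and $\shF_x=\shF(U)_{\pp_x}$. Writing $M:=\shF(U)$, the three commutative-algebra inputs are: (a) the localization formula $\Ass_{R_{\pp}}(M_{\pp})=\{\qq R_{\pp}\mid \qq\in\Ass_R M,\ \qq\subseteq\pp\}$; (b) the facts that $\Ass_R M=\emptyset$ if and only if $M=0$ and that $\Min(\Supp_R M)=\Min(\Ass_R M)$, i.e.\ the inclusion-minimal primes of the support are associated; and (c) the invariance of associated primes under restriction of scalars along a surjection $R\surj R/I$.

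First I would prove (1) and (2) simultaneously from (a). By definition a point $y\in U$ lies in $\Ass_X\shF$ exactly when $\pp_y R_{\pp_y}\in\Ass_{R_{\pp_y}}(M_{\pp_y})$, which by (a) is equivalent to $\pp_y\in\Ass_R M$; this is precisely (1). For (2), I identify $\Spec\shO_{X,x}=\Spec R_{\pp_x}$ with the set of generalizations of $x$ inside $U$ (open subsets being generalization-closed, together with \cref{fct:Ass general scheme}\,(2)); then formula (a) applied with $\pp=\pp_x$ says exactly that an associated point $y$ generalizing $x$ corresponds to a prime of $R_{\pp_x}$ in $\Ass_{R_{\pp_x}}(\shF_x)$, giving (2). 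Assertion (4) then follows: if $\shF\neq0$ some stalk is nonzero, hence $M\neq0$ on a suitable affine $U$, so $\Ass_R M\neq\emptyset$ by (b), and (1) produces an associated point; the converse is immediate from $\Ass\shF\subseteq\Supp\shF$ (\cref{fct:Ass general scheme}\,(1)).

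For (3), I would first observe that maximality under $\preceq$ can be tested on affine charts: a point of $\Supp\shF$ (resp.\ of $\Ass\shF$) is maximal globally if and only if it is maximal in any affine open containing it, since any strict generalization of such a point again lies in that open (open subsets are generalization-closed). On $U$ the maximal elements under $\preceq$ are the inclusion-minimal primes, so the claim becomes $\Min(\Ass_R M)=\Min(\Supp_R M)$, which is (b). Finally (5) reduces on affines, via $U=\Spec R$ and $Z\cap U=\Spec(R/I)$, to the statement that for an $R/I$-module $N$ one has $\Ass_R N=\{\,\text{preimage of }\qq\mid \qq\in\Ass_{R/I}N\,\}$, which is (c); as $j$ on underlying spaces is $\Spec(R/I)\to\Spec R$, this is exactly $\Ass_X(j_*\shG)=j(\Ass_Z\shG)$. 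The only genuinely delicate step is the order-theoretic bookkeeping in (3): one must match \emph{maximal under $\preceq$} with \emph{inclusion-minimal prime} and verify that this maximality is detected locally; every other part is a direct translation of the cited facts through the identifications $\shO_{X,x}=R_{\pp_x}$ and $\shF_x=M_{\pp_x}$.
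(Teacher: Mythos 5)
The paper offers no proof of this Fact at all---it is stated with bare citations to \cite[(3.1.2), (3.1.4), (3.1.5), (3.1.10)]{EGA4-2} and \cite[Theorem 6.2]{Matsumura}---and your reduction to affine charts via the identifications $\shO_{X,x}=R_{\pp_x}$, $\shF_x=M_{\pp_x}$, combined with the localization formula for associated primes, the nonvanishing criterion, the equality of minimal associated and minimal support primes, and invariance under restriction of scalars along $R\surj R/I$, is correct and is precisely the content of those references. The only point worth flagging is that $\shF$ is merely quasi-coherent, so your inputs (a)--(c) must be invoked in their versions for arbitrary (not necessarily finitely generated) modules over a noetherian ring; these do hold (for $\Min(\Ass_R M)=\Min(\Supp_R M)$ one localizes at a minimal prime of the support and uses that a nonzero module over a noetherian ring has an associated prime), so your argument, including the order-theoretic bookkeeping in (3) matching maximality under $\preceq$ with inclusion-minimality, goes through.
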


\begin{fct}[{cf.\ \cite[(3.1.6)]{EGA4-2}}]
Let $X$ be a locally noetherian scheme and $\shF\in \coh X$.
Then $\Ass \shF \cap U$ is a finite set for any quasi-compact open subset $U$ of $X$.
In particular, $\Ass \shF$ is a finite set when $X$ is noetherian.
\end{fct}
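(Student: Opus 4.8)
The plan is to reduce the statement to the affine noetherian case, where the finiteness of associated primes is classical, and then to patch together the finitely many affine pieces covering a quasi-compact open subset. The facts in \cref{fct:Ass general scheme,fct:Ass loc noetherian} furnish exactly the locality properties needed for this reduction, so the content lies entirely in the affine input.

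First I would exploit that $X$ is locally noetherian, so the noetherian affine open subschemes form a basis of the topology. Given a quasi-compact open subset $U$, I cover it by such affine opens; by quasi-compactness finitely many of them, say $U_1,\dots,U_n$, already cover $U$. Each $U_i$ is the spectrum of a noetherian ring $R_i:=\shO_X(U_i)$, and since $\shF$ is coherent, $\shF(U_i)$ is a finitely generated $R_i$-module. By \cref{fct:Ass loc noetherian}~(1) we then have
\[
\Ass_X(\shF)\cap U_i = \Ass_{R_i}\shF(U_i).
\]
Granting that each $\Ass_{R_i}\shF(U_i)$ is finite, it only remains to observe the purely set-theoretic identity
\[
\Ass_X(\shF)\cap U = \bigcup_{i=1}^n\bigl(\Ass_X(\shF)\cap U_i\bigr),
\]
exhibiting $\Ass_X(\shF)\cap U$ as a finite union of finite sets, hence finite.

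For the affine input I would recall the standard prime-filtration argument: a finitely generated module $M$ over a noetherian ring $R$ admits a chain $0=M_0\subsetneq M_1\subsetneq\cdots\subsetneq M_m=M$ with $M_j/M_{j-1}\iso R/\pp_j$ for suitable primes $\pp_j$. Since $\Ass_R(R/\pp)=\{\pp\}$, and since associated primes can only accumulate along a filtration by the exact-sequence inclusion $\Ass(M')\subseteq\Ass(M)\subseteq\Ass(M')\cup\Ass(M'')$ (the affine avatar of \cref{fct:Ass general scheme}~(3)), an easy induction yields $\Ass_R M\subseteq\{\pp_1,\dots,\pp_m\}$, which is finite. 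This is classical; see \cite{Matsumura}.

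There is no genuine obstacle here: the only points to verify are that the affine pieces glue correctly, which is precisely \cref{fct:Ass loc noetherian}~(1) together with the compatibility of $\Ass$ with restriction to opens in \cref{fct:Ass general scheme}~(2), and that the finitely generated module over a noetherian ring input is applicable, which follows from coherence of $\shF$ and the local noetherianity of $X$. For the final clause, a noetherian scheme is itself quasi-compact, so taking $U=X$ in the first part shows that $\Ass\shF$ is finite.
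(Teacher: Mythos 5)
Your proof is correct. The paper offers no argument of its own for this statement --- it is recorded as a Fact with a citation to EGA IV (3.1.6) --- and the argument you supply (cover the quasi-compact open by finitely many noetherian affine opens, identify $\Ass_X(\shF)\cap U_i$ with $\Ass_{R_i}\shF(U_i)$ via the paper's \cref{fct:Ass loc noetherian}, and invoke the classical prime-filtration bound on associated primes of a finitely generated module over a noetherian ring) is exactly the standard proof underlying that reference.
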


The following example will be used repeatedly in this paper.
\begin{ex}
Let $X$ be a locally noetherian scheme.
\begin{enua}
\item
If $X$ is an integral scheme with generic point $\eta$,
then $\Min X = \Ass X=\{\eta\}$ by \cref{fct:Ass loc noetherian} (1).
\item
For a point $x\in X$,
consider the reduced closed subscheme $Z_x$ of $X$ whose underlying space is $\ol{\{x\}}$.
Then $\Ass_X \shO_{Z_x}=\{x\}$ by (1) and \cref{fct:Ass loc noetherian} (5).
\end{enua}
\end{ex}

The following proposition asserts
that the property of having no embedding points is stalk-local.
\begin{prp}\label{prp:no emb pt stalk-local}
Let $X$ be a locally noetherian scheme and $\shF \in \Qcoh X$.
\begin{enua}
\item
$\Min_X(\shF)\cap \Spec\shO_{X,x} = \Min_{\shO_{X,x}}(\shF_x)$
for any point $x\in X$.
\item
The following are equivalent:
\begin{itemize}
\item
$\shF$ has no embedding points as a quasi-coherent $\shO_X$-module.
\item
$\shF_x$ has no embedding prime ideals as an $\shO_{X,x}$-module for any $x\in X$.
\end{itemize}
\end{enua}
\end{prp}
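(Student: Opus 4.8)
The plan is to deduce both statements from \cref{fct:Ass loc noetherian}~(2), which identifies $\Ass_X(\shF)\cap\Spec\shO_{X,x}=\Ass_{\shO_{X,x}}(\shF_x)$, together with one purely order-theoretic observation. Throughout I would identify $\Spec\shO_{X,x}$ with its image in $X$, namely the set of generalizations of $x$, i.e. the points $y$ with $x\preceq y$; this is a \emph{generalization-closed} subset of $X$, since a generalization of a generalization of $x$ is again a generalization of $x$. Recall also that the bijection between points of $\Spec\shO_{X,x}$ and primes of $\shO_{X,x}$ is order-reversing: inclusion of primes corresponds to reverse specialization of points, so the minimal elements of $\Ass_{\shO_{X,x}}(\shF_x)$ with respect to inclusion are exactly the maximal elements of $\Ass_X(\shF)\cap\Spec\shO_{X,x}$ with respect to the specialization-order. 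Thus $\Min_{\shO_{X,x}}(\shF_x)$ is precisely this set of maximal elements, whereas by definition $\Min_X(\shF)$ is the set of maximal elements of all of $\Ass_X(\shF)$.

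Part (1) then reduces to the following elementary claim: for any subset $A$ of the specialization poset (a partial order, since $X$ is $T_0$) and any generalization-closed subset $S$, the maximal elements of $A$ lying in $S$ coincide with the maximal elements of $A\cap S$. I would apply this with $A=\Ass_X(\shF)$ and $S=\Spec\shO_{X,x}$ and check both inclusions. For $\subseteq$, a point maximal in $A$ that lies in $S$ is a fortiori maximal in the smaller set $A\cap S$. The direction $\supseteq$ is where generalization-closedness enters: if $a$ is maximal in $A\cap S$ and $a\preceq b$ with $b\in A$, then $b$ is a generalization of $a\in S$, so $b\in S$ and hence $b\in A\cap S$, whence maximality forces $a=b$; thus $a$ is maximal in $A$. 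Combined with \cref{fct:Ass loc noetherian}~(2) this gives $\Min_X(\shF)\cap\Spec\shO_{X,x}=\Min_{\shO_{X,x}}(\shF_x)$.

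For Part (2) I would unwind the definitions, $\shF$ having no embedding points meaning $\Ass_X(\shF)=\Min_X(\shF)$ and $\shF_x$ having none meaning $\Ass_{\shO_{X,x}}(\shF_x)=\Min_{\shO_{X,x}}(\shF_x)$. Assuming $\shF$ has no embedding points, for any $x$ and any $y\in\Ass_{\shO_{X,x}}(\shF_x)$ I have $y\in\Ass_X(\shF)=\Min_X(\shF)$ by \cref{fct:Ass loc noetherian}~(2), whence $y\in\Min_{\shO_{X,x}}(\shF_x)$ by Part (1); so each stalk has no embedding primes. Conversely, assuming every stalk has none, for $x\in\Ass_X(\shF)$ I note $\mm_x\in\Ass_{\shO_{X,x}}(\shF_x)$, so $x\in\Min_{\shO_{X,x}}(\shF_x)$ by hypothesis and therefore $x\in\Min_X(\shF)$ by Part (1); thus every associated point of $\shF$ is isolated.

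There is no genuinely hard step here: the only substantive input is the cited stalk-locality of associated points, and the lone point requiring care is the inclusion $\supseteq$ in the order-theoretic claim, where the generalization-closedness of $\Spec\shO_{X,x}$ is exactly the property that makes the argument go through.
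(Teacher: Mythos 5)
Your proof is correct and follows exactly the route the paper intends: the paper's own proof is a two-line sketch citing \cref{fct:Ass loc noetherian}~(2) and the generalization-closedness of $\Spec\shO_{X,x}$, and your argument is precisely the fleshed-out version of that sketch, with the order-theoretic lemma about maximal elements in a generalization-closed subset correctly isolated as the only point needing care.
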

\begin{proof}
(1) follows from
\cref{fct:Ass loc noetherian} (2) and the fact that $\Spec\shO_{X,x}$ is generalization-closed.
(2) follows from (1) and \cref{fct:Ass loc noetherian} (2).
\end{proof}

The following fact is one version of the primary decomposition.
\begin{fct}[{cf.\ \cite[(3.2.6)]{EGA4-2}}]\label{fct:primary}
Let $X$ be a noetherian scheme and $\shF \in \coh X$.
Suppose that $\Ass \shF = \{x_1,\dots , x_n\}$.
Then there are surjective morphisms $\shF \surj \shF_i$ in $\coh X$ for $1\le i \le n$
satisfying the following:
\begin{itemize}
\item 
$\Ass \shF_i=\{x_i\}$ for any $1\le i \le n$.
\item
The canonical morphism $\shF \to \bigoplus_{i=1}^n \shF_i$ is injective.
\end{itemize}
\end{fct}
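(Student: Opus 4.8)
The statement is exactly the sheaf-theoretic \emph{primary decomposition of the zero subsheaf}, so the plan is to mimic the classical module-theoretic argument inside the noetherian abelian category $\coh X$, using only the formal properties of associated points collected in \cref{fct:Ass general scheme,fct:Ass loc noetherian}. Concretely, I would produce coherent subsheaves $\shK_1,\dots,\shK_n \subseteq \shF$ with $\bigcap_{i} \shK_i = 0$ and $\Ass(\shF/\shK_i)=\{x_i\}$, and then set $\shF_i := \shF/\shK_i$: the canonical surjections $\shF \surj \shF_i$ together with the observation that $\bigcap_i \shK_i$ is precisely the kernel of the diagonal $\shF \to \bigoplus_i \shF_i$ deliver the two assertions at once.

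First I would record that $\coh X$ is a noetherian abelian category (coherent subsheaves of a fixed coherent sheaf satisfy the ascending chain condition because $X$ is noetherian), so noetherian induction on subsheaves is available. Call a coherent subsheaf $\shQ \subsetneq \shF$ \emph{meet-irreducible} if $\shQ = \shA \cap \shB$ with coherent $\shA,\shB$ forces $\shQ \in \{\shA,\shB\}$; a routine maximal-counterexample argument then shows that $0$ is a finite intersection $0 = \bigcap_j \shQ_j$ of meet-irreducible subsheaves. Granting the coprimarity step below, each $\shF/\shQ_j$ has a single associated point $p_j$; grouping the $\shQ_j$ with equal $p_j$ by intersection (a subsheaf of a direct sum of $p$-coprimary sheaves is again $p$-coprimary, by \cref{fct:Ass general scheme,fct:Ass loc noetherian}) and then discarding redundant components, I obtain an irredundant decomposition $0 = \bigcap_i \shK_i$ with distinct $p_i$. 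Additivity of $\Ass$ along the injection $\shF \inj \bigoplus_i \shF/\shK_i$ gives $\Ass\shF \subseteq \{p_i\}$, while irredundancy forces each $p_i \in \Ass\shF$: the nonzero subsheaf $\bigcap_{k\ne i}\shK_k$ embeds into the $p_i$-coprimary sheaf $\shF/\shK_i$, so its set of associated points is exactly $\{p_i\}\subseteq \Ass\shF$. Hence $\{p_i\}=\Ass\shF=\{x_1,\dots,x_n\}$ after reindexing, and $\shF_i:=\shF/\shK_i$ is as required, with injectivity of $\shF \to \bigoplus_i\shF_i$ following from $\bigcap_i\shK_i=0$ together with \cref{fct:Ass loc noetherian} (a quasi-coherent sheaf vanishes iff it has no associated points).

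The main obstacle is the step I deferred: a meet-irreducible subsheaf is coprimary, equivalently an object $\shF$ with $0$ meet-irreducible (a \emph{uniform} object) has $\#\Ass\shF \le 1$. Over an affine base this is the classical fact that an irreducible submodule is primary, proved by embedding $R/\pp$ and $R/\qq$ for two distinct associated primes and intersecting; but over a general noetherian scheme one cannot globalize such embeddings (there need be no injection $\shO_{Z_x}\inj \shF$, cf.\ \cite[Example B.11]{CS}), and the support functors $\Gamma_Z$ only separate associated points along the specialization order, never the embedded ones. The clean way around this is to pass to the ambient locally noetherian Grothendieck category $\Qcoh X$ and invoke the theory of injective envelopes: $\shF$ is uniform if and only if its injective envelope is indecomposable, and in a locally noetherian Grothendieck category the associated points (atoms) of $\shF$ are exactly those occurring in the decomposition of its injective envelope into indecomposables (cf.\ \cite{atom}); an indecomposable envelope therefore yields a single associated point. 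This is where the genuine content sits, and it is precisely the ingredient that elevates the statement to an EGA-level fact rather than a formal consequence of the $\Ass$-calculus.
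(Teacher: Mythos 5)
The paper does not prove \cref{fct:primary}; it is quoted as a known fact from \cite[(3.2.6)]{EGA4-2}, so there is no internal argument to compare yours against, and your proof should be judged against that source. Your argument is correct and takes a genuinely different route. The Noether--Lasker skeleton you run in the noetherian lattice of coherent subsheaves of $\shF$ --- writing $0=\bigcap_j\shQ_j$ with each $\shQ_j$ meet-irreducible, grouping by associated point, discarding redundant components, and using the injection $\bigcap_{k\ne i}\shK_k\inj\shF/\shK_i$ to force $p_i\in\Ass\shF$ --- goes through using only \cref{fct:Ass general scheme,fct:Ass loc noetherian}, and you correctly isolate the one nonformal step: a uniform coherent sheaf has at most one associated point. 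Your resolution of that step is valid: in the locally noetherian Grothendieck category $\Qcoh X$ a uniform object has indecomposable injective envelope, the indecomposable injectives are the $J(x)$ indexed by points $x\in X$ with $\Ass J(x)=\{x\}$, and monotonicity of $\Ass$ under subobjects finishes it. Two remarks. First, the precise input you need (Gabriel's classification of indecomposable injectives of $\Qcoh X$ and the computation of their associated points) lives in \cite{Gabriel} and classical Matlis theory rather than in \cite{atom}, whose atom-theoretic statements would need translating back; this is a matter of citation, not a gap. Second, for comparison, \cite{EGA4-2} avoids injectives entirely: for each subset $\Phi\subseteq\Ass\shF$ one takes a quasi-coherent subsheaf $\shG\subseteq\shF$ maximal among those with $\Ass\shG\subseteq\Ass\shF\setminus\Phi$ and shows $\Ass(\shF/\shG)=\Phi$; applying this with $\Phi=\{x_i\}$ and intersecting the resulting kernels yields the decomposition directly. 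That route is more elementary but must confront the local-to-global production of subsheaves that you rightly flag as the obstruction to globalizing the affine argument; your proof outsources exactly that difficulty to the theory of injective envelopes, which is a legitimate trade.
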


Let $X$ be a locally noetherian scheme of finite Krull dimension and $\shF\in \Qcoh X$.
We denote by $\Assh \shF$
the set of $x\in \Supp \shF$ such that $\dim \ol{\{x\}} = \dim \shF$.
It is easy to see that $\Assh \shF\subseteq \Min \shF$ 
by \cref{fct:Ass loc noetherian} (3).
We say that $\shF$ is \emph{equidimensional} if $\Assh \shF = \Min \shF$.
Then $X$ is equidimensional (cf.\ \cite[Definition 5.5]{GW})
if and only if $\shO_X$ is equidimensional.
We write $\Assh X := \Assh \shO_X$.
The following observation is often useful:
$\dim \shF = \dim X$ if and only if $\Assh\shF \subseteq \Assh X$.
Let $R$ be a commutative noetherian ring  of finite Krull dimension and $M\in \Mod R$.
Then we also denote by $\Assh M$ the set of $\pp \in \Supp M$
such that $\dim R/\pp = \dim M$.

Let $X$ be a locally noetherian scheme of finite Krull dimension and $\shF \in \Qcoh X$.
It may be useful to summarize this subsection as follows:
\begin{itemize}
\item
In general, $\Assh \shF \subseteq \Min \shF \subseteq \Ass \shF \subseteq \Supp \shF$ holds.
\item
$\shF$ has no embedding point if and only if $\Min \shF = \Ass \shF$.
\item
$\shF$ is equidimensional if and only if $\Assh\shF = \Min \shF$.
\item
$\shF$ is equidimensional and has no embedding points if and only if $\Assh\shF = \Ass \shF$.
\item
$\dim \shF = \dim X$ if and only if $\Assh\shF \subseteq \Assh X$.
\end{itemize}

\subsection{Torsionfree sheaves}\label{ss:torf sh}
In this subsection,
we study torsionfree sheaves on an \emph{arbitrary} noetherian scheme.
There are detailed accounts for this, for example, \cite{Ley16,EGA4-4}.
However, these references use
the notions of \emph{the torsion subsheaf} and \emph{the sheaf of meromorphic functions}.
These notions are somewhat difficult to handle. 
For example, the torsion subsheaf of a coherent sheaf and 
the sheaf of meromorphic functions on a noetherian scheme may not be quasi-coherent.
Our treatment in this subsection does not use these notions
but emphasizes the notion of associated points, which we believe to be more accessible.

We first recall the definition of torsionfree modules on an arbitrary commutative ring.
Let $R$ be a commutative ring and $M$ an $R$-module.
An element $a\in R$ is said to be \emph{$M$-regular}
if the $R$-linear map $a\colon M \to M$ given by $x \mapsto ax$ is injective.
An $R$-regular element is nothing but a nonzero divisor of $R$.
We denote by $\NZD(M)$ the set of $M$-regular elements.
\begin{dfn}
Let $R$ be a commutative ring.
An $R$-module $M$ is said to be \emph{torsionfree}
if $\NZD(R)\subseteq \NZD(M)$ holds,
that is, if any nonzero divisor of $R$ is an $M$-regular element.
\end{dfn}

\begin{ex}
We give examples of torsionfree modules.
\begin{enua}
\item
The zero module is torsionfree since $\NZD(0)=R$.
\item
Every module over a commutative artinian local ring $R$ is torsionfree.
It follows from the fact that any element of $R$ is either invertible or nilpotent.
\end{enua}
\end{ex}

A torsionfree module can be characterized using associated points.
\begin{prp}\label{prp:char torf mod}
Let $R$ be a commutative noetherian ring and $M$ an $R$-module.
\begin{enua}
\item
$M$ is torsionfree if and only if
any associated prime of $M$ is contained in some associated prime ideal of $R$.
\item
If $R$ has no associated prime ideals,
then we have that $M$ is torsionfree if and only if $\Ass M \subseteq \Ass R$.
\end{enua}
\end{prp}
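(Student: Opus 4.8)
The plan is to translate the module-regularity condition defining torsionfreeness into a statement about unions of associated primes, and then to invoke prime avoidance. First I would recall the standard description of the zerodivisors of a module over a noetherian ring: for any $R$-module $N$, the set of elements of $R$ that are \emph{not} $N$-regular coincides with $\bigcup_{\pp\in\Ass N}\pp$. Applied to $N=R$ and to $N=M$ this gives $\NZD(N)=R\setminus\bigcup_{\pp\in\Ass N}\pp$, so the defining inclusion $\NZD(R)\subseteq\NZD(M)$ of torsionfreeness is equivalent to the reverse inclusion of the zerodivisor loci, namely
\[
\bigcup_{\pp\in\Ass M}\pp \ \subseteq\ \bigcup_{\qq\in\Ass R}\qq .
\]

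For part (1) I would then show that this union-inclusion is equivalent to the pointwise condition that every $\pp\in\Ass M$ is contained in some $\qq\in\Ass R$. The implication from the pointwise condition to the union-inclusion is immediate. For the converse, fix $\pp\in\Ass M$; the union-inclusion gives $\pp\subseteq\bigcup_{\qq\in\Ass R}\qq$, and since $R$ is noetherian the set $\Ass R$ is finite, so prime avoidance yields $\pp\subseteq\qq$ for a single $\qq\in\Ass R$. This is the crux of the argument, and the only point requiring care is the finiteness of $\Ass R$, which is precisely what makes prime avoidance applicable.

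For part (2), under the hypothesis that $R$ has no embedded primes — that is, $\Ass R=\Min R$ — I would specialize the criterion of (1). Every $\qq\in\Ass R$ is then a minimal prime of $R$, hence a minimal element of $\Spec R$, so any prime $\pp$ with $\pp\subseteq\qq$ must equal $\qq$. Thus the condition ``$\pp$ is contained in some associated prime of $R$'' collapses to ``$\pp\in\Ass R$'', and the criterion of (1) reads precisely $\Ass M\subseteq\Ass R$; the reverse containment being trivial, this completes the equivalence.

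I expect the main obstacle to be expository rather than mathematical: I must make sure the zerodivisor description is cited in the generality of an arbitrary (not necessarily finitely generated) $R$-module, which does hold over a noetherian ring, since whenever $am=0$ with $m\neq 0$ the cyclic submodule $Rm\iso R/\Ann(m)$ has an associated prime $\pp\in\Ass M$ with $a\in\Ann(m)\subseteq\pp$, and conversely each $\pp\in\Ass M$ is an annihilator of some nonzero element.
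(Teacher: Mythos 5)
Your proof is correct and follows essentially the same route as the paper's: both translate $\NZD(R)\subseteq\NZD(M)$ into the inclusion $\bigcup_{\pp\in\Ass M}\pp\subseteq\bigcup_{\qq\in\Ass R}\qq$ via the standard zerodivisor description (the paper cites \cite[Theorem 6.1 (ii)]{Matsumura}), apply prime avoidance using the finiteness of $\Ass R$, and for (2) observe that associated primes of a ring without embedded primes are minimal, so containment forces equality. Your added remark that the zerodivisor description holds for arbitrary (not necessarily finitely generated) modules over a noetherian ring is a worthwhile clarification the paper leaves implicit.
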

\begin{proof}
(1)
By the definition, $M$ is torsionfree if and only if $\NZD(R)\subseteq \NZD(M)$.
It is equivalent to the following by \cite[Theorem 6.1 (ii)]{Matsumura}:
\[
\bigcup_{\qq \in \Ass R} \qq \supseteq \bigcup_{\pp \in \Ass M} \pp.
\]
By the prime avoidance,
it is also equivalent to that 
for any $\pp \in \Ass M$ there exists $\qq\in \Ass R$ such that $\pp \subseteq \qq$.

(2)
Any associated prime ideals of $R$ are minimal prime ideals
since $R$ has no embedded prime ideals.
Hence, if $\pp \subseteq \qq$ for some $\pp \in \Ass M$ and $\qq \in \Ass R$,
we have $\pp= \qq$.
Therefore $M$ is torsionfree if and only if $\Ass M \subseteq \Ass R$ by (1).
\end{proof}

The property of being torsionfree is not stable under localization in general
(see \cite[Example 6.7]{CS} 
and the proof of (iii)$\imply$(ii) in \cref{prp:torf sh general} below).
However, the following holds.
\begin{lem}\label{prp:torf mod local}
Let $R$ be a commutative ring and $M$ an $R$-module.
If $M_{\pp}$ is torsionfree for any $\pp\in \Spec R$,
then so is $M$.
\end{lem}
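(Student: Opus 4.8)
The plan is to verify the definition directly. Fix a nonzerodivisor $a \in \NZD(R)$; I want to show that the multiplication map $a \colon M \to M$ is injective, which is precisely the statement that $a \in \NZD(M)$. Since injectivity of a homomorphism of $R$-modules can be tested locally, I would set $K := \Ker(a \colon M \to M)$ and aim to prove $K_{\pp} = 0$ for every $\pp \in \Spec R$; as a module vanishes if and only if all of its localizations at primes vanish, this yields $K = 0$ and hence the injectivity we want.

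The crucial observation, and the only step that is not purely formal, is that localization carries nonzerodivisors to nonzerodivisors: the image $a/1$ of $a$ in $R_{\pp}$ again lies in $\NZD(R_{\pp})$. I would check this by a short direct calculation. Suppose $(a/1)(b/s) = 0$ in $R_{\pp}$; then $t a b = 0$ in $R$ for some $t \notin \pp$, so $a(tb) = 0$, and because $a$ is a nonzerodivisor \emph{in all of $R$} we get $tb = 0$. Since $t/1$ is a unit in $R_{\pp}$, this forces $b/s = 0$. I expect this to be the main (if mild) obstacle, precisely because it is where the hypothesis that $a$ is $R$-regular is genuinely used; localization does not preserve regularity for an element that is only a nonzerodivisor on a submodule, so the global hypothesis is essential here.

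Finally I would combine the two ingredients. Because localization is an exact functor, $K_{\pp} = \Ker(a/1 \colon M_{\pp} \to M_{\pp})$. By the previous step $a/1 \in \NZD(R_{\pp})$, and the hypothesis guarantees that $M_{\pp}$ is a torsionfree $R_{\pp}$-module, so $a/1$ is $M_{\pp}$-regular and this kernel is zero. Thus $K_{\pp} = 0$ for every $\pp \in \Spec R$, whence $K = 0$ and $a$ is $M$-regular. As $a$ was an arbitrary element of $\NZD(R)$, we conclude $\NZD(R) \subseteq \NZD(M)$, i.e.\ $M$ is torsionfree.
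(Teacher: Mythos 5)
Your proof is correct and follows essentially the same route as the paper: both reduce to checking that a nonzerodivisor $a\in\NZD(R)$ stays a nonzerodivisor in each $R_{\pp}$ and then invoke the local-to-global principle for injectivity (the paper via the embedding $M\inj\prod_{\pp}M_{\pp}$, you via vanishing of the localized kernel, which is the same principle). Your explicit verification that $a/1\in\NZD(R_{\pp})$ fills in the step the paper dismisses as ``easy to see.''
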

\begin{proof}
Let $a \in \NZD(R)$.
It is easy to see that $\frac{a}{1} \in \NZD(R_{\pp})$ for any $\pp \in \Spec R$.
Thus, if $M_{\pp}$ is torsionfree for any $\pp\in \Spec R$,
then $\frac{a}{1} \colon \prod_{\pp\in\Spec R} M_{\pp} \to \prod_{\pp\in\Spec R} M_{\pp}$ 
is injective.
We obtain that $a \in \NZD(M)$ by the following commutative diagram:
\[
\begin{tikzcd}
M \arr{r,"a"} \arr{d,hook} & M \arr{d,hook}\\
\prod_{\pp\in\Spec R} M_{\pp} \arr{r,,"{\frac{a}{1}}",hook} & \prod_{\pp\in\Spec R} M_{\pp}.
\end{tikzcd}
\]
\end{proof}

Before defining torsionfree sheaves,
we discuss the relationship between the conditions of quasi-coherent sheaves
related to torsionfree modules.
\begin{prp}\label{prp:torf sh general}
Let $X$ be a locally noetherian scheme and $\shF \in \Qcoh X$.
Consider the following conditions:
\begin{enur}
\item
$\shF_x$ is a torsionfree $\shO_{X,x}$-module
for any point $x\in X$.
\item
$\shF(U)$ is a torsionfree $\shO_X(U)$-module
for any affine open subset $U$ of $X$.
\item
$\Ass\shF \subseteq \Ass X$ holds.
\item
There exists an affine open covering $\{U_i\}_{i\in I}$ of $X$
such that $\shF(U_i)$ is a torsionfree $\shO_X(U_i)$-module
for any $i\in I$.
\item
Any associated point of $\shF$ is a generalization of some associated point of $X$.
\end{enur}
Then {\upshape (i)$\equi$(ii)$\equi$(iii)$\imply$(iv)$\imply$(v)} hold.
\end{prp}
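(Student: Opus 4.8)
The plan is to prove the three conditions equivalent by running the cycle $\text{(iii)}\imply\text{(i)}\imply\text{(ii)}\imply\text{(iii)}$, and then to deduce $\text{(iii)}\imply\text{(iv)}\imply\text{(v)}$ separately. Throughout, the only real tools are the characterization of torsionfree modules by associated primes in \cref{prp:char torf mod}(1) and the localization compatibilities of associated points in \cref{fct:Ass loc noetherian}.

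First I would treat $\text{(iii)}\imply\text{(i)}$ stalk by stalk. Fixing $x\in X$, \cref{fct:Ass loc noetherian}(2) identifies $\Ass_{\shO_{X,x}}(\shF_x)$ with $\Ass\shF\cap\Spec\shO_{X,x}$ and $\Ass_{\shO_{X,x}}(\shO_{X,x})$ with $\Ass X\cap\Spec\shO_{X,x}$, so (iii) forces every associated prime of $\shF_x$ to be an associated prime of $\shO_{X,x}$, hence trivially contained in one; \cref{prp:char torf mod}(1) then gives that $\shF_x$ is torsionfree. For $\text{(i)}\imply\text{(ii)}$, on an affine open $U=\Spec A$ I would note that the localizations of $M:=\shF(U)$ at the primes of $A$ are exactly the stalks $\shF_x$ (for $x\leftrightarrow\pp$), which are torsionfree by (i), so $M$ is torsionfree by \cref{prp:torf mod local}.

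The heart of the argument, and the step I expect to be the main obstacle, is $\text{(ii)}\imply\text{(iii)}$, precisely because torsionfreeness is not inherited by localizations. I would argue by contraposition: given $y\in\Ass\shF\setminus\Ass X$, I must exhibit an affine open on which $\shF$ fails to be torsionfree. Choosing an affine open $U_0=\Spec A\ni y$ and writing $\pp:=\pp_y$, \cref{fct:Ass loc noetherian}(1) gives $\pp\in\Ass_A\shF(U_0)$ while $\pp\notin\Ass_A A=\Ass X\cap U_0$. Let $\qq_1,\dots,\qq_k$ be the (finitely many) associated primes of $A$ containing $\pp$; each containment is strict. If $k=0$ then $\shF(U_0)$ is already not torsionfree by \cref{prp:char torf mod}(1). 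If $k\geq 1$, then since $\pp$ is prime and strictly smaller than each $\qq_j$ one has $\bigcap_j\qq_j\not\subseteq\pp$ (otherwise $\qq_1\cdots\qq_k\subseteq\pp$ would force some $\qq_j\subseteq\pp$), so I can pick $f\in\left(\bigcap_j\qq_j\right)\setminus\pp$ and pass to the principal affine open $\Spec A_f\subseteq U_0$. Because $f\notin\pp$ the prime $\pp A_f$ survives in $\Ass_{A_f}\shF(U_0)_f$, while every $\qq_j$ is killed by $f$; hence no associated prime of $A_f$ contains $\pp A_f$, and $\shF(\Spec A_f)=\shF(U_0)_f$ is not torsionfree, contradicting (ii). This simultaneously produces the explicit localization at which torsionfreeness is lost, which is exactly the phenomenon flagged in the remark preceding the proposition.

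Finally, $\text{(iii)}\imply\text{(iv)}$ is immediate from the already-proved $\text{(iii)}\imply\text{(ii)}$ together with the existence of an affine open cover of the scheme $X$. For $\text{(iv)}\imply\text{(v)}$ I would take $y\in\Ass\shF$ lying in some cover member $U_i=\Spec A_i$; then $\pp_y\in\Ass_{A_i}\shF(U_i)$ by \cref{fct:Ass loc noetherian}(1), and torsionfreeness of $\shF(U_i)$ together with \cref{prp:char torf mod}(1) yields $\pp_y\subseteq\qq$ for some $\qq\in\Ass_{A_i}A_i=\Ass X\cap U_i$. Writing $z$ for the point of $\qq$, the inclusion $\pp_y\subseteq\pp_z$ means $z\in\ol{\{y\}}$, i.e.\ $y$ is a generalization of the associated point $z$ of $X$, which is exactly (v).
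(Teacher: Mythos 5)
Your proposal is correct and follows essentially the same route as the paper: the easy implications go through \cref{prp:char torf mod} and \cref{prp:torf mod local} exactly as in the paper, and your contrapositive argument for (ii)$\imply$(iii) is the paper's argument in disguise --- both isolate the associated primes $\qq_1,\dots,\qq_k$ of $A$ strictly containing $\pp$, choose $f\in\bigl(\bigcap_j\qq_j\bigr)\setminus\pp$, and pass to $\Spec A_f$ to force a contradiction with torsionfreeness there. The only cosmetic differences are the direction of the (ii)$\imply$(iii) step and that you route (iii)$\imply$(iv) through (ii) rather than stating (ii)$\imply$(iv) directly; neither affects the substance.
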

\begin{proof}
The implication (i)$\imply$(ii) follows from \cref{prp:torf mod local}.
The implication (ii)$\imply$(iv) is obvious.
The implication (iii)$\imply$(i) follows from \cref{prp:char torf mod} and the following:
\[
\Ass_{\shO_{X,x}}\shF_x = \Ass \shF \cap \Spec\shO_{X,x}
\subseteq \Ass X \cap\Spec\shO_{X,x} = \Ass \shO_{X,x}.
\]
We prove (iv)$\imply$(v).
Let $x\in \Ass \shF$.
Then $x\in U_i$ for some $i\in I$.
There exists $y\in \Ass U_i$ such that $x$ is a generalization of $y$
by $\Ass \shF \cap U_i=\Ass_{\shO(U_i)}\shF(U_i)$ and \cref{prp:char torf mod}.
Thus (v) follows.

We prove (ii)$\imply$(iii).
It is enough to show that the following statement:
\begin{itemize}
\item 
Let $R$ be a commutative noetherian ring and $M\in \Mod R$.
If $M_f$ is a torsionfree $R_f$-module for any $f\in R$,
then $\Ass M \subseteq \Ass R$ holds.
\end{itemize}
We first note that $\Ass R$ is finite.
Let $\pp \in \Ass M$.
If there is no associated prime ideals $\qq$ of $R$ such that $\pp \subsetneq \qq$,
then we can easily see that $\pp \in \Ass R$
by \cref{prp:char torf mod} and the fact that $M$ is torsionfree.
Consider the case where there exists an associated prime ideal $\qq$ of $R$ 
such that $\pp \subsetneq \qq$.
Let $\qq_1,\dots, \qq_r$ be
all the associated prime ideals of $R$ such that $\pp \subsetneq \qq_i$.
Take $f \in \left(\cap_{i=1}^r \qq_i \right)\setminus \pp$.
Then $\pp \in \Ass M \cap D(f)=\Ass M_f$ and $\qq_i \not\in \Ass R \cap D(f)=\Ass R_f$ for any $i$.
Since $M_f$ is torsionfree, there exists $\qq \in \Ass R \cap D(f)$ such that $\pp \subseteq \qq$
by \cref{prp:char torf mod}.
However, from the choice of $f$, we have $\pp=\qq$.
Therefore, we obtain $\Ass M \subseteq \Ass R$.
\end{proof}

\begin{dfn}\label{dfn:torf sh}
Let $X$ be a locally noetherian scheme and $\shF \in \Qcoh X$.
\begin{enua}
\item
$\shF$ is said to be \emph{locally torsionfree}
if $\shF_x$ is a torsionfree $\shO_{X,x}$-module for any point $x\in X$.
\item
$\shF$ is said to be \emph{torsionfree}
if any associated point of $\shF$ is a generalization of some associated point of $X$.
\end{enua}
\end{dfn}

\begin{ex}
Let $R$ be a commutative noetherian ring and $M\in \Mod R$.
Consider the quasi-coherent $\shO_{\Spec R}$-module $\wti{M}$ associated to $M$.
Then $\wti{M}$ is a torsionfree $\shO_{\Spec R}$-module 
if and only $M$ is a torsionfree $R$-module.
However, even if $M$ is a torsionfree $R$-module,
$\wti{M}$ is not necessarily locally torsionfree.
\end{ex}


As the following corollary shows,
the two conditions in \cref{dfn:torf sh} coincide 
on a locally noetherian scheme without embedding points. 
\begin{cor}\label{prp:torf sh without emb pt}
Let $X$ be a locally noetherian scheme without embedding points.
The following are equivalent for $\shF \in \Qcoh X$.
\begin{enur}
\item
$\shF$ is torsionfree.
\item
$\shF$ is locally torsionfree.
\item
There exists an affine open covering $\{U_i\}_{i\in I}$ of $X$
such that $\shF(U_i)$ is a torsionfree $\shO_X(U_i)$-module
for any $i\in I$.
\end{enur}
In this case, $\shF$ has no embedding points.
\end{cor}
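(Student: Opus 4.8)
The plan is to bootstrap from \cref{prp:torf sh general}, which already proves all the needed implications except the one that genuinely uses the hypothesis on $X$. Matching up labels, condition~(i) of the corollary (``$\shF$ is torsionfree'') is condition~(v) of \cref{prp:torf sh general}, condition~(ii) (``locally torsionfree'') is its condition~(i), and condition~(iii) is its condition~(iv); recall also that its condition~(iii) is the inclusion $\Ass\shF\subseteq\Ass X$. Hence, with no assumption beyond local noetherianity, \cref{prp:torf sh general} already yields the equivalence of corollary~(ii) with $\Ass\shF\subseteq\Ass X$, together with the implications $\Ass\shF\subseteq\Ass X \Rightarrow$ corollary~(iii) $\Rightarrow$ corollary~(i). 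So to obtain the full equivalence it remains only to prove corollary~(i) $\Rightarrow$ corollary~(ii); by the first equivalence this amounts to showing that \emph{torsionfreeness implies $\Ass\shF\subseteq\Ass X$}, and this is where I will use that $X$ has no embedding points.

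To prove that implication, I would fix $x\in\Ass\shF$ and invoke the definition of torsionfreeness (\cref{dfn:torf sh}): $x$ is a generalization of some associated point $y$ of $X$, that is, $y\preceq x$. The hypothesis that $X$ has no embedding points means exactly $\Ass X=\Min X$, so $y\in\Min X$; by \cref{fct:Ass loc noetherian} (3) applied to $\shO_X$, the point $y$ is then a maximal element of $\Supp\shO_X=X$ for the specialization order. Since the specialization order on a scheme is antisymmetric (schemes are $T_0$), the relation $y\preceq x$ together with the maximality of $y$ forces $x=y$, whence $x\in\Ass X$. As $x\in\Ass\shF$ was arbitrary, $\Ass\shF\subseteq\Ass X$, and the three conditions of the corollary are equivalent.

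For the concluding assertion I would argue that the equivalences just established give $\Ass\shF\subseteq\Ass X=\Min X$. Consequently every $x\in\Ass\shF$ is a maximal point of $X$ for the specialization order; since $x\in\Supp\shF$ by \cref{fct:Ass general scheme} (1) and $\Supp\shF\subseteq X$, such an $x$ is a fortiori maximal in $\Supp\shF$, hence lies in $\Min\shF$ by \cref{fct:Ass loc noetherian} (3). Combined with the trivial inclusion $\Min\shF\subseteq\Ass\shF$, this gives $\Ass\shF=\Min\shF$, i.e.\ $\shF$ has no embedding points.

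The proof is short precisely because \cref{prp:torf sh general} has already reconciled the stalk-wise, section-wise, and associated-point formulations of torsionfreeness; the only new input is the short argument above. The one conceptual subtlety—what I regard as the crux—is to use the hypothesis correctly: in general, torsionfreeness forces each associated point of $\shF$ merely to dominate (specialize to) an associated point of $X$, which at the level of a ring is only a containment $\pp\subseteq\qq$ of primes rather than an equality. It is exactly the vanishing of embedding points on $X$—which pins the associated points of $X$ down to the maximal (generic) points of its irreducible components—that upgrades ``each associated point of $\shF$ dominates an associated point of $X$'' to ``each associated point of $\shF$ \emph{is} an associated point of $X$,'' i.e.\ to the inclusion $\Ass\shF\subseteq\Ass X$ characterizing local torsionfreeness.
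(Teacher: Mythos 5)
Your proposal is correct and follows essentially the same route as the paper: both reduce everything to \cref{prp:torf sh general} and then observe that, since $\Ass X=\Min X$ consists of maximal points for the specialization order, an associated point of $\shF$ that generalizes a point of $\Ass X$ must actually equal it, giving $\Ass\shF\subseteq\Ass X$ and hence (i)$\Rightarrow$(ii). The concluding claim that $\shF$ has no embedding points is also argued the same way in the paper, via $\Ass\shF\subseteq\Min X$ and \cref{fct:Ass loc noetherian}~(3).
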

\begin{proof}
We have already shown (ii)$\imply$(iii)$\imply$(i) in \cref{prp:torf sh general}.
We have that $\Min X=\Ass X$ since $X$ has no embedding point.
This implies that $\Ass X$ is generalization-closed.
Thus if (i) holds, then $\Ass \shF \subseteq \Ass X$ holds.
This proves (i)$\imply$(ii) by \cref{prp:torf sh general}.

If $\shF$ is locally torsionfree, we have $\Ass \shF \subseteq \Ass X = \Min X$.
From this, every associated point of $\shF$ is maximal with respect to the specialization-order.
Hence $\shF$ has no embedding points.
This finishes the proof.
\end{proof}

\subsection{The relationship between torsionfree, pure and Cohen-Macaulay sheaves}\label{ss:torf pure CM}
In this subsection,
we describe the relationship between torsionfree, pure and Cohen-Macaulay sheaves.

We first describe the relationship between torsionfree and pure sheaves.
Let $X$ be a noetherian scheme of finite Krull dimension and $\shF\in \coh X$.
Recall that $\shF$ is said to be \emph{pure}
if for any nonzero coherent $\shO_X$-submodule $\shG$ of $\shF$,
we have that $\dim \shG=\dim\shF$.
We say that $\shF$ is \emph{maximal pure} if either $\shF=0$ or it is pure and $\dim \shF =\dim X$.

\begin{prp}[{cf.\ \cite[page 3]{Huy}}]\label{prp:pure Ass}
Let $X$ be a noetherian scheme of finite Krull dimension.
The following are equivalent for $\shF \in \coh X$.
\begin{enur}
\item
$\shF$ is pure.
\item
$\shF$ is equidimensional and has no embedding points,
that is, $\Assh \shF = \Min \shF = \Ass \shF$ holds.
\end{enur}
\end{prp}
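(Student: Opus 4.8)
The plan is to deduce both implications from the chain $\Assh \shF \subseteq \Min \shF \subseteq \Ass \shF \subseteq \Supp \shF$ recalled in \S\ref{ss:Ass}, together with the elementary fact that the dimension of a subsheaf is controlled by its associated points. Throughout I may assume $\shF \ne 0$, since both conditions hold trivially for $\shF = 0$. Write $d := \dim \shF = \dim \Supp \shF$; for $x \in \Supp \shF$ one has $\dim \ol{\{x\}} \le d$, with equality exactly when $x \in \Assh \shF$.

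For (ii)$\Rightarrow$(i) I assume $\Ass \shF = \Assh \shF$ (equivalent to (ii) by the inclusions above). Let $\shG \subseteq \shF$ be a nonzero coherent subsheaf. Then $\Ass \shG \ne \emptyset$ by \cref{fct:Ass loc noetherian} (4), and $\Ass \shG \subseteq \Ass \shF = \Assh \shF$ by \cref{fct:Ass general scheme} (3) applied to $0 \to \shG \to \shF \to \shF/\shG \to 0$. Picking $x \in \Ass \shG$ gives $\dim \ol{\{x\}} = d$ and $x \in \Supp \shG$, so $d = \dim \ol{\{x\}} \le \dim \shG \le \dim \shF = d$; hence $\dim \shG = \dim \shF$, which is purity.

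The content lies in (i)$\Rightarrow$(ii), which I argue by contraposition: assuming $\Ass \shF \ne \Assh \shF$, I exhibit a nonzero coherent subsheaf of dimension $< d$. Since $\Assh \shF \subseteq \Ass \shF$ always, the hypothesis provides $x \in \Ass \shF$ with $\dim \ol{\{x\}} < d$. The tempting witness would be an embedded copy of $\shO_{Z_x}$, but an associated point need not yield such a global embedding (this is exactly why \cref{prp:ass pt inj} only produces one after tensoring with a line bundle, the failure being illustrated in \cite[Example B.11]{CS}); crucially, here $X$ carries no ample family, so that tool is unavailable. I therefore take $\shG := \underline{\Gamma}_{Z_x}(\shF)$, the subsheaf of local sections of $\shF$ whose support is contained in $Z_x$.

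It then remains to verify three local, routine points. First, $\shG$ is a coherent subsheaf: on an affine noetherian open $U = \Spec R$ it is the submodule $\{m \in \shF(U) : \pp_x^n m = 0 \text{ for some } n\}$ of the noetherian module $\shF(U)$, hence finitely generated, and these glue. Second, $\Supp \shG \subseteq Z_x$, so $\dim \shG \le \dim Z_x = \dim \ol{\{x\}} < d$. Third, $\shG \ne 0$: by \cref{fct:Ass loc noetherian} (1) the point $x$ corresponds to $\pp_x \in \Ass_{\shO_X(U)} \shF(U)$ on an affine open $U \ni x$, yielding a submodule $\shO_X(U)/\pp_x \hookrightarrow \shF(U)$ with support $Z_x \cap U$, which therefore lies in $\shG(U)$. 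Thus $\shG$ is a nonzero coherent subsheaf with $\dim \shG < \dim \shF$, contradicting purity. The only mildly delicate ingredient is the coherence of $\underline{\Gamma}_{Z_x}(\shF)$ — the zeroth local cohomology sheaf — but this is standard for coherent sheaves on a noetherian scheme, so I expect the argument to be short and the local-to-global subtlety around \cref{prp:ass pt inj} to be the only conceptual hurdle.
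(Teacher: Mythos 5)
Your proof is correct, and the direction (ii)$\Rightarrow$(i) is essentially identical to the paper's (nonemptiness of $\Ass\shG$ plus the chain $\Assh \shG \subseteq \Ass\shG \subseteq \Ass\shF = \Assh\shF$). The interesting difference is in (i)$\Rightarrow$(ii). The paper argues directly: for each $x \in \Ass\shF$ it invokes \cite[Proposition B.12]{CS} to produce a nonzero coherent ideal sheaf $\shI \subseteq \shO_{Z_x}$ together with a global injection $\shI \inj \shF$; since $\Ass_X\shI = \{x\}$ forces $\Supp\shI = \ol{\{x\}}$, purity gives $\dim\ol{\{x\}} = \dim\shF$, i.e.\ $x \in \Assh\shF$. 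You instead build the witness subsheaf intrinsically as the zeroth local cohomology $\underline{\Gamma}_{Z_x}(\shF)$, which is nonzero precisely because $x$ is an associated point and is supported in $Z_x$, and you run the argument by contraposition. Both routes correctly avoid \cref{prp:ass pt inj} (which requires an ample family and a twist by a line bundle), and you rightly flag that this is the conceptual hurdle. What your version buys is self-containedness: coherence of $\shH^0_{Z_x}(\shF)$ on a noetherian scheme is standard and checked affine-locally, so you need no external citation; what the paper's version buys is a slightly sharper conclusion (a concrete subsheaf with $\Ass$ equal to exactly $\{x\}$ embedded in $\shF$, not just a nonzero subsheaf of too-small dimension), which is in the spirit of the paper's other "up to line bundles" embedding results. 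Your argument is complete as written.
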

\begin{proof}
Suppose that $\shF$ is pure.
Let $x\in \Ass \shF$ 
and $Z_x$ the reduced closed subscheme of $X$ whose underlying space is $\ol{\{x\}}$.
Then there exist a nonzero coherent ideal sheaf $\shI$ of $\shO_{Z_x}$ 
and an injective $\shO_X$-linear map $\shI \inj \shF$ by \cite[Proposition B.12]{CS}.
Note that $\Ass_X \shI = \{x\}$
since $\shI$ is a nonzero $\shO_{X}$-submodule of $\shO_{Z_x}$.
Because $\shF$ is pure,
we have that $\dim \shI = \dim \shF$.
This implies that $x \in \Assh \shF$.
Thus, we obtain $\Assh \shF = \Ass \shF$,

Conversely, suppose that $\shF$ is equidimensional and has no embedding points.
Let $\shG$ be a nonzero coherent $\shO_X$-submodule of $\shF$.
Then $\dim \shG = \dim \shF$ follows from the following:
\[
\Assh \shG \subseteq \Ass \shG \subseteq \Ass \shF = \Assh\shF.
\]
Hence $\shF$ is pure.
\end{proof}

\begin{cor}\label{prp:maximal pure Ass}
Let $X$ be a noetherian scheme of finite Krull dimension and $\shF \in \coh X$.
Then $\shF$ is maximal pure if and only if $\Ass \shF \subseteq \Assh X$.
In particular, maximal pure $\shO_X$-modules are locally torsionfree.
\end{cor}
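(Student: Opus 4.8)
The plan is to reduce everything to the characterisation of purity via associated points in \cref{prp:pure Ass}, together with the dimension bookkeeping recorded in \S\ref{ss:Ass}. The case $\shF=0$ is immediate, since it is maximal pure by definition and $\Ass\shF=\emptyset\subseteq\Assh X$; so I would assume $\shF\ne 0$ throughout. For the forward implication, suppose $\shF$ is maximal pure, so that $\shF$ is pure and $\dim\shF=\dim X$. By \cref{prp:pure Ass} purity gives $\Ass\shF=\Assh\shF$, and since $\dim\shF=\dim X$ the observation $\dim\shF=\dim X\iff\Assh\shF\subseteq\Assh X$ from \S\ref{ss:Ass} yields $\Assh\shF\subseteq\Assh X$. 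Combining these gives $\Ass\shF=\Assh\shF\subseteq\Assh X$.

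For the converse, assume $\Ass\shF\subseteq\Assh X$, so that every point of $\Ass\shF$ is top-dimensional, i.e.\ satisfies $\dim\ol{\{x\}}=\dim X$. The key step is to extract $\dim\shF=\dim X$ from this. Since $\shF\ne0$ we have $\emptyset\ne\Min\shF\subseteq\Ass\shF\subseteq\Assh X$, and by \cref{fct:Ass loc noetherian} the isolated points $\Min\shF$ are exactly the maximal points of $\Supp\shF$, so $\dim\shF$ is the maximum of $\dim\ol{\{x\}}$ over $x\in\Min\shF$; as each such $x$ now satisfies $\dim\ol{\{x\}}=\dim X$, we conclude $\dim\shF=\dim X$. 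Once this is in hand, for any $x\in\Ass\shF\subseteq\Assh X$ we have $x\in\Supp\shF$ and $\dim\ol{\{x\}}=\dim X=\dim\shF$, so $x\in\Assh\shF$; thus $\Ass\shF\subseteq\Assh\shF$, which together with the general inclusion $\Assh\shF\subseteq\Ass\shF$ gives $\Assh\shF=\Ass\shF$. By \cref{prp:pure Ass} this makes $\shF$ pure, and since $\dim\shF=\dim X$ it is maximal pure.

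Finally, for the ``in particular'' statement I would chain the inclusions $\Ass\shF\subseteq\Assh X\subseteq\Ass X$ (the second from the general chain $\Assh\shF\subseteq\Min\shF\subseteq\Ass\shF$ in \S\ref{ss:Ass}, applied to $\shO_X$) and invoke the equivalence (i)$\iff$(iii) of \cref{prp:torf sh general}, which says that $\Ass\shF\subseteq\Ass X$ is equivalent to $\shF$ being locally torsionfree in the sense of \cref{dfn:torf sh}. The only step demanding genuine care is the deduction $\dim\shF=\dim X$ in the converse: one must recall that the dimension of $\shF$ is governed by its \emph{isolated} points and that these lie in $\Ass\shF$, so that the hypothesis forces all of them to be top-dimensional; everything else is formal manipulation of the inclusions among $\Assh$, $\Min$, $\Ass$ and $\Supp$.
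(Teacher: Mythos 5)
Your proof is correct and follows essentially the same route as the paper's: both directions reduce to the characterization of purity in \cref{prp:pure Ass} together with the observation that $\dim\shF=\dim X$ iff $\Assh\shF\subseteq\Assh X$, and the ``in particular'' claim follows from $\Assh X\subseteq\Ass X$ and \cref{prp:torf sh general}. You merely spell out in more detail the step the paper leaves implicit, namely that $\dim\shF$ is controlled by the isolated points $\Min\shF\subseteq\Ass\shF$, which forces $\dim\shF=\dim X$ in the converse direction.
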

\begin{proof}
Suppose that $\shF$ is maximal pure.
Then $\Assh \shF \subseteq \Assh X$ holds since $\dim \shF = \dim X$.
Because $\shF$ is equidimensional and has no embedding points by \cref{prp:pure Ass},
we have that $\Ass \shF =\Assh \shF \subseteq \Assh X$.

Conversely, suppose that $\Ass \shF \subseteq \Assh X$ and $\shF \ne 0$.
Then $\dim \ol{\{x\}}=\dim X$ for any $x \in \Ass \shF$.
This implies that $\dim \shF = \dim X$ and $\Ass \shF = \Assh \shF$.
\end{proof}

\begin{cor}\label{cor:pure and torf}
Let $X$ be an equidimensional noetherian scheme without embedding points.
Then the following are equivalent: 
\begin{enur}
\item
$\shF$ is maximal pure.
\item
$\shF$ is locally torsionfree.
\item
$\shF$ is torsionfree.
\end{enur}
\end{cor}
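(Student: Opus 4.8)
The plan is to collapse all three conditions to a single containment of associated points, using the two hypotheses on $X$ to make the relevant subsets of $X$ coincide. First I would record the key consequence of the assumptions. Since $X$ is equidimensional we have $\Assh X = \Min X$, and since $X$ has no embedding points we have $\Min X = \Ass X$; combining these yields
\[
\Assh X = \Min X = \Ass X.
\]
(Here $\dim X < \infty$ is implicit, as it is required for $\Assh$ to be defined, and is part of what ``equidimensional'' presupposes.)

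Next I would translate each of the three notions into a statement about $\Ass \shF$. By \cref{prp:maximal pure Ass}, $\shF$ is maximal pure if and only if $\Ass \shF \subseteq \Assh X$, which by the displayed equalities is the same as $\Ass \shF \subseteq \Ass X$. By the equivalence (i)$\equi$(iii) of \cref{prp:torf sh general}, $\shF$ is locally torsionfree if and only if $\Ass \shF \subseteq \Ass X$. These two characterizations are literally identical, so (i)$\equi$(ii) is immediate.

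For (ii)$\equi$(iii) I would simply invoke \cref{prp:torf sh without emb pt}, whose equivalence (i)$\equi$(ii) asserts precisely that on a locally noetherian scheme without embedding points, the torsionfree and locally torsionfree conditions coincide. Alternatively, one reads it off directly: torsionfree means that every associated point of $\shF$ is a generalization of some point of $\Ass X$, and since $\Ass X = \Min X$ consists of maximal points (hence is generalization-closed), this is again equivalent to $\Ass \shF \subseteq \Ass X$.

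I do not anticipate a genuine obstacle: the corollary is bookkeeping that assembles \cref{prp:maximal pure Ass,prp:torf sh general,prp:torf sh without emb pt}. The only point that needs care is checking that the two hypotheses on $X$ are exactly what force $\Assh X$, $\Min X$ and $\Ass X$ to agree, so that the a priori distinct numerical characterizations of the three conditions all reduce to the single containment $\Ass \shF \subseteq \Ass X$.
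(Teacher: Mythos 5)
Your proof is correct and follows essentially the same route as the paper: the paper's own proof is a one-line reduction citing \cref{prp:maximal pure Ass} and \cref{prp:torf sh without emb pt} together with the observation that the hypotheses force $\Assh X=\Ass X$, which is exactly the bookkeeping you carry out. Your additional appeal to \cref{prp:torf sh general} (i)$\equi$(iii) is already contained in those cited results, so nothing new is needed.
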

\begin{proof}
It follows from \cref{prp:torf sh without emb pt,prp:maximal pure Ass}
since $\Assh X =\Ass X$.
\end{proof}

Let us recall the definition of Cohen-Macaulay sheaves.
Let $R$ be a commutative noetherian local ring and $M\in \catmod R$.
In general, the following inequalities hold (cf.\ \cite[Proposition 1.2.12]{BH}):
\[
\depth M \le \dim M \le \dim R.
\]
We say that $M$ is (resp.\ \emph{maximal}) \emph{Cohen-Macaulay}
if either $M=0$ or $\depth M= \dim M$ (resp.\ $\depth M= \dim R$) holds.
Let $X$ be a locally noetherian scheme and $\shF \in \coh X$.
Then $\shF$ is said to be (resp.\ \emph{maximal}) \emph{Cohen-Macaulay}
if $\shF_x$ is a (resp.\ maximal) Cohen-Macaulay $\shO_{X,x}$-module for any $x\in X$.
We call $X$ \emph{Cohen-Macaulay} if $\shO_X$ is Cohen-Macaulay.
Note that $X$ is Cohen-Macaulay if and only if
every locally free $\shO_X$-module is maximal Cohen-Macaulay.

We recall some basic facts about Cohen-Macaulay modules.
\begin{fct}[{cf.\ \cite[Theorem 17.3]{Matsumura},
\cite[Tag \href{https://stacks.math.columbia.edu/tag/031Q}{031Q},
\href{https://stacks.math.columbia.edu/tag/00NT}{00NT}]{SP}}]\label{fct:CM mod basic}
Let $R$ be a commutative noetherian local ring and $M\in \catmod R$.
\begin{enua}
\item
If $M$ is Cohen-Macaulay, then it is equidimensional and has no embedding prime ideals.
More precisely, $\depth M  =\dim M = \dim R/\pp$ hold for any $\pp \in \Ass M$.
\item
If $\dim R \le 1$, then $M$ is Cohen-Macaulay
if and only if it has no embedding prime ideals.
\item
If $R$ is regular, then every maximal Cohen-Macaulay $R$-module is free.
\end{enua}
\end{fct}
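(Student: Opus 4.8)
The plan is to deduce all three parts from standard depth theory over a noetherian local ring $(R,\mm)$; since each is classical, I will only indicate the key inputs rather than reprove the foundational lemmas. For part (1) the essential ingredient is the inequality $\depth M \le \dim R/\pp$, valid for every $\pp \in \Ass M$, which is itself proved by induction on $\depth M$: when $\depth M > 0$ one selects an $M$-regular element $a \in \mm$, passes to $M/aM$, and tracks how $\Ass$ and the coheights $\dim R/\pp$ behave. Granting this, I would combine it with the general identity $\dim M = \max\{\dim R/\pp \mid \pp \in \Ass M\}$ (the maximum being realized on $\Min M \subseteq \Ass M$) and the defining equality $\depth M = \dim M$ of a Cohen--Macaulay module. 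Together these force $\depth M \le \dim R/\pp \le \dim M = \depth M$ for every $\pp \in \Ass M$, so that $\dim R/\pp = \dim M$ uniformly. This is precisely equidimensionality, and it simultaneously excludes embedded primes: an embedded $\pp'$ would properly contain a minimal prime $\pp$ of $\Supp M$, whence $\dim R/\pp' < \dim R/\pp = \dim M$, contradicting the uniform value.

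For part (2) the forward implication is already contained in (1), so only the converse needs attention, and I would argue by cases on $\dim M$. If $M = 0$ or $\dim M = 0$ the assertion is immediate, since then $M$ has finite length and $\depth M = 0 = \dim M$. If instead $\dim M = 1$, the absence of embedded primes means $\Ass M = \Min M$ consists solely of primes $\pp$ with $\dim R/\pp = 1$, so $\mm \notin \Ass M$; invoking the standard criterion that $\depth M \ge 1$ exactly when $\mm$ is not associated, and using $\depth M \le \dim M = 1$ from the basic inequalities $\depth M \le \dim M \le \dim R$, I obtain $\depth M = 1 = \dim M$, i.e.\ $M$ is Cohen--Macaulay.

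For part (3) the operative tool is the Auslander--Buchsbaum formula $\pd M + \depth M = \depth R$, which applies once $\pd M < \infty$; over a regular local ring every finitely generated module has finite projective dimension, so this hypothesis is automatic. Since a regular local ring is Cohen--Macaulay, $\depth R = \dim R$, and the maximality of $M$ gives $\depth M = \dim R$, whence $\pd M = 0$. A module of projective dimension zero is projective, and a finitely generated projective module over a local ring is free, which is the claim.

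The only genuinely non-formal step is the depth inequality $\depth M \le \dim R/\pp$ underlying (1); parts (2) and (3) are then short consequences of it and of the Auslander--Buchsbaum formula. This is why the statement is recorded here as a cited fact rather than proved in full, and in practice I would simply reference \cite[Theorem 17.3]{Matsumura} for (1) and (2) and the Auslander--Buchsbaum theorem for (3).
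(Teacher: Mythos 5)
Your sketch is correct: part (1) follows from the inequality $\depth M \le \dim R/\pp$ for $\pp \in \Ass M$ combined with $\dim R/\pp \le \dim M = \depth M$, part (2)'s converse from the criterion $\depth M \ge 1 \iff \mm \notin \Ass M$, and part (3) from Auslander--Buchsbaum; these are exactly the arguments in the sources the paper cites. The paper records this statement as a Fact with references to Matsumura and the Stacks Project and gives no proof of its own, so there is nothing to compare beyond noting that your route is the standard one those references take.
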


The following corollary immediately follows from the fact above.
\begin{cor}\label{cor:CM sh basic}
Let $X$ be a locally noetherian scheme and $\shF \in \coh X$.
\begin{enua}
\item
If $\shF$ is Cohen-Macaulay, then it has no embedding points.
In particular, Cohen-Macaulay schemes have no embedding points.
\item
If $\dim X \le 1$,
then $\shF$ is Cohen-Macaulay if and only if it has no embedding points.
In particular,
$X$ is Cohen-Macaulay if and only if it has no embedding points
when $\dim X \le 1$
\item
If $X$ is regular,
then $\shF$ is maximal Cohen-Macaulay if and only if $\shF$ is locally free.
\qed
\end{enua}
\end{cor}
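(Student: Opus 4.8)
The plan is to reduce all three assertions to the local statements collected in \cref{fct:CM mod basic}, exploiting that each relevant property—being Cohen-Macaulay, being maximal Cohen-Macaulay, being locally free, and having no embedding points—is stalk-local by definition. The crucial bridge is \cref{prp:no emb pt stalk-local}, which translates the sheaf-theoretic condition that $\shF$ have no embedding points into the module-theoretic condition that every stalk $\shF_x$ have no embedding prime ideals.

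For (1), I would note that $\shF$ is Cohen-Macaulay precisely when each stalk $\shF_x$ is a Cohen-Macaulay $\shO_{X,x}$-module. By \cref{fct:CM mod basic} (1) each such $\shF_x$ then has no embedding prime ideals, so \cref{prp:no emb pt stalk-local} (2) yields that $\shF$ has no embedding points. Taking $\shF=\shO_X$ gives the statement for Cohen-Macaulay schemes. For (2), the extra input is that $\dim \shO_{X,x}\le \dim X\le 1$ for every $x\in X$, so \cref{fct:CM mod basic} (2) applies at each stalk: $\shF_x$ is Cohen-Macaulay if and only if it has no embedding prime ideals. Combining the stalk-local description of Cohen-Macaulayness with the translation of \cref{prp:no emb pt stalk-local}, the two conditions on $\shF$ become equivalent, and specializing to $\shF=\shO_X$ gives the scheme-theoretic version.

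For (3), when $X$ is regular every $\shO_{X,x}$ is a regular local ring. If $\shF$ is maximal Cohen-Macaulay, then each $\shF_x$ is maximal Cohen-Macaulay, hence free by \cref{fct:CM mod basic} (3); since $\shF$ is coherent this means $\shF$ is locally free. Conversely, if $\shF$ is locally free then each $\shF_x$ is free over $\shO_{X,x}$, and a free module over a regular (hence Cohen-Macaulay) local ring is maximal Cohen-Macaulay, so $\shF$ is maximal Cohen-Macaulay.

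The arguments are essentially formal once the stalk-locality is recorded, so I do not expect a genuine obstacle. The only point meriting care is the passage between embedding points of $\shF$ and embedding primes of its stalks, which must be invoked through \cref{prp:no emb pt stalk-local} rather than treated as self-evident; everything else is a direct appeal to the corresponding item of \cref{fct:CM mod basic} at each local ring.
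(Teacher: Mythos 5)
Your proof is correct and follows exactly the route the paper intends: the paper gives no written proof, stating only that the corollary ``immediately follows'' from \cref{fct:CM mod basic}, and your argument simply makes explicit the stalk-local reductions (via \cref{prp:no emb pt stalk-local} for the embedding-point condition) that justify that claim.
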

Thus,
the notion of torsionfree and locally torsionfree coincide on Cohen-Macaulay schemes
by \cref{prp:torf sh without emb pt}.

We now describe the relationship between torsionfree and Cohen-Macaulay sheaves.
\begin{prp}\label{prp:MCM and torf}
Let $X$ be a locally noetherian scheme and $\shF \in \coh X$.
If $\shF$ is maximal Cohen-Macaulay,
then it is locally torsionfree.
\end{prp}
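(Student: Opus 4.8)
The plan is to reduce the statement to commutative algebra and then control the associated primes. Since both being maximal Cohen-Macaulay and being locally torsionfree are conditions imposed stalk by stalk (compare the definition of Cohen-Macaulay sheaves with \cref{dfn:torf sh}~(1)), it suffices to prove the following local assertion: if $R$ is a commutative noetherian local ring and $M\in\catmod R$ is a maximal Cohen-Macaulay module, then $M$ is a torsionfree $R$-module. Applying this to $R=\shO_{X,x}$ and $M=\shF_x$ for every $x\in X$ then gives the claim, since $\shF_x$ is a finite $\shO_{X,x}$-module.

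For the local assertion, I would first dispose of the case $M=0$, which is torsionfree by definition. So assume $M\ne 0$; then $\depth M=\dim R$ by maximality, and combining this with the inequalities $\depth M\le \dim M\le \dim R$ recalled just before \cref{fct:CM mod basic} yields $\dim M=\dim R$. The key step is then to pin down $\Ass M$: by \cref{fct:CM mod basic}~(1) a Cohen-Macaulay module has no embedding primes and satisfies $\dim R/\pp=\dim M$ for every $\pp\in\Ass M$, so every $\pp\in\Ass M$ satisfies $\dim R/\pp=\dim M=\dim R$. Such a $\pp$ must be a minimal prime of $R$: if some minimal prime $\pp_0\subsetneq\pp$ existed, then prepending $\pp_0$ to any chain of primes lying above $\pp$ would force $\dim R/\pp_0\ge \dim R/\pp+1=\dim R+1$, which is absurd. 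Hence $\Ass M\subseteq \Min R\subseteq\Ass R$.

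Finally, since every associated prime of $M$ is itself an associated prime of $R$, and in particular is contained in one, \cref{prp:char torf mod}~(1) shows that $M$ is torsionfree, which completes the argument. I do not expect any serious obstacle here: the whole content is the passage from $\depth M=\dim R$ to $\Ass M\subseteq\Ass R$, and the only point that needs a short argument of its own is the minimality of the associated primes, which relies solely on the equality $\dim R/\pp=\dim R$ and holds in any noetherian local ring regardless of catenarity.
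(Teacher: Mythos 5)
Your proof is correct and follows essentially the same route as the paper: reduce to the stalks, invoke \cref{fct:CM mod basic}~(1) to see that every $\pp\in\Ass_{\shO_{X,x}}\shF_x$ satisfies $\dim\shO_{X,x}/\pp=\dim\shF_x=\dim\shO_{X,x}$, deduce that such $\pp$ are minimal (hence associated) primes of $\shO_{X,x}$, and conclude with \cref{prp:char torf mod}. The only cosmetic difference is that the paper packages the middle step as ``$\shF_x$ is maximal pure'' and cites \cref{prp:maximal pure Ass}, whereas you inline that step by arguing the minimality of the associated primes directly.
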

\begin{proof}
Let $x\in \Supp \shF$.
Then $\shF_x$ is a nonzero maximal Cohen-Macaulay $\shO_{X,x}$-module.
From this,
$\shF_x$ is maximal pure by \cref{fct:CM mod basic} (1).
Thus $\shF_x$ is torsionfree by \cref{prp:maximal pure Ass}.
This means that $\shF$ is locally torsionfree.
\end{proof}

\begin{rmk}
Let $X$ be a noetherian scheme of finite Krull dimension.
\begin{enua}
\item
If $X$ is equidimensional and has no embedding points,
then maximal Cohen-Macaulay $\shO_X$-modules are maximal pure
by \cref{cor:pure and torf,prp:MCM and torf}.
The converse does not hold.
Consider the formal power series ring $R=\bbk[[x,y]]$ of two variables over a field $\bbk$.
Then $R$ is equidimensional and has no embedding prime ideals
since it is an integral domain.
Its maximal ideal $\mm=(x,y)$ is a maximal pure but not Cohen-Macaulay $R$-module.
Indeed, we have $\Ass \mm = \{(0)\}$ by $\emptyset \ne \Ass \mm \subseteq \Ass R =\{(0)\}$.
Thus $\mm$ is maximal pure.
On the other hand, we have $\depth \mm =1<2=\dim \mm$ 
by the exact sequence $0\to \mm \to R \to \bbk \to 0$ of $R$-modules.
Hence $\mm$ is not Cohen-Macaulay.

\item
If $X$ is not equidimensional,
there is a maximal Cohen-Macaulay $\shO_X$-module which is not pure.
Indeed, consider $R=\bbk\times \bbk[x]$, where $\bbk$ is a field.
Then $R$ itself is a maximal Cohen-Macaulay but not equidimensional $R$-module.
\end{enua}
\end{rmk}

\begin{lem}\label{prp:torf + CM = MCM}
Let $X$ be a Cohen-Macaulay scheme and $\shF\in\coh X$.
\begin{enua}
\item
If $\shF$ is torsionfree, then $\dim \shF_x = \dim \shO_{X,x}$ for any $x \in X$.
\item
$\shF$ is maximal Cohen-Macaulay
if and only if it is torsionfree and Cohen-Macaulay.
\end{enua}
\end{lem}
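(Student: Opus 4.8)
The plan is to reduce both parts to stalk-local statements about the Cohen--Macaulay local rings $\shO_{X,x}$ and finitely generated torsionfree modules over them. Since $X$ is Cohen--Macaulay, it has no embedding points by \cref{cor:CM sh basic}, so $\Ass X=\Min X$; this set is generalization-closed, and hence the notions of being torsionfree and locally torsionfree coincide by \cref{prp:torf sh general,prp:torf sh without emb pt}. In particular, if $\shF$ is torsionfree then $\shF_x$ is a torsionfree $\shO_{X,x}$-module for every $x$. Throughout I work at a point $x\in\Supp\shF$ (for $x\notin\Supp\shF$ one has $\shF_x=0$ and there is nothing to check); I write $R:=\shO_{X,x}$ and $M:=\shF_x\neq 0$.

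For (1), I would prove the purely commutative-algebraic statement: if $R$ is a Cohen--Macaulay local ring and $M$ is a nonzero finitely generated torsionfree $R$-module, then $\dim M=\dim R$. Since $R$ is Cohen--Macaulay it has no embedding prime ideals by \cref{fct:CM mod basic}, so $\Ass_R R=\Min R$ and \cref{prp:char torf mod} gives $\Ass_R M\subseteq \Ass_R R$. Now $\dim M$ is computed from the minimal primes of its support, and $\Min M\subseteq \Ass_R M\subseteq \Ass_R R$; moreover every $\mathfrak p\in\Ass_R R$ satisfies $\dim R/\mathfrak p=\dim R$, because a Cohen--Macaulay local ring is equidimensional (apply \cref{fct:CM mod basic} to the module $R$ itself). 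Hence $\dim M=\max\{\dim R/\mathfrak p \mid \mathfrak p\in\Min M\}=\dim R$, which is exactly $\dim\shF_x=\dim\shO_{X,x}$.

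For (2), both implications then follow by comparing depth and dimension stalkwise against $\dim\shO_{X,x}$. If $\shF$ is maximal Cohen--Macaulay, it is locally torsionfree, hence torsionfree, by \cref{prp:MCM and torf}, and for $x\in\Supp\shF$ one has $\depth\shF_x=\dim\shO_{X,x}$, which equals $\dim\shF_x$ by (1); thus $\shF_x$ is Cohen--Macaulay, so $\shF$ is torsionfree and Cohen--Macaulay. Conversely, if $\shF$ is torsionfree and Cohen--Macaulay, then for $x\in\Supp\shF$ we get $\depth\shF_x=\dim\shF_x=\dim\shO_{X,x}$ by (1), so $\shF_x$ is maximal Cohen--Macaulay; the points off the support are trivial. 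This yields the claimed equivalence.

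I expect no serious obstacle: the single piece of genuine content is the dimension identity in (1), which rests entirely on the equidimensionality of Cohen--Macaulay local rings, i.e.\ $\dim R/\mathfrak p=\dim R$ for every associated prime $\mathfrak p$. The only care needed is the bookkeeping---ensuring $\dim M$ is read off from the minimal (hence associated) primes of $M$, all of which lie in $\Ass_R R$---together with the harmless treatment of the zero stalks and the empty-support case separately.
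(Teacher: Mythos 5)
Your proof is correct and follows essentially the same route as the paper: reduce to stalks using that a Cohen--Macaulay scheme has no embedding points (so torsionfree $=$ locally torsionfree), and then derive $\dim\shF_x=\dim\shO_{X,x}$ from the equidimensionality of the Cohen--Macaulay local ring $\shO_{X,x}$. The only cosmetic difference is that the paper packages the dimension count by citing \cref{cor:pure and torf} (torsionfree $\Rightarrow$ maximal pure over $\Spec\shO_{X,x}$), whereas you inline the same associated-primes argument; part (2) is handled identically in both.
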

\begin{proof}
We only prove (1) since (2) is the direct consequence of (1) and \cref{prp:MCM and torf}.
Since $X$ is Cohen-Macaulay, 
the local ring $\shO_{X,x}$ is equidimensional and has no embedding points for any $x\in X$
by \cref{fct:CM mod basic}.
Therefore the torsionfree module $\shF_x$ is maximal pure by \cref{cor:pure and torf}.
In particular, we have $\dim \shF_x = \dim \shO_{X,x}$.
\end{proof}

\begin{cor}\label{prp:torf on 1-dim CM}
Let $X$ be a Cohen-Macaulay scheme such that $\dim X \le 1$.
The following are equivalent for $\shF \in \coh X$.
\begin{enur}
\item
$\shF$ is maximal Cohen-Macaulay.
\item
$\shF$ is locally torsionfree.
\item
$\shF$ is torsionfree.
\end{enur}
\end{cor}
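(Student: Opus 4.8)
The plan is to assemble the three conditions into the cycle $(\text{i}) \Rightarrow (\text{ii}) \Leftrightarrow (\text{iii}) \Rightarrow (\text{i})$ using the structural results already proved in this appendix. The guiding observation is that the Cohen-Macaulay hypothesis on $X$ forces $X$ to have no embedding points by \cref{cor:CM sh basic} (1), and this is precisely the hypothesis that unlocks \cref{prp:torf sh without emb pt}.

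The two easy directions come for free. Since $X$ has no embedding points, \cref{prp:torf sh without emb pt} immediately gives $(\text{ii}) \Leftrightarrow (\text{iii})$: for any $\shF \in \coh X$, being torsionfree and being locally torsionfree coincide. The implication $(\text{i}) \Rightarrow (\text{ii})$ is just \cref{prp:MCM and torf}, which asserts that maximal Cohen-Macaulay sheaves are locally torsionfree.

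The remaining implication $(\text{iii}) \Rightarrow (\text{i})$ is where the bound $\dim X \le 1$ is needed. Assume $\shF$ is torsionfree. Because $X$ has no embedding points, the final assertion of \cref{prp:torf sh without emb pt} shows that $\shF$ itself has no embedding points. Since $\dim X \le 1$, \cref{cor:CM sh basic} (2) then upgrades this to: $\shF$ is Cohen-Macaulay. Being simultaneously torsionfree and Cohen-Macaulay on a Cohen-Macaulay scheme, $\shF$ is maximal Cohen-Macaulay by \cref{prp:torf + CM = MCM} (2), which closes the cycle.

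I expect no genuine obstacle here; the content is bookkeeping, and the only point requiring care is to keep straight which hypothesis each cited lemma consumes. The dimension bound $\dim X \le 1$ is used solely in \cref{cor:CM sh basic} (2) to pass from ``no embedding points'' to ``Cohen-Macaulay'', whereas the Cohen-Macaulay hypothesis on $X$ does double duty: it removes embedding points from $X$ (making the two torsionfree notions agree) and it is a standing hypothesis of \cref{prp:torf + CM = MCM}.
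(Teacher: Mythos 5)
Your proposal is correct and follows essentially the same route as the paper: both establish (ii)$\equi$(iii) and (i)$\imply$(ii) from \cref{prp:torf sh without emb pt} and \cref{prp:MCM and torf}, and both close the cycle by extracting ``no embedding points'' from \cref{prp:torf sh without emb pt}, upgrading to Cohen-Macaulay via \cref{cor:CM sh basic} (2), and concluding with \cref{prp:torf + CM = MCM}. The only cosmetic difference is that the paper labels the last step as (ii)$\imply$(i) while you label it (iii)$\imply$(i), which is immaterial given the equivalence already proved.
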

\begin{proof}
The implications (i)$\imply$(ii)$\equi$(iii) follow from 
\cref{prp:torf sh without emb pt,prp:MCM and torf} since $X$ has no embedding points.
We prove (ii)$\imply$(i).
Suppose that $\shF$ is locally torsionfree.
Then it has no embedding points by \cref{prp:torf sh without emb pt},
and hence it is Cohen-Macaulay by \cref{cor:CM sh basic}.
Thus $\shF$ is maximal Cohen-Macaulay by \cref{prp:torf + CM = MCM}.
\end{proof}

\begin{cor}\label{prp:torf on regular}
Let $X$ be a regular scheme such that $\dim X \le 1$.
The following are equivalent for $\shF \in \coh X$.
\begin{enua}
\item
$\shF$ is locally free.
\item
$\shF$ is maximal Cohen-Macaulay.
\item
$\shF$ is locally torsionfree.
\item
$\shF$ is torsionfree.
\end{enua}
\end{cor}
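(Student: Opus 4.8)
The plan is to obtain the corollary as a direct combination of the two results already proved for Cohen-Macaulay schemes of dimension at most one, namely \cref{prp:torf on 1-dim CM} and \cref{cor:CM sh basic}. The key preliminary observation I would record is that a regular scheme is automatically Cohen-Macaulay: for every point $x\in X$ the local ring $\shO_{X,x}$ is regular, and a regular local ring satisfies $\depth \shO_{X,x}=\dim \shO_{X,x}$, so it is Cohen-Macaulay. Hence $X$ is a Cohen-Macaulay scheme with $\dim X \le 1$, and all the hypotheses of \cref{prp:torf on 1-dim CM} are in force.

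With this in hand, I would argue as follows. Applying \cref{prp:torf on 1-dim CM} to $X$ immediately yields the equivalence of conditions (2), (3) and (4), that is, maximal Cohen-Macaulay, locally torsionfree, and torsionfree all coincide. To bring in condition (1), I would invoke \cref{cor:CM sh basic}(3): since $X$ is regular, $\shF$ is maximal Cohen-Macaulay if and only if it is locally free, which gives (1)$\equi$(2). Chaining the two equivalences produces (1)$\equi$(2)$\equi$(3)$\equi$(4), as required.

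There is essentially no genuine obstacle here; the statement is a formal corollary of the preceding structure theory, and the work has already been done in establishing \cref{prp:torf on 1-dim CM} and \cref{cor:CM sh basic}. The only point deserving explicit mention is the implication \emph{regular $\Rightarrow$ Cohen-Macaulay}, which is precisely what licenses invoking the one-dimensional Cohen-Macaulay result; everything else is bookkeeping among the cited equivalences.
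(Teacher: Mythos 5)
Your proof is correct and matches the paper's own argument, which likewise cites \cref{cor:CM sh basic} for the equivalence (1)$\Leftrightarrow$(2) and \cref{prp:torf on 1-dim CM} for (2)$\Leftrightarrow$(3)$\Leftrightarrow$(4). Your explicit remark that regularity implies the Cohen--Macaulay hypothesis is a useful detail the paper leaves implicit.
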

\begin{proof}
It follows from \cref{cor:CM sh basic,prp:torf on 1-dim CM}.
\end{proof}

Let $X$ be a noetherian scheme of finite Krull dimension.
The relationship among the properties of coherent $\shO_X$-modules 
discussed in this subsection is summarized as follows:
\[
\begin{tikzcd}
& \text{without embdding points} & \text{locally free} \arr{d,Rightarrow,bend left=20,"\text{$X$: Cohen-Macaulay}"} \\
\text{maximal pure} \arr{rd,Rightarrow} \arr{ru,Rightarrow} &  & \text{maximal Cohen-Macaulay} \arr{lu,Rightarrow} \arr{ld,Rightarrow} \arr{u,Rightarrow,bend left=20,"\text{$X$: regular}"}\\
& \text{locally torisonfree} \arr{uu,"\text{$\Min X = \Ass X$}",Rightarrow,bend left=15} \arr{d,Rightarrow} \arr{lu,Rightarrow,bend left=10,"\Assh X=\Ass X"} \arr{ru,Rightarrow,bend right=10,"\text{$X$: Cohen-Macaulay and $\dim X \le 1$}"'} & \\
& \text{torisonfree} \arr{u,Rightarrow,bend left=25,"\Min X= \Ass X"}&.
\end{tikzcd}
\]


\end{document}